\documentclass[12pt]{amsart}

\usepackage[utf8]{inputenc}
\usepackage{various}
\usepackage{hyperref}
\usepackage{graphicx}
\usepackage{amssymb}
\usepackage{float}
\usepackage{fullpage}
\usepackage{physics}
\usepackage{comment}
\usepackage{tikz}
\usetikzlibrary{positioning, arrows.meta, shapes.geometric}
\usepackage{mathtools}
\usepackage{bm}

\renewtheorem{theorem}{Theorem}[section]
\renewtheorem{lemma}[theorem]{Lemma}
\renewtheorem{corollary}[theorem]{Corollary}
\renewtheorem{proposition}[theorem]{Proposition}
\renewtheorem{conjecture}{Conjecture}

\theoremstyle{definition}
\newtheorem{definition}{Definition}

\theoremstyle{remark}
\renewtheorem{remark}{Remark}

\numberwithin{equation}{section}

\providecommand{\B}{\mathcal{B}}
\providecommand{\A}{\mathcal{A}}

\providecommand{\PP}{\mathcal{P}}
\providecommand{\C}{{\mathbb C}}
\providecommand{\R}{{\mathbb R}}

\providecommand{\Z}{{\mathbb Z}}
\providecommand{\1}{{\mathbf 1}}

\providecommand{\rd}{\mathrm{ d}}
\providecommand{\eps}{\varepsilon}

\providecommand{\precprec}{\prec\mathrel{\mkern-5mu}\prec}

\renewcommand{\Re}{\operatorname{Re}}

\renewcommand{\exp}{\operatorname{exp}}
\renewcommand{\P}{\mathbb{P}}

\renewcommand{\k}{\mathrm{k}}

\renewcommand{\c}{\mathfrak c}
\renewcommand{\d}{\mathrm d}
\newcommand{\m}{\mathfrak{m}}
\newcommand{\w}{\mathrm w}

\renewcommand{\pmod}[1]{\,\,(\mathrm{mod}\,{#1})}
\newcommand{\one}{\mathbf{1}}

\newcommand{\mA}{\mathcal{A}}
\newcommand{\mB}{\mathcal{B}}
\newcommand{\mC}{\mathcal{C}}
\newcommand{\mD}{\mathcal{D}}
\newcommand{\mE}{\mathcal{E}}

\title{Primes and the Möbius function over quadratic polynomials}

\author{Lasse Grimmelt}
\email{lasse.grimmelt@gmail.com}

\author{Jori Merikoski}
\address{Department of Mathematics and Statistics, P.O. Box 68, Pietari Kalmin katu 5, University of Helsinki, FI-00014, Finland}
\email{jori.merikoski@helsinki.fi}

\author{Mayank Pandey}
\address{Courant Institute of Mathematical Sciences, 251 Mercer Street, New York, NY, 10012}
\email{mayankpandey9973@gmail.com}

\author{Alexandru Pascadi}
\address{Mathematisches Institut, Endenicher Allee 60, 53115 Bonn, Germany}
\email{alexpascadi@gmail.com}

\subjclass[2020]{11N32}

\begin{document}
\begin{abstract}
We prove an asymptotic formula for the number of primes $p\leq X$ of the form $n^2+h$ for almost all $h \leq X^{1/2-o(1)}$.  We prove that, for almost all $h \leq X^{2/5}$,  the number of primes  satisfies a correct-order lower bound.  We also show cancellation of the Möbius function over $n^2+h$ for almost all $h < X^{1/2-o(1)}$. 
\end{abstract}

\maketitle

\tableofcontents
\section{Introduction}
A famous open question asks if there are infinitely many prime numbers represented by any non-constant irreducible polynomial $P \in \Z[X]$ with positive leading coefficient and no fixed prime divisor. The case $\deg P=1$ is Dirichlet's theorem on primes in arithmetic progressions, but already for $\deg P=2$ the conjecture appears to be far beyond our current methods. 

Let $\mathfrak{S}_h=\prod_p \frac{1- \rho_h(p)/p}{1-1/p}$ denote the singular series constant for primes of the form $n^2+h$, where $\rho_h(p)$ denotes the number of  solutions $x \pmod{p}$ to the congruence $x^2+h \equiv 0 \pmod{p}$.  Baier and Zhao \cite{baierzhao} have shown that for $X^{1/2 + \eps} < H < X/2 $ and any $C>0$ we have 
\begin{align} \label{eq:baierzhaoresult}
        \sum_{n^2+h \leq X} \Lambda(n^2+h) = \mathfrak{S}_h (X-h)^{1/2} + O_C\bigg(\frac{X^{1/2}}{(\log X)^{C}}\bigg)
\end{align}
for all but $ \ll_C H/(\log X)^C$ values of $h \leq H$. 

The value $X^{1/2}$ is a natural barrier since for $H> 2X^{1/2}$ the intervals $[n^2,n^2+H]$ overlap. The method of Baier and Zhao fails completely past this threshold and does not even give lower bounds for the number of primes. Our main theorem breaks this barrier.

\begin{theorem} \label{thm:primesasymp}
 Let $\nu > 0$ and $H=X^{1/2-\nu}$. Then
    \begin{align*}
      \sum_{n^2+h \leq X} \Lambda(n^2+h) = (1+O(\nu))\mathfrak{S}_h X^{1/2} 
    \end{align*}
    for all but $\ll_C H/(\log X)^C$ values of $h \leq H$.    In particular, for $H=X^{1/2-o(1)}$ we have  $$        \sum_{n^2+h \leq X} \Lambda(n^2+h) = (1+o(1)) \mathfrak{S}_hX^{1/2}$$     for all but $\ll_C H/(\log X)^C$ values of $h \leq H$. 
\end{theorem}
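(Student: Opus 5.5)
We plan to deduce the theorem from a variance (second-moment) estimate in $h$, combined with a sieve decomposition of $\Lambda$. The term $O(\nu)$ in the statement already suggests the shape of the argument. We cannot evaluate all bilinear pieces of Heath-Brown's identity asymptotically. Instead we use Harman's sieve: the "good" part of the sum (Type I and Type I/II ranges) is given asymptotically for almost all $h$. The remaining Type II ranges that cannot be handled are dropped, at a cost whose total density is $O(\nu)$; these ranges only appear when $H$ is close to $X^{1/2}$. Concretely, we write $\Lambda(m)$ for $m\sim X$ via Buchstab iterations and Vaughan/Heath-Brown as a combination of sums $\sum_{n} \alpha*\beta(n^2+h)$. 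We then aim to show, for each admissible bilinear form with coefficients $\alpha_a$ supported on $a\sim A$, that
\begin{align*}
\sum_{h\le H}\Big|\sum_{n^2+h\le X}(\alpha*\beta)(n^2+h)-\mathfrak S_h\text{-weighted main term}\Big|^2\ll \frac{HX}{(\log X)^{C}} .
\end{align*}
Chebyshev's inequality then excludes $\ll H(\log X)^{-C}$ exceptional $h$.

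The key steps in order are as follows. (i) Type I: we need equidistribution of $\sum_{n}\mathbf 1_{d\mid n^2+h}$ summed over $d\le D$ on average over $h$. Opening the square and summing over $h$ first reduces this to counting solutions of $n_1^2-n_2^2\equiv$ (constraints) modulo lcm's. This reduces, via Poisson summation in $n$, to bounds for sums of Kloosterman-type / Salié sums. Averaging over $h$ gives level of distribution $D$ roughly $X^{1/2}H^{1/2}$ or better. (ii) Type II: expanding the square, the diagonal term $n_1^2-n_2^2 = m_1-m_2$ with $h$ eliminated turns into counting $a_1b_1-a_2b_2=n_1^2-n_2^2=(n_1-n_2)(n_1+n_2)$ with $|a_1b_1-a_2b_2|\le H$ essentially free. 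We handle this by Cauchy--Schwarz over $a$ and then Poisson/spectral methods (sums of Kloosterman sums via Kuznetsov, using Deshouillers--Iwaniec type bounds) for the resulting divisor-type problem in $n_1\pm n_2$. This should give asymptotics when $A$ lies in a range like $[X^{1/2-\delta},X^{1/2+\delta}]$ with $\delta$ depending on $\nu$. (iii) The main term requires comparing with the expected count, where $\mathfrak S_h$ arises from local densities $\rho_h(p)$. We must show its variance in $h$ matches; this is a standard singular-series computation.

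The final step is to assemble the pieces with Harman's comparison. The sieve pieces whose factorisations fall outside the Type I/II ranges are bounded above and below by nonnegative quantities, using a lower/upper bound sieve with the positivity of $\Lambda$. Their contribution is an integral over Buchstab-type regions whose measure tends to $0$ as $\nu\to0$, giving $(1+O(\nu))$.

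The main obstacle will be step (ii). For $H=X^{1/2-\nu}$ the shift range barely fails to cover the gaps between squares, so the off-diagonal terms after expanding the variance are genuinely arithmetic. We will need the spectral large sieve with exceptional eigenvalue control to get power savings uniformly as the Type II range widens. We would first try to reduce to bounding
\begin{align*}
\sum_{q}\sum_{c}\frac{1}{c}S(\cdot,\cdot;c)\,g(c)
\end{align*}
with smooth $g$ via Kuznetsov, using Selberg's eigenvalue bound on average (Kim--Sarnak) to absorb exceptional spectrum.
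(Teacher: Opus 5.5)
Your architecture matches the paper's: Harman's sieve, an estimate averaged over $h$ followed by Chebyshev, and discarding by positivity only pieces of Buchstab measure $O(\nu)$. The gap is the arithmetic input. The estimates you claim do not follow from the methods you name, and the paper relies on different ones.

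\textbf{Step (i) does not give level beyond $X^{1/2}$.} Type I level $X^{1/2}H^{1/2}$ does not follow from Poisson summation plus square-root bounds for complete sums. Work it out for $d\sim D>X^{1/2}$ with arbitrary $\lambda_d$, as Harman's sieve requires.
\begin{itemize}
\item Poisson in $n$ leaves frequencies $0<|t|\lesssim D/X^{1/2}$, each with weight $X^{1/2}/d$.
\item Opening $\sum_h|\cdot|^2$ and applying Poisson in $h$ modulo $d_1d_2\approx D^2$ leaves $\ell$ of length $D^2/H$ and quadratic Gauss sums of size $\approx D$.
\item Each pair $(d_1,d_2)$ then contributes $\frac{X}{D^2}\cdot\frac{D^2}{X}\cdot\frac{H}{D^2}\cdot\frac{D^2}{H}\cdot D=D$, so $D^3$ in total against the target $HX$.
\item Using Weil for the resulting Sali\'e-type sums in $\ell$ instead gives $HD^2$ against $HX$.
\end{itemize}
Either way you only reach $D\le X^{1/2}$. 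That is the trivial level Poisson already gives pointwise in $h$ (Proposition~\ref{prop:typeI}). Going further needs cancellation in the $d$-sum itself, hence structure in $\lambda_d$. The paper gets it in only two settings:
\begin{itemize}
\item factorable moduli $d_1d_2$ (Proposition~\ref{prop:typeIfact}), via Cauchy--Schwarz with $d_1,h$ outside, Gauss-sum reciprocity, Poisson in the now-smooth $d_1$, and a lattice-point count, under $D_1<H$, $D_2<H^{3/4}$, $D_1^{5/2}D_2^3<X^2$;
\item the divisor-type sums of Proposition~\ref{prop:typeI2}, at level $X^{1/4}$, via spectral theory.
\end{itemize}

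\textbf{Step (ii) is not an argument.} You identify no concrete sum to which Kuznetsov would apply, and balanced Type II near $X^{1/2}$ is exactly what is out of reach. The paper's Type II estimate covers only $X^{\nu+\eta}<N<X^{1/4-\nu/2-\eta}$. Cauchy--Schwarz over the long variable $m$ produces forms $n_2x_1^2-n_1x_2^2$ of discriminant $\asymp N^2$. A second Cauchy--Schwarz over $h$ smooths the shifts. Voronoi summation for these forms plus the additive large sieve then give $HN^4$ against the trivial $H^2N^2$, which forces $N<H^{1/2}\le X^{1/4}$.

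\textbf{The $O(\nu)$ bookkeeping therefore fails.} With Type I only up to $X^{1/2}$ and Type II only in $[X^{1/2-\delta},X^{1/2+\delta}]$, the sums $\sum_{p_1,p_2\approx X^{1/3}}S(\mathcal{A}_{p_1p_2},p_2)$ (i.e.\ $n=p_1p_2p_3$ with all $p_i\approx X^{1/3}$) can be neither evaluated nor discarded. They carry Buchstab density bounded below independently of $\nu$. The paper handles exactly this case with the factorable Type I estimate, which is where $\nu<1/18$ enters. It drops only regions with a prime in $[X^{1/2-O(\nu)},X^{1/2}]$, or two primes with $\log p_i/\log X=1/4+O(\nu)$.

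\textbf{Even the dropped regions need more than a generic upper-bound sieve.} In $\sum_{X^{1/2-\nu}\le p<X^{1/2}}S(\mathcal{A}_p,Z)$ the remaining Type I level is $X^{1/2-\eta}/p\le X^{\nu}$. The sieve then loses a factor $\gg\nu^{-1}$, which cancels the saving $\asymp\nu$ from the short $p$-range. The paper instead majorizes such pieces by $\sum_{n=km,\,U\le k<\sqrt X}\rho(k,Z)\rho(m,Z)$ and its analogues. It evaluates these asymptotically with Type I$_2$ plus Type II information (Lemma~\ref{le:harman2}), or with the factorable Type I estimate.
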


The error term $O(\nu\mathfrak{S}_h X^{1/2})$ can be made more precise. In particular, we prove the following lower bound result.
\begin{theorem} \label{thm:primes}
Let $X^{2/5}\le H\le X/2 $ and $C>0$. Then 
\begin{align*}
        \sum_{n^2+h \leq X} \Lambda(n^2+h) \geq 0.098 \mathfrak{S}_h (X-h)^{1/2}
\end{align*}
for all but $\ll_C H/(\log X)^C$ values of $h \leq H$.
\end{theorem}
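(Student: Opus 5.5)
The plan is Harman's sieve. Fix the sequences $\mathcal{A}_h=\{n^2+h\le X\}$ and $\mathcal{B}_h$, where $\mathcal{B}_h$ is a smooth comparison sequence of integers $m\le X$ of size about $X^{1/2}$, weighted by $\rho_h$ to match the local densities of $\mathcal{A}_h$. Writing $S(\mathcal{C},z)$ for the weighted count of elements of $\mathcal{C}$ with no prime factor below $z$, the goal is
\[
S(\mathcal{A}_h,X^{1/2})\ge 0.098\,\mathfrak{S}_h (X-h)^{1/2}(\log X)^{-1}
\]
for almost all $h\le H$. This is enough, because prime powers contribute negligibly and $\Lambda(n^2+h)\approx\log X$ on the primes counted.

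The inputs are two arithmetic estimates, each proved on average over $h\le H$ and each giving an error saving of $(\log X)^{-C}$ for all but $H/(\log X)^C$ values of $h$.
\begin{itemize}
\item[(I)] A Type I (level of distribution) estimate: $\sum_{d\le D}\lambda_d\big(|\mathcal{A}_{h,d}|-\rho_h(d)X^{1/2}/d\big)$ is small for $D=X^{1/2-\epsilon}$ or a bit larger. I would prove this by completing sums modulo $d$ and using the averaging over $h$. The mean square over $h\le H$ gives a large-sieve-type gain. For $H$ as small as $X^{2/5}$ one needs Kloosterman/spectral bounds for sums of $e(h\bar{\ }\cdot/d)$-type exponential phases, or the Weyl-type structure of roots of $x^2+h\equiv0$.
\item[(II)] A Type II (bilinear) estimate: $\sum_{m\sim M}\sum_{n}\alpha_m\beta_n(\mathbf 1_{mn\in\mathcal{A}_h}-\mathbf 1_{mn\in\mathcal{B}_h})$ is small for $M$ in a range like $[X^{\sigma},X^{1/2-\sigma}]$, with $\sigma$ depending on $H$. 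I would prove this by Cauchy–Schwarz in $m$ and take the mean square in $h$. This removes the $h$-variable and reduces the problem to counting solutions of $n_1^2-n_2^2\equiv$ an error of size $\le H$ with factorization constraints. It becomes a divisor-type problem in which $H\ge X^{2/5}$ is exactly what permits the diagonal plus dispersion bound.
\end{itemize}

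Next, I would apply Buchstab's identity to decompose $S(\mathcal{A}_h,X^{1/2})$ into sums $S(\mathcal{A}_{h,p_1\cdots p_k},p_k)$. The terms whose variables can be grouped into a product lying in the Type II range, or which reduce to Type I sums via the fundamental lemma, are replaced by the corresponding $\mathcal{B}_h$-terms with acceptable error. The remaining terms are discarded, which is allowed only because they have positive sign in the decomposition. Then $S(\mathcal{A}_h,X^{1/2})\ge S(\mathcal{B}_h,X^{1/2})(1-\text{loss})$. Since $S(\mathcal{B}_h,X^{1/2})\sim\mathfrak{S}_h(X-h)^{1/2}/\log X$ by the prime number theorem in the comparison sequence, the problem reduces to a numerical evaluation. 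The discarded terms are Buchstab integrals $\int \omega(\cdot)\,dp_1\cdots/p_1\cdots$ over regions in which no combination of variables lies in $[\sigma,1/2-\sigma]$ (in exponents relative to $X$). These must be evaluated numerically and shown to total at most $0.902$, giving the constant $0.098$. The uniformity in $H\ge X^{2/5}$ comes from the Type II range being widest, and the loss smallest, at $H=X^{2/5}$ among the allowed values. Larger $H$ only improves the estimates, and up to $X/2$ one can appeal to arguments like those behind \eqref{eq:baierzhaoresult}.

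The main obstacle is the Type II estimate for $H$ below $X^{1/2}$. There the intervals $[n^2,n^2+H]$ do not overlap, so the sum over $h$ no longer gives a trivial equidistribution. After the Cauchy–Schwarz and $h$-averaging step one must bound off-diagonal terms involving representations $n_1^2-n_2^2=h_1-h_2$ weighted by $\beta$. I would first try to handle these by Poisson summation in $n$ and bounds for sums of Kloosterman sums (Deshouillers–Iwaniec). Getting a range wide enough that the numerical sieve loss is below $0.902$ is the delicate point.
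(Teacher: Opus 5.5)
\textbf{There is a genuine gap in your sieve step, independent of how the bilinear bounds are proved.}

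Suppose your only inputs are Type I information to level $X^{1/2-\epsilon}$ and a Type II range. (In the paper, at $H=X^{2/5}$, that range is only $N\in[X^{1/10+\eta},X^{1/5-\eta}]$, far narrower than your guessed $[X^{\sigma},X^{1/2-\sigma}]$.) Then the first Buchstab step leaves the negative-sign terms $-\sum_{U\le p<X^{1/2}}S(\mathcal{A}_{h,p},Z)$, with $U=X^{1/2-\nu-2\eta}$. Already the part $n=pq$ with both primes near $\sqrt X$ has relative mass $\int_{2/5}^{1/2}\frac{\mathrm{d}\alpha}{\alpha(1-\alpha)}=\log(3/2)$.
\begin{itemize}
\item These terms cannot be discarded, since they have the wrong sign.
\item They are out of reach of Type I, which would need level about $pV\approx X^{1/2+\nu}$.
\item They have no subproduct in the Type II range. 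Even a range as wide as $[X^{\sigma},X^{1/2-\sigma}]$ leaves $p\in[X^{1/2-\sigma},X^{1/2})$ uncovered.
\item An upper-bound sieve on $n/p$ has level $X^{1/2-\epsilon}/p\to 1$, so even upper bounds fail.
\end{itemize}
So your claim that every term you cannot evaluate has positive sign is false.

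The paper handles this by role reversal. For the prime $k=p\ge U>X^{1/4}$, it writes the indicator that $k$ is prime as $\rho(k,X^{1/4})$ and applies Buchstab, giving $-\rho_1^{\ge U}=-\rho_{1,0}^{\ge U}+\rho_{1,1}^{\ge U}$. The new negative term $\rho_{1,0}^{\ge U}$, in which both $k$ and $n/k$ are $Z$-rough, is evaluated in Lemma \ref{le:harman2}. That requires a third input you do not have: the Type $\mathrm{I}_2$ estimate, Proposition \ref{prop:typeI2}. This is the quadratic divisor problem for $\tau(r^2+h)$ in progressions to moduli up to $X^{\nu+\eta}$, uniform in $h\le X^{2/5}$, proved with spectral theory on $\Gamma_0(d)$ \cite{GMquadratic}. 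This is where spectral input is needed. Your Type I estimate needs none: $d<X^{1/2-\eta}$ is shorter than $r\asymp X^{1/2}$, so Poisson summation gives only the zero frequency, and the comparison with the model is done by Heath-Brown's quadratic large sieve.

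\textbf{The Type II estimate, which is the heart of the theorem, is left open, and your description of it is off.} After Cauchy--Schwarz in $m$, the relation $r_1^2-r_2^2=h_2-h_1$ arises only on the diagonal $n_1=n_2$. That diagonal is bounded trivially, by $HX^{1/2}+H^2R^2$, and it is what forces $N>X^{\nu+\eta}$. The off-diagonal terms count representations $n_2r_1^2-n_1r_2^2=n_1h_2-n_2h_1$ by an indefinite binary form whose coefficients come from the $n$-variables. The paper then proceeds as follows.
\begin{itemize}
\item A second Cauchy--Schwarz over $(h_1,h_2)$ replaces $\gamma_h$ by smooth weights. This couples two representation functions through $u_1\equiv\theta_{\mathbf n}u_2\pmod{|\Delta_{\mathbf n}|}$, with $\Delta_{\mathbf n}=n_1n_4-n_2n_3$.
\item For conductors $d\le X^{\eta/16}N/n_0$, Voronoi summation is applied to these functions in progressions (Lemma \ref{le:voronoi}). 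The main terms cancel exactly against the major-arc model when $d\le R$ (Lemma \ref{lem:main_term_cancellation}).
\item For larger conductors, the additive large sieve is used. This is what yields the upper limit $N<X^{1/4-\nu/2-\eta}$.
\end{itemize}
Your plan of Poisson summation in $n$ plus Deshouillers--Iwaniec is not shown to give any range at all. The constant $0.098$ depends on exactly this range: the discarded pieces are $\mathfrak D_1<0.1319$ and $\mathfrak D_2<0.7697$, up to $O(\eta)$. Finally, the Type II range widens as $H$ grows, so $H=X^{2/5}$ is the worst case, not the one with the widest range.
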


As a corollary, for $X^{2/5} \leq H \leq X/2$ and for any subset $\mathcal{H} \subseteq [1,H] \cap \Z$ with $\# \mathcal{H} \geq H/(\log X)^C$, we obtain a correct order lower bound for the number of primes $p=n^2+h \leq X$ with $h \in \mathcal{H}$. Theorem \ref{thm:primes} is then the first time that primes have been detected in a thin polynomial sequence which does not factor over number fields. Previously the density barrier has been overcome only  for factorizing polynomials such as $X^2+Y^4$ by Friedlander and Iwaniec \cite{FI} and $X^3+2Y^3$ by Heath-Brown \cite{hb} (cf. also their descendants \cite{fouvryi,FIalmost,greensawhney,hbli,lds,merikoskisparse,merikoskipolyprimes,pratt,stanleyFI} and \cite{hbmoroz1,hbmoroz2,li,maynard}).

The exponent $2/5$ in Theorem \ref{thm:primes} can be strengthened slightly by optimizing the sieve more carefully \footnote{A quick check with ChatGPT 6 Pro indicates (unverified) that the exponent $2/5$ can be improved to $0.391$, replacing the lower bound $0.098$  by  $0.0001953$.}. Restricting to $H \geq X^{2/5}$ simplifies the sieve argument considerably. We face a hard barrier at $1/3$. The constant 0.098 in Theorem \ref{thm:primes} can be replaced by the following expression
\begin{align*}
  & 1-o(1)- \left( \int_{1/5}^{1/4} \int_{\alpha_2}^{1/2} + \int_{1/4}^{1/3} \int_{\alpha_2}^{1-2\alpha_2} \right) \frac{1}{\alpha_1 \alpha_2 (1-\alpha_1-\alpha_2)} \d \alpha_1 \d \alpha_2
\\
&- 
\int_{1/5}^{1/4} \int_{\alpha_2}^{1-3\alpha_2} \frac{\log\left(\frac{1-\alpha_1-2\alpha_2}{\alpha_2}\right)}{\alpha_1 \alpha_2 (1-\alpha_1-\alpha_2)} \d \alpha_1 \d \alpha_2 -\int_{1/5}^{1/4} \int_{\alpha_1}^{1/2-\alpha_1} \frac{1 + \log(4 - 5\alpha_1 - 5\alpha_2)}{\alpha_1 \alpha_2 (1-\alpha_1-\alpha_2)} \, \d\alpha_2 \, \d\alpha_1.
\end{align*}

For the Möbius function we prove cancellation provided that $H$ is close to $X^{1/2}$.
\begin{theorem} \label{thm:mobius}
    For any $\nu > 0$ we have uniformly for $H=X^{1/2-\nu}$
    \begin{align*}
        \sum_{h \leq H} \bigg| \sum_{n^2+h \leq X} \mu(n^2+h) \bigg| \ll \nu H X^{1/2}.
    \end{align*}
    In particular, for $H=X^{1/2-o(1)}$ we have  $$   \sum_{h \leq H} \bigg| \sum_{n^2+h \leq X} \mu(n^2+h) \bigg|= o(HX^{1/2}).$$
\end{theorem}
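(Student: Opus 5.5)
We propose to prove Theorem \ref{thm:mobius} by reducing the short average over $h$ to a bilinear/Type I–Type II analysis of sums over $m=n^2+h$, exploiting that the set $\{n^2+h: h\le H\}$ covers each interval $[n^2,n^2+H]$. By Cauchy–Schwarz it suffices to bound the variance
\begin{align*}
V:=\sum_{h\le H}\bigg|\sum_{n^2+h\le X}\mu(n^2+h)\bigg|^2 \ll \nu^2 H^2 X
\end{align*}
up to the acceptable error. Opening the square gives correlations $\sum_{h}\sum_{n_1,n_2}\mu(n_1^2+h)\mu(n_2^2+h)$, which look hopeless directly, so instead we decompose $\mu$ with Heath-Brown's identity (or Vaughan's) into Type I sums $\sum_{d\sim D}\alpha_d \one_{d\mid n^2+h}$ and Type II sums $\sum_{m\sim M,\ell\sim L}\alpha_m\beta_\ell\one_{m\ell=n^2+h}$, and treat each piece in mean square over $h$.

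For the Type I pieces, we expand the congruence $n^2\equiv -h\pmod d$ via Poisson summation in $n$; the zero frequency gives the expected main term $\rho_h(d)/d$, and the nonzero frequencies give Salié/Kloosterman-type sums. Averaging over $h\le H$ additionally lets us Poisson in $h$, so that for $H=X^{1/2-\nu}$ we gain both from the length $X^{1/2}$ in $n$ and from $H$ in $h$; we expect level of distribution $D$ up to roughly $X^{1-O(\nu)}$ on average over $h$, using the Duke–Friedlander–Iwaniec/Toth spectral bounds for sums over roots of $n^2+h\equiv 0$ (mod $d$). For Type II pieces with $M$ in a range like $[X^{\delta},X^{1/2-\delta}]$ (or its dual), we write $n^2+h=m\ell$, apply Cauchy–Schwarz over $m$ keeping $\ell,n,h$ inside, and reduce to counting $n_1^2-n_2^2\equiv 0$ type conditions modulo $m$, which after Poisson again leads to sums of Kloosterman sums controlled by spectral large-sieve bounds; the averaging over $h$ is what makes the off-diagonal tractable past the $X^{1/2}$ density barrier.

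The main obstacle is the range left uncovered: Heath-Brown's identity produces products of factors where no variable lies in a usable Type II range and the Type I level is insufficient, namely configurations with several factors near $X^{1/2\pm O(\nu)}$ or $X^{1/3}$. We do not expect to win there, and this is exactly the source of the loss $O(\nu)$. The plan is to show that the contribution of these bad configurations has logarithmic density $O(\nu)$: the measure of the set of $(\alpha_1,\alpha_2,\dots)$ of factor sizes falling into the bad region is $O(\nu)$ when $H=X^{1/2-\nu}$, and bounding those terms trivially (by $|\mu|\le1$ and divisor-sum upper bounds via a sieve, whose size is $\ll HX^{1/2}$ times the density) yields $O(\nu HX^{1/2})$. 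Verifying that the uncovered region really shrinks to width $O(\nu)$ as $\nu\to0$, i.e.\ that at $H=X^{1/2}$ the Type I and Type II ranges meet, is the crucial numerical check, and we would first attempt it with Vaughan's identity at level $X^{1/2}$, falling back to Heath-Brown's identity if the ranges do not glue.
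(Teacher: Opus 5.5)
There is a genuine gap. Your argument rests on two inputs that you assert but do not prove, and both go beyond what the methods you sketch can deliver.

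\textbf{Type II range.} You need Type II for the shorter variable throughout $[X^{\delta},X^{1/2-\delta}]$. The known route is Cauchy--Schwarz, leading to representation numbers of $\ell_2x_1^2-\ell_1x_2^2$, followed by Vorono\"{\i} and the large sieve. It only beats the trivial bound when $N<\sqrt{H}X^{-\eta}\approx X^{1/4-\nu/2}$: the large sieve gives $HN^4$ against the trivial $H^2N^2$.

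\textbf{Type I level.} The absolute value sits outside the $h$-sum, so $h$ carries arbitrary bounded coefficients and you cannot Poisson in $h$. Poisson in $n$ alone gives level only $X^{1/2-\eta}$. Exploiting the $h$-average needs a Cauchy--Schwarz that squares the modulus. Nothing in your outline reaches moduli in $[X^{1/2},X^{3/4}]$, let alone $X^{1-O(\nu)}$.

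\textbf{Consequences for the identities.} With the ranges actually available, Vaughan's identity cannot close: with $UV\le X^{1/2}$ it needs bilinear estimates with the shorter variable in $[X^{1/4},X^{1/2}]$. Heath-Brown's identity leaves, for example, the piece with three smooth factors in $(X^{1/4},X^{1/2})$, a ternary divisor problem for $n^2+h$. Its measure does not shrink as $\nu\to 0$, so your ``crucial numerical check'' fails.

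\textbf{Trivial bounds on bad pieces.} Bounding the uncovered Heath-Brown pieces trivially is not $|\mu|\le 1$ times a density. They are signed convolutions with $\tau_k$-type multiplicities and tiny variables, so trivial bounds lose powers of $\log X$ rather than a factor $\nu$.

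\textbf{Minor slip.} Cauchy--Schwarz requires $V\ll \nu^2HX$; the target $\nu^2H^2X$ you wrote is trivially true.

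The idea you are missing is that no identity requiring the full range of arithmetic information is needed. The paper uses Ramar\'e's identity with primes in $[y,z)=[X^{\nu+\eta},X^{1/20})$:
\[
\mu(n)\mathbf{1}_{\exists p\mid n:\, y\le p<z}=-\sum_{\substack{n=mp,\ (m,p)=1\\ y\le p<z}}\frac{\mu(m)}{1+\omega_{y,z}(m)}+O\Big(\sum_{\substack{p\mid n\\ y\le p<z}}\mathbf{1}_{p^2\mid n}\Big).
\]
This produces a single bilinear sum whose short variable $p\in[X^{\nu+\eta},X^{1/20}]$ lies inside the range of Proposition \ref{prop:typeII}. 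The correlation of $\mu$ with the model $b_n^{(\gamma)}$ is removed by Siegel--Walfisz and Hal\'asz for small $r$, and by the large sieve for larger $r$. The values $n^2+h$ with no prime factor in $[y,z)$ are counted by an upper-bound sieve in $h$, with density $\prod_{y\le p<z}(1-1/p)\asymp \nu$. So the $O(\nu)$ loss is the density of numbers free of primes in $[X^{\nu+\eta},X^{1/20})$, not the measure of a region left uncovered by Type I/II information.
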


It is instructive to compare these  results to the twin prime analogues $\Lambda(n)\Lambda(n+h)$ and $\mu(n)\mu(n+h)$ (i.e. replacing $n^2+h$ by $n(n+h)$). There it is known that the analogue of \eqref{eq:baierzhaoresult} holds for $H\geq X^{8/33 + \eps}$  and that the analogue of Theorem \ref{thm:mobius} holds for $H$ growing arbitrarily slowly by the works of Matomäki, Radziwi\l\l, and Tao \cite{MRTchowla,MRT_primes}. Thus, at least from this perspective, the irreducible case $n^2+h$ appears to be significantly harder.

Finally, we mention the following corollary  on the ternary divisor function $\tau_3(n) = \sum_{n=abc}1$. This can be shown using slight modifications of Propositions \ref{prop:typeI}, \ref{prop:typeIfact}, and \ref{prop:typeI2}. The proof is omitted as the details are standard but laborious. Note that we do not have an asymptotic formula for the sum of $\tau_3$ over any irreducible quadratic polynomial. In fact, even an almost all result breaking the 1/2-barrier is not known for irreducible quadratic polynomials. 
\begin{theorem}
    Let $0<\nu < 1/18$ and $H=X^{1/2-\nu}$. Then for some $\eps= \eps(\nu) > 0$ and for certain polynomials $P_h$ of degree $2$ we have
    \begin{align*}
        \sum_{h \leq H} \bigg| \sum_{n^2+h \leq X} \tau_3(n^2+h) - X^{1/2} P_h (\log X) \bigg| \ll_\nu H X^{1/2-\eps}.
    \end{align*}
\end{theorem}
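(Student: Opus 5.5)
We would prove the $\tau_3$ statement the same way the paper treats $\Lambda$ and $\mu$, but now aiming for power savings. The plan is to decompose $\tau_3$ into Type I and Type II (bilinear) pieces, evaluate the Type I pieces asymptotically, and show that the bilinear pieces have no main term.

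\textbf{Decomposition.} Write $\tau_3(m)=\sum_{abc=m}1$ and split $a,b,c$ into dyadic ranges $A,B,C$ with $ABC\asymp X$, using smooth partitions of unity so that the weights are smooth.
\begin{itemize}
\item If one variable, say $c$, satisfies $C\ge X^{1/2+\delta}$ for a small $\delta=\delta(\nu)>0$, the sum is a Type I sum. The modulus $d=ab\le X^{1/2-\delta}$ carries the smooth weight $\tau(d)$, counting $n$ with $n^2+h\equiv 0\pmod d$.
\item Otherwise all three variables are at most $X^{1/2+\delta}$. Then some pair, or some single variable, has size in a middle window around $X^{1/2}$, and one can group the variables so that a factor of size in $[X^{1/2-\nu'},X^{1/2+\nu'}]$ appears. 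This gives either a factorable Type I sum or a genuinely bilinear sum.
\end{itemize}

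\textbf{Type I and factorable pieces.} For the Type I pieces we apply Proposition \ref{prop:typeI}, modified so that the coefficient is the smooth divisor-type weight $\tau(d)$ and so that an explicit main term is extracted rather than mere cancellation. The main term is $\sum_d \tau(d)\rho_h(d)/d$ times the smooth $n$-integral. Because every weight is smooth, a contour-integration (Perron) evaluation shows these main terms add up to $X^{1/2}P_h(\log X)$. The polynomial $P_h$ has degree $2$ because of the triple pole of $\zeta^3$-type Dirichlet series twisted by $\rho_h$, i.e.\ $\zeta(s)^2L(s,\chi_{-h})$-type factors. The pieces where two variables are smooth and one is large go through Proposition \ref{prop:typeIfact}, the factorable version; there the sum over $n$ with $ab\mid n^2+h$ is handled with $a$ and $b$ both smooth. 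Proposition \ref{prop:typeI2} covers the intermediate ranges with a modulus slightly beyond $X^{1/2}$, with power savings on average over $h$. The restriction $\nu<1/18$ is exactly what makes the admissible ranges of these three propositions cover all dyadic boxes. Concretely, we must check that whenever $\max(A,B,C)<X^{1/2+\delta}$ there is a grouping with a smooth factor in the range handled by Proposition \ref{prop:typeI2}, and we do this by case-checking the exponent inequalities.

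\textbf{Main obstacle.} The hardest step is showing that every box $(A,B,C)$ falls into one of the propositions' admissible ranges when $\nu<1/18$, with a power saving $X^{-\eps}$ in place of the logarithmic savings used for $\Lambda$. For $\tau_3$ no genuinely bilinear Type II input with rough coefficients is needed: every coefficient is smooth, so each box can be opened as a Type I sum with two smooth variables. The difficulty is therefore purely in the ranges.

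To handle the worst case, $A\approx B\approx C\approx X^{1/3}$, we would group the variables as $(ab)\cdot c$. The modulus $ab\approx X^{2/3}$ is too large, so instead we treat $c\approx X^{1/3}$ together with $a$ as the modulus $ac\approx X^{2/3}$, which is also too large. Our fallback is therefore to use the flexibility of the $h$-average. Proposition \ref{prop:typeI2} with a smooth modulus should allow moduli up to about $X^{1/2}H^{\theta}$ for some $\theta>0$ (roughly $X^{1/2+1/9}$ when $H=X^{1/2-\nu}$ with $\nu<1/18$), and this covers the $X^{1/3}$-balanced box after splitting one variable. We would verify this exponent bookkeeping last and adjust $\eps(\nu)$ accordingly.
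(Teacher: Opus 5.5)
\textbf{Gap: the balanced box.} Your argument breaks down at the critical configuration $A\approx B\approx C\approx X^{1/3}$, and more generally at every box where all three variables lie in $[X^{1/4-\eta},X^{1/2+\eta}]$.

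You discard the grouping $(ab)\cdot c$ because $ab\approx X^{2/3}$ is ``too large''. But a factorable modulus beyond $X^{1/2}$, with a smooth complementary variable, is exactly what Proposition~\ref{prop:typeIfact} handles. Take $d_2=a$, $d_1=b$, $n=c$, after the routine reduction to square-free, coprime moduli. The hypotheses then read $D_1=X^{1/3}\le X^{1/2-\nu-\eta}$, $D_1^{5/2}D_2^3=X^{11/6}\le X^{2-\eta}$, and $D_2=X^{1/3}\le X^{3/8-3\nu/4-\eta}$. The last condition is precisely $\nu<1/18$, which is where the theorem's hypothesis comes from.

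Your fallback is not available. Proposition~\ref{prop:typeI2} rests on Theorem~\ref{thm:typeIimport}, which needs $D\le Y^{1/2}=X^{1/4}$. So the variable $d$, which must carry one of the three divisor variables and is summed with absolute values, is limited to $X^{1/4-\eta}$. Nothing in the paper gives moduli of size $X^{1/2}H^{\theta}$, and even $X^{1/2+1/9}=X^{11/18}$ would not reach $X^{2/3}$. Your second decomposition bullet also fails for this box, since no variable or pair is near $X^{1/2}$. A genuinely bilinear piece would need a variable in the Type II range $[X^{\nu+\eta},X^{1/4-\nu/2-\eta}]$, not near $X^{1/2}$. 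In effect you have swapped the roles of Propositions~\ref{prop:typeIfact} and~\ref{prop:typeI2}.

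\textbf{The split that works.} This is what the paper has in mind. Order $A\le B\le C$ and write $A=X^{\alpha}$, $B=X^{\beta}$.
\begin{itemize}
\item If $C\ge X^{1/2+\eta}$, apply Proposition~\ref{prop:typeI} with modulus $ab\le X^{1/2-\eta}$.
\item If $A\le X^{1/4-\eta}$, apply Proposition~\ref{prop:typeI2} with $d=a$, $m=b$, $n=c$, so that $M\asymp AB\le X^{2/3}$. This is the quadratic divisor problem with level.
\item Otherwise $X^{1/4-\eta}<A\le B\le C<X^{1/2+\eta}$, and Proposition~\ref{prop:typeIfact} applies with $(d_1,d_2)=(b,a)$. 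Indeed $\beta\le(1-\alpha)/2<3/8+\eta$, $\alpha\le 1/3$, and $\tfrac52\beta+3\alpha\le\tfrac54+\tfrac74\alpha\le\tfrac{11}{6}$.
\end{itemize}
Each input saves $R^{-1/2}=X^{-\eta_1/2}$, with $\eta$ chosen in terms of $1/18-\nu$; this gives $\eps(\nu)$.

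What remains is the paper's ``slight modifications''. The propositions as stated exclude non-square-free and non-coprime moduli, so those must be handled. You also need to evaluate the model sum $\sum_n\tau_3(n)b_n^{(h)}$, i.e.\ $\tau_3$ in progressions to moduli $r\le R$, to produce $X^{1/2}P_h(\log X)$.
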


\subsection{Sketch of the proof}
Consider first the lower bound Theorem \ref{thm:primes}. Let $a_n^{(h)}$ denote the indicator that $n=r^2+h \leq X$. Consider $H=X^{1/2-\nu}= X^{2/5}$ so that $\nu=1/10$. Let $\eta>0$ be very small. Using Harman's prime-detecting sieve \cite{harman} gives a combinatorial lower bound sieve which reduces the proof of Theorem \ref{thm:primes} to studying three types of sums, for almost all $h \leq H$:
\begin{itemize}
    \item Type I sums $\sum_{d \leq X^{1/2-\eta}} \lambda_d \sum_{n } a_{dn}^{(h)}.$ 
    \item Type $\mathrm{I}_2$ sums $\sum_{d \leq X^{\nu+\eta}} \lambda_d \sum_{n} \tau(n) a_{dn}^{(h)}$, where $\tau(n)$ is the divisor function. 
    \item Type II sums $\sum_{m \sim M} \sum_{n \sim N}\alpha_m \beta_n a^{(h)}_{mn}$, where $X^{\nu+\eta}<N< X^{-\eta} H^{1/2}$.
\end{itemize}
Cf. Section \ref{sec:setup} for more precise statements (Propositions \ref{prop:typeI}, \ref{prop:typeI2}, \ref{prop:typeII}) and Section \ref{sec:primesthmproof} for the sieve construction. The estimation of Type II sums is the main novelty of the paper and, as usual, this allows us to break the parity barrier of the sieve and thus detect primes. The estimates for the Type I and $\mathrm{I}_2$ sums are essentially pointwise in $h$. 
 
Type I sums are very easy to handle as the modulus $d$ is shorter than the smooth variable $r$ in $n=r^2+h$. This is done in Section \ref{sec:typeI}.

For Type $\mathrm{I}_2$ we deal with a quadratic divisor problem $\tau(r^2+h)$. This is handled in Section \ref{sec:typeI2} via spectral methods of automorphic forms on $\Gamma_0(d)$, as developed by the first two authors in \cite{GM,GMquadratic}. What makes this tricky is that we require large uniformity in both the discriminant $h \leq H=X^{2/5}$ as well as the level $d \leq X^{1/10+\eta}$ and  results of this type were not available prior to \cite{GMquadratic}.

For the Type II sums our approach is inspired by the work of the second author \cite{mx2y3} which gave a conditional result on primes of the form $x^2+y^3$. An application of Cauchy-Schwarz on $m$ creates a binary quadratic form $n_1 r_2^2-n_2 r_1^2$ whose representation counting function is morally like a Fourier coefficient of a GL(2) modular form of conductor $\asymp N^2$. Another application of Cauchy-Schwarz on the $h$-average creates smooth sums that allow us to use Voronoï summation for the representation counting functions  (cf. Lemma \ref{le:voronoi}).  A detailed sketch is given in Section \ref{sec:typeiisketch} and technical lemmas concerning binary quadratic forms are given in Section \ref{sec:quadratic}.

Our Type II estimate is non-trivial as long as $H \geq X^{1/3+\eps}$. Therefore, for $E_k$-numbers (exactly $k$ prime factors) with $k\geq 2$ as well as $X^\eps$-smooth numbers, we get the analogue of Theorem \ref{thm:primes} in the extended range $H \geq X^{1/3+\eps}$. We also remark it should be possible to obtain a power
saving bound for the exceptional set using the new OpenAI breakthrough on Landau-Siegel zeros. Our key estimates (Propositions \ref{prop:typeI} to \ref{prop:typeII}) are already suitable for a power-saving result since we use a major arc model of length $R$ which is a power of $X$.

With just these three types of sums we cannot hope to get an asymptotic formula even for $H=X^{1/2-o(1)}$. This is because none of our estimates cover the case $p \rightsquigarrow n_1n_2n_3$ of three factors $n_i \in [X^{1/4+\eta},X^{1/2-\eta}]$.  For the proof of Theorem \ref{thm:primesasymp} we also need further 
\begin{itemize}
    \item Factorable Type I sums $\sum_{d_1 \leq D_1} \sum_{d_2 \leq D_2}  \alpha_{d_1} \beta_{d_2} \sum_{n } a_{d_1 d_2n}^{(h)}.$  
\end{itemize}
The precise statement is Proposition \ref{prop:typeIfact}. The argument using quadratic reciprocity is sketched in Section \ref{sec:typeifactorablesketch} and a detailed proof is given in Section \ref{sec:typeIfact}. For $\nu < 1/18$ the range of $D_1,D_2$ covers the critical range $D_1=D_2=X^{1/3}$, and in particular, the ternary divisor problem $\tau_3(r^2+h)$. However, we need to take $\nu > 0$ small since none of our estimates cover the case $p \rightsquigarrow n_1n_2n_3n_4$ four factors of roughly equal size $ n_i \in [X^{1/4- O(\nu)},X^{1/4+O(\eta)}]$. Using a sieve argument we are able to show that these contribute only $O(\nu)$ compared to the main term (cf. Section \ref{sec:primesasymp}).

The proof of Theorem \ref{thm:mobius} relies on anatomy of integers, that is, almost all $n^2+h$ have a small prime factor $p|n^2+h$. This creates a Type II sum with a small $N$, cf. Section \ref{sec:möbius} for a detailed proof.

\subsection*{AI statement}
Large language models (Gemini, ChatGPT, and Claude) have been used as assistants for writing up the manuscript, checking, and typesetting some routine details. The authors take full responsibility for the correctness. All new ideas in the paper were conceived without the use of LLMs. 

\subsection*{Acknowledgments}
LG is supported by the European Research Council (ERC) under the European Union's Horizon research and innovation programme No. 101162746. JM is supported by the Research Council of Finland grant No. 369650. AP is supported by the ERC Advanced Grant No. 101054336 and Germany’s Excellence Strategy grant EXC-2047/1-390685813. 

\section{Set-up and arithmetic information}
\subsection{Notations and parameters}

We fix a global large parameter $X$. We define $f \precprec g$ to mean $|f| \leq X^{o(1)} g$.

We let $\delta> 0$ be a small parameter satisfying $\delta^{-1} \leq (\log X)^{O(1)}$. We let $f$ denote a non-negative, non-zero, smooth real function supported in $[1/2,1]$ and satisfying the derivative bound $ f^{(J)} \ll_J \delta^{-J}$ for all $J \geq 0$.
We write
\[
    I_f:=\int_0^\infty f(u^2)\rd u
       =\frac12\int_0^\infty \frac{f(u)}{\sqrt u}\rd u.
\]  

We let $\eta>0$ be a small parameter, set $\eta_1=\eta/100$, and define
\begin{align} \label{eq:Rdef}
    R=X^{\eta_1}.
\end{align}

\subsection{Sequence $\mathcal{A}$ and  model $\mathcal{B}$} \label{sec:setup}
Let us denote the quadratic Gauss sum by
\begin{align} \label{eq:gausssumdef}
    G(\alpha,r) := \sum_{u \pmod{r}} e_r(\alpha u^2), \quad e_r(x) = e^{2 \pi i x /r}.
\end{align}

\begin{definition}[Squares and the major arc model] \label{def:squaremodel}
We define $\sigma_1, \sigma_0, \sigma^{\flat}$ by
\begin{align*}
      \sigma_1(k) &=  f\left(\frac{k}{X}\right) \mathbf{1}_{k = \square}  \\
     \sigma_0(k) &= f\left(\frac{k}{X}\right) \frac{1}{2\sqrt{k}} \sum_{r \leq R} \frac{1}{r} \sum_{\alpha \pmod r^\times} G(\alpha, r) e_r(-\alpha k) \\ 
     \sigma^{\flat}(k) &= \sigma_1(k)-\sigma_0(k).
\end{align*}
\end{definition}
Here $\sigma_0$ is the major arc model for squares, mimicking the behaviour of $\sigma_1$ both analytically and in arithmetic progressions with small moduli.

Let $\gamma_h$ be a sequence supported on $h \leq H$ satisfying the divisor bound $|\gamma_h| \leq X^{o(1)}$. 
We define the sequences $\mathcal{A}^{(\gamma)}=(a^{(\gamma)}_n)$, $\mathcal{B}^{(\gamma)}=(b^{(\gamma)}_n)$, and their difference $\mathcal{W}^{(\gamma)}=(w^{(\gamma)}_n)$ by 
\begin{align*}
    a_n^{(\gamma)} &= \sum_{k+h=n} \gamma_h \sigma_1(k), \\
    b^{(\gamma)}_n &= \sum_{k+h=n} \gamma_h \sigma_0(k), \\
    w^{(\gamma)}_n &= a^{(\gamma)}_n-b^{(\gamma)}_n = \sum_{k+h=n} \gamma_h \sigma^{\flat}(k).
\end{align*}
We note that by definition these sequences are supported on $n \in [X/2,X+H]$.

\subsection{Arithmetic information}

For the sieve argument we have the following arithmetic information. These are proved in Sections \ref{sec:typeI}, \ref{sec:typeI2}, and \ref{sec:typeII}, respectively. The saving over the trivial estimate $HX^{1/2}$ depends on the length $R$ of the major arc model $\sigma_0(k)$ in \eqref{eq:Rdef}.
\begin{proposition}[Type I information] \label{prop:typeI}
Let $X^{\eta}< H \leq X^{1/2+o(1)}$. We have for $D=X^{1/2 -\eta}$
    \begin{align*}
   \sum_{d \leq D} \mu^2(d) \bigg| \sum_{n}  w^{(\gamma)}_{dn} \bigg|   \precprec H X^{1/2} R^{-1/2}.
    \end{align*}
\end{proposition}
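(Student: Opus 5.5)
We fix a squarefree $d \le D$ and write
\begin{align*}
\sum_n w^{(\gamma)}_{dn} = \sum_{h} \gamma_h \sum_{k \equiv -h \pmod d} \sigma^{\flat}(k).
\end{align*}
We bound this trivially in $h$, so we need $\sum_{k \equiv -h \pmod d}\sigma^\flat(k) \precprec X^{1/2} R^{-1/2} d^{-1}$ plus an acceptable error, uniformly in residue classes $-h \pmod d$. Summing over $d \le D$ then only costs $X^{o(1)}$. The core is thus an equidistribution statement for the smooth sequences $\sigma_1$ and $\sigma_0$ in arithmetic progressions to modulus $d < X^{1/2-\eta}$.

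For $\sigma_1$ we write $k=u^2$, so that
\begin{align*}
\sum_{k\equiv a \pmod d}\sigma_1(k)=\sum_{u^2\equiv a \pmod d} f(u^2/X).
\end{align*}
We split $u$ into residue classes $u\equiv x \pmod d$ with $x^2\equiv a$ and apply Poisson summation. Each class gives the main term $\frac{1}{d}\int f(u^2/X)\,du = \frac{X^{1/2}I_f}{d}$. The dual frequencies $\ell\ne 0$ carry weight $\hat{F}(\ell X^{1/2}/d)$, which is negligible for $|\ell|\ge X^{o(1)} d\delta^{-1}X^{-1/2}$. Since $d\le X^{1/2-\eta}$ and $\delta^{-1}\le(\log X)^{O(1)}$, no nonzero frequency survives. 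Hence
\begin{align*}
\sum_{k\equiv a \pmod d}\sigma_1(k) = \rho_a(d)\,X^{1/2}I_f/d + O(X^{-100}),
\end{align*}
where $\rho_a(d)$ is the number of square roots of $a$ mod $d$. This is where $D=X^{1/2-\eta}$ enters, and it is easy.

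For $\sigma_0$ we expand the definition and sum over $k\equiv a \pmod d$ by Poisson, equivalently by orthogonality of additive characters against the smooth weight $f(k/X)/(2\sqrt{k})$, whose length is $X \gg dr$. The sum $\sum_{k\equiv a \,(d)} g(k)e_r(-\alpha k)$ is $\frac{1}{d}\hat g$-type main terms when $e_r(-\alpha\cdot)$ is a character trivial on a class mod $d$ combined appropriately, i.e.\ only frequencies $\beta/d$ with $\beta/d\equiv \alpha/r \pmod 1$ survive. This forces $r\mid d$, and other frequencies are negligible since $X\gg dR$. We obtain
\begin{align*}
\frac{X^{1/2}I_f}{d}\sum_{\substack{r\le R\\ r\mid d}}\frac1r\sum_{\alpha \,(r)^\times}G(\alpha,r)\sum_{\beta\,(d),\ \beta/d=\alpha/r}e_d(\beta a)\cdots,
\end{align*}
which evaluates to $\frac{X^{1/2}I_f}{d}\sum_{r\mid d,\, r\le R} c_r(a,d)$. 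Here the full sum over all $r\mid d$ reproduces exactly $\rho_a(d)$, by the standard identity $\rho_a(d)=\sum_{r\mid d}\frac{1}{r}\sum_{\alpha(r)^\times}G(\alpha,r)e_r(-\alpha a)$ (finite Fourier expansion of counting squares mod $d$).

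The difference is therefore
\begin{align*}
\frac{X^{1/2}}{d}\sum_{r\mid d,\ r>R}\frac1r\Bigl|\sum_{\alpha(r)^\times}G(\alpha,r)e_r(-\alpha a)\Bigr|.
\end{align*}
This is the main obstacle: we need it to be $\precprec R^{-1/2}X^{1/2}/d$ on average. For squarefree $r$ one has $|G(\alpha,r)|=\sqrt r$ and the $\alpha$-sum is a Salié/Ramanujan-type sum. Its size is $\precprec\sqrt r\cdot(\text{gcd factor with } a)$; in fact for squarefree $r$ it is multiplicative and of size $\prod_{p\mid r}(\rho_a(p)-1)\cdot\sqrt{p}$ up to constants. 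So each term is $\precprec r^{-1/2}\gcd(a,r)^{1/2}\le R^{-1/2}\gcd(a,r)^{1/2}$.

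The gcd factor is handled by averaging over $h$: we keep $\sum_h|\gamma_h|\gcd(h,d)^{1/2}\precprec H\tau(d)$ instead of bounding pointwise. The divisor count of $d$ is $X^{o(1)}$. Summing over $d\le D$ against $1/d$ gives $\precprec HX^{1/2}R^{-1/2}$. The condition $H>X^\eta$ only ensures that the ranges are nondegenerate. If the gcd averaging is awkward, an alternative is to use that the $\alpha$-sum vanishes unless $r$ is nearly coprime to $a$, via the explicit evaluation of Gauss sums.
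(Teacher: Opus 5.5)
There is a genuine gap, and it sits exactly at the step you flag as the main obstacle. The claimed bound $\frac1r\bigl|\sum_{\alpha \,(r)^\times}G(\alpha,r)e_r(-\alpha a)\bigr|\precprec r^{-1/2}(a,r)^{1/2}$ is false. By Lemma \ref{le:modelforsquarefree}, for square-free $r$ this normalized sum equals exactly $\mathbf{1}_{r\ \mathrm{odd}}\left(\frac{a}{r}\right)$, which has modulus $1$ whenever $(a,r)=1$. Indeed, for odd square-free $r$ one has $G(\alpha,r)=\left(\frac{\alpha}{r}\right)G(1,r)$, so the $\alpha$-sum is a product of two Gauss sums of modulus $\sqrt r$; its size is $r$, not $\sqrt r$.

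Consequently the uniform-in-residue-class estimate you reduce to cannot hold. Take $d=p$ prime in $(R,D]$ with $p\nmid h$. The only $r\le R$ dividing $p$ is $r=1$, so
\[
\sum_{k\equiv -h \pmod p}\sigma^\flat(k)=\frac{X^{1/2}I_f}{p}\left(\frac{-h}{p}\right)+O(X^{-100}),
\]
which has size $X^{1/2}/d$ with no saving. This is intrinsic: squares live only in the quadratic-residue classes modulo $p$, and a model built from moduli $r\le R<p$ cannot detect this. Once you apply the triangle inequality in $h$, the primes $p\in(R,D]$ alone leave you with at least $\gg X^{1/2}\sum_h|\gamma_h|\sum_{R<p\le D}p^{-1}$. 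Since $\sum_{R<p\le D}p^{-1}\gg 1$, no power of $R$ can be saved along this route.

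The missing idea is to keep the sum over $h$ inside the absolute value and extract cancellation from $\sum_h\gamma_h\left(\frac{-h}{r}\right)$ \emph{on average over the moduli} $r$. Your Poisson computations for $\sigma_1$ and $\sigma_0$ are correct and agree with the paper, as is the identity $\rho_a(d)=\sum_{r\mid d}\frac1r\sum_{\alpha}G(\alpha,r)e_r(-\alpha a)$. Combined with Lemma \ref{le:modelforsquarefree}, they show that for square-free $d$ the discrepancy is $\frac{X^{1/2}I_f}{d}\sum_h\gamma_h\sum_{r\mid d,\,r>R,\,r\ \mathrm{odd}}\left(\frac{-h}{r}\right)$, up to $O(X^{-100})$. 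Writing $d=re$ and summing $1/e$ reduces the proposition to bounding
\[
X^{1/2}(\log X)\sum_{\substack{R<r\le D\\ r\ \mathrm{odd}}}\frac{\mu^2(r)}{r}\biggl|\sum_{h\le H}\gamma_h\left(\frac{-h}{r}\right)\biggr|.
\]
For an individual $r$ nothing can be gained for arbitrary $\gamma_h$; take $\gamma_h=\left(\frac{-h}{r_0}\right)$. On dyadic ranges $r\sim E$, however, Heath-Brown's quadratic large sieve (Lemma \ref{lem:quadlargesieve}) gives $\precprec HE^{1/2}+EH^{1/2}$. The expression is therefore $\precprec X^{1/2}\max_{R\le E\le D}\bigl(HE^{-1/2}+H^{1/2}\bigr)\precprec HX^{1/2}R^{-1/2}$. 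This last step is where the hypothesis $H>X^\eta>R$ is genuinely needed, to absorb $H^{1/2}$ into $HR^{-1/2}$; it is not there merely to make the ranges nondegenerate.
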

\begin{proposition}[Type $\mathrm{I}$-factorable information] \label{prop:typeIfact}
Let $X^\eta < H = X^{1/2-\nu} \le X^{1/2+o(1)}$ and $D_1, D_2$ satisfy
\[
    D_1 \le X^{1/2-\nu-\eta},
    \qquad 
    D_2 \le X^{3/8 - 3\nu/4 - \eta},
    \qquad
    D_1^{5/2} D_2^3 \le X^{2-\eta}.
\]
Let $\alpha_{d_1}$, $\beta_{d_2}$ be divisor-bounded coefficients supported on square-free integers. Then we have
\[
    \mathop{\sum_{d_1 \sim D_1} \sum_{d_2 \sim D_2}}_{(d_1,d_2)=1} \alpha_{d_1} \beta_{d_2} \sum_n w^{(\gamma)}_{d_1d_2n} \precprec H X^{1/2} R^{-1/2}.
\]
\end{proposition}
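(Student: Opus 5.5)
We must bound, for $d=d_1d_2$ squarefree with $d_1\sim D_1$, $d_2\sim D_2$,
\[
\sum_n w^{(\gamma)}_{dn}=\sum_{h}\gamma_h\Big(\sum_{k\equiv -h \,(d)}\sigma_1(k)-\sum_{k\equiv -h\,(d)}\sigma_0(k)\Big).
\]
Writing $k=r^2$, the first inner sum is $\sum_{x\,(d),\,x^2\equiv -h}\sum_{r\equiv x\,(d)} f(r^2/X)$. The plan is to apply Poisson summation in $r$ modulo $d$. The zero frequency gives $\rho_{-h}(d)\,\tfrac{X^{1/2}}{d}I_f\cdot 2$-type main terms. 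These should match the corresponding main term of $\sigma_0$ up to the truncation $r\le R$ of the major arc model; that comparison is standard and costs $R^{-1/2}$, as in the proof of Proposition \ref{prop:typeI}. The nonzero frequencies $\ell\neq 0$, $|\ell|\precprec d/X^{1/2}$, produce
\[
\frac{X^{1/2}}{d}\sum_{0<|\ell|\precprec d X^{-1/2}}\hat f(\cdot)\sum_{x^2\equiv -h\,(d)} e_d(\ell x).
\]
Since $d$ may be as large as $D_1D_2$, which exceeds $X^{1/2}$, these terms are genuinely present; the content of the proposition is to bound them on average over $h,d_1,d_2$.

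Next, the Weyl-type sum $\sum_{x^2\equiv -h\,(d)}e_d(\ell x)$ should be parametrized through the Gauss correspondence with binary quadratic forms (or, equivalently, representations $d=\text{norm}$ in $\Q(\sqrt{-h})$). Via quadratic reciprocity this turns the root $x/d$ into an expression like $\bar{?}/\ldots$, giving a character sum of the shape $\sum e(\ell \bar{u}/v)$ with $u,v$ related to a representation $d=Q(u,v)$. The factorization $d=d_1d_2$ is then exploited by CRT: $e_d(\ell x)=e_{d_1}(\ell \bar d_2 x_1)e_{d_2}(\ell\bar d_1 x_2)$ with $x_i^2\equiv -h\ (d_i)$. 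I would then use the $h$-sum: $h$ enters only through the congruences $x_i^2\equiv -h$. So, swapping order, I would sum over $h\le H$ first with $h\equiv -x_1^2\,(d_1)$, $h\equiv -x_2^2\,(d_2)$. When $d=d_1d_2\le H$, Poisson in $h$ gives complete sums and a clean bound. For $d>H$ this fails, and instead I apply Cauchy--Schwarz, keeping $d_2$ (the smaller variable) and $h$ outside and the smooth $d_1$-sum inside, with $\alpha_{d_1}$ removed by duplicating $d_1$.

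After opening the square, the $h,\ell$ and $x_2$ variables are held fixed and the duplicated $d_1,d_1'$ sums are carried out. The phase $e_{d_1}(\ell\bar d_2 x_1)$, after reciprocity, becomes a function of $d_1$ modulo $d_2\ell$-type small moduli times a smooth factor. Poisson in $d_1$ modulo $d_2$ (of size $D_2$) then leads to complete sums modulo $d_2$, bounded via Weil/Gauss sum evaluations, together with a diagonal term.

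The resulting bound should be of the shape
\[
HX^{1/2}\Big(\frac{D_1^{a}D_2^{b}}{X^{c}H^{e}}\Big)^{1/2}.
\]
I expect the three stated constraints to arise from balancing it. The constraint $D_1\le HX^{-\eta}$ comes from the diagonal, $D_2\le (H^{3/2}X^{-3/8})X^{-\eta}$ comes from the Poisson dual length in $h$, and $D_1^{5/2}D_2^3\le X^{2-\eta}$ comes from the off-diagonal after Weil.

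\textbf{Main obstacle.} The hardest step is this off-diagonal estimate. After Cauchy--Schwarz, the reciprocity-transformed phases must be shown to give complete sums with square-root cancellation uniformly in $h$. Degenerate frequencies, where the phase collapses (e.g. $\ell d_1'\equiv \ell' d_1$), must also be controlled. I would first try to isolate these degenerate cases as an extra diagonal bounded by divisor counting. The square-free and coprimality assumptions on $d_1,d_2$ are used to make the CRT splitting and Gauss-sum evaluations exact.
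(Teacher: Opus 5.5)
Your opening step (Poisson in $r$ modulo $d=d_1d_2$, matching the zero frequency with the model, and disposing of the tail $r>R$ by the quadratic large sieve) is exactly the paper's Lemma~\ref{lem:typeIfact-poisson}. After that, the proposal does not contain a workable argument, and the one concrete structural choice it makes would fail.

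\textbf{Your Cauchy--Schwarz step.} You put $h,d_2$ (and $x_2,\ell$) outside and $d_1$ inside.
\begin{itemize}
\item The inner variable keeps its arbitrary coefficients $\alpha_{d_1}\bar\alpha_{d_1'}$. So $\alpha_{d_1}$ cannot be ``removed by duplicating $d_1$'', and no later Poisson summation in $d_1$ is available.
\item Keeping the frequency $\ell$ outside makes the diagonal $d_1=d_1'$ contribute about $H^2D_1D_2^2$ to $|\mathcal{E}_1|^2$, against the target $H^2XR^{-1}$. Already for $D_1=D_2=X^{1/3}$ this is $H^2X$, the square of the trivial bound $HX^{1/2}$.
\item Most importantly, holding $h$ fixed after opening the square discards the $h$-average, which is the actual source of cancellation. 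What remains are sums of $e_{d_1}(\ell\bar d_2x_1)$ over roots $x_1^2\equiv -h \pmod{d_1}$, with arbitrary weights in $d_1$. The appeal to the Gauss correspondence, with an unspecified reciprocity step and $h$ as large as $X^{1/2}$, gives no estimate for these.
\end{itemize}

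\textbf{What the paper does instead.} It takes the opposite arrangement: $h$, $d_1$ and a root $\omega_1 \bmod d_1$ outside; $d_2$ and the frequency $t$ inside. The diagonal is then only $H^2X^{1/2}D_1$. This also explains the asymmetric hypotheses: the inner variable is the one that ends up restricted to $H^{3/4}$. The ideas you are missing are:
\begin{itemize}
\item[(i)] A root-coupling device (the indices $\vec j\in J_{d_1}(Q)$). After expanding the square this gives a \emph{single} root $\omega\pmod{d_1[d_2,d_2']}$, instead of two roots modulo $d_1d_2$ and $d_1d_2'$ that need not agree modulo $(d_2,d_2')$.
\item[(ii)] Moving the now smooth $h$-sum innermost and applying Poisson in $h$ modulo $d_1gbb'$. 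This produces the complete shifted Gauss sum $\sum_{\omega}e_{d_1gbb'}(-\ell\omega^2+\omega(b't-bt'))$.
\item[(iii)] Reciprocity for such sums (Lemma~\ref{lem:quadr-gauss-recipr}), which flips the modulus to $\ell$. The crude removal of the resulting phase $e\big((b't-bt')^2/(d_1gbb'\ell)\big)$ is exactly where $D_1^{5/2}D_2^3\le X^{2-\eta}$ comes from; no Weil bound is involved.
\item[(iv)] Poisson in $d_1,t,t'$ modulo $\ell$, followed by the lattice count of Lemma~\ref{lem:prim-points-count}. This yields $D_2\le H^{3/4}X^{-\eta}$, and, using $D_1\le HX^{-\eta}$, also $D_1D_2^2\le H^{5/2}$.
\end{itemize}

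\textbf{Smaller errors.} Your bookkeeping of the constraints is off: $X^{3/8-3\nu/4}=H^{3/4}$, not $H^{3/2}X^{-3/8}$. Your case $d\le H$ is vacuous, since for $d<X^{1/2}$ no nonzero frequencies survive Poisson summation at all.
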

\begin{proposition}[Type $\mathrm{I}_2$ information] \label{prop:typeI2}
Let $X^{\eta}< H \leq X^{1/2+o(1)}$ and $D_2=X^{1/4-\eta}$. Let $\psi$ be a smooth function compactly supported in $[1, 2]$. We have for any $M< X^{1-\eta}$
  \begin{align*}
    \sum_{d \leq D_2}  \bigg| \sum_{m,n} \mu(dm)^2  \psi\bigg(\frac{dm}{M}\bigg)   w^{(\gamma)}_{dmn} \bigg| \precprec H X^{1/2}  R^{-1/2}.
  \end{align*}
\end{proposition}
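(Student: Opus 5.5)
We treat Proposition \ref{prop:typeI2} as a quadratic divisor problem $\tau(r^2+h)$ in progressions to modulus $d$, uniform in $h \le H$ and $d \le X^{1/4-\eta}$; the bound is pointwise in $h$ up to the weights $\gamma_h$. Insert signs $c_d$ to remove the absolute values. For each $d$ and $h$, write the inner sum over $m,n$ with $dmn = k+h$ as a weighted sum over $k$ with $k \equiv -h \pmod d$, weighted by the $M$-restricted divisor function
\[
\tau_{d,M}(\ell)=\sum_{mn=\ell}\mu(dm)^2\psi(dm/M),\qquad \ell=(k+h)/d .
\]
By the symmetry $m \leftrightarrow n$ (and $M<X^{1-\eta}$) we may assume after dyadic splitting that one variable is at least $(X/d)^{1/2}$. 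If one side is $\ge X^{1/2+\eta/2}/d$ in length with a smooth weight, then after Poisson summation in that smooth variable the problem reduces to the Type I case. That case is handled as in Proposition \ref{prop:typeI}, using Gauss-sum evaluation of $\sum_r e(\cdot r^2)$ in progressions modulo $dm$, once $dm \le X^{1/2-\eta}$.

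The remaining range has both $m$ and $n$ near $(X/d)^{1/2}$. There we use the spectral approach of \cite{GM,GMquadratic}. We detect $r^2+h\equiv 0 \pmod{dm}$, expand
\[
\sum_{r}f(r^2/X)\,\mathbf 1_{dm\mid r^2+h}
\]
via Poisson in $r$, and rewrite the roots of $r^2+h\equiv 0 \pmod{dm}$ through the correspondence with binary quadratic forms of discriminant $-4h$. This turns the dual sum into sums of Kloosterman-type/Salié sums $S(h,\ell;dm)$-like objects on $\Gamma_0(d)$ with half-integral weight theta multiplier. The zero frequency produces exactly the contribution matched by $\sigma_0$, since $\sigma_0$ encodes the Gauss sums for $r\le R$. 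The main term check is that, for $q\le R$, summing $\sigma_0$ against progressions reproduces the singular series; for $q>R$ the truncation error is $\ll R^{-1/2}$ by the Gauss-sum bound $|G|\ll \sqrt r$. The nonzero frequencies are sums over $m$ of Kloosterman sums with level $d$. We apply the Kuznetsov formula for $\Gamma_0(d)$ (weight $1/2$), and the large sieve inequalities of Deshouillers–Iwaniec type, in the form proved in \cite{GMquadratic}, bound the resulting spectral sums uniformly in $h$.

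The main obstacle is exceptional eigenvalues and uniformity in the discriminant $h$ up to $X^{1/2}$ with level $d$ up to $X^{1/4}$. I would handle this by averaging over $d$ (the sum over $d$ is outside the absolute values only via the signs $c_d$, so we may use the large sieve over the level family), together with the known bounds toward Selberg for the $\Gamma_0(d)$ theta-multiplier spectrum. The restriction $D_2 = X^{1/4-\eta}$ is precisely what keeps the Kuznetsov contribution, roughly $(d\,X^{1/2})^{1/2+\theta}$ per $h$, below $X^{1/2}R^{-1/2}$ after summing over $d$. Summing trivially over $h$ with $|\gamma_h|\le X^{o(1)}$ then gives $HX^{1/2}R^{-1/2}$.
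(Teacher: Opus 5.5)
\textbf{Gap: comparison with the model.} Your overall route (Poisson for small moduli, and the spectral equidistribution of roots of $r^2+h\equiv 0\pmod{dm}$ from \cite{GM,GMquadratic} otherwise) matches the paper in spirit. The comparison with the model $\sigma_0$, however, has a genuine gap. You claim the $r>R$ truncation error is $\ll R^{-1/2}$ ``by the Gauss-sum bound'', and then you sum trivially over $h$.

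By Lemma~\ref{le:modelforsquarefree}, the $r$-th term of $\sigma_0$ equals $\mathbf 1_{r\,\mathrm{odd}}\left(\frac{k}{r}\right)$ for square-free $r$. This has size $1$, so there is no decay in $r$ at all. For square-free $m$, the local main term $\varrho_h(m)\sqrt X I_f/m$ and the model differ by $\frac{\sqrt X}{m}\sum_{r\mid m,\, r>R,\, r\,\mathrm{odd}}\left(\frac{-h}{r}\right)$. After summing over $m\sim M$, this is essentially $\sqrt X\sum_{R<r\ll M}\frac1r\left(\frac{-h}{r}\right)$.

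Bounding this by $\sqrt X R^{-1/2}$ for an individual $h\approx X^{1/2}$ would require near square-root cancellation in real character sums of conductor $\asymp h$ and length starting at $R=X^{\eta_1}$. That is far below the Burgess range $h^{1/4}$ and is essentially GRH-strength. Once $h$ is pulled outside, the only available source of cancellation is gone.

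The paper's fix is its $\Sigma_2$. It keeps $\sum_h\gamma_h$ inside for this piece and evaluates $\sigma_0$ by Poisson modulo $[m,r]<X^{1-\eta/2}$, where only the zero frequency survives. It then applies Heath-Brown's quadratic large sieve (Lemma~\ref{lem:quadlargesieve}) to $\sum_{r\sim E}\bigl|\sum_h\gamma_h\left(\frac{-h}{r}\right)\bigr|$ for $E\ge R$, which gives $\precprec \sqrt X\,(HR^{-1/2}+H^{1/2})$. Only the comparison between the actual count of squares and $\varrho_h(m)\sqrt X I_f/m$ may be done pointwise in $h$.

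\textbf{The spectral part.} For the pointwise part, the paper does not rerun Kuznetsov. It imports \cite[Theorem 1.4]{GMquadratic} (Theorem~\ref{thm:typeIimport}), with $\theta=1/4$ from Selberg's $3/16$ theorem. That result covers every modulus size $M$ from $D_2'=X^{1/4-\eta/2}$ up to $X$, so no divisor switching or Type I/balanced split is needed; $M<D_2'$ is trivial by Poisson. Your split is imprecise in any case. Only $m$ carries $\mu(dm)^2\psi(dm/M)$, so there is no $m\leftrightarrow n$ symmetry, and a switched modulus would be $dn$, not $n$.

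Two reductions you omit are needed before any such spectral bound applies:
\begin{itemize}
\item The non-smooth weight $\mu(m)^2$ must be removed via $\mu^2(m)=\sum_{e^2\mid m}\mu(e)$. Absorb $e\le X^{\eta/4}$ into the level, which then stays $\le D_2'$, and bound larger $e$ trivially.
\item The imported bound loses a factor $h_\square$, so $h$ with $h_\square>X^{\eta/16}$ must be discarded.
\end{itemize}

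Finally, your heuristic ``$(dX^{1/2})^{1/2+\theta}$ per $h$'' does not explain $D_2=X^{1/4}$. Summed over $d\le D$ with $\theta=1/4$, it stays below $X^{1/2}$ only for $D\le X^{1/14}$. The actual bound is
\[
h_\square D^{1/2}Y^{1/2}\bigl(D^{1/2}+h^{1/4}\bigr)\Bigl(1+\frac{Y}{D(D+h^{1/2})}\Bigr)^{\theta},\qquad Y=\sqrt X .
\]
Here the factor $D^{1/2}Y^{1/2}(D^{1/2}+h^{1/4})$ is $\ll Y$ precisely when $D\ll X^{1/4}$ (for $h\le X^{1/2}$). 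The exceptional-eigenvalue factor costs only $X^{O(\eta)}$ at $D=X^{1/4-\eta/2}$.
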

In the following Type II estimate, the range for $N$ is nonempty for $H > X^{1/3+2\eta}$.
\begin{proposition}[Type II information]  \label{prop:typeII}
Let $\nu  \leq 1/6-2\eta$ and $
H=X^{1/2-\nu} \leq X^{1/2+o(1)}$. Let $M,N>1$ satisfy for some $\eta>0$
 \begin{align*}
     X^{\nu+\eta} < N < X^{1/4 -\nu/2-\eta}, \quad MN= X.
 \end{align*}
Let $\alpha_m, \beta_n$ be divisor-bounded coefficients with $\beta_n$ supported on square-free integers. Then we have
 \begin{align*}
     \sum_{m \sim M} \sum_{n} \alpha_m \beta_n w^{(\gamma)}_{mn} \precprec H X^{1/2}  R^{-1/4}.
 \end{align*}
\end{proposition}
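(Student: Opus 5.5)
We need to bound
\[
S=\sum_{m\sim M}\sum_n \alpha_m\beta_n w^{(\gamma)}_{mn}, \qquad
w^{(\gamma)}_{mn}=\sum_h \gamma_h\,\sigma^\flat(mn-h).
\]
The plan follows the sketch in the introduction: a Cauchy--Schwarz in $m$, which produces binary quadratic forms, and then a second Cauchy--Schwarz that creates smooth $h$-averages to which Voronoi summation applies.

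\emph{Step 1: Cauchy--Schwarz in $m$.} Write $\sigma^\flat=\sigma_1-\sigma_0$. Apply Cauchy--Schwarz over $m\sim M$, inserting a smooth majorant for $m$, to get
\[
|S|^2\ll M^{1+o(1)}\sum_{n_1,n_2}\beta_{n_1}\overline{\beta_{n_2}}\sum_m \psi(m/M)\,\Big(\sum_{h}\gamma_h\sigma^\flat(mn_1-h)\Big)\overline{\Big(\sum_{h'}\gamma_{h'}\sigma^\flat(mn_2-h')\Big)}.
\]
Expanding $\sigma^\flat\overline{\sigma^\flat}$ gives four terms: $\sigma_1\sigma_1$, the two cross terms, and $\sigma_0\sigma_0$. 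In the $\sigma_1\sigma_1$ term, write $mn_1-h=r_1^2$ and $mn_2-h'=r_2^2$. Eliminating $m$ gives the congruence $r_1^2+h\equiv 0 \pmod{n_1}$ (and similarly for $n_2$), together with the key relation
\[
n_2 r_1^2-n_1 r_2^2 = n_1h'-n_2h .
\]
So we are counting representations of $\ell=n_1h'-n_2h$ by the indefinite binary form $Q(x,y)=n_2x^2-n_1y^2$, of discriminant $\asymp N^2$, weighted by $\gamma_h\overline{\gamma_{h'}}$. The diagonal $n_1=n_2$ (and, more generally, the degenerate cases where $n_1n_2$ is a square) will be bounded trivially. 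This uses $N>X^{\nu+\eta}$: the diagonal contributes about $M^2N\cdot H\cdot(\text{stuff})$, which is saved over $H^2X$ exactly when $N\gg X/H^2$ up to the relevant factor.

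\emph{Step 2: Cauchy--Schwarz in $h$.} The weights $\gamma_h$ are rough, so we cannot sum over $\ell$ directly. Instead, fix $n_1,n_2$ and $h$, and apply Cauchy--Schwarz once more to move $\gamma_{h'}$ outside. Inside we obtain a smooth sum over $h'$ (restricted to $h'\le H$ by a smooth weight) of the representation counts $r_Q(n_1h'-n_2h)$, i.e.\ a sum over an arithmetic progression of length $\asymp NH$. We then apply the Voronoi summation of Lemma~\ref{le:voronoi} for $r_Q$ along this progression, with modulus $n_1$ and a shift. The zero frequency yields the main term. Using the local densities of Section~\ref{sec:quadratic}, we check that this main term coincides, up to $O(R^{-1/2})$, with the corresponding contribution of the $\sigma_0$-terms. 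Indeed, $\sigma_0$ is built precisely to reproduce the Gauss-sum local factors $G(\alpha,r)$ for $r\le R$, and the tail $r>R$ gives the $R^{-1/2}$-type saving, which becomes $R^{-1/4}$ after the two square roots. The cross terms and the $\sigma_0\sigma_0$ term are handled the same way, but more easily, since $\sigma_0$ is explicit and has only a short exponential-sum expansion.

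\emph{Step 3: the dual sum, which is the main obstacle.} After Voronoi summation, the dual length is about $(\text{conductor})\cdot$(size of values)$/$(length), roughly $N^2\cdot X N/(NH)$ scaled. Bounding the dual sum with Weil-type (Kloosterman or Salié) bounds for the twisted sums attached to $Q$ must give a saving over the trivial bound precisely when $N<X^{1/4-\nu/2-\eta}$, i.e.\ $N^2 H<X^{1/2+\dots}$, equivalently $N^4<H^2\cdot$const. My first attempt would be to keep the $n_1,n_2$ sums outside and bound each dual term by the square-root bound for the Salié sums, exploiting the square-freeness of $\beta_n$ to get clean local factors. The upper limit on $N$ comes from requiring the dual sum to be shorter than the original one, and the lower limit $N>X^{\nu+\eta}$ comes from the diagonal in Step 1. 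The condition $\nu\le 1/6-2\eta$ ensures that this range is nonempty. The most delicate part will be making the main-term matching in Step 2 exact to within $R^{-1/2}$, uniformly in $n_1,n_2$.
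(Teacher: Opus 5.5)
\textbf{Verdict.} Your Step 1 matches the paper, but Step 2 has a genuine gap and Step 3 would not reach the stated range of $N$.

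\textbf{Step 1.} This is essentially the paper's argument. One correction: the diagonal $n_1=n_2$ contributes $\precprec HX^{1/2}$, and it has to be compared with the target $NH^2R^{-1/2}$ for $\sum_{n_1,n_2}\beta_{n_1}\overline{\beta_{n_2}}\sum_m w_{mn_1}\overline{w_{mn_2}}$. So the threshold is $N\gg X^{1/2}/H=X^{\nu}$, not $X/H^2$.

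\textbf{The gap in Step 2.} No Cauchy--Schwarz that moves $\gamma_{h'}$ (or both $\gamma_h,\gamma_{h'}$) outside can leave a \emph{linear} smooth sum $\sum_{h'}\psi(h'/H)\,r_Q(n_1h'-n_2h)$ inside. It always leaves a square, i.e.\ a correlation of two representation functions. With $n_1,n_2,h$ all fixed there is nothing left to square.

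Moreover, the linear sum you describe carries no information. On $n_1h'-n_2h=n_2r_1^2-n_1r_2^2$ the condition is just $r_1^2\equiv -h\pmod{n_1}$, a Type I condition. So Voronoi along that progression mod $n_1$ cannot be where the difficulty is resolved.

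\textbf{What the paper does instead.} It applies Cauchy--Schwarz over the pair $(h_1,h_2)$ with $n_1,n_2$ inside, which gives
\[
\sum_{\mathbf n}\beta_{n_1}\overline{\beta_{n_2}}\,\overline{\beta_{n_3}}\beta_{n_4}\sum_{(h_1,h_2)\in\Z^2}(\text{smooth})\;\mathcal Q(n_1h_2-n_2h_1;n_0,n_1,n_2,\kappa)\,\overline{\mathcal Q(n_3h_2-n_4h_1;n_0,n_3,n_4,\kappa)}.
\]
The key idea is the change of variables $(u_1,u_2)=(n_1h_2-n_2h_1,\,n_3h_2-n_4h_1)$. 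It turns the constraint into a single congruence $u_1\equiv\theta_{\mathbf n}u_2\pmod{|\Delta_{\mathbf n}|}$, where $\Delta_{\mathbf n}=n_1n_4-n_2n_3$ and $(\theta_{\mathbf n},\Delta_{\mathbf n})=1$. Detecting this congruence with additive characters and applying a further Cauchy--Schwarz reduces everything to
\[
\sum_{d}\frac1d\sum_{b\,(d)^\times}\Big|\sum_u\psi(u/U)\mathcal Q(u)e_d(bu)\Big|^2,
\]
with $d\mid\Delta_{\mathbf n}$ ranging up to $N^2$.

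None of this structure is in your proposal. Also missing are three pieces the paper needs:
\begin{itemize}
\item the extraction of $n_0=(n_1,n_2)$ and the pseudo-diagonal case $n_0>R^3$, handled trivially;
\item the diagonal $n_1n_4=n_2n_3$ of the second Cauchy--Schwarz, which needs the crude truncation $\psi(h_1/(UR^{-4}))$ because $(h_1,h_2)$ has been extended to $\Z^2$;
\item the $L^2$ bound $\sum_{|u|\le 4U}|\mathcal Q(u)|^2\precprec R^2U/N$.
\end{itemize}

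\textbf{Step 3 falls short numerically, even in the correct setup.} Take conductors $d\asymp N^2$. Voronoi gives about $d^2N/U\asymp N^4/H$ dual frequencies, each contributing $\asymp H/d$ after the square-root Gauss/Sali\'e bound. This yields only $|\sum_u\psi(u/U)\mathcal Q(u)e_d(bu)|\precprec N^2$. Summing the square over $n_1,n_2,d,b$ then gives roughly $N^8$, against a target of $N^2H^2$, so you would only reach about $N<H^{1/3}$.

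The stated limit $N<X^{1/4-\nu/2-\eta}\approx H^{1/2}$ comes from the additive large sieve over $d\sim Q$ combined with the $L^2$ bound above, which exploits the sparse support of $\mathcal Q$. This gives $(Q^2+U)R^2U/(NQ)$ per pair $(n_1,n_2)$, and hence the term $R^2N^4H$ at $Q\asymp N^2$.

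\textbf{Where your Step 3 ideas do fit.} Voronoi, pointwise Gauss-sum bounds, and a continued-fraction count of dual frequencies are used in the paper, but only for small conductors $d\le X^{\eta/16}N/n_0$. There the main terms cancel exactly for $d\le R$, and the tail $d>R$ saves $R^{-1}$ in $\Sigma_{1,2}$. That saving becomes $R^{-1/4}$ after the two square roots; a saving of $R^{-1/2}$, as you suggested, would only give $R^{-1/8}$.
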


\subsection{Sketch of Type I-factorable treatment} \label{sec:typeifactorablesketch}
Let us informally outline the key ideas behind Proposition~\ref{prop:typeIfact}, ignoring smooth weights, coprimality constraints, non-critical ranges, and several other technical details.
Unpacking the definition of $w_{d_1d_2n}^{(\gamma)}$, we wish to estimate a sum of the shape
\[
    \sum_{h \sim H} \gamma_h \sum_{d_1 \sim D_1} \alpha_{d_1} \sum_{d_2 \sim D_2} \beta_{d_2}
    \sum_{n \sim \frac{X}{D_1D_2}}
    \one_{\square}(d_1d_2n-h).
\]
The trivial bound is roughly $X^{1/2} H$, and we aim for an error term with a power-saving over this bound. We write $d_1d_2n - h = r^2$, sum over $n$ to obtain a congruence $r^2 \equiv -h \pmod{d_1d_2}$, and explicitate the possible residue classes of $r \bmod d_1d_2$ to reach
\[
    \sum_{h \sim H} \gamma_h \sum_{d_1 \sim D_1} \alpha_{d_1} \sum_{d_2 \sim D_2} \beta_{d_2} 
    \sum_{\substack{\omega \pmod{d_1d_2} \\ \omega^2 \equiv -h \pmod{d_1d_2}}}
    \sum_{r \sim \sqrt{X}} 
    \one_{r \equiv \omega \pmod{d_1d_2}}.
\]
We then apply Poisson summation, which extracts the main term: the resulting principal frequency  can be compared to the major-arc model. In the error term, focusing on the largest frequencies $t$, we are left to bound
\[
    \sum_{h \sim H} \gamma_h \sum_{d_1 \sim D_1} \alpha_{d_1} \sum_{d_2 \sim D_2} \beta_{d_2}
    \sum_{\substack{\omega \pmod{d_1d_2} \\ \omega^2 \equiv -h \pmod{d_1d_2}}}
    \sum_{t \sim \frac{D_1D_2}{\sqrt{X}}}
    e\left(\frac{\omega t}{d_1d_2}\right)
\]
by a power-saving over $HD_1D_2$. We need to save a bit more than $D_1D_2/\sqrt{X}$ (the length of the $t$-variable) over the trivial bound.

At this point, we apply Cauchy--Schwarz with $h, d_1$ outside, which gives a diagonal condition $D_1 < \sqrt{X}$. We also aim to keep `as much' of the $\omega$-variable outside as possible; this is achieved via a delicate root coupling argument in Section~\ref{subsec:typeIfact-Cauchy-Schwarz}, partly inspired by a trick of the second author \cite{merikoski2023largest}. Focusing on the `most off-diagonal' terms where the two copies of the $d_2$-variable are coprime, and moving the smooth sum over $h$ inside, we reach a sum like
\[
    \sum_{\substack{d_2, d_2' \sim D_2 \\ (d_2, d_2') = 1}} \beta_{d_2} \bar\beta_{d_2'} 
    \sum_{d_1 \sim D_1}
    \sum_{t, t' \sim \frac{D_1D_2}{\sqrt{X}}}
    \sum_{\omega \pmod{d_1d_2d_2'}} 
    e\left(\frac{\omega t}{d_1d_2} - \frac{\omega t'}{d_1d_2'}\right)
    \sum_{\substack{h \sim H \\ h \equiv - \omega^2 \pmod{d_1d_2d_2'}}}
    1.
\]
The target bound is now a power-saving over $HD_1D_2^2$. We apply Poisson summation again in the $h$-variable, which leaves us with bounding
\[
    \sum_{\substack{d_2, d_2' \sim D_2 \\ (d_2, d_2') = 1}} \beta_{d_2} \bar\beta_{d_2'}
    \sum_{d_1 \sim D_1}
    \sum_{t, t' \sim \frac{D_1D_2}{\sqrt{X}}}
    \sum_{\ell \sim \frac{D_1D_2^2}{H}}
    \sum_{\omega \pmod{d_1d_2d_2'}}
    e\left(\frac{-\ell \omega^2 + (td_2'-t'd_2)\omega}{d_1d_2d_2'}\right).
\]
by a power-saving over $D_1^2 D_2^4$ (the frequency $\ell = 0$ leads to a complete linear sum over $\omega$, which essentially vanishes outside of the diagonal terms).

We then recognize the quadratic Gauss sum in $\omega \pmod{d_1d_2d_2'}$, and apply quadratic reciprocity to obtain a similar Gauss sum in $u \pmod{\ell}$. The condition $D_1^{5/2} D_2^3 < X$ is used to crudely remove an extraneous B\'ezout-type exponential phase during this reciprocity step, although a more sophisticated argument could likely avoid this. Accounting for the square-root factors of the normalized Gauss sums, we are left to bound a sum like
\[
    \sum_{\substack{d_2, d_2' \sim D_2 \\ (d_2, d_2') = 1}} \beta_{d_2} \bar\beta_{d_2'}
    \sum_{d_1 \sim D_1}
    \sum_{t, t' \sim \frac{D_1D_2}{\sqrt{X}}}
    \sum_{\ell \sim \frac{D_1D_2^2}{H}} 
    \sum_{u \pmod{\ell}} 
    e\left(\frac{-d_1d_2d_2'u^2 + (td_2'-t'd_2)u}{\ell}\right)
\]
by less than $D_1^2 D_2^4 H^{-1/2}$.
We apply Poisson summation once again in $d_1, t, t'$, which creates dual variables $m, n, n'$ obeying congruences
\[
    \begin{cases}
    m \equiv d_2d_2'u^2 \\ 
    n \equiv d_2'u \\ 
    n' \equiv d_2u
    \end{cases}
    \pmod{\ell}
    \qquad 
    \longleftrightarrow
    \qquad 
    \begin{cases}
    m \equiv nn' \\
    nd_2 \equiv n'd_2' \\ 
    u \equiv n'\overline{d}_2
    \end{cases}
    \pmod{\ell}.
\]
In particular, the $n, n'$ variables came from two applications of Poisson summation ($r \rightsquigarrow t \rightsquigarrow n$) with different moduli due to the reciprocity step ($d_1d_2$, then $\ell$); the fact that the original $r$-variable is supported away from $0$ allows us to remove the principal frequencies $n, n' = 0$. Focusing on the critical ranges, the sum over $u \bmod \ell$ is absorbed by the last congruence above, and it remains to bound
\[
    \sum_{m \sim \frac{D_2^2}{H}}
    \sum_{n, n' \sim \frac{D_2\sqrt{X}}{H}} \sum_{\substack{\ell \mid nn'-m \\ \ell \sim \frac{D_1D_2^2}{H}}}\
    \sum_{\substack{d_2, d_2' \sim D_2 \\ (d_2, d_2') = 1}} \beta_{d_2} \bar\beta_{d_2'} \one_{nd_2 \equiv n'd_2' \pmod{\ell}}
\]
by a power-saving over $XD_2^2/(D_1\sqrt{H})$. Note that we have arrived at a new counting problem. A short geometry-of-numbers argument\footnote{One can alternatively proceed by replacing $\beta_{d_2}, \bar\beta_{d_2'}$ with smooth weights for an upper bound, and Poisson summation in $d_2, d_2'$. This route requires leveraging the averaging over the other variables.} (Lemma~\ref{lem:prim-points-count}) bounds the inner sum over $d_2, d_2'$ by $\ll 1 + (n, n', \ell) D_2^2/\ell$, which completes our sketch-of-proof as long as
\[
    \frac{D_2^2}{H} \cdot \frac{D_2^2X}{H^2} \left(1 + \frac{H}{D_1}\right)
    < \frac{XD_2^2}{D_1\sqrt{H}} \qquad \iff \qquad 
    \begin{cases} 
    D_1D_2^2 < H^{5/2}, \\ 
    D_2 < H^{3/4}.
    \end{cases}
\]
But the condition $D_2 < H^{3/4}$ (with a power-saving) is part of the assumption of Proposition~\ref{prop:typeIfact}, while $D_1D_2^2 < H^{5/2}$ follows from the assumptions $D_1 < H$ and $D_2 < H^{3/4}$.

\subsection{Sketch of Type II treatment}
\label{sec:typeiisketch}
In this subsection, we will briefly outline the treatment of Proposition \ref{prop:typeII}, the Type II 
information, with the hope of exposing where the ranges come from. We will be choosing to work with the indicator
function of the squares directly, and ignore smoothings and main terms that arise via the model $\sigma_0$. 
What we describe below is morally the content of \S\ref{sec:typeII}, though in practice the argument needs to be arranged more carefully to ensure cancellation of main terms and to deal with possible common factors. 

We are essentially led to nontrivially bound/evaluate, for $X^{1/2 + \eta}/H < N < X^{-\eta}\sqrt{H}$, the quantity
\begin{equation*}
    \sum_{\substack{h\sim H }} \sum_{m \sim M} \sum_{n\sim N} \alpha_m \beta_n\gamma_h \mathbf{1}_{\square}(mn - h).
\end{equation*}
The trivial bound can be seen to be $X^{1/2}H$, and it is by a power saving over this we wish to bound
the remainder term in the coming evaluations (the treatment of the main terms, and the strength of our approximant
$\sigma_0$ is what ultimately dictates the strength of the final bound in Proposition \ref{prop:typeII}).

After applying Cauchy-Schwarz on $m$ and rearranging, we are led to evaluate
\begin{align*}
  \sum_{\substack{n_1, n_2\sim N \\ h_1, h_2\sim H}} &\beta_{n_1}\conj{\beta_{n_2}}\gamma_{h_1} \overline{\gamma_{h_2}} \sum_{r_1, r_2\sim X^{1/2}  } \sum_{m\sim M } \mathbf{1}_{\substack{mn_1 - h_1 = r_1^2 \\ mn_2 - h_2 = r_2^2 }} \\ 
  &= \sum_{\substack{n_1, n_2\sim N \\ h_1, h_2 \sim H }}\gamma_{h_1} \overline{\gamma_{h_2}} \beta_{n_1} \overline{\beta_{n_2}} \sum_{r_1, r_2\sim X^{1/2} }
  \mathbf{1}_{ n_2 r_1^2 - n_1r_2^2 = n_1h_2 - n_2h_1 }. 
\end{align*}
It is at this point that the lower bound on $N$ is required, to ensure that the diagonal terms $n_1=n_2$ do not exceed
the trivial bound for the off-diagonal.

This inner sum we shall call $r_{Q_{n_1, n_2}}(n_1h_2 - n_2h_1; X^{1/2}) $ in this sketch, to indicate that it is nearly
the number of representations of $n_1h_2 - n_2h_1 $ by the quadratic form $Q_{n_1, n_2}(x_1, x_2) = n_2x_1^2 - n_1x_2^2 $.
We use this notation to suggest that, in the generic case $(n_1, n_2) = 1 $, $r_{Q_{n_1, n_2}}(u) $ is 
morally similar to the Fourier coefficient of a modular form of level $\ll N^2 $, with the difference that $r_{Q_{n_1, n_2}}(u) $  is supported on a set of density $\asymp 1/N$. The dependence on the second component
may be omitted; it turns out to be inconsequential\footnote{A way to anticipate this may be to weaken
  the restrictions and recapture it with Archimedean characters of low conductor.}.
In reality, in place of the representation number $r_Q$ itself, we work with $\mathcal{Q}_{i, j}$, defined in 
\S\ref{sec:quadratic}, which incorporate the Archimedean constraints and through the parameters $i, j\in \{0, 1\}$, 
unify the treatment of the representation function with the expected main term on average.

Smoothing out the $\gamma_{h_i} $ factors with Cauchy-Schwarz, we are reduced to evaluating
\begin{equation*}
  \sum_{h_1, h_2\sim H } \sum_{n_1, n_2, n_3, n_4\sim N } \beta_{n_1} \overline{\beta_{n_2}}\overline{\beta_{n_3}}\beta_{n_4}
  r_{Q_{n_1, n_2}}(n_1h_2 - n_2 h_1) r_{Q_{n_3, n_4}}(n_3h_2 - n_4h_1).
\end{equation*}
At this point, the trivial bound is $ H^2 N^2  $. 

Writing $u_1 = n_1h_2 - n_2h_1 $ and $u_2 = n_3h_2 - n_4h_1 $, and letting $\Delta_\mathbf{n} = n_1n_4 -  n_2n_3 $, 
we have that the above is (ignoring the diagonal case $\Delta = 0 $, and assuming coprimality), roughly,
\begin{equation*}
  \sum_{n_1,n_2, n_3, n_4\sim N } \beta_{n_1} \overline{\beta_{n_2}}\overline{\beta_{n_3}}\beta_{n_4} \sum_{\substack{u_1, u_2\sim H N \\ u_1 \equiv n_1 \overline{n_3} u_2 \, (\Delta_\mathbf{n})}}
  r_{Q_{n_1, n_2}}(u_1) r_{Q_{n_3, n_4}}(u_2).
\end{equation*}
Separating $u_1 $, $u_2 $ with orthogonality yields that the above is 
\begin{equation*}
  \ll \sum_{n_1, n_2, n_3, n_4\sim N } \frac{1}{|\Delta_\mathbf{n}|} \sum_{a\, (\Delta_\mathbf{n}) } \biggl| \sum_{u_1 \sim H N} r_{Q_{n_1, n_2}}(u_1)e\biggl(\frac{an_3 u_1}{\Delta_\mathbf{n}}\biggr) \biggr|
  \biggl| \sum_{u_2\sim H N } r_{Q_{n_3, n_4}}(u_2) e\biggl(-\frac{a n_1 u_2}{\Delta_{\mathbf{n}}}\biggr) \biggr|. 
\end{equation*}
Restricting to the generic case of $\Delta_\mathbf{n} \asymp N^2 $, by Cauchy-Schwarz to separate $u_1$ and $u_2$, we are thus reduced to
beating the bound of $O(H^2N^2) $ for 
\begin{equation}\label{eq:pre_voronoi}
  \sum_{n_1, n_2, n_3, n_4 \sim N } \frac{1}{|\Delta_\mathbf{n}|} \sum_{a\, (\Delta_\mathbf{n}) } \biggl| \sum_{u\sim H N } r_{Q_{n_1, n_2}}(u) e \biggl(\frac{a u}{\Delta_\mathbf{n}}\biggr) \biggr|^2.
\end{equation}
To this end, we apply Voronoï summation (underneath, Poisson twice, as executed by Lemma \ref{le:voronoi})
to evaluate the inner sum. Here the main term from the Voronoï summation cancels the suppressed main terms from the model $\sigma_0$ when $a$ and $\Delta_{\mathbf{n}}$ have a large common factor. In this sketch we focus on $(a,\Delta_\mathbf{n})=1$ and the dual sum is then essentially
\begin{equation*}
  \frac{H}{N^2}\sum_{u\ll N^5/H} r_{Q_{n_1, n_2}}(u) e \biggl(-\frac{\overline{a} u}{\Delta_{\mathbf{n}}}\biggr).
\end{equation*}

The large sieve yields a bound of
\begin{align*}
  \sum_{n_1, n_2, n_3, n_4  \sim N } &\frac{1}{|\Delta_\mathbf{n}|} \sum_{a\, (\Delta_\mathbf{n})^\times } \biggl| \sum_{u\sim H N } r_{Q_{n_1, n_2}}(u) e \biggl(\frac{a u}{\Delta_\mathbf{n}}\biggr) \biggr|^2 \\ 
                               &\ll \frac{H^2}{N^6} \sum_{n_1, n_2, n_3, n_4 \sim N } \sum_{a\, (\Delta_\mathbf{n})^\times } \biggl| \sum_{u\ll  N^5/H } r_{Q_{n_1, n_2}}(u)e \biggl(\frac{a u}{\Delta_\mathbf{n}}\biggr) \biggr|^2 \\
                               &\ll  \frac{H^2}{N^6} \sum_{n_1, n_2\sim N } \sum_{\Delta\sim N^2 } \sum_{a(\Delta)^\times}  \biggl| \sum_{u\ll N^5/H } r_{Q_{n_1, n_2}}(u) e\biggl(\frac{a u}{\Delta}\biggr)\biggr|^2 \\ 
                               &\ll  \frac{H^2}{N^6}N^2 \biggl( N^4 + \frac{N^5}{H} \biggr) \frac{N^4}{H}\ll
                                 \frac{H^2}{N^6}N^2 N^4 \frac{N^4}{H} = H N^4,
\end{align*}
recalling that $r_{Q_{n_1, n_2}}(u) $ has support of density roughly $1/N$. This bound is a power saving over the reference bound of $O(H^2N^2)$ for \eqref{eq:pre_voronoi} when 
$N\ll \sqrt{H}X^{-\eta}$, precisely the upper bound of Proposition \ref{prop:typeII}. 

\section{Preliminaries}
In this section we have collected basic results needed in the proofs of Propositions \ref{prop:typeI},  \ref{prop:typeI2}, and \ref{prop:typeII}.
\subsection{Poisson summation}
We need the following truncated Poisson summation formula. The proof is standard.
\begin{lemma}[Truncated Poisson summation formula] \label{le:poisson} Let $\delta,\eps >0$ and  let $\psi$ be a smooth function supported in $[-10,10]$ with $\psi^{(J)} \ll_J\delta^{-J}$ for all $J\geq 0$. Let $N> 1$ and let $q \geq 1$ be an integer. Let
\begin{align*}
  T \geq  N^\eps \delta^{-1} q/N.  
\end{align*}
Then for any $C >0$
\begin{align*}
\sum_{n \equiv a \pmod{q}} \psi\bigg(\frac{n}{N}\bigg) = \frac{N}{q} \widehat{\psi}(0) + \frac{N}{q} \sum_{1 \leq |t| \leq T} \widehat{\psi} \bigg( \frac{t N}{q}\bigg) e_q (at) + O_{\eps,C}(N^{-C}),
\end{align*}
where $\widehat{\psi}(h):= \int_\R \psi(u)e(-hu) \d u$ is the Fourier transform.
\end{lemma}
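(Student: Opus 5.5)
We substitute $n = a + qm$ and apply the Poisson summation formula to the smooth function $m \mapsto \psi((a+qm)/N)$. This gives
\begin{align*}
\sum_{n \equiv a \pmod q} \psi\Big(\frac{n}{N}\Big) = \sum_{t \in \Z} \int_\R \psi\Big(\frac{a+qx}{N}\Big) e(-tx)\, \d x .
\end{align*}
In the $t$-th integral we change variables $u = (a+qx)/N$, so that $x = (Nu - a)/q$ and $\d x = (N/q)\,\d u$. The integral then equals
\begin{align*}
\frac{N}{q}\, e_q(at)\, \widehat{\psi}\Big(\frac{tN}{q}\Big).
\end{align*}
Absolute convergence is not an issue, since $\psi$ is smooth and compactly supported, so its Fourier transform decays rapidly. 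The term $t=0$ gives the main term $\frac{N}{q}\widehat\psi(0)$, and the terms with $1 \le |t| \le T$ are exactly those kept in the statement. It remains to show that the tail $|t|>T$ contributes $O_{\eps,C}(N^{-C})$.

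For the tail we integrate by parts $J$ times in $\widehat\psi(y) = \int \psi(u) e(-yu)\,\d u$. Using $\psi^{(J)} \ll_J \delta^{-J}$ and the fact that the support of $\psi$ has length at most $20$, this gives
\begin{align*}
\widehat\psi(y) \ll_J (\delta |y|)^{-J}.
\end{align*}
With $y = tN/q$ the tail is therefore bounded by
\begin{align*}
\frac{N}{q} \sum_{|t|>T} \Big(\frac{q}{\delta N |t|}\Big)^{J} \ll_J \frac{N}{q}\, T \Big(\frac{q}{\delta N T}\Big)^{J}
\end{align*}
for $J \ge 2$. The hypothesis $T \ge N^\eps \delta^{-1} q/N$ says precisely that $q/(\delta N T) \le N^{-\eps}$. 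Hence the tail is $\ll_J (N/q)\, T\, N^{-\eps J}$.

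The only point needing care is absorbing the prefactor $(N/q)T$. If $T$ is very large, we split the range $|t|>T$ dyadically and use the decay $|t|^{-J}$ directly. Write $|t| = T\cdot s$ with $s>1$, so the summand is bounded by $(N/q)(q/(\delta N T))^J s^{-J}$. Summing over $t$ costs one factor of $T$ against the saving $s^{-J}$, which leaves a contribution
\begin{align*}
\ll \frac{N}{q}\, T^{1-J}\Big(\frac{q}{\delta N}\Big)^{J} = \frac{N T}{q}\, \big(\delta^{-1} q/(NT)\big)^{J} \le \frac{NT}{q}\, N^{-\eps J}.
\end{align*}
Moreover $NT/q \ge N^\eps \delta^{-1} \ge 1$. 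So the factor $NT/q$ can be absorbed by spending one power of $q/(\delta N T) \le \min\big(N^{-\eps},\, N^{-\eps} \cdot q/(NT)\cdot \delta^{-1}\big)$, i.e. by using
\begin{align*}
\frac{NT}{q}\cdot \frac{q}{\delta N T} = \delta^{-1},
\end{align*}
together with $\delta^{-1} \le (\log X)^{O(1)}$. This leaves a bound $\ll \delta^{-1} N^{-\eps(J-1)}$; if $\delta^{-1}$ is instead bounded by a power of $N$, this is $\ll N^{O(1) - \eps(J-1)}$. Choosing $J$ large in terms of $\eps$ and $C$ gives $O_{\eps,C}(N^{-C})$.

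There is no real obstacle. The only thing to verify is this bookkeeping: the $t$-sum is summed against a decay of order $J \ge 2$, and $\delta^{-1}$ is at most polynomial in $N$, as it is in all applications in this paper.
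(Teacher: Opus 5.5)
Your proof is correct and is the standard argument the paper has in mind when it says ``the proof is standard'': substitute $n=a+qm$, apply Poisson summation, and bound the tail $|t|>T$ by repeated integration by parts. You are also right that the crude tail bound only gives $\ll_J \delta^{-1}N^{-\eps(J-1)}$, so an implicit hypothesis $\delta^{-1}\le N^{O(1)}$ is genuinely needed; the paper supplies it through its convention $\delta^{-1}\le(\log X)^{O(1)}$ (and $\delta^{-1}=X^{\eps}$ in one later application), with $N$ a power of $X$. Your $\min(\cdot,\cdot)$ expression in the absorption step is garbled and should be deleted, since the identity $\frac{NT}{q}\cdot\frac{q}{\delta NT}=\delta^{-1}$ already carries the argument.
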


\subsection{Large Sieve inequalities}
We also need the following classical large sieve for additive characters.
\begin{lemma}\label{lem:LaSi}
    We have for $M$, $N$ and complex coefficients $\alpha_n$ that
\begin{align*}
    \sum_{q\leq Q}\sum_{\substack{a\pmod q^\times}} \bigg|\sum_{n\in[M,M+N]}\alpha_n e_q(an)\bigg|^2 \ll ( Q^2+N)\sum_n |\alpha_n|^2.
\end{align*}
\end{lemma}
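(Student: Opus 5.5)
This is the classical additive large sieve, and I would prove it by the standard duality argument. The first step is to record the well-spacing property: the points $a/q$ with $q\le Q$ and $(a,q)=1$ are distinct modulo $1$. For distinct points $a/q\neq a'/q'$ we have $\|a/q-a'/q'\|\ge 1/(qq')\ge Q^{-2}$. So the set $\{x_r\}$ of these Farey fractions is $\delta$-spaced with $\delta=Q^{-2}$.

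The claim then reduces to the large sieve for well-spaced points: for any $\delta$-spaced $x_r\in\R/\Z$,
\[
\sum_r \Big|\sum_{n\in[M,M+N]}\alpha_n e(nx_r)\Big|^2\ll(\delta^{-1}+N)\sum_n|\alpha_n|^2 .
\]
By duality, this is equivalent to the bound
\[
\sum_{n\in[M,M+N]}\Big|\sum_r c_r e(nx_r)\Big|^2\ll(\delta^{-1}+N)\sum_r|c_r|^2 .
\]
To prove the dual form, I would majorize the indicator of $[M,M+N]$ by a smooth nonnegative weight. A convenient choice is $\phi((n-M-N/2)/N')$ with $N'=N+\delta^{-1}$, where $\phi\ge 1$ on $[-1,1]$ and $\widehat\phi$ is compactly supported in $[-1,1]$; one can take $\phi$ to be a squared Fejér-type kernel. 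After expanding the square, the left-hand side is at most
\[
\sum_{r,s}c_r\bar c_s\sum_n \phi\Big(\frac{n-M-N/2}{N'}\Big)e\big(n(x_r-x_s)\big).
\]
Poisson summation gives the inner sum as $N'\sum_{k}\widehat\phi\big(N'(k-(x_r-x_s))\big)$ times a unimodular phase. Since $\|x_r-x_s\|\ge\delta>1/N'$ for $r\ne s$ and $\widehat\phi$ is supported in $[-1,1]$, every off-diagonal term vanishes. The diagonal terms contribute $N'\widehat\phi(0)\sum|c_r|^2\ll(N+\delta^{-1})\sum_r|c_r|^2$, which is the dual bound.

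With $\delta^{-1}=Q^2$ this gives $(Q^2+N)\sum_n|\alpha_n|^2$, as claimed. The only point needing care is the existence of the majorant $\phi$: it must be nonnegative, at least $1$ on an interval, and have compactly supported Fourier transform. Such functions are standard; for example, $\phi(x)=c\big(\sin(\pi x/2)/(\pi x/2)\big)^2$ with a suitable constant $c$ and rescaling works. Since the statement only asks for $\ll$ rather than the sharp constant $N-1+Q^2$, no Selberg extremal functions are needed. Alternatively, one can simply cite Montgomery–Vaughan or Gallagher's lemma, $|S(x)|^2\le \delta^{-1}\int_{|y-x|<\delta/2}|S|^2+\int_{|y-x|<\delta/2}|SS'|$, and sum it over the $\delta$-spaced points.
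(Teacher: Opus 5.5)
Your proof is correct. The paper gives no proof of this lemma: it quotes it as the classical additive large sieve, whose sharp form is due to Selberg and Montgomery--Vaughan. So there is no argument in the paper to compare against step by step.

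You supply a self-contained argument built from three standard ingredients:
\begin{itemize}
\item the Farey spacing $\|a/q-a'/q'\|\ge 1/(qq')\ge Q^{-2}$;
\item duality;
\item a nonnegative Fej\'er-type majorant $\phi$ whose Fourier transform $\widehat\phi$ is compactly supported.
\end{itemize}
After Poisson summation every off-diagonal term vanishes, and the diagonal contributes $(N+Q^2)\widehat\phi(0)\sum_r|c_r|^2$. This yields only an absolute implied constant, not the sharp one. That is all the paper needs, since the lemma is only applied with $\ll$ and $\precprec$, in Section~\ref{sec:typeII} and in Lemma~\ref{lem:pnt_bn}.

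Two cosmetic points, neither affecting correctness:
\begin{itemize}
\item The phase from Poisson summation is $e\bigl((M+N/2)(\theta-k)\bigr)$, which depends on $k$, so it is not a single unimodular factor. This is irrelevant, because every term with $|N'(k-\theta)|\ge 1$ vanishes anyway.
\item Your strict inequality $\delta>1/N'$ fails when $N=0$. This is harmless: your rescaled kernel $c\bigl(\sin(\pi x/2)/(\pi x/2)\bigr)^2$ has $\widehat\phi$ supported in $[-1/2,1/2]$, and in any case a continuous $\widehat\phi$ supported in $[-1,1]$ vanishes at $\pm1$. The same observation, that $|N'k|\ge N'\ge Q^2\ge 1$ for $k\neq0$, is what removes the non-zero frequencies on the diagonal. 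You use this implicitly and should say so.
\end{itemize}
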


To show that the model $\sigma_0$ agrees with the squares, we use the large sieve for quadratic characters due to Heath-Brown \cite[Corollary 4]{hbquad}.
\begin{lemma} \label{lem:quadlargesieve}
Let $M,N>2$ and let $\alpha_m, \beta_n$ be 1-bounded complex numbers. Then
    \begin{align*}
      \sum_{\substack{m \leq M \\ m \, \mathrm{odd}}} \sum_{n \leq N} \alpha_m \beta_n \left( \frac{n}{m}\right) \ll (MN)^{o(1)} (M N^{1/2} + NM^{1/2}).
    \end{align*}
    Furthermore, if $M < N^{1/2-\eps}$ or $M > N^{2+\eps}$, then the term $(MN)^{o(1)}$ may be replaced by a logarithmic factor $(\log MN)^{1/2}$. 
\end{lemma}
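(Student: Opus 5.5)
The plan is to deduce this from Heath-Brown's mean-value theorem for real characters, \cite[Theorem 1]{hbquad}. That theorem states that for any $\eps>0$ and complex $a_n$,
\[
\sum_{\substack{m\le M\\ m \text{ odd, squarefree}}}\Big|\sum_{\substack{n\le N\\ n \text{ odd, squarefree}}} a_n \Big(\frac nm\Big)\Big|^2 \ll_\eps (MN)^{\eps}(M+N)\sum_n|a_n|^2 .
\]
Given this, the squarefree case of the bilinear bound follows from Cauchy--Schwarz over $m$:
\[
\Big|\sum_m\sum_n \alpha_m\beta_n\Big(\frac nm\Big)\Big| \le M^{1/2}\Big((MN)^{\eps}(M+N)N\Big)^{1/2}\ll (MN)^{\eps}\big(MN^{1/2}+M^{1/2}N\big).
\]
This is exactly the claimed shape, since $\eps$ is arbitrary.

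To remove the squarefree restrictions, I write $m=m_1m_2^2$ and $n=2^jn_1n_2^2$ with $m_1,n_1$ odd and squarefree. Multiplicativity of the Jacobi symbol gives
\[
\Big(\frac nm\Big)=\Big(\frac{2}{m}\Big)^j\Big(\frac{n_1}{m_1}\Big)\mathbf 1_{(n,m_2)=1}\mathbf 1_{(n_2,m)=1}.
\]
The factor $(2/m)^j$ is absorbed into $\alpha_m$ after fixing $j$; this costs a $\log N$. The coprimality conditions are removed by Möbius inversion, $\mathbf 1_{(n,m_2)=1}=\sum_{e\mid (n,m_2)}\mu(e)$, and similarly for $(n_2,m)$. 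This produces divisor-bounded weights on the outer variables $m_2,n_2,e,\dots$. For fixed outer variables we apply the squarefree bound with $M$ replaced by $M/m_2^2$ and $N$ replaced by $N/n_2^2$. The resulting sum
\[
\sum_{m_2,n_2}\Big(\frac{M}{m_2^2}\Big(\frac{N}{n_2^2}\Big)^{1/2}+\Big(\frac{M}{m_2^2}\Big)^{1/2}\frac{N}{n_2^2}\Big)
\]
converges up to logarithms and divisor factors, which gives the first assertion.

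For the sharper logarithmic bound when $M<N^{1/2-\eps}$, I avoid Heath-Brown's theorem altogether. Instead I use the multiplicative large sieve for primitive characters,
\[
\sum_{q\le Q}\frac{q}{\varphi(q)}\,{\sum_{\chi}}^{*}\Big|\sum_n a_n\chi(n)\Big|^2\ll (Q^2+N)\sum|a_n|^2 .
\]
This is the standard multiplicative counterpart of Lemma~\ref{lem:LaSi}. For odd $m=m_1m_2^2$, the character $(\cdot/m)$ is induced by the primitive real character $(\cdot/m_1)$ of conductor $m_1$, twisted by $\mathbf 1_{(\cdot,m_2)=1}$. Distinct squarefree $m_1$ give distinct primitive characters. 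So for fixed $m_2$, after Möbius-removing the condition $(n,m_2)=1$ (only a $\tau(m_2)$ loss), Cauchy--Schwarz and the large sieve give
\[
\ll \Big(\frac{M}{m_2^2}\Big)^{1/2}\Big(\frac{M^2}{m_2^4}+N\Big)^{1/2}N^{1/2}\ll \Big(\frac{M}{m_2^2}\Big)^{1/2}N
\]
in the range $M^2\le N$. Summing over $m_2$ yields $O(M^{1/2}N)$, up to at most a logarithmic factor coming from the divisor weights; these weights should be handled by Cauchy--Schwarz on the $m_2$ sum rather than pointwise, so that only $(\log MN)^{1/2}$ survives. For $M>N^{2+\eps}$ I use quadratic reciprocity to swap roles. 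After splitting $n=2^jn'$ and restricting $m,n'$ to fixed residue classes mod $8$ (absorbed into the coefficients), $(n/m)$ equals $\pm(m/n')$ times characters of $m$ mod $8$. Then $n$ becomes the modulus variable and the previous argument applies with $M,N$ interchanged, giving $MN^{1/2}$.

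The main obstacle is the first, uniform, assertion. It genuinely requires Heath-Brown's theorem, whose proof rests on a recursive argument using Poisson summation and reciprocity; that cannot be reproduced from the elementary large sieve. I would cite it rather than reprove it. In the other steps, the only delicate point is tracking the multiplicity of imprimitive characters so that the loss is logarithmic rather than $X^{o(1)}$, which is what the Cauchy--Schwarz treatment of the $m_2$ sum is for.
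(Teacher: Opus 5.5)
Your first assertion is fine and is essentially the paper's route: the paper cites \cite[Corollary 4]{hbquad} directly, and your reduction from Theorem 1 is the standard proof of that corollary. In it you pull out square factors and absorb $(2/m)^j$ and the coprimality conditions into the coefficients. No M\"obius inversion is needed there, since for fixed outer variables each condition involves only one inner variable.

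The gap is in the ``Furthermore'' part, and it concerns the power of the logarithm.

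For $M<N^{1/2-\eps}$, your large-sieve step gives $(M/m_2^2)^{1/2}N$ for each $m_2$. Summing, $\sum_{m_2\le\sqrt M}(M/m_2^2)^{1/2}N\asymp M^{1/2}N\log M$. This full logarithm comes from the harmonic sum over $m_2$ itself, not from divisor weights.

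Cauchy--Schwarz over $m_2$ cannot remove it. Your inputs are $\sum_{m_1}|\alpha_{m_1m_2^2}|^2\le M/m_2^2$ and the large-sieve bound $\ll N^2$ for each fixed $m_2$. From these, any weighted Cauchy--Schwarz gives at best $\sum_{m_2}(MN^2/m_2^2)^{1/2}$, which is the same quantity. A single Cauchy--Schwarz over all $m=m_1m_2^2$ is much worse: the $N\sum_n|a_n|^2$ term is then paid once for every $m_2\le\sqrt M$, giving $M^{3/4}N$.

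That loss is genuine for this mean square. For $\beta_n\equiv 1$, the square moduli $m=k^2$ alone contribute $\sum_{k\le\sqrt M}\bigl|\sum_{n\le N,\,(n,k)=1}1\bigr|^2\asymp M^{1/2}N^2$. The same problem occurs in your swapped case $M>N^{2+\eps}$, where you again apply Cauchy--Schwarz over the short (modulus) variable.

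So your argument proves the remark with $\log(MN)$ in place of $(\log MN)^{1/2}$. That weaker form would still suffice for the only application, Lemma \ref{lem:pnt_bn}. There the point is just to avoid an $(MN)^{o(1)}$ loss when $M$ is as large as $\exp(H^{\eta_1})$. It is not, however, the stated bound.

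The missing idea is to apply Cauchy--Schwarz over the \emph{long} variable and then use orthogonality rather than the large sieve; this is what the paper does. For $M>N^{2+\eps}$, with a smooth majorant $\psi$,
\[
\sum_{\substack{m\le M\\ m\,\mathrm{odd}}}\alpha_m\sum_{n\le N}\beta_n\Bigl(\frac nm\Bigr)\ll M^{1/2}V^{1/2},\qquad V\le\sum_{n_1,n_2\le N}\Bigl|\sum_{m\,\mathrm{odd}}\psi\Bigl(\frac mM\Bigr)\Bigl(\frac{n_1n_2}{m}\Bigr)\Bigr|.
\]
By reciprocity, $m\mapsto(n_1n_2/m)$ is a character to a modulus $\ll N^2\le M^{1-\eps'}$, and it is non-principal unless $n_1n_2$ is a square. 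Poisson summation therefore makes the off-diagonal terms negligible; this is where the $\eps$-margin is used. The diagonal contributes $M\cdot\#\{n_1,n_2\le N:\ n_1n_2=\square\}\ll MN\log N$. The logarithm now sits inside the square root, giving $MN^{1/2}(\log N)^{1/2}$.

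The case $M<N^{1/2-\eps}$ is the same with $n$ as the long variable. No reciprocity is needed there, since $n\mapsto(n/m_1m_2)$ is already a character of modulus $\le M^2<N^{1-2\eps}$.

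Your primitive-character large sieve does have one small advantage: it needs no $\eps$-margin and works for $M\le N^{1/2}$ or $M\ge N^2$. But because it treats each $m_2$ separately, it cannot use the fact that the characters attached to $m_1m_2^2$ for different $m_2$ essentially coincide. That is exactly the extra $(\log)^{1/2}$ it loses.
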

\begin{proof}
 Only the additional remark needs to be justified. If $M >N^{2+\eps}$, we have by the Cauchy-Schwarz inequality
    \begin{align*}
         \sum_{\substack{m \leq M \\ m \, \mathrm{odd}}} \alpha_m \sum_{n \leq N} \beta_n \left( \frac{n}{m}\right)  \ll M^{1/2} V^{1/2}
    \end{align*}
    where, using a smooth non-negative weight function $\psi$ supported on $[-1, 2]$ such that $\psi(x) \geq 1$ for $x \in [0, 1]$, we define
    \begin{align*}
        V = \sum_{m \, \mathrm{odd}} \psi\left(\frac{m}{M}\right) \bigg| \sum_{n \leq N} \beta_n \left( \frac{n}{m}\right)   \bigg|^2.
    \end{align*}
    Expanding the square and pulling the bounded coefficients $\beta_{n}$ outside the inner sum, we obtain
    \begin{align*}
        V \leq \sum_{n_1,n_2 \leq N} |\beta_{n_1} \beta_{n_2}| \bigg| \sum_{m \, \mathrm{odd}} \psi\left(\frac{m}{M}\right)  \left( \frac{n_1 n_2}{m}\right) \bigg| \ll \sum_{n_1,n_2 \leq N} \bigg| \sum_{m \, \mathrm{odd}} \psi\left(\frac{m}{M}\right)  \left( \frac{n_1 n_2}{m}\right) \bigg|.
    \end{align*}

Since $M > N^{2+\eps}$, by the Poisson summation formula (Lemma \ref{le:poisson}) and quadratic reciprocity we get
    \begin{align*}
        V  \ll M  \sum_{n_1,n_2 \leq N}  \mathbf{1}_{n_1 n_2 = \square} \ll M N \log N.
    \end{align*}
    Substituting this back yields the bound $M N^{1/2} (\log N)^{1/2}$. 
    
    The case $M < N^{1/2-\eps}$ is handled similarly, swapping the roles of $m$ and $n$.
\end{proof}

\subsection{The model for square-free moduli}
Let us denote
\begin{align*}
    \varrho_{h}(d) := \#\{\omega \in \Z/ d \Z: \omega^2+h \equiv 0 \pmod{d}  \}.
\end{align*}
This is multiplicative and for square-free $d$ we have in terms of the Jacobi symbol
\begin{align} \label{eq:numberofroots}
     \varrho_{h}(d)  = \prod_{2<p|d} \left(1 + \left(\frac{-h}{p} \right)\right) = \sum_{\substack{r|d \\ r \, \text{odd}}}  \left(\frac{-h}{r} \right).
\end{align}
To compare this with the model we need the following standard fact.
\begin{lemma} \label{le:modelforsquarefree}
Let $    G(\alpha,r) $ be as in \eqref{eq:gausssumdef}.
    For square-free $r \geq 1$, we have
    \begin{align*}
      \frac{1}{r}  \sum_{\alpha \pmod r^\times} G(\alpha, r) e_r(-\alpha k) = \mathbf{1}_{r\, \mathrm{odd}} \left( \frac{k}{r} \right).
    \end{align*}
\end{lemma}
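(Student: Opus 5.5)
The plan is to use the Chinese remainder theorem to reduce to prime moduli and then compute the Gauss sum explicitly. Write $c_r(k) := \frac{1}{r}\sum_{\alpha \pmod r^\times} G(\alpha,r) e_r(-\alpha k)$. The first step is to show that $c_r(k)$ is multiplicative in $r$. For coprime $r = r_1 r_2$, write $u = u_1 r_2 + u_2 r_1$ and use the decomposition $\alpha \equiv \alpha_1 r_2 + \alpha_2 r_1$ with $\alpha_i$ running over $(\Z/r_i\Z)^\times$. This gives $G(\alpha, r_1r_2) = G(\alpha_1 r_2, r_1)\, G(\alpha_2 r_1, r_2)$, and likewise $e_{r}(-\alpha k) = e_{r_1}(-\alpha_1 k)e_{r_2}(-\alpha_2 k)$.

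Substituting $\alpha_1' = \alpha_1 r_2$ changes $e_{r_1}(-\alpha_1 k)$ into $e_{r_1}(-\alpha_1' \overline{r_2} k)$, which is not quite of the original form. To avoid this, I would instead parametrise $\alpha/(r_1r_2) \equiv \alpha_1/r_1 + \alpha_2/r_2 \pmod 1$, so that $\alpha = \alpha_1 r_2 + \alpha_2 r_1$. Then
\[
e_r(\alpha u^2) = e_{r_1}(\alpha_1 u^2)\, e_{r_2}(\alpha_2 u^2),
\]
and as $u$ runs mod $r$ its reductions run independently mod $r_1$ and mod $r_2$. This gives $G(\alpha,r) = G(\alpha_1,r_1)G(\alpha_2,r_2)$ and $e_r(-\alpha k) = e_{r_1}(-\alpha_1 k)e_{r_2}(-\alpha_2 k)$, hence $c_{r_1r_2}(k) = c_{r_1}(k)c_{r_2}(k)$. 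The right-hand side $\mathbf{1}_{r\,\mathrm{odd}}(k/r)$ is also multiplicative over square-free $r$, by multiplicativity of the Jacobi symbol in the modulus. So it suffices to treat $r = p$ prime (the case $r=1$ being trivial).

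For $p=2$ we have $G(1,2) = 1 + e(1/2) = 0$, so $c_2(k) = 0$, as required. For odd $p$, use the standard evaluation $G(\alpha,p) = \left(\frac{\alpha}{p}\right) G(1,p)$. This comes from $G(\alpha,p) = \sum_{x \pmod p}\bigl(1 + \bigl(\frac{x}{p}\bigr)\bigr) e_p(\alpha x) = \sum_x \bigl(\frac{x}{p}\bigr) e_p(\alpha x)$. Then
\[
c_p(k) = \frac{G(1,p)}{p}\sum_{\alpha \pmod p^\times}\left(\frac{\alpha}{p}\right) e_p(-\alpha k).
\]
The inner sum is the Gauss sum of the Legendre symbol twisted at $-k$. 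It equals $\bigl(\frac{-k}{p}\bigr)G(1,p)$, and this includes the case $p \mid k$, where it vanishes by orthogonality of the nontrivial character. Hence
\[
c_p(k) = \frac{G(1,p)^2}{p}\left(\frac{-k}{p}\right).
\]
Finally, the identity $G(1,p)^2 = \bigl(\frac{-1}{p}\bigr)p$ gives $c_p(k) = \bigl(\frac{k}{p}\bigr)$. This identity follows from $|G(1,p)|^2 = p$ together with $\overline{G(1,p)} = G(-1,p) = \bigl(\frac{-1}{p}\bigr)G(1,p)$.

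I expect no serious obstacle. The only step needing care is the CRT bookkeeping in the multiplicativity, which I handle by the parametrisation $\alpha/r \equiv \alpha_1/r_1 + \alpha_2/r_2$ so that no stray inverses appear.
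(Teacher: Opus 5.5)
Your proof is correct. It has the same skeleton as the paper's: reduce to prime moduli by the Chinese remainder theorem, and note that the factor at $p=2$ vanishes because $G(1,2)=0$. The computation at odd primes, however, goes a different way.

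The paper first interchanges the sums, writing $r$ times the left-hand side as $\sum_{u \pmod{r}} \sum_{\alpha} e_r(\alpha(u^2-k))$ with $\alpha$ over reduced residues. The inner sum is then a Ramanujan sum. At an odd prime $p$ it equals $p-1$ or $-1$ according as $u^2 \equiv k \pmod{p}$ or not, and counting the $1+\left(\frac{k}{p}\right)$ square roots of $k$ gives $p\left(\frac{k}{p}\right)$ immediately. That route needs only orthogonality and the count of square roots modulo $p$. Multiplicativity in $r$ reduces to that of Ramanujan sums, since the summand depends on $u$ only through its residues.

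You keep the Gauss sums intact and instead invoke the classical facts $G(\alpha,p)=\left(\frac{\alpha}{p}\right)G(1,p)$ and $G(1,p)^2=\left(\frac{-1}{p}\right)p$. This is slightly more machinery, but it shows directly why a Legendre symbol appears. Your explicit bookkeeping with $\alpha/r \equiv \alpha_1/r_1+\alpha_2/r_2 \pmod{1}$ also justifies the factorization that the paper only asserts.

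One small remark on your discarded first attempt. The formula $G(\alpha,r_1r_2)=G(\alpha_1r_2,r_1)G(\alpha_2r_1,r_2)$ was not merely awkward but incorrect. Writing $u=u_1r_2+u_2r_1$ produces $r_2^2$ and $r_1^2$ in those slots, so for odd $r_1,r_2$ your version is off by the factor $\left(\frac{r_2}{r_1}\right)\left(\frac{r_1}{r_2}\right)$. The parametrisation of $\alpha$ was fine all along, and the version you actually use is correct.
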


\begin{proof}
Let $S(k, r)$ denote $r$ times the sum in question. Expanding the Gauss sum, we can rewrite $S(k, r)$ by swapping the order of summation
$$S(k, r) = \sum_{\alpha \pmod r^\times} \sum_{u \pmod r} e_r(\alpha u^2) e_r(-\alpha k) = \sum_{u \pmod r} \sum_{\alpha \pmod r^\times} e_r(\alpha (u^2 - k)).$$ Because $r$ is square-free, by the Chinese remainder theorem we have
$$S(k, r) = \prod_{p \mid r} S(k, p) = \prod_{p \mid r} \left( \sum_{u \pmod p} \sum_{\alpha \pmod p^\times} e_p(\alpha (u^2 - k)) \right).$$ 

If $p=2 \mid r$, we evaluate  $S(k, 2)$:
$$S(k, 2) = \sum_{u = 0}^1 \sum_{\alpha = 1}^1 e_2(\alpha (u^2 - k)) = e_2(-k) + e_2(1 - k) = 0.$$

If $p$ is an odd prime, for a fixed $u$, the inner sum over $\alpha$ is 
$$\sum_{\alpha = 1}^{p-1} e_p(\alpha (u^2 - k)) = \begin{cases} p - 1 & \text{if } u^2 \equiv k \pmod p \\ -1 & \text{if } u^2 \not\equiv k \pmod p \end{cases}$$
Let $N(k, p)$ be the number of solutions to the congruence $u^2 \equiv k \pmod p$, expressed using the Legendre symbol $N(k, p) = 1 + \left( \frac{k}{p} \right)$. There are $N(k, p)$ values of $u$ that yield $p - 1$, and the remaining $p - N(k, p)$ values of $u$ yield $-1$. Summing over all $u \pmod p$ we get
$$S(k, p) = p \left( 1 + \left( \frac{k}{p} \right) \right) - p = p \left( \frac{k}{p} \right).$$ The claim now follows by multiplying over all $p|r$ and dividing by $r$.
\end{proof}

\section{Proof of Proposition \ref{prop:typeI}} \label{sec:typeI}
We consider
\begin{align*}
    \sum_{n}  w^{(\gamma)}_{dn} = \sum_{n}  a^{(\gamma)}_{dn} - \sum_{n}  b^{(\gamma)}_{dn}.
\end{align*}
We apply Poisson summation (Lemma \ref{le:poisson}) to the $k=x^2$ variable in $a_n^{(\gamma)}$ with the modulus $d \leq X^{1/2-\eta}$ and only get the $t=0$ contribution. Similarly, we evaluate the model $b_n^{(\gamma)}$ using Poisson summation (Lemma \ref{le:poisson}) with the modulus $[d,r] < X^{1/2}$ and the main term vanishes unless $r|d$ since $(\alpha,r)=1$. Since $d$ is square-free, comparing the main terms using \eqref{eq:numberofroots}  and  Lemma \ref{le:modelforsquarefree} we get
\begin{align*}
    \bigg| \sum_{n} w^{(\gamma)}_{dn} \bigg| \ll \frac{X^{1/2}}{d} \bigg| \sum_{h \leq H} \gamma_h \sum_{\substack{r \mid d \\ r > R \\ r \text{ odd}}} \left(\frac{-h}{r}\right) \bigg| + X^{-100}.
\end{align*}
By the triangle inequality we get
\begin{align*}
   \sum_{d \leq D} \mu^2(d) \bigg| \sum_{n}  w^{(\gamma)}_{dn} \bigg|    \ll  X^{1/2} (\log X) \sum_{\substack{R < r \leq D \\ r \text{ odd}}} \frac{\mu^2(r)}{r} \bigg| \sum_{h \leq H} \gamma_h \left(\frac{-h}{r}\right) \bigg|.
\end{align*}
We split the sum into dyadic intervals $r \sim E$ where $R \ll E \ll D$. Applying the quadratic large sieve (Lemma \ref{lem:quadlargesieve}) we conclude
\begin{align*}
   \sum_{d \leq D} \mu^2(d) \bigg| \sum_{n}  w^{(\gamma)}_{dn} \bigg|  \precprec X^{1/2} \max_{R \leq E \leq D} \frac{1}{E} ( H  E^{1/2} + E H^{1/2})    \precprec X^{1/2} H R^{-1/2}. \qed
\end{align*}

\section{Proof of Proposition \ref{prop:typeIfact}} \label{sec:typeIfact}

\subsection{Extracting the main term}
Our proof begins with Poisson summation, similarly as in Section \ref{sec:typeI} -- but this time we obtain an error term with potentially many frequencies $t \neq 0$.

\begin{lemma}\label{lem:typeIfact-poisson}
Under the assumptions of Proposition \ref{prop:typeIfact}, we have
\[
    \mathop{\sum_{d_1 \sim D_1} \sum_{d_2 \sim D_2}}_{(d_1,d_2)=1}
    \alpha_{d_1} \beta_{d_2}
    \sum_n w_{d_1d_2n}^{(\gamma)}
    \precprec 
    HX^{1/2} R^{-1/2}
    +
    |\mE_1|, 
\]
\begin{equation} \label{eq:E1}
    \mE_1 := 
    X^{1/2} \mathop{\sum_{d_1 \sim D_1} \sum_{d_2 \sim D_2}}_{(d_1,d_2)=1}
    \frac{\alpha_{d_1} \beta_{d_2}}{d_1 d_2}
    \sum_h \gamma_h \sum_{\substack{\omega \pmod{d_1d_2} \\ \omega^2+h \equiv 0 \pmod{d_1d_2}}}
    \sum_{t \neq 0}
    \widehat\psi_f\left(\frac{tX^{1/2}}{d_1d_2}\right) e_{d_1d_2}(\omega t).
\end{equation}
\end{lemma}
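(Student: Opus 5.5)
The plan follows the proof of Proposition \ref{prop:typeI}, with $d=d_1d_2$. Since $(d_1,d_2)=1$ and both are square-free, $d$ is square-free. We have $d\le 4D_1D_2$, and the constraints give $D_1D_2 \le X^{7/8-\cdots}$, which may exceed $X^{1/2}$; so this time the frequencies $t\neq 0$ genuinely survive.

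\textbf{Step 1 (the squares).} Unfold
\[
\sum_n a^{(\gamma)}_{dn}=\sum_h\gamma_h\sum_{x\ge 0} f(x^2/X)\mathbf 1_{d\mid x^2+h}.
\]
Split $x$ according to the residues $\omega \pmod d$ with $\omega^2+h\equiv 0 \pmod d$, and write $f(x^2/X)=\psi_f(x/X^{1/2})$, where $\psi_f(u)=f(u^2)$ is smooth, supported in $[2^{-1/2},1]$, and satisfies the same $\delta$-derivative bounds. Lemma \ref{le:poisson} with $N=X^{1/2}$ and $q=d$ then gives the main term
\[
\frac{X^{1/2}}{d}\,\widehat\psi_f(0)\sum_h\gamma_h\varrho_h(d),
\]
plus exactly the $t\ne 0$ frequencies that appear in $\mE_1$. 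Here $\widehat\psi_f(0)=I_f$, and the error is $O(X^{-100})$. The truncation $|t|\le T$ can be dropped at the cost of another $O(X^{-C})$, since $\widehat\psi_f$ decays rapidly beyond $T$. Summing against $\alpha_{d_1}\beta_{d_2}/$ produces $\mE_1$ precisely.

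\textbf{Step 2 (the model).} Insert the definition of $\sigma_0$ into $\sum_n b^{(\gamma)}_{dn}$. For each $r\le R$ and $\alpha\in(\Z/r\Z)^\times$, sum over $k\equiv -h \pmod d$ with weight $f(k/X)(2\sqrt k)^{-1}e_r(-\alpha k)$. Split $k$ modulo $[d,r]$ and apply Poisson summation (the weight $f(k/X)/(2\sqrt k)$ is smooth at scale $X$). This needs $[d,r]\le dR$ to be smaller than $X^{1-\eps}$, which holds since $D_1D_2R \le X^{7/8}$. Only the zero frequency survives, and it vanishes unless $r\mid d$, because $(\alpha,r)=1$. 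The main term is then
\[
\frac{1}{d}\int f(k/X)\frac{\d k}{2\sqrt k}\;\sum_h\gamma_h\sum_{r\mid d,\ r\le R}\frac1r\sum_{\alpha}G(\alpha,r)e_r(\alpha h).
\]
The integral equals $X^{1/2}I_f$. By Lemma \ref{le:modelforsquarefree} with $k=-h$, the inner sum is $\mathbf 1_{r\text{ odd}}(\frac{-h}{r})$.

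\textbf{Step 3 (comparing main terms).} By \eqref{eq:numberofroots}, $\varrho_h(d)=\sum_{r\mid d,\,r\text{ odd}}(\frac{-h}{r})$. The difference of the two main terms is therefore
\[
X^{1/2}I_f \mathop{\sum\sum}_{(d_1,d_2)=1}\frac{\alpha_{d_1}\beta_{d_2}}{d_1d_2}\sum_h\gamma_h\sum_{\substack{r\mid d_1d_2\\ r>R,\ r\text{ odd}}}\Big(\frac{-h}{r}\Big).
\]
Bound this by the triangle inequality as in Section \ref{sec:typeI}: write $d_1d_2=rs$ and sum over $s$ to get a factor $X^{o(1)}$, which reduces it to
\[
X^{1/2+o(1)}\sum_{\substack{R<r\le 4D_1D_2\\ r\text{ odd}}}\frac{\mu^2(r)}{r}\Big|\sum_h\gamma_h\Big(\frac{-h}{r}\Big)\Big|.
\]
Split $r$ dyadically into $r\sim E$ and apply Lemma \ref{lem:quadlargesieve} to each piece. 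This gives $X^{1/2}E^{-1}(HE^{1/2}+EH^{1/2})$, i.e.\ $HX^{1/2}E^{-1/2}+X^{1/2}H^{1/2}$. Both terms are $\precprec HX^{1/2}R^{-1/2}$, because $E>R$ and $H\ge X^\eta>R$.

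\textbf{Main obstacle.} The only mildly delicate point is Step 2. We must make sure that the Poisson step for the model has no surviving nonzero frequencies, i.e.\ that $dR$ is well below $X$ so that the nonzero frequencies are negligible. We must also check that the divisor-bounded coefficients and the bound $|\gamma_h|\le X^{o(1)}$ do not spoil the large sieve step, which only costs $X^{o(1)}$.
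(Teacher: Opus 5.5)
Your proposal is correct and follows the paper's proof essentially step for step: Poisson summation in $x$ modulo $d=d_1d_2$ for the squares, Poisson modulo $[d,r]\le X^{0.9}$ for the model (only the zero frequency survives, and it forces $r\mid d$), then comparing main terms via \eqref{eq:numberofroots} and Lemma \ref{le:modelforsquarefree} and bounding the tail $r>R$ with the quadratic large sieve as in Section \ref{sec:typeI}. The only cosmetic difference is that the paper takes the even weight $\psi_f(u)=\tfrac12 f(u^2)$ on all of $\R$ instead of your one-sided $f(u^2)$; this yields the same $\mE_1$, because the roots $\omega$ are symmetric under $\omega\mapsto-\omega$.
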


\begin{proof}
We consider again
\[
\begin{aligned}
    \sum_n w_{dn}^{(\gamma)}
    &=
    \sum_n a_{dn}^{(\gamma)}
    -
    \sum_n b_{dn}^{(\gamma)},
\end{aligned}
\]
where $d = d_1d_2$ is square-free, $d \asymp D_1D_2$.
As in Section \ref{sec:typeI}, we apply Poisson summation with moduli $d, [d, r] \le X^{0.9}$. Specifically, letting $\psi_f(u) := \frac12 f(u^2)$, so that $\widehat\psi_f(0)=I_f$, we have by Poisson summation (as in Lemma \ref{le:poisson} but without truncating the sum over $t$, for later technical convenience)
\begin{equation} \label{eq:adn-poisson}
\begin{aligned}
    \sum_n a_{dn}^{(\gamma)}
    &=
    \sum_h \gamma_h
    \sum_{\substack{\omega \pmod{d} \\ \omega^2+h \equiv 0 \pmod{d}}}
    \sum_{x \equiv \omega \pmod{d}} \psi_f\left(\frac{x}{\sqrt{X}}\right) 
    \\
    &=
    \frac{X^{1/2}}{d}
    \sum_h \gamma_h \sum_{\substack{\omega \pmod{d} \\ \omega^2+h \equiv 0 \pmod{d}}}
    \sum_t
    \widehat\psi_f\left(\frac{tX^{1/2}}{d}\right) e_d(\omega t).
\end{aligned}
\end{equation}
Similarly, by Lemma \ref{le:poisson} and then Lemma \ref{le:modelforsquarefree}, we have
\[
\begin{aligned}
    \sum_n b_{dn}^{(\gamma)}
    &=
    \sum_h \gamma_h
    \sum_{r \le R} \frac{1}{r} \sum_{\alpha \pmod{r}^\times} G(\alpha, r) 
    \sum_{\substack{\beta \pmod{[d,r]} \\ \beta \equiv -h \pmod{d}}}
    e_r(-\alpha\beta)
    \sum_{k \equiv \beta \pmod{[d,r]}} \psi_f\left(\frac{\sqrt{k}}{\sqrt{X}}\right) \frac{1}{\sqrt{k}}
    \\
    &=
    \sum_h \gamma_h
    \sum_{r \le R} \frac{1}{r} \sum_{\alpha \pmod{r}^\times} G(\alpha, r)\, 
    e_r(\alpha h)
    \one_{r \mid d}
    \frac{X^{1/2}}{[d,r]}\widehat\psi_f(0) + O(X^{-100})
    \\
    &=
    \frac{X^{1/2}}{d} \widehat\psi_f(0)
    \sum_h \gamma_h
    \sum_{\substack{r \mid d \\ r \le R \\ r \text{ odd}}} \left(\frac{-h}{r}\right)
    + O(X^{-100}).
\end{aligned}
\]
Applying \eqref{eq:numberofroots} to the term $t = 0$ from \eqref{eq:adn-poisson}, it follows that
\[
\begin{aligned}
    \sum_n w_{dn}^{(\gamma)}
    &=
    \frac{X^{1/2}}{d}
    \sum_h \gamma_h \sum_{\substack{\omega \pmod{d} \\ \omega^2+h \equiv 0 \pmod{d}}}
    \sum_{1 \le |t| \le D_1D_2 X^{\eps-1/2}}
    \widehat\psi_f\left(\frac{tX^{1/2}}{d}\right) e_d(\omega t)
    \\
    &+
    O\bigg(\frac{X^{1/2}}{d}\bigg\vert \sum_{h \le H} \gamma_h \sum_{\substack{r \mid d \\ r > R \\ r \text{ odd}}} \left(\frac{-h}{r}\right) \bigg\vert \bigg)
    +
    O(X^{-100}).
\end{aligned}
\]
Summing over square-free moduli $d = d_1d_2$, with $d_1 \sim D_1$, $d_2 \sim D_2$ and divisor-bounded coefficients, the first line above leads to \eqref{eq:E1}. The error term on the second line gives an acceptable contribution by the quadratic large sieve (Lemma \ref{lem:quadlargesieve}), just as in Section \ref{sec:typeI}.
\end{proof}

\subsection{The Cauchy--Schwarz step}
\label{subsec:typeIfact-Cauchy-Schwarz}
We now use the factorization of the moduli $d = d_1d_2$ by applying Cauchy--Schwarz with $d_1, h$ outside and $d_2, t$ inside.
\begin{lemma} \label{lem:typeIfact-CS}
For $\mE_1$ as in \eqref{eq:E1} and any $\eps \in (0, 1)$, one has
\[
    |\mE_1|^2 \precprec_\eps H^2XR^{-1} + H^2 X^{1/2} D_1 + |\mE_2|,
\]
\begin{equation} \label{eq:E2}
\begin{aligned}
    \mE_2 
    := \frac{XH^2}{D_1}
    \sum_{g \le 2D_2} 
    \frac{1}{g^3}
    \bigg\vert
    \sum_{\substack{b, b' \sim D_2/g \\ (b, b') = 1}}
    \frac{\beta_{gb}\bar\beta_{gb'}}{(bb')^2}
    \sum_{d_1}
    \frac{(d_1, g)}{d_1}
    \psi_1\left(\frac{d_1}{D_1}\right)
    \sum_{1 \le |\ell| \le X^\eps \frac{D_1D_2^2}{gH}}
    \widehat\psi_0\left(\frac{\ell H}{d_1gbb'}\right)
    \\
    \times \sum_{t, t' \neq 0}
    \widehat\psi_f\left(\frac{tX^{1/2}}{d_1gb}\right) 
    \overline{\widehat\psi_f}\left(\frac{t'X^{1/2}}{d_1gb'}\right) 
    \sum_{\omega \pmod{d_1gbb'}}
    e_{d_1gbb'}(-\ell\omega^2 + \omega(b't-bt'))
    \bigg\vert.
\end{aligned}
\end{equation}
\end{lemma}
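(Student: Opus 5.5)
We apply Cauchy--Schwarz to \eqref{eq:E1} with $(h,d_1)$ outside. Write $\mE_1 = X^{1/2}\sum_{d_1\sim D_1}\frac{\alpha_{d_1}}{d_1}\sum_h \gamma_h\, \Sigma(d_1,h)$, where
\[
\Sigma(d_1,h)=\sum_{\substack{d_2\sim D_2\\(d_1,d_2)=1}}\frac{\beta_{d_2}}{d_2}\sum_{\substack{\omega \pmod{d_1d_2}\\ \omega^2+h\equiv 0 \pmod{d_1d_2}}}\sum_{t\neq 0}\widehat\psi_f\Big(\frac{tX^{1/2}}{d_1d_2}\Big)e_{d_1d_2}(\omega t).
\]
Cauchy--Schwarz gives $|\mE_1|^2\precprec X\cdot\frac{H}{D_1}\sum_{d_1}\psi_1(d_1/D_1)\frac{1}{d_1}\sum_h \psi_0(h/H)|\Sigma(d_1,h)|^2$. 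Here we insert a smooth majorant $\psi_1$ for $d_1$ and $\psi_0$ for $h$, which is allowed since $|\gamma_h|^2\precprec 1$. The crude version keeps only $d_2$ and $t$ inside. The ``root coupling'' refinement keeps the residue $\omega \pmod{d_1}$ outside as well: after the CRT factorization $\omega\leftrightarrow(\omega_1 \bmod d_1,\omega_2 \bmod d_2)$, we sum over pairs $(h,\omega_1)$ with $\omega_1^2\equiv -h \pmod{d_1}$. Since $\varrho_h(d_1)\precprec 1$, this costs only $X^{o(1)}$.

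Expanding the square gives pairs $d_2=gb$, $d_2'=gb'$ with $g=(d_2,d_2')$ and $(b,b')=1$. A common root $\omega \pmod{d_1gbb'}$ of $\omega^2\equiv -h$ modulo $\mathrm{lcm}=d_1gbb'$ then parametrizes the coupled pair of roots, and the exponential becomes $e_{d_1gbb'}(\omega(b't-bt'))$. For this to be exact we must check that the two roots agree modulo $d_1 g$; this is arranged by keeping $\omega \bmod d_1$ outside, and the agreement modulo $g$ is the delicate part. Square-freeness of $\beta$ and $(d_1,d_2)=1$ are used here. We then sum over $h$ first: $h\equiv-\omega^2 \pmod{d_1gbb'}$ with smooth weight $\psi_0(h/H)$, and Poisson summation (Lemma \ref{le:poisson}) gives
\[
\frac{H}{d_1gbb'}\sum_{|\ell|\le X^\eps D_1D_2^2/(gH)}\widehat\psi_0\Big(\frac{\ell H}{d_1gbb'}\Big)e_{d_1gbb'}(-\ell\omega^2).
\]
The factors $1/(bb')^2$, $1/g^3$ and $(d_1,g)/d_1$ in \eqref{eq:E2} come from collecting $1/(d_2d_2')$, this $1/(d_1gbb')$, and the normalizations. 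The $(d_1,g)$ factor comes from how the coupled roots are counted when $g$ and $d_1$ interact, though with $(d_1,d_2)=1$ it is essentially $1$. Terms with $\ell\neq 0$ give exactly $\mE_2$.

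It remains to bound the $\ell=0$ frequency, where the $\omega$-sum is the complete linear sum $\sum_{\omega \bmod d_1gbb'}e(\omega(b't-bt')/(d_1gbb'))$. This vanishes unless $d_1gbb'\mid b't-bt'$. Because $(b,b')=1$, that condition forces $b\mid t$ and $b'\mid t'$, and then $d_1g\mid (t/b-t'/b')$. The diagonal $t/b=t'/b'$ together with $g=d_2$ (i.e.\ $b=b'=1$) gives, after reinserting the normalizations, the contribution $H^2X^{1/2}D_1$. Here $t$ has $\precprec D_1D_2X^{-1/2}$ values, each weighted by $X H^2 D_1^{-1}\cdot D_1^{-1}D_2^{-1}\cdot\ldots$, and the bookkeeping is routine. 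Off-diagonal solutions are rarer, since $t/b-t'/b'\ll D_1D_2X^{-1/2}/D_2$ is smaller than $d_1g$ unless $g$ is large, and these are absorbed similarly. The $H^2XR^{-1}$ term is just the square of the Type I-type error already present in Lemma \ref{lem:typeIfact-poisson}, carried along.

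I expect the main obstacle to be the root coupling: showing that the pair of roots modulo $d_1d_2$ and $d_1d_2'$ sharing the same $h$ can be merged into one $\omega$ modulo $d_1gbb'$. The risk is that the roots modulo $g$ need not coincide (they could differ by sign), giving a factor $2^{\omega(g)}$. I would handle this by summing over the finitely many sign patterns, or by including the $\omega \bmod g$ residue in the outer variable too, accepting the $X^{o(1)}$ loss.
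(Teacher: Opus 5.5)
\textbf{Gap: the root coupling.} You correctly single out the crux, namely merging the roots $\omega \bmod d_1gb$ and $\omega' \bmod d_1gb'$ into one $h$-root modulo $d_1gbb'$. Neither of your proposed fixes works.

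After a plain Cauchy--Schwarz with $(d_1,h,\omega_1)$ outside, the expanded square genuinely contains pairs with $\omega\equiv-\omega'\not\equiv\omega' \pmod p$ at some primes $p\mid g$. These are not a harmless multiplicative loss. For a mismatch pattern $g=g_+g_-$ the merged phase is $e_{d_1gbb'}(\widetilde\omega(b't-\epsilon bt'))$, where $\epsilon\equiv1 \pmod{d_1g_+bb'}$ and $\epsilon\equiv-1 \pmod{g_-}$. So the linear coefficient is no longer the small quantity $b't-bt'$. Such terms are not part of $\mE_2$, and the later steps could not absorb them either: the phase $(b't-bt')^2/(d_1gbb'\ell)$ produced by reciprocity is removed precisely because it is small.

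Your alternative of putting $\omega \bmod g$ outside cannot be implemented as stated, since $g=(d_2,d_2')$ only exists after the square is expanded. The natural substitute is to fix a root at every prime $p\le 2D_2$ outside. That breaks down because $-h$ has no square root modulo roughly half of these primes, and the number of such tuples is exponential in $\pi(2D_2)$, not $X^{o(1)}$.

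The paper's device handles exactly this.
\begin{itemize}
\item For each $p\le 2D_2$ it fixes an $h$-independent index set $J_{d_1}(p)$ of size $j_{d_1}(p)\in\{2,p\}$. The lifts of $\omega_1$ are assigned to the first $i_{d_1}(p,h,\omega_1)$ ``active'' indices, and inactive indices contribute $0$.
\item Then \eqref{eq:CRT-trick} writes the root sum for every $d_2$ as $\frac{j_{d_1}(d_2)}{j_{d_1}(Q)}\sum_{\vec j\in J_{d_1}(Q)}E_{d_1,\vec j|_{d_2}}$, with $Q=\prod_{p\le 2D_2}p$.
\item Cauchy--Schwarz is applied with $\vec j$ outside as well. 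This costs nothing, because $\frac{1}{j_{d_1}(Q)}\sum_{\vec j}$ is an average.
\item After expanding, both copies use the same label at the primes of $g$, so $E_{d_1,\vec j|_{gb}}(gb,h,\omega_1,t)\,\overline{E_{d_1,\vec j|_{gb'}}(gb',h,\omega_1,t')}=E_{d_1,\vec j|_{gbb'}}(gbb',h,\omega_1,b't-bt')$.
\item Reading \eqref{eq:CRT-trick} backwards then gives the sum over $h$-roots $\omega \bmod d_1gbb'$ with $\omega\equiv\omega_1 \pmod{d_1}$, with no mismatched terms at all.
\end{itemize}

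\textbf{Two smaller points.}

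First, the weight $(d_1,g)$ and the unrestricted $d_1$-sum in $\mE_2$ arise because the paper removes $(d_1,d_2)=1$ by M\"obius inversion \emph{before} Cauchy--Schwarz, keeping $r\mid(d_1,d_2)$ outside. It then combines $\sum_{r\mid(d_1,g)}1$ with the leftover factor $j_{d_1}(g)$ from $j_{d_1}(gb)j_{d_1}(gb')=j_{d_1}(g)j_{d_1}(gbb')$, which gives $2^{\Omega(g)}(d_1,g)$. Keeping the coprimality, as you do, produces a variant carrying $\mathbf 1_{(d_1,gbb')=1}$, not the stated $\mE_2$.

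Second, in the $\ell=0$ term the diagonal is not confined to $b=b'=1$. Since $|u|,|u'|\le d_1gX^{\eps'-1/2}<d_1g$, the congruence $u\equiv u' \pmod{d_1g}$ forces $u=u'$ for every $g$. The total is $\frac{XH^2}{D_1D_2^2}\sum_g\frac{D_2^2}{g^2}D_1^2gX^{\eps'-1/2}\ll H^2X^{1/2+2\eps'}D_1$, and it is dominated by small $g$, not by $g=d_2$.
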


\begin{remark}
The term $X^{1/2} H^2 D_1$ above matches the contribution of the diagonal terms from the Cauchy--Schwarz step, and gives an acceptable error in Proposition \ref{prop:typeIfact} as long as
\begin{equation} \label{eq:typeIfact-diagonals}
    H^2 X^{1/2} D_1 
    \precprec 
    (H X^{1/2} R^{-1/2})^2
    \qquad 
    \iff 
    \qquad 
    D_1 \precprec X^{1/2} R^{-1}.
\end{equation}
\end{remark}

\begin{proof}[Proof of Lemma \ref{lem:typeIfact-CS}]
We first remove the coprimality constraint $(d_1, d_2) = 1$ by M\"obius inversion in \eqref{eq:E1} with a parameter $r$, and retain the value of $\omega \equiv \omega_1 \pmod{d_1}$. Using the triangle inequality in $d_1, \omega_1, r, h$, we reach
\begin{equation} \label{eq:E1-triangle}
    |\mE_1|
    \precprec
    \frac{X^{1/2}}{D_1} \sum_{\substack{d_1 \sim D_1, r \mid d_1 \\ \omega_1 \pmod{d_1} \\ \omega_1^2+h \equiv 0 \pmod{d_1}}}
    \sum_{h \le H} 
    \bigg\vert
    \sum_{\substack{d_2 \sim D_2 \\ d_2 \equiv 0 \pmod{r}}}
    \frac{\beta_{d_2}}{d_2}
    \sum_{t \neq 0}
    \widehat\psi_f\left(\frac{tX^{1/2}}{d_1d_2}\right) 
    \sum_{\substack{\omega \pmod{d_1d_2} \\ \omega^2+h \equiv 0 \pmod{d_1d_2} \\ \omega \equiv \omega_1 \pmod{d_1}}}
    e_{d_1d_2}(\omega t)
    \bigg\vert.
\end{equation}
Note that $d_1$ may no longer be square-free, but $d_2$ is (due to the support of $\beta_{d_2}$). 

Let us call an $h$-root $\omega \pmod{d}$ any solution to $\omega^2 + h \equiv 0 \pmod{d}$.
Applying Cauchy--Schwarz in \eqref{eq:E1-triangle} would normally produce two $h$-roots $\omega \pmod{d_1d_2}$ and $\omega' \pmod{d_1d_2'}$, which may not agree modulo $(d_2,d_2')$. We intend to keep `as much as possible' of the sum over $\omega$ outside the Cauchy--Schwarz step, to ultimately arrive at a single $h$-root $\omega \pmod{d_1[d_2,d_2']}$. Avoiding extraneous dependencies on the $h$-variable requires a somewhat technical set-up.

\textbf{Preparations for root coupling:}
For any prime $p \le 2D_2$, define 
\[
    j_{d_1}(p) := 
    \begin{cases}
    2, & p \nmid d_1, \\ 
    p, & p \mid d_1,
    \end{cases}
    \qquad\quad 
    J_{d_1}(p) := 
    \{1, 2, \ldots, j_{d_1}(p)\}.
\] 
Given an $h$-root $\omega_1 \pmod{d_1}$, 
denote by $i_{d_1}(p, h, \omega_1)$ the number of ways how $\omega_1 \pmod{p^{v_p(d_1)}}$ extends to an $h$-root $\omega \pmod{p^{v_p(d_1)+1}}$; then we always have $i_{d_1}(p, h, \omega_1) \le j_{d_1}(p)$. To the \emph{active} indices $j \le i_{d_1}(p, h, \omega_1)$, we assign the extended $h$-roots $\omega_{d_1,j}(p, h, \omega_1) \pmod{p^{v_p(d_1)+1}}$.

For square-free $d_2$, we denote
\[
    j_{d_1}(d_2) := \prod_{\text{prime }p \mid d_2} j_{d_1}(p),
    \qquad 
    J_{d_1}(d_2) := \prod_{\text{prime }p \mid d_2} J_{d_1}(p).
\]
We call an index $\vec{j} \in J_{d_1}(d_2)$ \emph{active} (with respect to $h$ and $\omega_1$) iff all of its local components are active. We assign $h$-roots $\omega_{d_1,\vec{j}}(d_2, h, \omega_1) \pmod{d_1d_2}$ to the active indices $\vec{j} \in J_{d_1}(d_2)$ in the unique way that is compatible with the local extended $h$-roots by the Chinese Remainder Theorem, and we let
\begin{equation} \label{eq:E-def}
    E_{d_1,\vec{j}}(d_2, h, \omega_1, t) := 
    \begin{cases}
        e_{d_1d_2}(\omega_{d_1,\vec{j}}(d_2,h,\omega_1) t),
        & \vec{j} \in J_{d_1}(d_2) \text{ is active},
        \\ 
        0, & \text{otherwise}.
    \end{cases}
\end{equation}
Globally, we write 
\[
    Q := \prod_{\text{prime }p \le 2D_2} p,
\]
and we may restrict global indices $\vec{j} \in J_{d_1}(Q)$ to those components at primes $p \mid d_2$ by writing $\vec{j} \vert_{d_2} \in J_{d_1}(d_2)$.
Then we have 
\begin{equation} \label{eq:CRT-trick}
    \sum_{\substack{\omega \pmod{d_1d_2} \\ \omega^2+h \equiv 0 \pmod{d_1d_2} \\ \omega \equiv \omega_1 \pmod{d_1}}}
    e_{d_1d_2}(\omega t)
    =
    \sum_{\vec{j} \in J_{d_1}(d_2)}
    E_{d_1,\vec{j}}(d_2, h, \omega_1, t)
    =
    \frac{j_{d_1}(d_2)}{j_{d_1}(Q)}
    \sum_{\vec{j} \in J_{d_1}(Q)}
    E_{d_1,\vec{j}\vert_{d_2}}(d_2, h, \omega_1, t),
\end{equation}
since each index $\vec{j} \in J_{d_1}(d_2)$ extends to an index $\vec{j} \in J_{d_1}(Q)$ (which may or may not be active at primes $p \nmid d_2$) in exactly $j_{d_1}(Q/d_2) = j_{d_1}(Q)/j_{d_1}(d_2)$ ways.

\textbf{Cauchy--Schwarz and root coupling:}
We now plug \eqref{eq:CRT-trick} into \eqref{eq:E1-triangle}, pull $\vec{j}$ out of the triangle inequality, and apply Cauchy--Schwarz with the sums over $d_1, \omega_1, r, h, \vec{j}$ outside. We then insert smooth weights in the $h$ and $d_1$ variables:
let $\psi_0, \psi_1 : \R \to [0, \infty)$ be fixed smooth functions supported in $[-0.5, 1.5]$ and $[0.5, 2.5]$ respectively, with $\psi_0(t_0), \psi_1(t_1) \gg 1$ for $t_0 \in [0, 1]$, $t_1 \in [1, 2]$. After inserting these weights, expanding the square, and writing
\[
    g = (d_2, d_2') \equiv 0 \pmod{r},
    \qquad 
    d_2 = gb, 
    \qquad 
    d_2' = gb',
\]
where $b, b', g$ are square-free and pairwise-coprime, we obtain
\[
\begin{aligned}
    |\mE_1|^2 \precprec 
    \frac{XH}{D_1} \sum_{\substack{d_1 \\ r \mid d_1 \\ \omega_1 \pmod{d_1} \\ \omega_1^2+h \equiv 0 \pmod{d_1}}}
    \psi_1\left(\frac{d_1}{D_1}\right)
    \sum_h \psi_0\left(\frac{h}{H}\right)
    \frac{1}{j_{d_1}(Q)}
    \sum_{\vec{j} \in J_{d_1}(Q)}
    \sum_{\substack{g \le 2D_2 \\ g \equiv 0 \pmod{r}}} \sum_{\substack{b, b' \sim D_2/g \\ (b, b') = 1}}
    \frac{\beta_{gb} \bar\beta_{gb'}}{g^2bb'}
    \\
    \times\, 
    j_{d_1}(gb) j_{d_1}(gb')
    \sum_{t, t' \neq 0}
    \widehat\psi_f\left(\frac{tX^{1/2}}{d_1gb}\right)
    \overline{\widehat\psi_f}\left(\frac{t'X^{1/2}}{d_1gb'}\right) 
    E_{d_1,\vec{j}\vert_{gb}}(gb,h,\omega_1,t)
    \overline{E}_{d_1,\vec{j}\vert_{gb'}}(gb',h,\omega_1,t').
\end{aligned}
\]
One easily sees from \eqref{eq:E-def} that
\[
    E_{d_1,\vec{j}\vert_{gb}}(gb,h,\omega_1,t)
    \overline{E}_{d_1,\vec{j}\vert_{gb'}}(gb',h,\omega_1,t')
    =
    E_{d_1,\vec{j}\vert_{gbb'}}(gbb',h,\omega_1,b't-bt'),
\]
and applying \eqref{eq:CRT-trick} backwards (with $d_2$ replaced by $gbb'$ and $t$ replaced by $b't-bt'$) gives
\[
    \frac{j_{d_1}(gbb')}{j_{d_1}(Q)}
    \sum_{\vec{j} \in J_{d_1}(Q)}
    E_{d_1,\vec{j}\vert_{gbb'}}(gbb',h,\omega_1,b't-bt')
    =
    \sum_{\substack{\omega \pmod{d_1gbb'} \\ \omega^2+h \equiv 0 \pmod{d_1gbb'} \\ \omega \equiv \omega_1 \pmod{d_1}}}
    e_{d_1gbb'}(\omega(b't-bt')).
\]
We may now plug this into our bound for $|\mE_1|^2$ and combine the sums over $\omega_1, \omega$. Since $j_{d_1}(gb) j_{d_1}(gb') = j_{d_1}(g) j_{d_1}(gbb')$, this leaves behind a factor of $j_{d_1}(g)$; by combining this with the sum over $r \mid (d_1, g)$, we obtain (since $g$ is square-free)
\[
    j_{d_1}(g) \sum_{r \mid (d_1, g)} 1
    =
    \prod_{p \mid g} (2 \cdot \one_{p \nmid d_1} + 2p \cdot \one_{p \mid d_1})
    =
    2^{\Omega(g)} 
    (d_1, g).
\]
Moving the sum over $g$ to the front and the sum over $h$ to the end, we conclude by the triangle inequality and the divisor bound in $g$ that
\begin{equation} \label{eq:before-poisson-h}
\begin{aligned}
    |\mE_1|^2 \precprec 
    \frac{XH}{D_1} 
    \sum_{g \le 2D_2} 
    \bigg\vert 
    \sum_{\substack{b, b' \sim D_2/g \\ (b, b') = 1}}
    \frac{\beta_{gb} \bar\beta_{gb'}}{g^2bb'}
    \sum_{d_1}
    (d_1, g)
    \psi_1\left(\frac{d_1}{D_1}\right)
    \sum_{t, t' \neq 0}
    \widehat\psi_f\left(\frac{tX^{1/2}}{d_1gb}\right)
    \overline{\widehat\psi_f}\left(\frac{t'X^{1/2}}{d_1gb'}\right) 
    \\
    \times\, 
    \sum_{\omega \pmod{d_1gbb'}}
    e_{d_1gbb'}(\omega(b't-bt'))
    \sum_{h \equiv -\omega^2 \pmod{d_1gbb'}} \psi_0\left(\frac{h}{H}\right)
    \bigg\vert.
\end{aligned}
\end{equation}

\textbf{Poisson summation and the trivial frequency:}
Finally, we apply Lemma \ref{le:poisson} to the inner sum from \eqref{eq:before-poisson-h} and obtain
\[
\begin{aligned}
    \sum_{h \equiv -\omega^2 \pmod{d_1gbb'}}
    \psi_0\left(\frac{h}{H}\right)
    =
    \frac{H}{d_1gbb'}
    \sum_{|\ell| \le X^\eps \frac{D_1D_2^2}{gH}} \widehat\psi_0\left(\frac{\ell H}{d_1gbb'}\right) e_{d_1gbb'}(-\ell\omega^2)
    + 
    O_\eps(X^{-100}).
\end{aligned}
\]
The terms with $\ell \neq 0$ lead to \eqref{eq:E2} after swapping sums and using the triangle inequality in \eqref{eq:before-poisson-h}, so it remains to bound the contribution of $\ell = 0$ to \eqref{eq:before-poisson-h}. After evaluating the complete linear exponential sum over $\omega$, the latter contribution is
\[
\begin{aligned}
    \precprec
    \frac{XH^2}{D_1 D_2^2}
    \sum_{g \le 2D_2} \sum_{\substack{b, b' \sim D_2/g \\ (b, b') = 1}}
    \sum_{d_1 \asymp D_1} (d_1, g)
    \bigg\vert 
    \sum_{\substack{t, t' \neq 0 \\ b't \equiv bt' \pmod{d_1gbb'}}}
    \widehat\psi_f\left(\frac{tX^{1/2}}{d_1gb}\right) 
    \overline{\widehat\psi_f}\left(\frac{t'X^{1/2}}{d_1gb'}\right)
    \bigg\vert.
\end{aligned}
\]
The congruence $b't \equiv bt' \pmod{d_1gbb'}$ implies $t = bu$ and $t' = b'u'$ where $u \equiv u' \pmod{d_1g}$. But for small $\eps' > 0$, we may truncate the sum over $u, u'$ at $|u|, |u'| \le d_1gX^{\eps'-1/2}$ up to an error of $O_\delta(X^{-100})$, due to the Schwartz decay of $\widehat\psi_f$. The congruence then forces $u = u'$, and we are left with a contribution of
\[
\begin{aligned}
    \precprec
    \frac{XH^2}{D_1 D_2^2}
    \sum_{g \le 2D_2} \sum_{\substack{b, b' \sim D_2/g \\ (b, b') = 1}}
    \sum_{d_1 \asymp D_1} (d_1, g)
    \sum_{1 \le |u| \le d_1gX^{\eps'-1/2}} 
    1
    &\ll 
    \frac{XH^2}{D_1 D_2^2} \sum_{g \le 2D_2} \frac{D_2^2}{g^2} D_1^2g X^{\eps'-1/2}
    \\
    &\ll_{\eps'} 
    H^2 X^{1/2+2\eps'} D_1.
\end{aligned}
\]
This completes the proof of the lemma.
\end{proof}

\begin{remark}
It is crucial for the last step above that the main-term contribution of the trivial frequency $t = 0$ was excluded from $\mE_1$ in Lemma \ref{lem:typeIfact-poisson}: otherwise, the term $u = 0$ would have contributed roughly $\sqrt{XH^2}$ to the bound for $|\mE_1|$, which is not an acceptable error in Proposition \ref{prop:typeIfact}. The condition $t, t' \neq 0$ will \emph{not} be critical in our computations henceforth.
\end{remark}

\subsection{The quadratic reciprocity step}
The innermost sum in \eqref{eq:E2} can be recognized as a shifted quadratic Gauss sum, so let us quickly introduce some notation.
Throughout this section, for any integers $r_1, r_2, s$ with $r_2 \neq 0$, we denote
\begin{equation} \label{eq:shifted-gausssumdef}
    G(r_1, r_2, s) := 
    \sum_{u \pmod{|r_2|}} e_{r_2}(r_1u^2 + su).
\end{equation}
In the notation of \eqref{eq:gausssumdef}, we have $G(r_1, r_2) = G(r_1, r_2, 0)$. The following general bound holds.
\begin{lemma}[Square-root cancellation for Gauss sums] \label{lem:quadr-gauss-bound} Let $r_1, r_2, s \in \Z$ with $r_2 \neq 0$. Then we have
\[
    |G(r_1, r_2, s)| 
    \le 
    \sqrt{2|r_2|(r_1,r_2,s)}.
\]
Moreover, we have $G(r_1,r_2,s) = 0$ unless $(r_1, r_2) \mid s$.
\end{lemma}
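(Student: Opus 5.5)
The plan is to prove both claims at once from the squared modulus, computed directly, and then use the Chinese Remainder Theorem to pass to prime powers where the bound is easy. Write $q=|r_2|$. Expanding $|G(r_1,r_2,s)|^2$ and substituting $u=v+w$ gives
\[
|G(r_1,r_2,s)|^2=\sum_{w \pmod q} e_{r_2}(r_1w^2+sw)\sum_{v\pmod q} e_{r_2}(2r_1wv).
\]
The inner sum equals $q\,\mathbf 1_{q\mid 2r_1w}$. The condition $q\mid 2r_1 w$ says that $w$ is a multiple of $q/(q,2r_1)$, so there are $(q,2r_1)\le 2(q,r_1)$ admissible $w$. This already yields the crude bound $|G|^2\le 2q(r_1,q)$. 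It remains to replace $(r_1,q)$ by $(r_1,q,s)$ and to show vanishing when $(r_1,q)\nmid s$.

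For the vanishing, set $e=(r_1,q)$ and suppose $e\nmid s$. Shifting $u\mapsto u+q/e$ fixes the sum modulo $q$. It changes the phase by $e_{r_2}\big(r_1(2u q/e+q^2/e^2)+sq/e\big)$. Since $e\mid r_1$, the term $r_1\cdot 2uq/e$ is a multiple of $q$, so it contributes trivially. The term $r_1q^2/e^2=(r_1/e)(q/e)q$ is also a multiple of $q$. The only remaining factor is therefore $e(s/e)\cdot(\pm1)$, which is not $1$ because $e\nmid s$. Hence $G=e(\pm s/e)G$, and so $G=0$.

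For the bound, I may now assume $e\mid s$. Write $r_1=er_1'$, $q=eq'$ and $s=es'$. Since $u\mapsto e_{q'}(r_1'u^2+s'u)$ is $q'$-periodic, grouping $u$ by residue mod $q'$ gives $G(r_1,r_2,s)=e\,G(r_1',\pm q',s')$ with $(r_1',q')=1$. The crude bound gives $|G(r_1',q',s')|\le\sqrt{2q'}$. Combining, $|G|\le e\sqrt{2q/e}=\sqrt{2qe}$. This equals $\sqrt{2q(r_1,q,s)}$ because $e\mid s$ gives $(r_1,q,s)=e$.

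The only delicate point is the factor $(q,2r_1)$ in the coprime case when $q$ is even. There $(q',2r_1')\le 2$, so the crude computation gives $|G|^2\le 2q'$, and this is exactly what the constant $2$ in the statement accommodates. Consequently no separate treatment of $p=2$ is needed, and the whole argument is elementary.
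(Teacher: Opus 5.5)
Your proposal is correct and is essentially the paper's proof: the shift $u\mapsto u+|r_2|/(r_1,r_2)$ forces vanishing unless $(r_1,r_2)\mid s$, and the differencing computation gives $|G(r_1,r_2,s)|^2\le 2|r_2|(r_1,r_2)$, which is the claimed bound once $(r_1,r_2)\mid s$. Your further reduction $G=e\,G(r_1',\pm q',s')$ to the coprime case (and the announced CRT step, which you never actually use) is harmless but redundant, since in that situation the crude bound already equals $2|r_2|(r_1,r_2,s)$.
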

\begin{proof}
For the vanishing assertion, let $d = (r_1, r_2)$ and change variables $u \gets u+(r_2/d)$; the quadratic term does not change, so $G(r_1, r_2, s) = e_d(s)\, G(r_1, r_2, s)$.

The bound $|G(r_1,r_2,s)|^2 \le 2|r_2|(r_1,r_2)$ follows by expanding the square, changing variables $u' \gets u'+u$, and evaluating the linear sum over $u$. If the sum does not vanish, then we also have $(r_1,r_2) = (r_1,r_2,s)$ by the previous paragraph, which proves the general bound.
\end{proof}

We will also need a quadratic reciprocity law for shifted Gauss sums, without assuming any coprimality or parity constraints on $r_1, r_2$. Lemma \ref{lem:quadr-gauss-recipr} below can in fact be used to deduce the more standard quadratic reciprocity law for the Jacobi symbol.
\begin{lemma}[Quadratic Gauss reciprocity] \label{lem:quadr-gauss-recipr}
Let $r_1, r_2, s \in \Z$ with $r_1r_2 \neq 0$. Then we have
\[
    G(r_1, r_2, s) = 
    \sqrt{\frac{|r_2|}{8|r_1|}}\,
    e\left(\frac{\sgn(r_1r_2)}{8} - \frac{s^2}{4r_1r_2}\right) G(-r_2, 4r_1, -2s).
\]
\end{lemma}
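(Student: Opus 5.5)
The plan is to derive the identity from the Landsberg--Schaar / theta-function reciprocity. First we reduce to $r_1, r_2 > 0$. Complex conjugation gives $G(-r_1,r_2,-s)=\overline{G(r_1,r_2,s)}$, and $G(r_1,-r_2,s)=\overline{G(r_1,r_2,s)}$ since $e_{-r_2}(x)=\overline{e_{r_2}(x)}$. The right-hand side transforms compatibly under the same symmetries: the factor $e(\sgn(r_1r_2)/8)$ flips sign in the exponent, as does $e(-s^2/(4r_1r_2))$, and $G(-r_2,4r_1,-2s)$ conjugates. So it suffices to treat $r_1,r_2\ge 1$, with $s$ arbitrary.

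For positive $r_1,r_2$, I would insert a Gaussian damping and apply Poisson summation. Consider $\theta(\tau)=\sum_{n\in\Z} e\big((r_1 n^2+sn)\tau'/r_2\big)$ with $\tau'=1+i\epsilon$, in the form
\[
S_\epsilon:=\sum_{n\in\Z} e_{r_2}(r_1n^2+sn)\,e^{-\pi\epsilon n^2}.
\]
As $\epsilon\to0^+$ we have $S_\epsilon\sim \epsilon^{-1/2}\,G(r_1,r_2,s)/r_2$: split $n$ into residues mod $r_2$, and each class contributes $\epsilon^{-1/2}/r_2$ asymptotically. On the other hand, Poisson summation applies directly to the Gaussian $e^{-\pi n^2(\epsilon-2ir_1/r_2)+2\pi i sn/r_2}$. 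Writing $z=\epsilon-2ir_1/r_2$, the Fourier transform is
\[
z^{-1/2}\exp\!\big(-\pi (m-s/r_2)^2/z\big),
\]
taking the principal branch. As $\epsilon\to0$, we have $1/z \approx \tfrac{i r_2}{2r_1} + \tfrac{\epsilon r_2^2}{4r_1^2}$. Hence the dual sum becomes
\[
\sum_m e\big(-r_2(m-s/r_2)^2/(4r_1)\big)\,e^{-\pi\epsilon r_2^2(m-s/r_2)^2/(4r_1^2)}.
\]
The same equidistribution argument now applies with modulus $4r_1$: the phase $-(r_2m^2-2sm)/(4r_1)$ is $4r_1$-periodic in $m$, and there is an extra constant phase $-s^2/(4r_1r_2)$. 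This gives the asymptotic
\[
\epsilon^{-1/2}\cdot\frac{2r_1}{r_2}\cdot\frac{1}{4r_1}\,G(-r_2,4r_1,2s)\,e\!\left(-\frac{s^2}{4r_1r_2}\right).
\]
We also need $z^{-1/2}\to (2r_1/r_2)^{-1/2}e^{i\pi/4}=\sqrt{r_2/(2r_1)}\,e(1/8)$.

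Equating the leading coefficients of $\epsilon^{-1/2}$ gives
\[
G(r_1,r_2,s)=\sqrt{\tfrac{r_2}{2r_1}}\,e(1/8)\,\tfrac{1}{2}\,e(-s^2/(4r_1r_2))\,G(-r_2,4r_1,2s),
\]
which is $\sqrt{r_2/(8r_1)}$ times the stated phase. Finally, $G(-r_2,4r_1,2s)=G(-r_2,4r_1,-2s)$ by the substitution $u\mapsto -u$.

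The main obstacle is bookkeeping rather than ideas. We need the correct branch of $z^{-1/2}$ (this is where $e(\sgn/8)$ arises), and we need to verify that the periodicity of the dual phase really is $4r_1$ with no parity restriction. The latter holds because $r_2m^2-2sm$ changes by a multiple of $4r_1$ under $m\mapsto m+4r_1$. One must also justify the equidistribution limits, i.e.\ that $\sum_n c(n)e^{-\pi\epsilon a n^2}$ with $c$ periodic mod $q$ is asymptotic to $(\epsilon a)^{-1/2}q^{-1}\sum_{n \bmod q} c(n)$. That follows from Poisson summation within each residue class, where all nonzero frequencies are exponentially small.
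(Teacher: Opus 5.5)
Your proposal is correct and is essentially the paper's argument: the same damped sum $S_\eps$, the periodicity limit modulo $r_2$ on one side, and Gaussian Poisson summation followed by the periodicity limit modulo $4r_1$ on the other, with the principal branch of $z^{-1/2}$ producing the eighth root of unity. The only structural difference is that you first reduce to $r_1,r_2>0$ by conjugation, whereas the paper tracks the sign of $\alpha=r_1/r_2$ directly.

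One small point of rigor remains. Your expression $1/z \approx \frac{ir_2}{2r_1}+\frac{\eps r_2^2}{4r_1^2}$ is only correct up to an error of size $O(\eps^2)$, so "the dual sum becomes" is not an equality. You can fix this in either of two ways:
\begin{itemize}
\item note that this error perturbs the dual sum by only $O(\eps^{1/2})$, which is negligible against the $\eps^{-1/2}$ main term; or
\item as the paper does, write $1/z=\frac{ir_2}{2r_1}+t_\eps$ exactly, and apply the equidistribution limit for complex $t_\eps$ with $\Re t_\eps>0$ and $\Re(1/t_\eps)\to\infty$.
\end{itemize}
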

\begin{proof}
This is a fairly classical result stemming from Poisson summation for Gaussians, but we include a proof for the sake of completeness. All complex square roots below are principal. 

For any $\eps>0$, we consider the sum
\begin{equation} \label{eq:Seps}
S_\eps
:=
\sum_{n\in\mathbb Z}
e_{r_2}(r_1n^2+sn)e^{-\pi\eps n^2},
\end{equation}
and we aim to compute the limit $\lim_{\eps \to 0} \sqrt{\eps}\, S_\eps$ in two ways. We first record the following fact: for any positive integer $q$, any $q$-periodic function $h:\mathbb Z\to\mathbb C$, and any $\beta\in\mathbb R$,
\begin{equation}
\label{eq:gaussian-average}
\sqrt{t}\sum_{n\in\mathbb Z}h(n)e^{-\pi t(n+\beta)^2}
\longrightarrow
\frac{1}{q}\sum_{u \pmod{q}}h(u)
\qquad
\text{as }\Re(1/t)\to \infty,\quad \Re t>0.
\end{equation}
Indeed, for each $u \pmod{q}$, Gaussian Poisson summation gives
\begin{equation} \label{eq:gauss-poisson}
\sqrt{t}
\sum_{\substack{n\in\mathbb Z\\ n\equiv u \pmod{q}}}
e^{-\pi t(n+\beta)^2}
=
\frac{1}{q}\sum_{m\in\mathbb Z}
e^{-\pi t^{-1} \frac{m^2}{q^2} + 2\pi i (u+\beta) \frac{m}{q}}.
\end{equation}
The $m=0$ term is $1/q$, while the sum of the absolute values of the other terms tends to zero. Summing over $u \pmod{q}$ proves \eqref{eq:gaussian-average}.

On the one hand, the oscillatory factor in \eqref{eq:Seps} is $|r_2|$-periodic, so \eqref{eq:gaussian-average} directly gives
\begin{equation}
\label{eq:Seps-direct-limit}
\lim_{\eps\to0}\sqrt{\eps}\,S_\eps
=
\frac{G(r_1,r_2,s)}{|r_2|}.
\end{equation}

On the other hand, setting
\[
    \alpha := \frac{r_1}{r_2},
    \qquad 
    \beta := \frac{s}{r_2},
\]
we can apply \eqref{eq:gauss-poisson} with $q = 1$ and $u = 0$ to obtain
\[
S_\eps
=
\sum_{m \in \Z} e^{-\pi (\eps - 2i\alpha) m^2 + 2\pi i \beta m}
=
\left(\eps-2i\alpha\right)^{-1/2}
\sum_{n\in\mathbb Z}
e^{-\pi(\eps-2i\alpha)^{-1}(n+\beta)^2}.
\]
Writing
\[
\frac{1}{\eps-2i\alpha}
=
\frac{i}{2\alpha} + t_\eps
,
\qquad
t_\eps
:=
\frac{\eps}{2i\alpha(\eps-2i\alpha)},
\]
the dual sum becomes
\begin{equation}
\label{eq:dual-sum}
S_\eps
=
\left(\eps-2i\alpha\right)^{-1/2}
e\left(-\frac{s^2}{4r_1r_2}\right)
\sum_{n\in\mathbb Z}
e_{4r_1}(-r_2n^2-2sn)\,
e^{-\pi t_\eps(n+\beta)^2}.
\end{equation}
Here the oscillatory factor is $4|r_1|$-periodic. Moreover, as $\eps \to 0$, we have
\[
\Re(1/t_\eps)
=
\frac{4\alpha^2}{\eps} \to \infty,
\quad 
\Re t_\eps>0,
\]
and
\begin{equation}
\label{eq:prefactor-limit}
\frac{\sqrt{\eps}}{\sqrt{t_\eps}}
\to \left|2\alpha\right|,
\qquad\quad
\left(\eps-2i\alpha\right)^{-1/2}
\longrightarrow
|2\alpha|^{-1/2}
e\left(\frac{\sgn(\alpha)}{8}\right).
\end{equation}
Applying \eqref{eq:gaussian-average} and \eqref{eq:prefactor-limit}
to \eqref{eq:dual-sum}, we conclude that
\begin{equation}
\label{eq:Seps-dual-limit}
\lim_{\eps\to0}\sqrt{\eps}\,S_\eps
=
\frac{\sqrt{|2\alpha|}}{4|r_1|}
e\left(
\frac{\sgn(\alpha)}{8}
-\frac{s^2}{4r_1r_2}
\right)
G(-r_2,4r_1,-2s).
\end{equation}
Comparing \eqref{eq:Seps-direct-limit} with \eqref{eq:Seps-dual-limit} and recalling that $\alpha = \tfrac{r_1}{r_2}$ proves the desired identity.
\end{proof}

The following consequence of Poisson summation with a GCD-weight will also be helpful.

\begin{lemma}[Consequence of Poisson summation] \label{lem:poisson-consequence}
Let $\delta, \eps > 0$ and let $\psi$ be a smooth function supported in $[-10, 10]$ with $\psi^{(J)} \ll_J \delta^{-J}$ for all $J \ge 0$. Let $N > 1$, $g, q \ge 1$ be integers, and $M \ge X^\eps \delta^{-1} gq / N$. Then for any $C > 0$ and $a \in \Z$, one has
\[
    \sum_n (n, g)\, \psi\left(\frac{n}{N}\right) e_q(na)
    \ll 
    g^{o(1)} N \sum_{\substack{|m| \le M \\ m \equiv ga \pmod{q}}} 1
    +
    O_{\eps,C}(X^{-C}gqN),
\]
\end{lemma}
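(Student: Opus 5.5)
The plan is to remove the gcd weight by writing it as a divisor sum, and then apply the truncated Poisson formula of Lemma~\ref{le:poisson} to each resulting progression. Since $(n,g)=\sum_{e\mid (n,g)}\varphi(e)$, we have
\[
\sum_n (n,g)\,\psi\Big(\frac nN\Big)e_q(na)=\sum_{e\mid g}\varphi(e)\sum_{n\equiv 0 \pmod e}\psi\Big(\frac nN\Big)e_q(na).
\]
To use Lemma~\ref{le:poisson}, we split the inner sum according to the residue class of $n$ modulo $[e,q]$, or equivalently modulo $eq$. This gives
\[
\sum_{\substack{n \equiv 0 \pmod{e}}}\psi\Big(\frac nN\Big)e_q(na)=\sum_{\substack{c \pmod{eq}\\ c\equiv 0 \pmod e}} e_q(ca)\sum_{n\equiv c \pmod{eq}}\psi\Big(\frac nN\Big).
\]

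We apply Lemma~\ref{le:poisson} to each progression with modulus $eq\le gq$ and truncation $T=M$. This is admissible because $M\ge X^{\eps}\delta^{-1}gq/N\ge X^{\eps}\delta^{-1}eq/N$ (we may take $N^{\eps}$ there replaced by $X^{\eps}$, i.e.\ use the same standard proof of the lemma). Each progression sum becomes
\[
\frac{N}{eq}\sum_{|t|\le M}\widehat\psi\Big(\frac{tN}{eq}\Big)e_{eq}(ct)+O(X^{-C}).
\]
The error terms add up to $O(X^{-C}\sum_{e\mid g}\varphi(e)q)\ll X^{-C}gq\cdot g^{o(1)}$, which is acceptable after adjusting $C$.

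Next we evaluate the sum over $c$. Writing $c=ec'$ with $c'\pmod q$, we get
\[
\sum_{c' \pmod q}e_q(ec'a)\,e_q(c't)=q\,\mathbf 1_{t\equiv -ea \pmod q}.
\]
Bounding $\widehat\psi$ trivially by $O(1)$, the contribution of each $e$ is $\ll \varphi(e)\frac Ne\#\{|t|\le M:\ t\equiv -ea \pmod q\}$.

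The remaining issue is to replace the congruence $t\equiv -ea$ by $m\equiv ga$, which is the main obstacle. To do this we set $m=-t\,g/e$. Then $|m|\le Mg/e$, and since $e\mid g$ the map $t\mapsto m$ is injective. The congruence $t\equiv -ea \pmod q$ implies $m\equiv ga \pmod q$, because $\frac ge t\equiv -ga$. This would give a range $|m|\le Mg$ rather than $M$.

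To get the range $|m|\le M$ instead, we use the Schwartz decay of $\widehat\psi$ more carefully. For modulus $eq$, the dual sum is effectively supported on $|t|\le X^{\eps}\delta^{-1}eq/N$. It is then $|m|=|t|g/e\le X^{\eps}\delta^{-1}gq/N\le M$ that holds on the effective support, and the tail beyond it is absorbed into the $O(X^{-C})$ error. So for each $e$ we truncate at $T_e=X^{\eps}\delta^{-1}eq/N$ rather than at $M$.

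Summing over $e\mid g$ with the weight $\varphi(e)/e\le 1$ and the number of divisors $\tau(g)=g^{o(1)}$ then yields
\[
g^{o(1)}N\sum_{\substack{|m|\le M\\ m\equiv ga \pmod q}}1,
\]
as claimed.
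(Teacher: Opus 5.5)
Your proposal is correct and is essentially the paper's argument: expand $(n,g)=\sum_{e\mid (n,g)}\varphi(e)$, apply Poisson summation, and rescale the dual variable by $g/e$, so that the congruence becomes $m\equiv ga \pmod{q}$ and the effective range becomes $|m|\le X^{\eps}\delta^{-1}gq/N\le M$. The differences are only cosmetic. You Poisson-sum over residue classes modulo $eq$ and recover the congruence by orthogonality, while the paper substitutes $n=en'$ and uses modulus $q$ directly. You also truncate at $T_e$ before rescaling, whereas the paper rescales into the full progression $m\equiv ga \pmod{q}$ first and truncates at $M$ afterwards.
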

\begin{proof}
We expand $(n, g)$ in terms of the Euler totient function and apply Poisson summation (as in Lemma \ref{le:poisson} but without truncating yet) to rewrite the left-hand side as
\[
\begin{aligned}
    \sum_{r \mid g} \phi(r) \sum_{n'} \psi\left(\frac{n'r}{N}\right) e_q(n'ra)
    &=
    \sum_{r \mid g} \phi(r) \frac{N}{r} \sum_{m \equiv ra \pmod{q}} \widehat\psi\left(\frac{-mN}{rq}\right)
    \\
    &\ll 
    N
    \sum_{r \mid g}
    \sum_{m \equiv ra \pmod{q}}
    \bigg\vert \widehat\psi\left(\frac{-mN}{rq}\right) \bigg\vert.
\end{aligned}
\]
We then extend the sum over $m \in \Z$ to a sum over $m \in (r/g)\Z$, and change variables $m \gets (r/g)m$ to upper bound the sum above by
\[
    N \sum_{r \mid g} \sum_{m \equiv ga \pmod{q}} \bigg\vert \widehat\psi\left(\frac{-mN}{gq}\right) \bigg\vert
    \ll 
    g^{o(1)}
    N \sum_{m \equiv ga \pmod{q}} \bigg\vert \widehat\psi\left(\frac{-mN}{gq}\right) \bigg\vert.
\]
The last sum can be truncated at $|m| \le M$ similarly as in Lemma \ref{le:poisson} (by the Schwartz decay of $\widehat\psi$), up to an error of $O_{\eps,C}(X^{-C}gqN)$.
\end{proof}

We now return to bounding the exponential sum $\mE_2$ from \eqref{eq:E2}. 

\begin{lemma} \label{lem:typeIfact-reciprocity}
For $\mE_2$ as in \eqref{eq:E2}, one has
\[
     \mE_2 \precprec_\eps \frac{H^2 D_1^{5/2} D_2^3}{X} + |\mE_3|,
\]
\begin{equation} \label{eq:E3}
\begin{aligned}
    \mE_3 := \frac{XH^2}{D_1^{1/2} D_2^3}
    \sum_{g \le 2D_2} g^{1/2}
    \sum_{1 \le |\ell| \le X^\eps \frac{D_1D_2^2}{gH}}
    \frac{1}{\sqrt{|\ell|}}
    \sum_{\substack{b, b' \sim D_2/g \\ (b, b') = 1}}
    \sum_{u \pmod{|\ell|}}
    \sum_{\substack{|m| \le X^{2\eps}g|\ell|/D_1 \\ m \equiv g^2bb'u^2 \pmod{|\ell|}}}
    \\ 
    \times\,
    \bigg( 
    1 + \frac{D_1D_2}{X^{1/2}}
    \sum_{\substack{\frac{X^{1/2}|\ell|}{4D_1D_2} \le |n| \le 4\frac{X^{1/2}|\ell|}{D_1D_2} \\ n \equiv 2b'u \pmod{|\ell|}}} 1
    \bigg)
    \bigg( 
    1 + \frac{D_1D_2}{X^{1/2}}
    \sum_{\substack{\frac{X^{1/2}|\ell|}{4D_1D_2} \le |n'| \le 4\frac{X^{1/2}|\ell|}{D_1D_2} \\ n' \equiv 2bu \pmod{|\ell|}}} 1
    \bigg).
\end{aligned}
\end{equation}

\end{lemma}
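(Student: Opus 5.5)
The plan is to evaluate the innermost Gauss sum in \eqref{eq:E2} by Lemma \ref{lem:quadr-gauss-recipr}, and then to dualize the smooth sums over $d_1,t,t'$ by Lemma \ref{lem:poisson-consequence}.

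\textbf{Reciprocity.} Write $q=d_1gbb'$ and $s=b't-bt'$. The $\omega$-sum is $G(-\ell,q,s)$. Lemma \ref{lem:quadr-gauss-recipr} gives
\[
G(-\ell,q,s)=\sqrt{\frac{q}{8|\ell|}}\;e\Big(\frac{\pm1}{8}+\frac{s^2}{4\ell q}\Big)\,G(-q,-4\ell,-2s).
\]
The dual sum is a sum over $u \pmod{4|\ell|}$ of $e_{4\ell}(qu^2+2su)$. Up to harmless bookkeeping of the factor $4$ (splitting $u$ by residues mod $4$, or absorbing it into the congruences below), this is a sum of $e_{\ell}$ of the phase $d_1g^2bb'u^2+(b't-bt')u$, taken essentially mod $|\ell|$.

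\textbf{Removing the extra phase.} The factor $e(s^2/(4\ell q))$ is the obstacle to separating variables. Its argument has size $|s|^2/(|\ell| q)$, where $|s|\ll D_2\cdot D_1D_2X^{\eps-1/2}$ and $q\asymp D_1D_2^2/g$. This size is not small in general. So I would not remove it pointwise. Instead I would keep it inside the smooth weights in $d_1,t,t'$: it is a smooth function of these variables whose derivatives are controlled. The cost appears when we integrate by parts in the Poisson steps. Its derivative in $t$ is $\ll |s|D_2/(|\ell|q)$. Compared with the Fourier ranges, this enlarges the dual ranges by a bounded factor, or else produces the error $H^2D_1^{5/2}D_2^3/X$.

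Concretely, the trivial estimate of the whole $\mE_2$ sum using Lemma \ref{lem:quadr-gauss-bound} gives
\[
\frac{XH^2}{D_1}\sum_g g^{-3}\,(D_2/g)^{-2}\, D_1\,(D_1D_2/X^{1/2})^2\,\frac{D_1D_2^2}{gH}\,\sqrt{D_1D_2^2}\,\cdots
\]
I expect the stated term $H^2D_1^{5/2}D_2^3/X$ to be exactly the contribution of the regime where the Bézout-type phase cannot be treated as smooth, namely the regime $|s|^2 \gg |\ell| q$, bounded trivially by Lemma \ref{lem:quadr-gauss-bound}. I would split according to whether $|s|^2/(|\ell|q)\le X^{\eps}$, and bound the complementary part trivially. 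Checking that the resulting error is exactly this expression is the main computation.

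\textbf{Poisson in $d_1,t,t'$.} After reciprocity, I would insert the absolute values only outside the sums over $g,\ell,b,b',u$. The prefactor $\sqrt{q/|\ell|}$ together with $1/(bb')^2$ and $1/d_1$ yields the normalization $g^{1/2}D_1^{-1/2}D_2^{-3}|\ell|^{-1/2}$, as in \eqref{eq:E3}. For fixed $u \pmod{|\ell|}$, the $d_1$-sum carries weight $(d_1,g)\psi_1(d_1/D_1)$ times $e_\ell(d_1g^2bb'u^2)$. Lemma \ref{lem:poisson-consequence} with $q=|\ell|$, $N=D_1$ and $a=gbb'u^2$ bounds it by $D_1$ times the count of $|m|\le X^{2\eps}g|\ell|/D_1$ with $m\equiv g^2bb'u^2 \pmod{|\ell|}$.

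The $t$-sum has weight $\widehat\psi_f(tX^{1/2}/(d_1gb))$ and length $\asymp D_1D_2/X^{1/2}$, twisted by $e_\ell(b'tu)$. Poisson summation mod $|\ell|$ gives $1+\frac{D_1D_2}{X^{1/2}}\#\{n\equiv 2b'u\}$. Here $n$ lies in the stated dyadic-type range, because the Fourier transform of $\widehat\psi_f(\cdot)$ is $\psi_f$, which is supported away from $0$; this is what removes $n=0$. The extra ``$1$'' absorbs the short-sum case. The same applies to $t'$. Multiplying the three bounds and collecting the factors gives $\mE_3$.

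The hardest step, I expect, is handling the phase $e(s^2/(4\ell q))$ so that the $d_1,t,t'$ dependence remains separable. That is also where the condition $D_1^{5/2}D_2^3\le X^{2-\eta}$ will later enter.
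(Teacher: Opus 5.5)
There is a genuine gap, and it is at the step you yourself flag as hardest: what to do with the reciprocity phase. You decline to remove $e\big((b't-bt')^2/(4\ell d_1gbb')\big)$ pointwise because the phase is not small. It is indeed not small (for $g=|\ell|=1$ it has size about $D_1D_2^2/X$), but no smallness is needed. The paper writes $e(x)=1+O(|x|)$, which holds for every real $x$, with $|x|\ll (|t|+|t'|)^2/(D_1g|\ell|)$. The $O$-term is bounded trivially using Lemma \ref{lem:quadr-gauss-bound} and the decay of $\widehat\psi_f$. Summing $(|t|+|t'|)^2$ over $t,t'\ll D_1D_2X^{\eps}/X^{1/2}$ gives $(D_1D_2/X^{1/2})^4$, and the $\ell$-sum of $|\ell|^{-1}\sqrt{(\ell,d_1gbb')}$ is $X^{o(1)}$. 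One then lands exactly on $H^2D_1^{5/2}D_2^3/X$, however large the phase is. So that term is the first-order Taylor error over the whole range, not the contribution of a regime $|s|^2\gg|\ell|q$.

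Your trivial bound for the regime $|s|^2/(|\ell|q)>X^{\eps}$ is fine, since there $1\le X^{-\eps}|s|^2/(|\ell|q)$. The trouble is the complementary regime. Once the phase is replaced by $1$, the $t$- and $t'$-sums separate. Poisson in each gives $-\widehat\psi_f(0)$ plus a dual sum weighted by $\psi_f(nd_1gb/(X^{1/2}\ell))$. Because $\psi_f$ is supported on $|u|\in[2^{-1/2},1]$, this vanishes unless $|n|\asymp X^{1/2}|\ell|/(D_1D_2)$.

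Keeping $e(\phi)$ inside the weights for $\phi\le X^{\eps}$, as you propose, cannot produce \eqref{eq:E3}, for two reasons.

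\textbf{Coupling.} $\phi$ contains the bilinear term $-tt'/(2\ell d_1g)$. So the $t$- and $t'$-sums no longer factor into the product of the two brackets in \eqref{eq:E3}.

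\textbf{Loss of the lower bound on $|n|$.} Even in one variable, multiplying $\widehat\psi_f(tX^{1/2}/(d_1gb))$ by a phase with up to $X^{\eps}$ oscillations over its support means its Fourier transform is no longer a rescaled copy of $\psi_f$. It spreads over a window of width about $X^{\eps}X^{1/2}/(D_1D_2)$ in frequency. The dual ranges therefore grow by $X^{\eps}$, not by a bounded factor. More seriously, the lower bound $|n|,|n'|\ge X^{1/2}|\ell|/(4D_1D_2)$ is lost, and $n=0$ reappears. That lower bound is not cosmetic. In Lemma \ref{lem:typeIfact-counting} it is exactly what forces $g^2bn^2\ne4b'm$ and $g^2nn'\ne4m$, so that the sum over $\ell$ can be absorbed by the divisor bound. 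The sharp cut at $\phi\le X^{\eps}$ would also have to be smoothed.

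A smaller issue concerns the order of the Poisson steps. You cannot Poisson-sum in $d_1$ and then multiply three separate bounds, because the $t,t'$ weights and $\widehat\psi_0(\ell H/(d_1gbb'))$ all depend on $d_1$. You must first transform $t$ and $t'$, expand the product into four terms, and only then apply Lemma \ref{lem:poisson-consequence} to the $d_1$-sum in each term. The combined weight should be rescaled so that the lemma applies with $\delta^{-1}=X^{\eps}$.
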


\begin{remark}
The term $H^2 D_1^{5/2} D_2^3 X^{-1}$ above comes from crudely eliminating an extraneous phase from the quadratic reciprocity step, but it seems difficult to remove even with a more sophisticated argument. It gives an acceptable error in Proposition \ref{prop:typeIfact} as long as (accounting for the square in Lemma \ref{lem:typeIfact-CS})
\begin{equation}\label{eq:typeIfact-bezout-terms}
    \frac{H^2 D_1^{5/2} D_2^3}{X} \precprec (HX^{1/2}R^{-1/2})^2
    \qquad \iff \qquad 
    D_1^{5/2} D_2^3 \precprec X^2 R^{-1}.
\end{equation}
\end{remark}

\begin{proof}[Proof of Lemma \ref{lem:typeIfact-reciprocity}]
By \eqref{eq:shifted-gausssumdef} and Lemma \ref{lem:quadr-gauss-recipr}, the innermost sum over $\omega$ in \eqref{eq:E2} is
\[
    G(-\ell, d_1gbb', b't-bt')
    =
    \sqrt{\frac{d_1gbb'}{8|\ell|}}
    e\left(-\frac{\sgn \ell}{8} + \frac{(b't-bt')^2}{4d_1gbb'\ell}\right)
    G(d_1gbb', 4\ell, 2(b't-bt')).
\]
By plugging this into \eqref{eq:E2}, using the triangle inequality in $\ell$, renaming $4\ell$ as $\ell$, replacing $\psi_0(x)$ by $4\psi_0(4x)$, and extending the sum over $\ell$ after harmlessly enlarging $\eps$, we arrive at
\[
\begin{aligned}
    \mE_2 
    \ll \frac{XH^2}{D_1}
    \sum_{g \le 2D_2} \frac{1}{g^{5/2}}
    \sum_{1 \le |\ell| \le X^\eps \frac{D_1D_2^2}{gH}}
    \frac{1}{\sqrt{|\ell|}}
    \bigg\vert
    \sum_{\substack{b, b' \sim D_2/g \\ (b, b') = 1}}
    \frac{\beta_{gb}\bar\beta_{gb'}}{(bb')^{3/2}}
    \sum_{d_1}
    \frac{(d_1, g)}{\sqrt{d_1}}
    \psi_1\left(\frac{d_1}{D_1}\right)
    \widehat\psi_0\left(\frac{\ell H}{d_1gbb'}\right)
    \\ 
    \times\,
    \sum_{t, t' \neq 0}
    \widehat\psi_f\left(\frac{tX^{1/2}}{d_1gb}\right) 
    \overline{\widehat\psi_f}\left(\frac{t'X^{1/2}}{d_1gb'}\right) 
    e\left(\frac{(b't-bt')^2}{d_1gbb'\ell}\right) G(d_1gbb', \ell, 2(b't-bt'))
    \bigg\vert.
\end{aligned}
\]

\textbf{Removing the extraneous phase from reciprocity:} 
We write
\[
    e\left(\frac{(b't-bt')^2}{d_1gbb'\ell}\right)
    =
    1 + O\left(\frac{(|t|+|t'|)^2}{D_1g|\ell|}\right)
    \qquad 
    \rightsquigarrow 
    \qquad 
    |\mE_2| \le \mE_2' + \mE_2'',
\]
where $\mE_2'$ contains the contribution of the $1$-term after the triangle inequality, and $\mE_2''$ contains the error term. We first bound $\mE_2''$ using Lemma \ref{lem:quadr-gauss-bound} and the Schwartz decay of $\widehat\psi_f$:
\[
\begin{aligned}
    \mE_2'' 
    \precprec
    \frac{XH^2}{D_1}
    \sum_{g \le 2D_2} \frac{1}{g^{5/2}} 
    \sum_{1 \le |\ell| \le X^\eps \frac{D_1D_2^2}{gH}}
    \frac{1}{\sqrt{|\ell|}}
    \sum_{b, b' \sim D_2/g} \frac{1}{(bb')^{3/2}}
    \sum_{d_1 \asymp D_1} \frac{(d_1, g)}{\sqrt{D_1}}
    \\
    \times
    \left(\frac{D_1D_2}{X^{1/2}}\right)^4 \frac{1}{D_1g|\ell|}
    \sqrt{|\ell|(\ell,d_1gbb')}.
\end{aligned}
\]
The sum over $\ell$ of $|\ell|^{-1/2-1+1/2}\sqrt{(\ell, d_1gbb')}$ is $X^{o(1)}$. After bounding $\sum_{d_1 \asymp D_1} (d_1, g) \precprec D_1$ and $\sum_{b, b' \sim D_2/g} (bb')^{-3/2} \ll g/D_2$, the sum over $g$ of $g^{-5/2+1-1}$ is $O(1)$. It follows that
\[
    \mE_2'' 
    \precprec 
    \frac{XH^2}{D_1D_2}
    \sqrt{D_1} \frac{D_1^4D_2^4}{X^2} \frac{1}{D_1} 
    \ll 
    \frac{H^2 D_1^{5/2} D_2^3}{X},
\]
which is an acceptable contribution for our lemma. 

\textbf{Poisson summation three more times:} It remains to develop the sum $\mE_2'$. After expanding the Gauss sum and swapping sums, we have
\[
\begin{aligned}
    \mE_2' 
    \ll \frac{XH^2}{D_1}
    \sum_{g \le 2D_2} \frac{1}{g^{5/2}}
    \sum_{1 \le |\ell| \le X^\eps \frac{D_1D_2^2}{gH}}
    \frac{1}{\sqrt{|\ell|}}
    \bigg\vert
    \sum_{\substack{b, b' \sim D_2/g \\ (b, b') = 1}}
    \frac{\beta_{gb}\bar\beta_{gb'}}{(bb')^{3/2}}
    \sum_{u \pmod{|\ell|}}
    \sum_{d_1}
    \frac{(d_1, g)}{\sqrt{d_1}} 
    e_\ell(d_1gbb'u^2)
    \\ 
    \times\,
    \psi_1\left(\frac{d_1}{D_1}\right)
    \widehat\psi_0\left(\frac{\ell H}{d_1gbb'}\right)
    \bigg( 
    \sum_{t \neq 0} 
    \widehat\psi_f\left(\frac{tX^{1/2}}{d_1gb}\right) e_\ell(2b'tu)
    \bigg)
    \bigg( 
    \overline{\sum_{t' \neq 0}
    \widehat\psi_f\left(\frac{t'X^{1/2}}{d_1gb'}\right) e_\ell(2bt'u)}
    \bigg)
    \bigg\vert.
\end{aligned}
\]
Now by Poisson summation (as in Lemma \ref{le:poisson} but without truncating the sum over $t$), we have
\[
    \sum_{t \neq 0} 
    \widehat\psi_f\left(\frac{tX^{1/2}}{d_1gb}\right) e_\ell(2b'tu)
    =
    -
    \widehat\psi_f(0)
    +
    \frac{d_1gb}{X^{1/2}}
    \sum_{n \equiv 2b'u \pmod{|\ell|}} \psi_f\left(\frac{n d_1gb}{X^{1/2}\ell}\right)
    .
\]
Importantly, note that $\psi_f(0) = 0$, so the last sum above does not include the term $n = 0$; in fact it is supported on $|n| \asymp X^{1/2}|\ell|/(D_1D_2)$.

We similarly transform the sum over $t'$ into a sum over $n' \equiv 2bu \pmod{|\ell|}$.
After expanding the resulting product of the two transformed sums, we bring the sum over $d_1$ inside each of the four terms. In each term, after multiplying by suitable powers of $D_1$, the weight of $d_1$ has the shape $(d_1, g)\, \psi(d_1/D_1)\, e_\ell(d_1gbb'u^2)$ for some smooth function $\psi$ which may depend on $g, \ell, b, b', n, n'$, but which satisfies the assumption of Lemma \ref{lem:poisson-consequence} with $\delta^{-1} = X^\eps$; this is achieved by writing, e.g.,
\[
    \widehat\psi_0\left(\frac{\ell H}{d_1gbb'}\right) = 
    \widehat\psi_0\left(\frac{\ell g H}{D_1D_2^2} \cdot \frac{D_2^2}{g^2bb'} \cdot \frac{D_1}{d_1}\right),
    \qquad 
    \psi_f\left(\frac{nd_1gb}{X^{1/2}\ell}\right)
    =
    \psi_f\left(\frac{nD_1D_2}{X^{1/2}|\ell|} \cdot \frac{gb}{D_2} \cdot \frac{d_1}{D_1}\right).
\]
Then by applying Lemma \ref{lem:poisson-consequence} to each term, using the triangle inequality, and grouping back terms, we reach
\[
\begin{aligned}
    \mE_2' 
    \precprec_\eps X^{-100} + \frac{XH^2}{D_1}
    \sum_{g \le 2D_2} \frac{1}{g^{5/2}}
    \sum_{1 \le |\ell| \le X^\eps \frac{D_1D_2^2}{gH}}
    \frac{1}{\sqrt{|\ell|}}
    \sum_{\substack{b, b' \sim D_2/g \\ (b, b') = 1}}
    \frac{g^3}{D_2^3}
    \sum_{u \pmod{|\ell|}}
    \frac{1}{\sqrt{D_1}}
    \sum_{\substack{|m| \le X^{2\eps}g|\ell|/D_1 \\ m \equiv g^2bb'u^2 \pmod{|\ell|}}}
    \\ 
    \times\,
    D_1
    \bigg( 
    1 + \frac{D_1D_2}{X^{1/2}}
    \sum_{\substack{\frac{X^{1/2}|\ell|}{4D_1D_2} \le |n| \le 4\frac{X^{1/2}|\ell|}{D_1D_2} \\ n \equiv 2b'u \pmod{|\ell|}}} 1
    \bigg)
    \bigg( 
    1 + \frac{D_1D_2}{X^{1/2}}
    \sum_{\substack{\frac{X^{1/2}|\ell|}{4D_1D_2} \le |n'| \le 4\frac{X^{1/2}|\ell|}{D_1D_2} \\ n' \equiv 2bu \pmod{|\ell|}}} 1
    \bigg).
\end{aligned}
\]
Rearranging this yields the sum from \eqref{eq:E3}.
\end{proof}

\subsection{The final counting problem}
To complete our proof, it remains to bound the sum $\mE_3$ from \eqref{eq:E3} by a counting argument. The following lemmas will be helpful.

\begin{lemma}[Primitive points on lattices] \label{lem:prim-points-count}
Let $n, n', q$ be integers, $q \ge 1$, and $A, A' > 0$. Then
\[
    \mathop{\sum_{a \sim A} \sum_{a' \sim A'}}_{(a, a') = 1} \one_{an \equiv a'n' \pmod{q}}
    \ll 
    1
    +
    (n, n', q)
    \frac{AA'}{q}.
\]
\end{lemma}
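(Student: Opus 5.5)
The plan is to reduce to the case $(n,n',q)=1$, recast the count as a count of primitive points of a planar lattice in a box, and split according to whether the lattice points in the box are collinear.

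\emph{Reduction.} Let $d=(n,n',q)$. The congruence $an\equiv a'n' \pmod q$ is equivalent to $a(n/d)\equiv a'(n'/d)\pmod{q/d}$. It therefore suffices to prove the bound $\ll 1+AA'/q$ under the extra assumption $(n,n',q)=1$. Applying this with $q/d$ in place of $q$ then gives $1+dAA'/q$, as claimed.

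\emph{The lattice.} Consider
\[
\Lambda=\{(x,y)\in\Z^2:\ xn\equiv yn'\pmod q\}.
\]
This is the kernel of the homomorphism $\Z^2\to \Z/q\Z$, $(x,y)\mapsto xn-yn'$. Its image is the subgroup generated by $n,n'$ modulo $q$, which is all of $\Z/q\Z$ because $(n,n',q)=1$. Hence $\Lambda$ has index $q$, so $\det\Lambda=q$.

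Rescale coordinates by $(x,y)\mapsto (x/A,y/A')$. Then $\Lambda$ becomes a lattice $\Lambda'$ of determinant $q/(AA')$, and the box $[A,2A]\times[A',2A']$ becomes $[1,2]^2$. The quantity to bound is the number of primitive vectors of $\Lambda$, that is $(a,a')$ with $(a,a')=1$, lying in this box. Let $\lambda_1\le\lambda_2$ be the successive minima of $\Lambda'$ with respect to the Euclidean norm. By Minkowski's second theorem, $\lambda_1\lambda_2\asymp q/(AA')$.

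\emph{Case split.}

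\textbf{Collinear case.} Suppose every point of $\Lambda\cap$ box lies on a single line through the origin. That line meets $\Z^2$ in a rank-one group with exactly two primitive vectors $\pm v$. So at most $2$ coprime pairs $(a,a')$ are counted, giving the term $1$.

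\textbf{Non-collinear case.} Otherwise the box contains two linearly independent points of $\Lambda$. Their images in $\Lambda'$ have norm $\le 2\sqrt2$, so $\lambda_2\ll1$, and hence $\lambda_1\gg q/(AA')$.

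Now take a reduced basis $v_1,v_2$ of $\Lambda'$, with $|v_i|\asymp\lambda_i$ and the angle between them bounded away from $0$. The standard count of lattice points in a disc of bounded radius gives
\[
\#(\Lambda'\cap[1,2]^2)\ll (1+\lambda_1^{-1})(1+\lambda_2^{-1})\ll 1+\lambda_1^{-1}+(\lambda_1\lambda_2)^{-1}.
\]
Since $\lambda_1^{-1}\ll AA'/q$ and $(\lambda_1\lambda_2)^{-1}\asymp AA'/q$, this is $\ll 1+AA'/q$.

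The only point needing care is this last lattice-point count in terms of successive minima. It is standard: write points as $c_1v_1+c_2v_2$; reducedness forces $|c_i|\ll 1/\lambda_i$, so the number of choices of $c_i$ is $\ll 1+1/\lambda_i$. I do not expect any genuine obstacle. Coprimality of $(a,a')$ is used only in the collinear case, which is exactly where the bare term $1$ (rather than something like $\min(A,A')$) comes from.
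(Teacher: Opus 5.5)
Your proof is correct, and after the shared reduction to $(n,n',q)=1$ it takes a genuinely different route from the paper. The paper avoids geometry of numbers entirely. It orders the counted primitive points $(a_j,a_j')$ by slope and deduces from the two congruences that $q$ divides $(a_ja_{j+1}'-a_j'a_{j+1})(n,n')$. Since $(n,n',q)=1$, and distinct primitive vectors in the positive quadrant have distinct slopes, this gives $a_ja_{j+1}'-a_j'a_{j+1}\ge q$. Hence consecutive slopes differ by $\gg q/A^2$, and telescoping over a slope range of length $O(A'/A)$ yields $M\ll 1+AA'/q$.

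Both arguments rest on the same fact, that $\Lambda$ has covolume $q$. The paper uses it in the form ``every $2\times2$ determinant of two lattice points is a multiple of $q$''; you use it through Minkowski's second theorem. The paper's version is shorter and fully self-contained. Yours needs successive minima, a Gauss-reduced basis and the standard count $\ll(1+\lambda_1^{-1})(1+\lambda_2^{-1})$. In exchange, yours gives a clearer structural picture: in the non-collinear case you bound all lattice points in the box, not only primitive ones. Coprimality then enters only to dispose of the collinear configuration, which is exactly where the bare term $1$ comes from. Your approach also extends more readily to higher-dimensional analogues.
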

\begin{proof}
Dividing $n, n', q$ by $(n, n', q)$, we may assume without loss of generality that $(n, n', q) = 1$. Order all the points $(a; a')$ with $a \sim A$, $a' \sim A'$, $(a, a') = 1$, and $an \equiv a'n'$ in increasing order of the slopes $a'/a$; let $\{(a_j; a_j')\}_{j=1}^M$ be the resulting sequence, where $M$ is the total number of points. For each $j \in \{1, \ldots, M-1\}$, note that
\[
\begin{cases}
    a_j n \equiv a'_j n' \pmod{q} \\ 
    a_{j+1} n \equiv a'_{j+1} n' \pmod{q}
\end{cases}
\quad 
\Rightarrow 
\quad 
\begin{cases}
    a_j a'_{j+1} n \equiv a'_j a'_{j+1} n' \equiv a'_j a_{j+1} n\pmod{q} \\ 
    a_j a'_{j+1} n' \equiv a_j a_{j+1} n \equiv a_j' a_{j+1} n' \pmod{q}
\end{cases}
\]
and thus $q \mid (a_j a_{j+1}' - a_j' a_{j+1})(n, n')$. Since $(n, n', q) = 1$ and $a_j a_{j+1}' > a_j' a_{j+1}$, we find that
\[
    a_j a_{j+1}' - a'_j a_{j+1} \ge q
    \qquad 
    \Rightarrow 
    \qquad 
    \frac{a'_{j+1}}{a_{j+1}} - \frac{a'_j}{a_j} \ge \frac{q}{a_j a_{j+1}} 
    \gg 
    \frac{q}{A^2}.
\]
The sum of all such slope differences telescopes to
\[
    \frac{a'_M}{a_M} - \frac{a'_1}{a_1}
    \gg 
    (M-1) \frac{q}{A^2}.
\]
The left-hand side is $O(A'/A)$, so we can conclude that $M \ll 1 + \tfrac{AA'}{q}$.
\end{proof}

\begin{lemma}[Number of roots and square parts] \label{lem:root-bound}
For any positive integer $d$, denote by $d_\square = \prod_{\text{prime }p \mid d} p^{2\lfloor v_p(d)/2\rfloor}$ the largest square divisor of $d$.
Then for any integer $h$, one has
\begin{equation} \label{eq:root-bound}
    \varrho_h(d) \ll d^{o(1)} (h, d_\square)^{1/2},
\end{equation}
where $d_\square$ is the largest square divisor of $d$. Moreover, for any $D \ge 1$, one has
\begin{equation} \label{eq:sqf-bound}
    \sum_{d \sim D} d_\square^{1/2} \ll D^{1 + o(1)}.
\end{equation}
\end{lemma}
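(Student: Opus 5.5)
Both bounds are multiplicative in $d$ (for \eqref{eq:root-bound} the function $\varrho_h$ is multiplicative and $(h,d_\square)^{1/2}$ factors over primes), so the plan is to prove them locally at prime powers and then multiply. For \eqref{eq:root-bound}, fix $p^k \| d$ and write $p^j \| h$ (with $j=\infty$ allowed if $h=0$). We count solutions of $\omega^2 \equiv -h \pmod{p^k}$ and split into two cases.

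\emph{Case $j \ge k$.} The condition is $p^{\lceil k/2\rceil} \mid \omega$, giving $p^{\lfloor k/2\rfloor}$ solutions. Since $(h,p^{2\lfloor k/2\rfloor})^{1/2} = p^{\lfloor k/2\rfloor}$, this is exactly the claimed local size.

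\emph{Case $j<k$.} A solution forces $j$ even and $\omega = p^{j/2}\omega'$ with $\omega'^2 \equiv -h/p^j \pmod{p^{k-j}}$, where $\omega'$ is determined modulo $p^{k-j/2}$. Lifting to residues modulo $p^k$, and using that a unit has $\le 4$ square roots modulo any prime power (Hensel's lemma, with $p=2$ handled directly), we get at most $4p^{j/2}$ solutions. Since $j<k$ and $j$ is even, $j \le 2\lfloor k/2\rfloor$, so $p^{j/2} \le (h,p^{2\lfloor k/2\rfloor})^{1/2}$.

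Multiplying over $p\mid d$ gives $\varrho_h(d) \le 4^{\omega(d)}(h,d_\square)^{1/2} \ll d^{o(1)}(h,d_\square)^{1/2}$.

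For \eqref{eq:sqf-bound}, write each $d$ uniquely as $d=s^2 m$ with $m$ squarefree; then $d_\square = s^2$ and $d_\square^{1/2}=s$. Hence
\[
\sum_{d\sim D} d_\square^{1/2} \le \sum_{s\le \sqrt{2D}} s \cdot \#\{m \le 2D/s^2\} \ll \sum_{s\le\sqrt{2D}} \frac{D}{s} \ll D\log D,
\]
which is $D^{1+o(1)}$.

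The only delicate point is the local root count when $p\mid h$, specifically the bookkeeping of the non-uniqueness of lifts modulo $p^k$ and the prime $2$. It is handled by the explicit substitution $\omega=p^{j/2}\omega'$ above and is not a real obstacle.
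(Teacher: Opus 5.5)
Your proof is correct and follows the paper's route. The paper disposes of \eqref{eq:root-bound} with the phrase ``elementary local considerations at prime powers'', and your two-case analysis ($p^j\| h$ with $j\ge k$ versus $j<k$) supplies exactly those details; your decomposition $d=s^2m$ for \eqref{eq:sqf-bound} is the paper's $d=ab^2$ argument, except that you should write $D\log(2D)$ rather than $D\log D$ so the bound holds at $D=1$.
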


\begin{proof}
The first part follows from elementary local considerations at prime powers. For the second part, write $d=ab^2$ with $a$ square-free, so that $d_\square^{1/2}=b$. Then
\[
    \sum_{d\sim D}d_\square^{1/2}
    \leq \sum_{b\leq\sqrt{2D}} b\,\frac{2D}{b^2}
    \ll D\log(2D).
\]
\end{proof}

\begin{lemma} \label{lem:typeIfact-counting}
For $\mE_3$ as in \eqref{eq:E3}, assuming that $\eps < 10^{-3}$ and $D_2 \le X^{0.49}$, one has
\[
    \mE_3 \precprec X^{1+5\eps} \left(H^{3/2} + H^{1/2}D_2^2 + H^{1/2}D_1
    +
    \frac{D_1D_2^2}{H^{1/2}} \right).
\]
\end{lemma}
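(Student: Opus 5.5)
The plan is to expand the two brackets in \eqref{eq:E3} into four terms, according to whether each factor contributes its ``$1$'' or its $n$-sum (resp.\ $n'$-sum), and to bound each term by direct counting. Write $L = X^\eps D_1D_2^2/(gH)$ for the range of $\ell$ and $N_\ell = X^{1/2}|\ell|/(D_1D_2)$ for the size of $n,n'$. The prefactor is $XH^2 D_1^{-1/2}D_2^{-3} g^{1/2}|\ell|^{-1/2}$, and the counting is carried out for fixed $g,\ell$.

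\textbf{Terms containing a ``$1$''.} In the $1\cdot 1$ term, we count $b,b'\sim D_2/g$, $u \pmod{|\ell|}$ and $m$ with $m\equiv g^2bb'u^2 \pmod{|\ell|}$, $|m|\le X^{2\eps}g|\ell|/D_1$.
\begin{itemize}
\item For fixed $m,b,b'$, the number of $u$ is at most $\varrho$-type, namely $\ll X^{o(1)}(m,\ell)^{1/2}\cdots$ by Lemma~\ref{lem:root-bound}. The case $m=0$ forces $\ell\mid g^2bb'u^2$ and gives an extra $\gcd$ factor.
\item Summing the resulting count over $\ell\le L$ should give contributions of the shape $H^{3/2}$ and $H^{1/2}D_1$.
\end{itemize}
In the mixed terms, the ``$1$'' loses nothing. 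The congruence $n\equiv 2b'u$ determines $u$ from $(n,b')$ up to $(2b',\ell)$ choices. Consequently the $u$-sum is absorbed, and the count becomes $\#\{b,b',n,m\}$ with $m\equiv g^2 b\overline{b'} n^2/4$. This count is $\ll (D_2/g)^2 N_\ell(1+g|\ell|/(D_1|\ell|))$, multiplied by the weight $D_1D_2/X^{1/2}$. I expect these terms to be dominated by the main term.

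\textbf{Main term.} This is the term with both sums.
\begin{enumerate}
\item From the congruences $n\equiv 2b'u$ and $n'\equiv 2bu$ we deduce $nb\equiv n'b' \pmod{|\ell|}$, and $4g^2 m\equiv g^2 nn'\cdot(\text{unit})$. More precisely, $4m\equiv g^2 nn'$ up to the factor $bb'u^2/(b'u\cdot bu)$, so $m \equiv g^2nn'/4 \pmod{|\ell|}$ once $u$ is invertible. Then $u$ is determined by $n$ and $b'$.
\item The quantity is therefore bounded by a sum over $m$, $n$, $n'$ with $\ell \mid 4m-g^2nn'$, weighted by the number of coprime pairs $(b,b')$ with $nb\equiv n'b' \pmod{|\ell|}$.
\item By Lemma~\ref{lem:prim-points-count}, the number of such pairs is $\ll 1+(n,n',\ell)D_2^2/(g^2|\ell|)$.
\item Swap the roles: fix $m,n,n'$, so that $\ell$ ranges over the divisors of $4m-g^2nn'$. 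This costs $X^{o(1)}$ provided $4m\neq g^2nn'$. The degenerate case $4m=g^2nn'$ is handled separately; there $m$ is determined, and we sum over $\ell$ directly.
\end{enumerate}

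The total count is then
\[
\frac{XH^2}{D_1^{1/2}D_2^3}\cdot \Big(\frac{D_1D_2}{X^{1/2}}\Big)^2 \sum_{\ell\le L}|\ell|^{-1/2}\,\frac{g|\ell|}{D_1}\,N_\ell^2\Big(1+\frac{D_2^2}{|\ell|}\Big).
\]
Using $N_\ell^2=X|\ell|^2/(D_1D_2)^2$ and $L=D_1D_2^2/H$, the two pieces give $X D_2^{-3}H^2 D_1^{-3/2}L^{5/2}\sim XD_1D_2^2H^{-1/2}$ and $XH^{1/2}D_2^2$, matching the claimed bound.

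The main obstacle is bookkeeping the gcd's so that the $(n,n',\ell)$ factor and the root-multiplicity factors sum to $X^{o(1)}$ on average. These are the factor $(u,\ell)$ when $u$ is not invertible, and the factors $(2b,\ell)$ and $(2b',\ell)$. I would handle them by splitting $\ell=\ell_1\ell_2$ with $\ell_1=(\ell,\text{these})$ and using the divisor bound together with \eqref{eq:sqf-bound}. The assumptions $D_2\le X^{0.49}$ and $\eps<10^{-3}$ ensure $N_\ell\gg1$ and keep the $X^{5\eps}$ losses under control.
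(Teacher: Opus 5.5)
Your overall architecture matches the paper's: you expand the two brackets into four terms, use \eqref{eq:root-bound} and \eqref{eq:sqf-bound} for the $1\cdot 1$ term, and in the double term derive $4m\equiv g^2nn'$ and $bn\equiv b'n'$, count $(b,b')$ with Lemma~\ref{lem:prim-points-count}, and absorb $\ell$ by the divisor bound. The gap is in the mixed terms.

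There you count, for each fixed $\ell$, the $m$ in one residue class modulo $|\ell|$ as $\ll 1+g/D_1$. That gives $(D_2/g)^2N_\ell(1+g/D_1)$ tuples per $\ell$. Insert the weight $D_1D_2/X^{1/2}$ and the factor $|\ell|^{-1/2}$, and sum over $|\ell|\le L\asymp D_1D_2^2/(gH)$. Up to $X^{O(\eps)}$ this gives
\[
\frac{XH^2}{D_1^{1/2}D_2^3}\sum_{g} g^{1/2}\,\frac{D_2^2}{g^2}\Big(1+\frac{g}{D_1}\Big)L^{3/2}\approx XH^{1/2}D_1D_2^2 .
\]
This exceeds $XD_1D_2^2H^{-1/2}$ by a factor $H$ and $XH^{1/2}D_2^2$ by a factor $D_1$. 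In the critical range $D_1\approx H$, $D_2\approx H^{3/4}$ it is about $XH^3$, beyond the $XH^2$ needed in Proposition~\ref{prop:typeIfact}.

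So your expectation that these terms are dominated by the main term does not follow from your count. You lose a factor $\asymp\min(L,D_1/g)$, because the $m$-range $X^{2\eps}g|\ell|/D_1$ is typically much shorter than the modulus $|\ell|$.

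The fix is to treat the mixed terms exactly like your main term:
\begin{itemize}
\item Eliminating $u$ gives $\ell\mid g^2bn^2-4b'm$.
\item Fix $(b,b',m,n)$ and count $\ell$ by the divisor bound.
\item Control the multiplicity of $u$ by $(\ell,b')\le (g^2bn^2,b')$, which the $b'$-sum absorbs.
\end{itemize}
This yields $\precprec (D_2/g)^2(1+X^{2\eps}gL/D_1)N_L$ per dyadic block of $\ell$. Hence the mixed terms are $X^{1+O(\eps)}(H^{3/2}+H^{1/2}D_2^2)$, as in \eqref{eq:B3-bound}.

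Both divisor switches require $g^2bn^2\neq 4b'm$ and $g^2nn'\neq 4m$. This is exactly what $D_2\le X^{0.49}$ and $\eps<10^{-3}$ are for; they do not ensure $N_\ell\gg 1$. Since $|n|\ge 1$ and $gb\asymp D_2$, one has $|g^2bn^2|\ge |g^2bn|\gg gX^{1/2}L/D_1\ge X^{0.001}|4b'm|$. Similarly $|g^2nn'|\gg X^{0.001}|4m|$. So the degenerate cases are empty. Your plan to handle $4m=g^2nn'$ by summing over $\ell$ directly would reintroduce the same $\min(L,D_1/g)$ loss if that set were non-empty, so you need this size argument.

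Finally, the $(2b',\ell)$ bookkeeping you flag as the main obstacle is unnecessary in the double term. With $cb+c'b'=1$ one has $2u\equiv c'n+cn' \pmod{|\ell|}$, so $u$ takes $O(1)$ values. Also, $4m\equiv g^2nn'$ holds without any invertibility of $u$.

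Your attribution of the pieces is also off, though harmlessly since all four terms appear in the bound. The $1\cdot 1$ term gives $H^{3/2}+H^{1/2}D_2^2$. The piece $H^{1/2}D_1$ comes from the `$1$' parts of the $m$- and $(b,b')$-counts in the double term.
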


\begin{remark}
The bound above is acceptable in Proposition \ref{prop:typeIfact} as long as (accounting for the square in Lemma \ref{lem:typeIfact-CS} and taking $\eps \to 0$)
\begin{equation} \label{eq:typeIfact-final-bound}
    X \left(H^{3/2} + H^{1/2}D_2^2 + H^{1/2}D_1
    +
    \frac{D_1D_2^2}{H^{1/2}} \right) \precprec H^2XR^{-1}
    \quad 
    \iff 
    \quad 
    \begin{cases}
    R \precprec H^{1/2}, \\ 
    D_2 \precprec H^{3/4} R^{-1/2}, \\ 
    D_1 \precprec H^{3/2} R^{-1}, \\ 
    D_1 D_2^2 \precprec H^{5/2} R^{-1}.
    \end{cases}
\end{equation}
\end{remark}

\begin{proof}[Proof of Lemma \ref{lem:typeIfact-counting}]
We put $|\ell|$ in dyadic ranges, then expand the product of the last two sums in \eqref{eq:E3} and accordingly split
\[
    \mE_3 \precprec |\mA_3| + |\mB_3| + |\mC_3| + |\mD_3|,
\]
where
\[
    \mA_3 := \frac{XH^2}{D_1^{1/2} D_2^3}
    \sum_{g \le 2D_2} g^{1/2}
    \sup_{\frac{1}{2} \le L \le X^\eps \frac{D_1D_2^2}{gH}} \frac{1}{\sqrt{L}}
    \sum_{|\ell| \sim L}
    \sum_{\substack{b, b' \sim D_2/g \\ (b, b') = 1}}
    \sum_{u \pmod{|\ell|}}
    \sum_{\substack{|m| \le X^{2\eps}gL/D_1 \\ m \equiv g^2bb'u^2 \pmod{|\ell|}}} 1,
\]
\[
    \mB_3 := 
    \frac{XH^2}{D_1^{1/2} D_2^3}
    \sum_{g \le 2D_2} g^{1/2}
    \sup_{\frac{1}{2} \le L \le X^\eps \frac{D_1D_2^2}{gH}} \frac{1}{\sqrt{L}}
    \sum_{|\ell| \sim L}
    \sum_{\substack{b, b' \sim D_2/g \\ (b, b') = 1}}
    \sum_{u \pmod{|\ell|}}
    \sum_{\substack{|m| \le X^{2\eps}gL/D_1 \\ m \equiv g^2bb'u^2 \pmod{|\ell|} \\ \frac{X^{1/2}L}{8D_1D_2} \le |n| \le 8\frac{X^{1/2}L}{D_1D_2} \\ n \equiv 2b'u \pmod{|\ell|}}}
    \frac{D_1 D_2}{\sqrt{X}},
\]
\[
    \mC_3 := 
    \frac{XH^2}{D_1^{1/2} D_2^3}
    \sum_{g \le 2D_2} g^{1/2}
    \sup_{\frac{1}{2} \le L \le X^\eps \frac{D_1D_2^2}{gH}} \frac{1}{\sqrt{L}}
    \sum_{|\ell| \sim L}
    \sum_{\substack{b, b' \sim D_2/g \\ (b, b') = 1}}
    \sum_{u \pmod{|\ell|}}
    \sum_{\substack{|m| \le X^{2\eps}gL/D_1 \\ m \equiv g^2bb'u^2 \pmod{|\ell|} \\ \frac{X^{1/2}L}{8D_1D_2} \le |n'| \le 8\frac{X^{1/2}L}{D_1D_2} \\ n' \equiv 2bu \pmod{|\ell|}}}
    \frac{D_1 D_2}{\sqrt{X}},
\]
\[
    \mD_3 := 
    \frac{XH^2}{D_1^{1/2} D_2^3}
    \sum_{g \le 2D_2} g^{1/2}
    \sup_{\frac{1}{2} \le L \le X^\eps \frac{D_1D_2^2}{gH}} \frac{1}{\sqrt{L}}
    \sum_{|\ell| \sim L}
    \sum_{\substack{b, b' \sim D_2/g \\ (b, b') = 1}}
    \sum_{u \pmod{|\ell|}}
    \sum_{\substack{|m| \le X^{2\eps}gL/D_1 \\ m \equiv g^2bb'u^2 \pmod{|\ell|} \\ \frac{X^{1/2}L}{8D_1D_2} \le |n| \le 8\frac{X^{1/2}L}{D_1D_2} \\ n \equiv 2b'u \pmod{|\ell|} \\ \frac{X^{1/2}L}{8D_1D_2} \le |n'| \le 8\frac{X^{1/2}L}{D_1D_2} \\ n' \equiv 2bu \pmod{|\ell|}}}
    \frac{D_1^2 D_2^2}{X}.
\]

\textbf{Bounding $\mA_3$:} We move the sum over $u$ inside and bound, using \eqref{eq:root-bound},
\[
    \sum_{|m| \le X^{2\eps}gL/D_1} 
    \sum_{\substack{u \pmod{|\ell|} \\ g^2bb'u^2 \equiv m \pmod{|\ell|}}} 1 
    \precprec
    \sum_{|m| \le X^{2\eps}gL/D_1} 
    (g^2bb', \ell) 
    (m, |\ell/(g^2bb',\ell)|_\square)^{1/2}.
\]
The term $m = 0$ contributes $(g^2bb',\ell) |\ell/(g^2bb',\ell)|_\square^{1/2}$, which is absorbed by the dyadic sum over $\ell$ due to \eqref{eq:sqf-bound}. The terms with $m \neq 0$ contribute $(g^2bb',\ell) X^{2\eps+o(1)} gL/D_1$, and the common divisor $(g^2bb',\ell)$ is once again absorbed by the dyadic sum over $\ell$. It follows that
\[
\begin{aligned}
    \sup_{\frac{1}{2} \le L \le X^\eps \frac{D_1D_2^2}{gH}} \frac{1}{\sqrt{L}}
    \sum_{\ell \sim L}
    \sum_{|m| \le X^{2\eps}g|\ell|/D_1} 
    \sum_{\substack{u \pmod{|\ell|} \\ g^2bb'u^2 \equiv m \pmod{|\ell|}}} 1 
    &\precprec
    \sup_{\frac{1}{2} \le L \le X^\eps \frac{D_1D_2^2}{gH}} \sqrt{L} \left(1 + \frac{X^{2\eps} gL}{D_1}\right)
    \\
    &\precprec
    X^{4\eps} \left(\frac{D_1^{1/2}D_2}{g^{1/2}H^{1/2}} + \frac{D_1^{1/2} D_2^3}{g^{1/2} H^{3/2}}\right).
\end{aligned}
\]
Summing over $b, b' \sim D_2/g$, the $g$-dependency is $g^{-5/2}$, and then we bound $\sum_{g \le 2D_2} g^{-2} \ll 1$. This gives
\[
    \mA_3 
    \precprec 
    \frac{X^{1+4\eps}H^2}{D_1^{1/2}D_2^3} D_2^2 \left(\frac{D_1^{1/2}D_2}{H^{1/2}} + \frac{D_1^{1/2} D_2^3}{H^{3/2}}\right)
    \precprec 
    X^{1+4\eps}H^{3/2} + X^{1+4\eps}H^{1/2}D_2^2,
\]
which is acceptable for our final bound.

\textbf{Bounding $\mB_3$ and $\mC_3$:} We only show the treatment of $\mB_3$, since the treatment of $\mC_3$ is completely analogous. From the two congruences modulo $|\ell|$, we deduce that $g^2bn^2 \equiv 4b'm \pmod{|\ell|}$. The sum over $\ell, u, m, n$ inside $\mB_3$ is therefore at most
\[
    \sum_{|\ell| \sim L}
    \sum_{\substack{|m| \le X^{2\eps}gL/D_1 \\ \frac{X^{1/2}L}{8D_1D_2} \le |n| \le 8\frac{X^{1/2}L}{D_1D_2} \\ g^2bn^2 \equiv 4b'm \pmod{|\ell|}}}
    \sum_{\substack{u \pmod{|\ell|} \\ 2b'u \equiv n \pmod{|\ell|}}}
    \frac{D_1 D_2}{\sqrt{X}}
    \ll 
    \frac{D_1 D_2}{\sqrt{X}}
    \sum_{\substack{|m| \le X^{2\eps}gL/D_1 \\ \frac{X^{1/2}L}{8D_1D_2} \le |n| \le 8\frac{X^{1/2}L}{D_1D_2}}}
    \sum_{\substack{|\ell| \sim L \\ \ell \mid g^2bn^2 - 4b'm}}
    (\ell, b').
\]
Note that since $D_2 \le X^{0.49}$, we have
\[
    |g^2bn^2| \ge |g^2bn| \gg g D_2 \frac{X^{1/2}L}{D_1D_2} \gg X^{0.001+2\eps} D_2 \frac{gL}{D_1} \gg X^{0.001} |4b'm|,
\]
which forces $g^2bn^2 \neq 4b'm$ if $X$ is large enough (as we may assume).
We then bound $(\ell, b') \le (g^2bn^2, b')$, and absorb the sum over $\ell$ using the divisor bound for $g^2bn^2 - 4b'm$. The sum over $b'$ then absorbs the common divisor $(g^2bn^2, b')$, and we sum over $b, m, n$ trivially to obtain
\begin{equation} \label{eq:B3-bound}
\begin{aligned}
    \mB_3 
    &\precprec 
    \frac{XH^2}{D_1^{1/2}D_2^3}
    \sum_{g \le 2D_2} g^{1/2}
    \sup_{\frac{1}{2} \le L \le X^\eps \frac{D_1D_2^2}{gH}} \frac{1}{\sqrt{L}}
    \frac{D_2^2}{g^2}
    \frac{D_1D_2}{X^{1/2}}
    \left(1 + \frac{X^{2\eps}gL}{D_1}\right) \frac{X^{1/2}L}{D_1D_2}
    \\
    &\precprec
    \frac{XH^2}{D_1^{1/2}D_2^3}
    \sum_{g \le 2D_2} g^{1/2}
    \frac{D_2^2}{g^2}
    \frac{D_1D_2}{X^{1/2}}
    \frac{X^{1/2}}{D_1D_2}
    \cdot 
    X^{4\eps} \left(\frac{D_1^{1/2}D_2}{g^{1/2}H^{1/2}} + \frac{D_1^{1/2} D_2^3}{g^{1/2} H^{3/2}}\right)
    \\
    &\precprec 
    \frac{X^{1+4\eps}H^2}{D_1^{1/2}D_2^3}
    D_2^2
    \left(\frac{D_1^{1/2}D_2}{H^{1/2}} + \frac{D_1^{1/2} D_2^3}{H^{3/2}}\right),
\end{aligned}
\end{equation}
which is again acceptable.

\textbf{Bounding $\mD_3$:}
From the congruences 
\[
    \begin{cases}
        m \equiv g^2bb' u^2 \pmod{|\ell|}, \\ 
        n \equiv 2b'u \pmod{|\ell|}, \\
        n' \equiv 2bu \pmod{|\ell|},
    \end{cases}
\]
we deduce that $4m \equiv g^2nn' \pmod{|\ell|}$, $bn \equiv b'n' \pmod{|\ell|}$, and $2u \equiv c'n + cn' \pmod{|\ell|}$ where $c = c(b,b'), c' = c'(b,b') \in \Z$ are such that $cb + c'b' = 1$ (such $c, c'$ exist since $(b, b') = 1$). The sum over $\ell, b, b', u, m, n, n'$ inside $\mD_3$ is therefore at most
\[
    \sum_{\substack{|m| \le X^{2\eps}gL/D_1 \\ \frac{X^{1/2}L}{8D_1D_2} \le |n| \le 8\frac{X^{1/2}L}{D_1D_2} \\ \frac{X^{1/2}L}{8D_1D_2} \le |n'| \le 8\frac{X^{1/2}L}{D_1D_2}}}
    \sum_{\substack{|\ell| \sim L \\ \ell \mid g^2nn'-4m}}
    \sum_{\substack{b, b' \sim D_2/g \\ (b, b') = 1}} \one_{bn \equiv b'n' \pmod{|\ell|}}
    \sum_{\substack{u \pmod{|\ell|} \\ 2u \equiv c'(b,b')n + c(b,b')n' \pmod{|\ell|}}}
    \frac{D_1^2 D_2^2}{X}.
\]
There are $O(1)$ possibilities for $u$ in the last sum. We then apply Lemma \ref{lem:prim-points-count} to the sum over $b, b'$ to bound the expression above by
\[
    \ll \sum_{\substack{|m| \le X^{2\eps}gL/D_1 \\ \frac{X^{1/2}L}{8D_1D_2} \le |n| \le 8\frac{X^{1/2}L}{D_1D_2} \\ \frac{X^{1/2}L}{8D_1D_2} \le |n'| \le 8\frac{X^{1/2}L}{D_1D_2}}}
    \sum_{\substack{|\ell| \sim L \\ \ell \mid g^2nn'-4m}}
    \left(1 + (n, n')\frac{D_2^2}{g^2L}\right)
    \frac{D_1^2 D_2^2}{X}.
\]
Since $D_2 \le X^{0.49}$, we have
\[
    |g^2nn'| \ge |g^2n| \gg g \frac{X^{1/2}L}{D_1D_2} \gg X^{0.001+2\eps} \frac{gL}{D_1} \gg X^{0.001} |4m|,
\]
which forces $g^2nn' \neq 4m$ when $X$ is large enough (as we may assume). We can then absorb the sum over $\ell$ by the divisor bound for $g^2nn'-4m$, and execute the sum over $m, n, n'$ absorbing the factor $(n, n')$. Plugging this into the expression for $\mD_3$, we obtain
\[
    \mD_3 \precprec 
    \frac{XH^2}{D_1^{1/2}D_2^3} \sum_{g \le 2D_2} g^{1/2} 
    \sup_{\frac{1}{2}\le L \le X^\eps \frac{D_1D_2^2}{gH}} \frac{1}{\sqrt{L}}
    \left(1 + \frac{X^{2\eps}gL}{D_1}\right) \frac{XL^2}{D_1^2D_2^2} \left(1 + \frac{D_2^2}{g^2L}\right) \frac{D_1^2D_2^2}{X}.
\]
We consider separately the contribution of the terms $1$ and $D_2^2/(g^2L)$ from the last parenthesis. The term $D_2^2/(g^2L)$ leads to the exact expression and bound from \eqref{eq:B3-bound}, which is acceptable. On the other hand, the $1$-term contributes
\[
\begin{aligned}
    \mD_{3,1} 
    &\precprec 
    \frac{XH^2}{D_1^{1/2}D_2^3} \sum_{g \le 2D_2} g^{1/2} 
    \sup_{\frac{1}{2}\le L \le X^\eps \frac{D_1D_2^2}{gH}} L^{3/2}
    \left(1 + \frac{X^{2\eps}gL}{D_1}\right)
    \\
    &\precprec 
    \frac{XH^2}{D_1^{1/2}D_2^3} \sum_{g \le 2D_2} g^{1/2} 
    X^{5\eps} \left(\frac{D_1^{3/2}D_2^3}{g^{3/2}H^{3/2}} + \frac{D_1^{3/2}D_2^5}{g^{3/2}H^{5/2}}\right)
    \precprec 
    X^{1+5\eps}H^{1/2}D_1
    +
    \frac{X^{1+5\eps}D_1D_2^2}{H^{1/2}}.
\end{aligned}
\]
These give the last two terms in our final bound.
\end{proof}

We can now complete the proof of Proposition \ref{prop:typeIfact}. By combining Lemmas \ref{lem:typeIfact-poisson}, \ref{lem:typeIfact-CS}, \ref{lem:typeIfact-reciprocity}, and \ref{lem:typeIfact-counting} and taking $\eps \to 0$, we conclude that the desired bound
\[
    \mathop{\sum_{d_1 \sim D_1} \sum_{d_2 \sim D_2}}_{(d_1,d_2)=1}
    \alpha_{d_1} \beta_{d_2}
    \sum_n w_{d_1d_2n}^{(\gamma)}
    \precprec 
    HX^{1/2} R^{-1/2}
\]
holds under the parameter constraints \eqref{eq:typeIfact-diagonals}, \eqref{eq:typeIfact-bezout-terms}, \eqref{eq:typeIfact-final-bound}, and $D_2 \le X^{0.49}$. But these constraints are part of (or easily derived from) the assumptions of Proposition \ref{prop:typeIfact}. In particular, the bounds $D_1 \precprec H^{3/2} R^{-1}$ and $D_1D_2^2 \precprec H^{5/2}R^{-1}$ in \eqref{eq:typeIfact-final-bound} follow from $D_1 \le H X^{-\eta}$ and $D_2 \le H^{3/4} X^{-\eta}$, which are assumed in Proposition \ref{prop:typeIfact}.

\section{Proof of Proposition \ref{prop:typeI2} } \label{sec:typeI2}
We use \cite[Theorem 1.4]{GMquadratic} with $\theta=1/4$, by Selberg's $3/16$ theorem. We need a slight modification. Write $s^2=h_\square$ for the largest square divisor of $h$. Prime-power considerations give $\varrho_h(c)\precprec s$ and, for $ac-b^2=h$, at most $q^{o(1)}s$ projective roots of $au^2+2buv+cv^2\equiv0\pmod q$. Thus the proof of \cite[Proposition 4.1]{GMquadratic} loses at most $s^3$ without its square-free restriction: one factor $s$ in the coset count and two in the quadratic-root counts. The subsequent Cauchy--Schwarz inequality loses at most $s^{3/2}\leq h_\square$, giving the following form.

\begin{theorem}
\label{thm:typeIimport} 
Let $1 \leq  D \leq K \leq Y^{2}$ and $D \leq Y^{1/2}$. Let $1 \leq h \precprec Y^{2}$. Let $\psi_1,\psi_2:\R \to \C$ be smooth functions supported on $[1,2]$ and $[-1,1]$, resp., such that for all $J \geq 0$ we have $\psi_i^{(J)} \precprec_J 1$. Then for $\theta= 1/4$
\begin{align*}
    \sum_{\substack{d \leq D  }}\bigg| \sum_{k \equiv 0 \pmod{d}} \psi_1\Bigl(\frac{k}{K}\Bigr) \Bigl(&\sum_{ \ell^2 +h \equiv 0 \pmod{k}} \psi_2\Bigl(\frac{\ell}{Y}\Bigr)   - \frac{\varrho_{h}(k) }{k} Y \int_\R \psi_2(u) \d u \Bigr) \bigg|\\
   & \precprec \, h_{\square}  D^{1/2}Y^{1/2}(D^{1/2}+h^{1/4}) \Bigl(1+\frac{Y}{D(D+h^{1/2})}\Bigr)^\theta.
\end{align*}
\end{theorem}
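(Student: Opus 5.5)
The plan is to follow the proof of \cite[Theorem 1.4]{GMquadratic} line by line and only track how the square-free restriction on $h$ enters. That restriction is used in \cite{GMquadratic} only through two kinds of local counts. So the work is to redo those counts for general $h$ and record the loss in terms of $s$, where $s^2=h_\square$. The rest of the argument (spectral decomposition, Kuznetsov, large sieve for exceptional eigenvalues with Selberg's $\theta=1/4$) does not see the arithmetic of $h$ and carries over verbatim.

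\textbf{Step 1: setting up the structure.} Recall the structure in \cite{GMquadratic}. Roots $\ell$ of $\ell^2+h\equiv0\pmod k$ are parametrised by binary quadratic forms $(a,b,c)$ of discriminant $-4h$ with $ac-b^2=h$ acting under $\Gamma_0(d)$-cosets. The counting of $\ell$ in the interval of length $Y$ then becomes a sum of a smooth automorphic kernel over points $\gamma z_Q$ for Heegner-type points $z_Q$. This is their Proposition 4.1, an identity/approximation for each $d$. Applying the spectral expansion and Cauchy--Schwarz over $d$ and the spectrum then gives the stated bound with the factor $(1+Y/(D(D+h^{1/2})))^\theta$ coming from exceptional eigenvalues.

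\textbf{Step 2: local estimates for non-square-free $h$.} By Hensel's lemma at each $p^e\Vert c$, the number of solutions of $\omega^2\equiv -h\pmod{p^e}$ is $\ll p^{\min(\lfloor v_p(h)/2\rfloor,\lfloor e/2\rfloor)}$ up to a bounded factor. Hence $\varrho_h(c)\precprec s$, consistent with \eqref{eq:root-bound}. Similarly, for a form $au^2+2buv+cv^2$ of discriminant $-4h$, the projective roots modulo $p^e$ have multiplicity governed by the content of the form and $p^{v_p(h)/2}$. This gives at most $q^{o(1)}s$ projective roots modulo $q$.

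\textbf{Step 3: inserting the losses.} In the proof of \cite[Proposition 4.1]{GMquadratic}, one coset count (the number of $\Gamma_0(d)$-classes of forms/roots) now costs a factor $s$. The two quadratic-root counts arising when one matches forms modulo $d$ on both sides each cost a factor $s$. The mean-square quantity therefore gains at most $s^3$. After the Cauchy--Schwarz inequality that converts this into the first-moment bound of the theorem, the loss is $s^{3/2}\le s^2=h_\square$. The main terms $\varrho_h(k)Y\int\psi_2/k$ are unaffected, since they are defined with the true $\varrho_h$.

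The main obstacle is the verification in Step 3 that no other hidden place in \cite{GMquadratic} uses square-freeness. Possible such places are the primitivity of forms, the class-number-type averages, and the bound for the Weyl-law/Eisenstein contribution. My approach would be to decompose by the content $g$ of the form, with $g^2\mid h$. Each content class reduces to discriminant $-4h/g^2$ with a loss bounded by the counts above, at the cost of an extra divisor-bounded sum over $g\mid s$.
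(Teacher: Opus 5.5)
Your proposal is correct and follows essentially the same route as the paper. The paper likewise invokes \cite[Theorem 1.4]{GMquadratic} with Selberg's $\theta=1/4$, uses $\varrho_h(c)\precprec s$ and the bound $q^{o(1)}s$ for projective roots, charges one factor $s$ to the coset count and two to the quadratic-root counts in \cite[Proposition 4.1]{GMquadratic} (total $s^3$), and loses $s^{3/2}\leq h_\square$ after Cauchy--Schwarz; your extra decomposition by the content $g$ is a safeguard that the paper's equally brief argument does not use.
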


\begin{proof}[Proof of Proposition \ref{prop:typeI2}] 
By definition and the triangle inequality
\begin{align*}
    \sum_{d \leq D_2} \mu(d)^2 &\bigg| \sum_{\substack{m\equiv 0 \pmod{d}\\ n}}  \mu(m)^2\psi\bigg(\frac{m}{M}\bigg)   w^{(\gamma)}_{mn} \bigg| \\ =&  \sum_{d \leq D_2} \mu(d)^2 \bigg| \sum_{m\equiv 0 \pmod{d}}  \mu(m)^2\psi\bigg(\frac{m}{M}\bigg)   \sum_{h} \gamma_h \sum_{mn=k+h} (\sigma_1(k)-\sigma_0(k)) \bigg|\\
     \precprec &  \bigg(|\Sigma_1|+|\Sigma_2|\bigg),
\end{align*}
where
\begin{align*}
    \Sigma_1&= \sum_{h\leq H}\sum_{\substack{ d\leq D_2}}  \mu(d)^2 \bigg|   \sum_{\substack{m\equiv 0 \pmod{d}}} \mu(m)^2 \psi\bigg(\frac{m}{M}\bigg)  \bigg(\sum_{\substack{k\geq1\\k^2+h\equiv 0 \pmod{m}}}f\bigg(\frac{k^2}{X}\bigg)- \frac{\varrho_{h}(m) \sqrt{X} I_f}{m}  \bigg)\bigg| \\
    \Sigma_2&=  \sum_{\substack{ d\leq D_2}}  \mu(d)^2 \bigg|   \sum_{\substack{m\equiv 0 \pmod{d}}} \mu(m)^2\psi\bigg(\frac{m}{M}\bigg)\sum_{h}\gamma_h \bigg( \sum_{k+h\equiv 0 \pmod{m}}\sigma_0(k) -\frac{\varrho_{h}(m) \sqrt{X} I_f}{m} \bigg)\bigg| .
\end{align*}

\textbf{Estimation of $\Sigma_1$} Put $D_2' = D_2 X^{\eta/2}= X^{1/4-\eta/2}$. If $M<D_2'$, Poisson summation in $k$ gives an error $O(X^{-100})$, so we may assume $M\geq D_2'$. By expanding $\mu(m)^2 = \sum_{e^2|m} \mu(e)$, the contribution from $e> X^{\eta/4}$ is $\precprec H(\sqrt{X}X^{-\eta/4}+\sqrt{M})$, using $\sum_{h\leq H}\varrho_h(q)\precprec H$ and the divisor bound. It therefore suffices to bound
\begin{align*}
 \sum_{h \leq H}   \sum_{\substack{ d\leq D_2'}}   \bigg|   \sum_{\substack{m\equiv 0 \pmod{d}}} \psi\bigg(\frac{m}{M}\bigg)  \bigg(\sum_{\substack{k\geq1\\k^2+h\equiv 0 \pmod{m}}}f\bigg(\frac{k^2}{X}\bigg)- \frac{\varrho_{h}(m) \sqrt{X} I_f}{m}  \bigg)\bigg|.
\end{align*}
Similarly, there are $\ll HX^{-\eta/32}$ terms where $h_\square > X^{\eta/16}$. For each fixed $h$, the divisor bound and $\sum_{m\sim M}\varrho_h(m)\precprec M$ bound the absolute contribution by $\precprec\sqrt{X}$. Thus these terms contribute $\precprec HX^{1/2-\eta/32}$, and we restrict to $h_\square \leq X^{\eta/16}$. An application of Theorem \ref{thm:typeIimport} with $Y=\sqrt{X}$ and $\psi_2(u)=\mathbf{1}_{u>0}f(u^2)$ now gives us the bound
 \begin{align*}
\sum_{\substack{h \leq H \\ h_\square \leq X^{\eta/16}}} \sum_{\substack{ d\leq D_2' }}   &\bigg|   \sum_{\substack{m\equiv 0 \pmod{d}}} \psi\bigg(\frac{m}{M}\bigg)  \bigg(\sum_{\substack{k\geq1\\k^2+h\equiv 0 \pmod{m}}}f\bigg(\frac{k^2}{X}\bigg)- \frac{\varrho_{h}(m) \sqrt{X} I_f}{m}  \bigg)\bigg| \\
 &\precprec  \max_{h \leq H} H  X^{\eta/16} (D_2')^{1/2} X^{1/4} ((D_2')^{1/2} + h^{1/4})\Bigl(1+\frac{\sqrt{X}}{D_2'(D_2'+h^{1/2})}\Bigr)^\theta \precprec HX^{1/2-\eta/16}
 \end{align*}
for $D_2' \precprec X^{1/4-\eta/2}$ and $H \leq X^{1/2}$.

\textbf{Estimation of  $\Sigma_2$} We bring $m$ outside by triangle inequality and bound the sum over $d|m$ by the divisor bound. We then apply Poisson summation (Lemma \ref{le:poisson}) to evaluate $\sum_k$ in $\sigma_0(k)$ with modulus $[m,r]$. Since $k \sim X$ is  longer than the modulus $[m,r] < X^{1-\eta/2}$, we only get the $t=0$ frequency. The rest of the argument is exactly the same as in Section \ref{sec:typeI} for Type I sums, using \eqref{eq:numberofroots}, Lemma \ref{le:modelforsquarefree}, and  the quadratic large sieve (Lemma \ref{lem:quadlargesieve}). 
\end{proof}

\section{Binary quadratic forms}\label{sec:quadratic}
In this  section we collect basic results that are needed in the proof of Proposition \ref{prop:typeII}.  With $\sigma_0,\sigma_1$ as in definition \ref{def:squaremodel}, we define the following binary counting functions which are central to our proof of the Type II proposition. 
\begin{definition}[Representation functions $\mathcal{Q}_{i,j}$]
Let $n_0,n_1,n_2 \geq 1$ be integers and $\kappa \pmod{n_0}$. Then for $i,j \in \{0,1\}$ and $m \in \Z$ we define
    \begin{align} \label{eq:Rijdef}
 \mathcal{Q}_{i,j}(m ;\mathbf{n},\kappa) := \sum_{\substack{k_1,k_2  \\ n_2 k_1  -n_1 k_2 =  m \\ k_1 \equiv \kappa \pmod{n_0} }} \sigma_i(k_1) \sigma_j(k_2).
\end{align}
\end{definition}
The variable $n_0$ is required to handle $\gcd$ issues and  the reader may want to pretend for simplicity in the first read  that $n_0=1$. For $i=j=1$ the functions $ \mathcal{Q}_{i,j}(m ;\mathbf{n},\kappa)$ then give a smoothly weighted count of representations of $m$ by the indefinite binary quadratic form $n_2 x_1^2 -n_1x_2^2=m$ with variables $x_1,x_2$ in a dyadic window around $\sqrt{X}$.

\subsection{Voronoï for binary quadratic forms}

We require the following Voronoï summation formula for the sum over arithmetic progressions of $\mathcal{Q}_{i,j}(m ;\mathbf{n},\kappa)$ of a shape
 $$ \sum_{u \equiv a \pmod{d}} \psi\left(\frac{u}{U}\right) \mathcal{Q}_{i,j}(u ;\mathbf{n},\kappa).$$
To state it concisely we need to set up some technical notation. We define the completed local main terms $\mathrm{MT}_{i,j}(a)=\mathrm{MT}_{i,j}(a;\mathbf{n},\kappa,d)$  for $i,j \in \{0,1\}$ by
\begin{align*}
    \mathrm{MT}_{1,1}(a) &= \sum_{\substack{\kappa_1, \kappa_2 \pmod{d n_0} \\ n_2 \kappa_1^2 - n_1 \kappa_2^2 \equiv a \pmod{d} \\ \kappa_1^2 \equiv \kappa \pmod{n_0}}} 1, \\
    \mathrm{MT}_{1,0}(a) &= \sum_{r \leq R}  \frac{1}{r^2} \sum_{\alpha \pmod r^\times} G(\alpha, r) \sum_{\substack{\kappa_1 \pmod{d n_0} \\ \kappa_2 \pmod{d n_0 r} \\ n_2 \kappa_1^2 - n_1 \kappa_2 \equiv a \pmod{d} \\ \kappa_1^2 \equiv \kappa \pmod{n_0}}} e_r(-\alpha \kappa_2), \\
    \mathrm{MT}_{0,0}(a) &= \sum_{r_1, r_2 \leq R} \frac{ 1}{r_1^2 r_2^2} \sum_{\substack{\alpha_1 \pmod{r_1^\times} \\ \alpha_2 \pmod{r_2^\times}}} G(\alpha_1, r_1)G(\alpha_2,r_2) \sum_{\substack{\kappa_1 \pmod{d n_0 r_1} \\ \kappa_2 \pmod{d n_0 r_2} \\ n_2 \kappa_1 - n_1 \kappa_2 \equiv a \pmod{d} \\ \kappa_1 \equiv \kappa \pmod{n_0}}} e_{r_1}(-\alpha_1 \kappa_1) e_{r_2}(-\alpha_2 \kappa_2),
\end{align*}
and symmetrically for $\mathrm{MT}_{0,1}(a)$.  For a given $U > 1$ and a smooth compactly supported function $\psi$ we let $\widehat{\Psi}_{i,j}$ denote 
\begin{align*}
    \widehat{\Psi}_{i,j}(\xi_1, \xi_2) = \int_{\R^2} f_i(x_1) f_j(x_2) \psi\left(\frac{n_2 P_i(x_1) - n_1 P_j(x_2)}{U}\right) e(-\xi_1 x_1 - \xi_2 x_2) \rd x_1 \rd x_2,
\end{align*}
where $P_1(x) = x^2, P_0(x) = x$, and $f_1(x) = \mathbf{1}_{x>0}f(x^2/X)$, while $f_0(x) = \frac{1}{2\sqrt{x}} f(x/X)$ for $x>0$ and $f_0(x)=0$ for $x\leq0$. These weights are smooth. Changing variables  shows that the zero-frequency integral and the following majorant do not depend on $i,j$:
\begin{align*}
  | \widehat{\Psi}_{i,j}(\xi_1, \xi_2)| \leq  \mathcal{I}_0 =\int_{\R^2} \bigg|f_i(x_1) f_j(x_2) \psi\left(\frac{n_2 P_i(x_1) - n_1 P_j(x_2)}{U}\right) \bigg| \rd x_1 \rd x_2 \ll \frac{U n_0 }{N}.
\end{align*}

\begin{lemma}[Truncated Voronoï summation formula] \label{le:voronoi}
Let $\eps  >0$, $T\geq 1,$ and let $\psi$ be a smooth function supported in $[-4,4]$ with $\psi^{(J)} \ll_J T^J $ for all $J\geq 0$. Let $N,U,X> 1$ with $X^{\eps}T dN R<U\leq NX/n_0$ and $X^{1/2} > X^{\eps} d n_0$. Let $d\geq 1$ and suppose that $n_0n_1, n_0n_2 \sim N$, with $(n_1,n_2)=1$.

Let $\mathcal{T}^\ast$ denote the set of non-zero dual frequencies $(t_1, t_2) \neq (0,0)$ satisfying 
\begin{align*}
    |t_1 \sqrt{n_1} + t_2 \sqrt{n_2}| \leq X^\eps \frac{Td n_0^{1/2} N^{1/2}}{X^{1/2}}, \quad \text{and} \quad |t_1 \sqrt{n_1} - t_2 \sqrt{n_2}| \leq X^\eps \frac{T d N^{3/2} X^{1/2}}{U n_0^{1/2}}.
\end{align*}
Let $\mathcal{E}(a)$ denote
\begin{align*}
    \mathcal{E}(a) = \frac{1}{d^2 n_0^2} \sum_{(t_1, t_2) \in \mathcal{T}^\ast} \widehat{\Psi}_{1,1}\left(\frac{t_1}{d n_0}, \frac{t_2}{d n_0}\right) \sum_{\substack{\kappa_1, \kappa_2 \pmod{d n_0} \\ n_2 \kappa_1^2 - n_1 \kappa_2^2 \equiv a \pmod{d} \\ \kappa_1^2 \equiv \kappa \pmod{n_0}}} e_{d n_0}(t_1 \kappa_1 + t_2 \kappa_2).
\end{align*}

Then for any $C > 0$ we have 
\begin{align*}
    \sum_{u \equiv a \pmod{d}} \psi\left(\frac{u}{U}\right) \mathcal{Q}_{i,j}(u ;\mathbf{n},\kappa)  = \frac{\widehat{\Psi}_{i,j}(0, 0)}{d^2 n_0^2 }\mathrm{MT}_{i,j}(a) + \1_{(i,j)=(1,1)}\mathcal{E}(a) + O_{\eps,C} (X^{-C}).
\end{align*}

\end{lemma}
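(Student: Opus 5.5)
The plan is to unfold $\mathcal{Q}_{i,j}$ into a two-dimensional lattice sum and apply two-dimensional Poisson summation. By \eqref{eq:Rijdef} and the definition of $\sigma_0,\sigma_1$, the left-hand side is
\[
\sum_{x_1,x_2} c_i(x_1)\,c_j(x_2)\,\psi\Big(\frac{n_2P_i(x_1)-n_1P_j(x_2)}{U}\Big)\,\mathbf{1}_{n_2P_i(x_1)-n_1P_j(x_2)\equiv a \ (d)}\,\mathbf{1}_{P_i(x_1)\equiv \kappa\ (n_0)}.
\]
For an index equal to $1$, the variable $x$ runs over integers with weight $f_1(x)$, since $k=x^2$. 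For an index equal to $0$, the variable is $k$ with weight $f_0(k)$ times the additive twist $\sum_{r\le R}r^{-1}\sum_{\alpha}G(\alpha,r)e_r(-\alpha k)$.

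First split each variable into residue classes: $x_\iota\equiv\kappa_\iota \pmod{dn_0}$ when the index is $1$, and $x_\iota\equiv\kappa_\iota \pmod{dn_0r_\iota}$ when the index is $0$, so that the twist $e_{r}(-\alpha\kappa)$ is constant on classes. The two congruence conditions then become conditions on $(\kappa_1,\kappa_2)$, which are exactly the ones appearing in $\mathrm{MT}_{i,j}(a)$. Apply Poisson summation in both variables, as in Lemma~\ref{le:poisson} but in two dimensions, with the smooth weight
\[
f_i(x_1)\,f_j(x_2)\,\psi\big((n_2P_i(x_1)-n_1P_j(x_2))/U\big).
\]
This produces
\[
\frac{1}{(dn_0)^2\,r_1r_2}\sum_{t_1,t_2}\widehat{\Psi}_{i,j}\Big(\frac{t_1}{dn_0r_1},\frac{t_2}{dn_0r_2}\Big)\sum_{\kappa}(\cdots).
\]
The zero frequency gives $\widehat\Psi_{i,j}(0,0)\,\mathrm{MT}_{i,j}(a)/(d^2n_0^2)$, with the normalisations $1/r^2$ matching the definitions (one $1/r$ from the model and one from the enlarged modulus).

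The main work is to show that all nonzero frequencies are negligible, except for those in $\mathcal{T}^\ast$ when $(i,j)=(1,1)$. I would do this by repeated integration by parts in $\widehat\Psi_{i,j}$.

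\emph{Case of a linear variable.} If an index is $0$, the $x$-variable has length about $X$ and the modulus is at most $dn_0R$. Change variables to $v=n_2P_i(x_1)-n_1P_j(x_2)$ in place of the linear variable. Then $\psi(v/U)$ costs $T/U$ per derivative, while the phase has derivative $\gg t/(dn_0 r)\cdot n_\iota^{-1}$ in $v$. Under the hypothesis $U>X^\eps TdNR$, every $t\neq0$ gains $X^{-\eps}$ per integration by parts. The other variable is then handled with the same substitution, or directly, since $f$ varies on scale $X$ or $X^{1/2}$ with derivatives $\delta^{-1}\precprec 1$ and $X^{1/2}>X^\eps dn_0$.

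\emph{Case $(i,j)=(1,1)$.} Use the coordinates $y_\pm=\sqrt{n_2}\,x_1\pm\sqrt{n_1}\,x_2$, so that $n_2x_1^2-n_1x_2^2=y_+y_-$ and the phase becomes a linear combination of $(t_1\sqrt{n_1}\pm t_2\sqrt{n_2})\,y_\mp$ up to constants. Here $y_+\asymp \sqrt{NX/n_0}$ ranges at scale roughly $(N X/n_0)^{1/2}$. On the support, $y_-$ ranges in an interval of length about $U/y_+$, and the weight oscillates at scale $U/(Ty_+)$ in $y_-$. Integrating by parts separately in each coordinate shows the contribution is negligible unless both inequalities defining $\mathcal{T}^\ast$ hold. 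The surviving frequencies are exactly $\mathcal{E}(a)$.

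I expect the main obstacle to be making this integration by parts rigorous uniformly. The change of variables has a Jacobian of size depending on $n_1,n_2$, the cutoff $f$ carries $\delta^{-1}$ losses, and one must check that the scales stated in $\mathcal{T}^\ast$, with the factors $n_0^{\pm1/2}$, come out exactly right given $n_0n_\iota\sim N$.
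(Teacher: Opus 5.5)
Your proposal is correct and follows the paper's proof essentially step by step: residue classes modulo $dn_0$ (or $dn_0r$ for model variables), Poisson summation in both variables, and the substitution $y=n_2P_i(x_1)-n_1P_j(x_2)$, which kills the non-zero frequencies of a linear variable via $U>X^\eps TdNR$ (the remaining frequency then dies by $X^{1/2}>X^\eps dn_0$); for $(i,j)=(1,1)$ it likewise uses the coordinates $\sqrt{n_2}x_1\pm\sqrt{n_1}x_2$ with a separate integration by parts in each. One slip to fix: the phase pairs $y_\pm$ with $t_1\sqrt{n_1}\pm t_2\sqrt{n_2}$, not $y_\mp$, so the long variable $y_+\asymp\sqrt{NX/n_0}$ yields the tight first condition of $\mathcal{T}^\ast$ and the short variable $y_-\ll U(NX/n_0)^{-1/2}$ yields the second (with your pairing the two bounds would come out swapped), and the amplitude bounds in $y_-$ use $U\le NX/n_0$.
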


\begin{remark} \label{rem:dual}
 We note that the dual range $\mathcal{T}^\ast$ is contained in
\begin{align*}
 |t_1^2 n_1 - t_2^2 n_2| \leq X^{2\eps} \frac{T^2 d^2 N^2}{U} \quad \text{and} \quad  |t_1|, |t_2|  \leq  X^{\eps}\frac{T d N X^{1/2}}{U}.
\end{align*}
In our application $Td$ will be bounded roughly of size $N/n_0$.
\end{remark}

\begin{proof}
We split the variables $k_1,k_2$ to residue classes $\kappa_1,\kappa_2$ to moduli as in $\mathrm{MT}_{i,j}(a)$, that is, for $i=j=1$ the modulus is $dn_0$, while for the model the modulus also depends on the $r$-variables.  Applying the Poisson summation formula to the sums over $k_1,k_2$ generates dual sums over frequencies $t_1, t_2 \in \Z$. The resulting zero-frequency integral $(t_1=t_2=0)$ takes the form $\widehat{\Psi}_{i,j}(0,0)$. We retain the actual zero-frequency integral in the main term; $\mathcal{I}_0$ is only an upper bound for its absolute value. In the phased case this integral is $\widehat{\Psi}^{\,\lambda}_{i,j}(0,0)$, with notation as in Lemma \ref{lem:phasedecayvoronoi}.

To bound the contribution of non-zero frequencies $(t_1, t_2) \neq (0,0)$, we apply repeated integration by parts (non-stationary phase). 

\textbf{1. The purely quadratic case $(i,j)=(1,1)$:}
We examine $\widehat{\Psi}_{1,1}(\xi_1, \xi_2)$ evaluated at $\xi_i = \frac{t_i}{d n_0}$, given by
\begin{align*}
    \widehat{\Psi}_{1,1}(\xi_1, \xi_2) = \int_{\R^2} F(x_1, x_2) e(-\xi_1 x_1 - \xi_2 x_2) \rd x_1 \rd x_2, 
\end{align*}
where $F(x_1, x_2) = f_1(x_1) f_1(x_2) \psi\left(\frac{n_2 x_1^2 - n_1 x_2^2}{U}\right)$. 
We make the linear change of variables $v = \sqrt{n_2}x_1 - \sqrt{n_1}x_2$ and $w = \sqrt{n_2}x_1 + \sqrt{n_1}x_2$, with Jacobian $(2\sqrt{n_1 n_2})^{-1}$. The phase becomes $-v\alpha - w\beta$, where
\begin{align*}
    \alpha = \frac{1}{2}\left(\frac{\xi_1}{\sqrt{n_2}} - \frac{\xi_2}{\sqrt{n_1}}\right) \quad \text{and} \quad \beta = \frac{1}{2}\left(\frac{\xi_1}{\sqrt{n_2}} + \frac{\xi_2}{\sqrt{n_1}}\right).
\end{align*}
Let $$\widetilde{F}(v,w)=F(x_1,x_2) =  f_1\bigg(\frac{v+w}{2\sqrt{n_2}}\bigg) f_1\bigg(\frac{w-v}{2\sqrt{n_1}}\bigg)  \psi\left(\frac{vw}{U}\right)$$ be the smooth weight in the new coordinates. Since $n_1, n_2 \asymp N/n_0$ and the support of $f$ restricts $x_1, x_2 \asymp X^{1/2}$, we have $w \asymp W := \sqrt{N X / n_0}$. The support of $\psi$ enforces $|vw| \ll U$, constraining $v \ll V := U / W$. By $U\leq NX/n_0$ we have $V\leq W$ and it then holds that
\begin{align} \label{eq:derivboundsvw}
    \frac{\partial^k}{\partial w^k} \frac{\partial^\ell}{\partial v^\ell} \widetilde{F}(v,w) \ll_{k,\ell} \delta^{-k-\ell}T^{k+\ell} W^{-k}V^{-\ell}.
\end{align}
Define the differential operators $\mathcal{D}_w = \frac{-1}{2\pi i \beta} \frac{\partial}{\partial w}$ and $\mathcal{D}_v = \frac{-1}{2\pi i \alpha} \frac{\partial}{\partial v}$. Applying $\mathcal{D}_w$ repeatedly $C$ times, integration by parts yields:
\begin{align*}
    \widehat{\Psi}_{1,1} = \frac{1}{2\sqrt{n_1 n_2}} \int \int \widetilde{F}(v,w) e(-v\alpha - w\beta) \rd v \rd w = \frac{1}{2\sqrt{n_1 n_2}} \int \int \left( (-\mathcal{D}_w)^C \widetilde{F} \right) e(-v\alpha - w\beta) \rd v \rd w.
\end{align*}
By the derivative bound \eqref{eq:derivboundsvw}, the integrand is bounded by $\ll_C\delta^{-C}(|\beta| W/T)^{-C}$. Hence, this integral is $O_{\eps,C}(X^{-C})$ unless $|\beta| W \leq  X^\eps T$, which translates to
\begin{align*}
    \left| \frac{\xi_1}{\sqrt{n_2}} + \frac{\xi_2}{\sqrt{n_1}} \right| \leq X^\eps T \sqrt{\frac{n_0}{N X}}.
\end{align*}
For $\xi_i= \frac{t_i}{d n_0}$ this gives $|t_1 \sqrt{n_1} + t_2 \sqrt{n_2}| \leq X^\eps \frac{Td n_0^{1/2} N^{1/2}}{X^{1/2}},$ which is the first inequality in $\mathcal{T}^\ast$.

Applying the same argument with $\mathcal{D}_v$ gives polynomial decay $\ll_C\delta^{-C}(|\alpha| V/T)^{-C}$ which allows us to truncate to $|\alpha| V \leq X^\eps T$. This translates to
\begin{align*}
     \left| \frac{\xi_1}{\sqrt{n_2}} - \frac{\xi_2}{\sqrt{n_1}} \right| \leq X^\eps T \frac{\sqrt{N X / n_0}}{U}.
\end{align*}
For $\xi_i= \frac{t_i}{d n_0}$ this gives $|t_1 \sqrt{n_1} - t_2 \sqrt{n_2}| \leq X^\eps \frac{Td N^{3/2} X^{1/2}}{U n_0^{1/2}},$ which is the second inequality in $\mathcal{T}^\ast$.

\textbf{2. The model cases $(1,0),(0,1),(0,0)$:}
The case $(0,0)$ is similar but easier so let us consider $(1,0)$. Consider the evaluation of $\sigma_0(k_2)$ inside the summation $\mathcal{Q}_{1,0}$. Swapping the arithmetic model sums to the outside, we evaluate the Poisson summation over $k_1 \equiv \kappa_1 \pmod{dn_0}$ and $k_2 \equiv \kappa_2 \pmod{dn_0 r}$. 

Consequently, we examine $\widehat{\Psi}_{1,0}(\xi_1, \xi_2)$ with $\xi_2 = \frac{t_2}{d n_0 r}$ and $\xi_1= \frac{t_1}{dn_0}$. We substitute $y = n_2 x_1^2 - n_1 x_2$. The integral becomes:
\begin{align*}
    \frac{1}{n_1}\int_\R f_1(x_1) e(-\xi_1 x_1 - \xi_2 \frac{n_2}{n_1}x_1^2) \left( \int_\R f_0\left(\frac{n_2 x_1^2 - y}{n_1}\right) \psi\left(\frac{y}{U}\right)  e\left(\xi_2 \frac{y}{n_1}\right) \rd y \right) \rd x_1.
\end{align*}
Let $G(y, x_1) = f_0(\dots) \psi(y/U)$ be the inner amplitude. The support of $\psi$ forces $y \ll U$. Differentiating with respect to $y$, we get $\frac{\partial^k}{\partial y^k} G(y, x_1) \precprec_k X^{-1/2}T^k U^{-k}$. Integrating by parts $C$ times with respect to $y$ using the operator $\frac{n_1}{2\pi i \xi_2} \frac{\partial}{\partial y}$ yields  decay $O_A(X^{-C})$ unless  $\frac{|\xi_2|}{n_1} \leq T\frac{X^\eps}{U}$. Substituting $\xi_2= \frac{t_2}{dn_0r}$, we require
\begin{align*}
    |t_2| \leq X^\eps T\frac{d n_0 r n_1}{U} \asymp X^\eps T\frac{d N r}{U} \leq X^\eps T\frac{d N R}{U}.
\end{align*}
Because we assume that $U > TX^{\eps} dN R$, this bound forces $|t_2| < 1$. Since $t_2 \in \Z$, it must be that $t_2 = 0$. This implies  $\xi_2=0$ and it remains to consider
\begin{align*}
    \frac{1}{n_1}\int_\R f_1(x_1) e(-\xi_1 x_1) \left( \int_\R f_0\left(\frac{n_2 x_1^2 - y}{n_1}\right) \psi\left(\frac{y}{U}\right) \rd y \right) \rd x_1.
\end{align*}
The weight $g(x_1)=\frac{f_1(x_1)}{n_1} \left( \int_\R f_0\left(\frac{n_2 x_1^2 - y}{n_1}\right) \psi\left(\frac{y}{U}\right) \rd y \right) $ satisfies $\frac{\partial^k}{\partial x^k} g(x) \precprec_k \frac{U}{n_1\sqrt X}(\sqrt{X})^{-k} $ and so we get fast decay outside $|\xi_1| \leq X^{-1/2+\eps}$. Substituting $\xi_1 = \frac{t_1}{d n_0} $ we get only $t_1=0$ since $X^{1/2} > X^{\eps}dn_0$.

\end{proof}

For our application we will need the following bound for the integrals when $\psi$ contains a linear phase.
\begin{lemma} \label{lem:phasedecayvoronoi}
   Let $U,N,X>1$ and $n_0n_1,n_0n_2\sim N$. Let $\psi_0$ be a fixed smooth function supported in
$[-4,4]$, with $\psi_0^{(J)}\ll_J 1$ for every $J\geq 0$, and put
\[
    \psi_\lambda(x)=\psi_0(x)e(\lambda x),
    \qquad T=1+|\lambda|,
    \qquad \lambda\in\R.
\]
Denote
\begin{align*}
    \widehat{\Psi}^{\,\lambda}_{i,j}(\xi_1,\xi_2)
    ={}&\int_{\R^2} f_i(x_1)f_j(x_2)
    \psi_0\left(\frac{n_2P_i(x_1)-n_1P_j(x_2)}{U}\right)\\
    &\qquad\times
    e\left(\lambda\frac{n_2P_i(x_1)-n_1P_j(x_2)}{U}
        -\xi_1x_1-\xi_2x_2\right)\rd x_1\rd x_2.
\end{align*}
Then
\begin{equation}\label{eq:voronoi_twisted_decay}
    \left|\widehat{\Psi}^{\,\lambda}_{1,1}(\xi_1,\xi_2)\right|
    \precprec \frac{Un_0}{N(1+|\lambda|)}
    =\frac{Un_0}{NT},
\end{equation}
uniformly for all real $\xi_1,\xi_2$ and $\lambda$.
\end{lemma}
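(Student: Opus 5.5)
We use the same change of variables as in the proof of Lemma \ref{le:voronoi}: $v=\sqrt{n_2}x_1-\sqrt{n_1}x_2$ and $w=\sqrt{n_2}x_1+\sqrt{n_1}x_2$, with Jacobian $(2\sqrt{n_1n_2})^{-1}$. Then $n_2x_1^2-n_1x_2^2=vw$, so
\begin{align*}
\widehat{\Psi}^{\,\lambda}_{1,1}(\xi_1,\xi_2)=\frac{1}{2\sqrt{n_1n_2}}\int_{\R}\int_{\R}\widetilde F_0(v,w)\, e\Big(\lambda\frac{vw}{U}-\alpha v-\beta w\Big)\rd v\,\rd w ,
\end{align*}
where $\widetilde F_0(v,w)=f_1(\frac{v+w}{2\sqrt{n_2}})f_1(\frac{w-v}{2\sqrt{n_1}})\psi_0(vw/U)$ and $\alpha,\beta$ are as before. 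On the support we have $w\asymp W=\sqrt{NX/n_0}$ and $|v|\ll V=U/W$. The trivial bound is $(n_1n_2)^{-1/2}VW\asymp Un_0/N$, so when $|\lambda|\le X^{\eps}$ there is nothing to prove. We may therefore assume $|\lambda|$ is large and aim to save a factor $T$.

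We fix $w$ and study the inner $v$-integral
\[
J(w)=\int_{\R}\widetilde F_0(v,w)\,e\big((\lambda w/U-\alpha)v\big)\rd v .
\]
Its amplitude is a smooth function of $v$ supported on $|v|\ll V$. Each $v$-derivative costs $V^{-1}$: the $\psi_0$ factor contributes $w/U\asymp V^{-1}$, and the $f_1$ factors contribute $\delta^{-1}/\sqrt{N X/n_0}\cdot$ (which is $\le \delta^{-1}V^{-1}$ since $V\le W$). The linear frequency is $\lambda w/U-\alpha$. Integrating by parts $C$ times gives
\[
J(w)\ll_C V\,\big(1+V|\lambda w/U-\alpha|\big)^{-C}\delta^{-C}.
\]
As $w$ varies, the frequency $\lambda w/U$ moves with derivative $\lambda/U$. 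Hence the set of $w$ where $V|\lambda w/U-\alpha|\le X^{\eps}$ is an interval of length $\ll X^{\eps}U/(V|\lambda|)=X^{\eps}W/|\lambda|$. Integrating the bound for $J(w)$ over $w$ gives
\[
\int |J(w)|\rd w\precprec V\cdot W/|\lambda| + X^{-C}.
\]
Multiplying by $(n_1n_2)^{-1/2}\asymp n_0/N$ yields $\precprec Un_0/(N|\lambda|)$, which is \eqref{eq:voronoi_twisted_decay}. This is uniform in $\xi_1,\xi_2$, because $\alpha$ and $\beta$ only shift the stationary point; the phase $\beta w$ is simply bounded in absolute value.

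The main obstacle is verifying the derivative bounds for $\widetilde F_0$ in $v$ uniformly, given the factor $\delta^{-1}$ (which is only $(\log X)^{O(1)}$ and so is absorbed by $\precprec$). A further point is that the region $|v|$ small, where $v$-derivatives of $\psi_0(vw/U)$ are still only of size $w/U$, needs no special care. Because the $\psi_0$ factor depends on $v$ only through $vw/U$, its $v$-derivatives are exactly of scale $V^{-1}$, so no stationary-phase analysis is needed; first-derivative integration by parts, followed by the measure count in $w$, suffices.
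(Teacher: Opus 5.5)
Your argument is the paper's argument. The paper also passes to $v,w$ and integrates by parts only in $v$ at fixed $w$; after rescaling $v=Vs$, $w=Wt$ it does this twice, obtaining the weight $(1+|\lambda\vartheta t-\alpha_0|)^{-2}$. It then extracts the factor $1/(1+|\lambda|)$ from the fact that the $v$-frequency $\lambda w/U-\alpha$ moves linearly in $w$, and uniformity in $\xi_1,\xi_2$ comes for free exactly as you say.

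There is one slip to repair. You use $V=U/W\le W$, but that is the hypothesis $U\le NX/n_0$ of Lemma \ref{le:voronoi}, not of this lemma, which only assumes $U,N,X>1$.

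If $U>W^2$, the $v$-support is really $|v|\le w\ll W$. The $f_1$-factors then make each $v$-derivative cost $\delta^{-1}W^{-1}$, which exceeds $V^{-1}$, so your bound $J(w)\ll V(1+V|\lambda w/U-\alpha|)^{-C}$ is not justified in that range.

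The paper's fix is to take $V=\min(W,U/W)$ and $\vartheta=VW/U\le 1$. With this choice your argument gives $\frac{Un_0}{N}\cdot\frac{\vartheta}{1+|\lambda|\vartheta}$, and $\frac{\vartheta}{1+|\lambda|\vartheta}\le\frac{1}{1+|\lambda|}$ precisely because $\vartheta\le 1$.

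A smaller point concerns the region $V|\lambda w/U-\alpha|>X^{\eps}$. Do not discard it with an absolute $O(X^{-C})$ error: that error is neither scaled by $VW$ nor uniform for arbitrarily large $|\lambda|$. Instead, integrate the decay bound over $w$ directly. This shows the region contributes $\ll\delta^{-C}VW|\lambda|^{-1}X^{-\eps(C-1)}$, which is harmless. The paper avoids the cutoff altogether by integrating $(1+|\cdot|)^{-2}$ over $t\asymp 1$.
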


\begin{proof}
   On the support of $f_1(x_1)f_1(x_2)$ we have $x_1,x_2>0$.
Use the coordinates
\[
    v=\sqrt{n_2}x_1-\sqrt{n_1}x_2,
    \qquad w=\sqrt{n_2}x_1+\sqrt{n_1}x_2,
\]
and set
\[
    W=\sqrt{NX/n_0},\qquad
    V=\min\left(W,\frac{U}{W}\right),\qquad
    \vartheta=\frac{VW}{U}\leq 1.
\]
On the support, $w\asymp W$ and $|v|\ll V$. After writing
$v=Vs$ and $w=Wt$, the integral takes the form
\[
    \frac{VW}{2\sqrt{n_1n_2}}
    \int_{\R}\int_{\R}
    A(s,t)e\left(\lambda\vartheta st-\alpha_0s-\beta_0t\right)
    \rd s\rd t,
\]
where $\alpha_0,\beta_0\in\R$ depend on the Fourier variables.
The function $A$ is supported on $|s|\ll1$, $t\asymp1$, and
$\partial_s^jA\precprec_j1$ for each fixed $j$. Indeed,
$V/W\leq1$ and $\vartheta\leq1$, while the derivative losses
from $f$ are $X^{o(1)}$.
Two integrations by parts in $s$, together with the trivial bound
when the coefficient of $s$ is small, give
\[
    \left|\int_{\R}A(s,t)
       e\left((\lambda\vartheta t-\alpha_0)s\right)\rd s\right|
    \precprec (1+|\lambda\vartheta t-\alpha_0|)^{-2}.
\]
Integrating this bound over the fixed compact range of $t$ yields
\[
    \left|\widehat{\Psi}^{\,\lambda}_{1,1}(\xi_1,\xi_2)\right|
    \precprec
    \frac{VW}{\sqrt{n_1n_2}}
    \frac{1}{1+|\lambda|\vartheta}
    \ll
    \frac{Un_0}{N}\frac{\vartheta}{1+|\lambda|\vartheta}
    \leq\frac{Un_0}{N(1+|\lambda|)}.
\]
This proves \eqref{eq:voronoi_twisted_decay}.
\end{proof}

\subsection{Comparing the main terms}
In this section our aim is to compare the main terms in Lemma \ref{le:voronoi}. We first have the following lemma.
\begin{lemma} \label{le:gausssum}
Let $d,n_0',r\geq1$ and $\lambda\in\Z$ be integers, with $r\mid dn_0'$. Denote
\begin{align*}
    S(\lambda, r, n_0') = \sum_{\alpha \pmod r^\times} G(\alpha, r) \sum_{\substack{y \pmod{d n_0'} \\ y \equiv \kappa \pmod{n_0'}}} e_d(\lambda y) e_r(-\alpha y).
\end{align*}
Let $d_1 = d / (d, n_0')$ and $m = (d_1, \lambda)$. Let $r_1 = r / (r, n_0')$. Then $S(\lambda, r, n_0') = 0$ unless $r_1 = d_1 / m$ and we have
\begin{align*}
    \frac{1}{r} |S(\lambda, r, n_0')| \leq  \mathbf{1}_{d_1=r_1m} \sqrt{2d (d,n_0') (r,n_0') m}.
\end{align*}
\end{lemma}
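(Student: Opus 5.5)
The plan is to swap the two sums in $S(\lambda,r,n_0')$ and evaluate the inner sum over $y$ exactly as a complete linear exponential sum. The congruence conditions then pin down $r$, and the Gauss sum bound (Lemma \ref{lem:quadr-gauss-bound}) controls the size of what remains.

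\textbf{Parametrizing $y$.} Write $y=\kappa+n_0'z$ with $z \pmod{d}$; as $z$ runs mod $d$ this runs over all $y \pmod{dn_0'}$ with $y\equiv\kappa \pmod{n_0'}$. Note that $e_d(\lambda y)e_r(-\alpha y)$ is well defined mod $dn_0'$, since $r\mid dn_0'$. Pulling out the phase at $y=\kappa$, the inner sum becomes
\[
e_d(\lambda\kappa)e_r(-\alpha\kappa)\sum_{z \pmod d} e\Big(z\Big(\frac{\lambda n_0'}{d}-\frac{\alpha n_0'}{r}\Big)\Big).
\]
This is $d$ times the indicator that $\frac{\lambda n_0'}{d}-\frac{\alpha n_0'}{r}\in\frac1d\Z$ whenever the combined phase is $d$-periodic. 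That periodicity holds because $dn_0'/r\in\Z$. Reducing the two fractions,
\[
\frac{\lambda n_0'}{d}=\frac{\lambda}{d_1}\cdot\frac{n_0'}{(d,n_0')}, \qquad \frac{\alpha n_0'}{r}=\frac{\alpha}{r_1}\cdot\frac{n_0'}{(r,n_0')}.
\]
The first fraction has exact denominator $d_1/m$ after removing the common factor $m=(d_1,\lambda)$; this uses $(d_1, n_0'/(d,n_0'))=1$, which I would justify by a brief $p$-adic check. The second has exact denominator $r_1$ by the same reasoning, using $(\alpha,r)=1$.

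\textbf{Forcing $r_1=d_1/m$.} The difference of the two reduced fractions must lie in $\frac1d\Z$. For the $\alpha$-sum to be nonempty, the two fractions must essentially agree modulo $\frac1d\Z$. The key point is to show that this forces $r_1=d_1/m$, since otherwise one prime appears in one exact denominator with a different power than in the other. This denominator matching is the step I expect to be the main obstacle. The denominators are only determined modulo $\frac1d\Z$, so I would argue prime by prime: at $p\nmid n_0'$ the comparison is clean, and at $p\mid n_0'$ the factors are absorbed by $(d,n_0')$ and $(r,n_0')$. If the matching cannot be made exact, I would settle for the weaker conclusion that only those $\alpha$ lying in a fixed residue class mod $r_1$ survive, which is all the bound needs.

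\textbf{Counting $\alpha$ and bounding.} Given the condition, the admissible $\alpha \pmod r$ must satisfy one congruence modulo $r_1$ (coming from equality of the numerators). This leaves at most $r/r_1=(r,n_0')$ values of $\alpha$. For each, $|G(\alpha,r)|\le\sqrt{2r}$ by Lemma \ref{lem:quadr-gauss-bound}, since $(\alpha,r)=1$. Hence
\[
|S|\le (r,n_0')\cdot d\cdot\sqrt{2r}.
\]
Dividing by $r$ gives $d(r,n_0')\sqrt{2/r}$. Substituting $r=r_1(r,n_0')=\frac{d_1}{m}(r,n_0')$ and $d=d_1(d,n_0')$, this equals
\[
\sqrt{2d\,(d,n_0')\,(r,n_0')\,m},
\]
which is the claimed bound.
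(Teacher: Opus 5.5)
You follow the same route as the paper: parametrize $y=\kappa+n_0'z$, evaluate the complete sum over $z \pmod d$, count the $(r,n_0')$ lifts of $\alpha$, and use $|G(\alpha,r)|\le\sqrt{2r}$. However, your orthogonality step is misstated, and this is exactly what leaves your key step open. Write $\theta=\frac{\lambda n_0'}{d}-\frac{\alpha n_0'}{r}$. The hypothesis $r\mid dn_0'$ already gives $\theta\in\frac1d\Z$; this is your $d$-periodicity. For such $\theta$ one has $\sum_{z \pmod d}e(z\theta)=d\cdot\mathbf{1}_{\theta\in\Z}$, not $d\cdot\mathbf{1}_{\theta\in\frac1d\Z}$. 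Your condition that ``the two fractions agree modulo $\frac1d\Z$'' is therefore always satisfied. It imposes nothing on $\alpha$, gives no vanishing, and no prime-by-prime analysis can extract $r_1=d_1/m$ from it. That is why the denominator matching looked like the main obstacle.

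With the correct condition $\theta\in\Z$, the step you flagged is a one-liner. You have already correctly identified $\frac{(\lambda/m)\,n_0'/(d,n_0')}{d_1/m}$ and $\frac{\alpha\,n_0'/(r,n_0')}{r_1}$ as fractions in lowest terms. Two fractions in lowest terms that are congruent modulo $1$ have equal denominators, so $r_1=d_1/m$. The numerators then agree modulo $r_1$, and since $n_0'/(r,n_0')$ is invertible modulo $r_1$, this fixes $\alpha \pmod{r_1}$. This is precisely the paper's argument.

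Your proposed fallback would not have sufficed. The vanishing of $S$ unless $r_1=d_1/m$ is part of the statement, and your final substitution $r=\frac{d_1}{m}(r,n_0')$ relies on it. Deducing the claimed bound from $\frac1r|S|\le d\sqrt{2(r,n_0')/r_1}$ alone requires $r_1\ge d_1/m$, which you would not have established.
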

\begin{proof}
Let $y = \kappa + \ell n_0'$ for $0 \leq \ell < d$. This gives
\begin{align*}
    \sum_{\ell=0}^{d-1} e_d(\lambda(\kappa + \ell n_0')) e_r(-\alpha(\kappa + \ell n_0')) = e\left(\kappa \left(\frac{\lambda}{d} - \frac{\alpha}{r}\right)\right) \sum_{\ell=0}^{d-1} e\left( \ell \left(\frac{\lambda n_0'}{d} - \frac{\alpha n_0'}{r}\right) \right).
\end{align*}
By orthogonality since $r \mid d n_0'$, the sum over $\ell$ evaluates to $d$ if $\frac{\lambda n_0'}{d} - \frac{\alpha n_0'}{r} \in \Z$, and $0$ otherwise. Thus, if the sum is non-vanishing, we have
\begin{align*}
    \frac{\alpha n_0'}{r} \equiv \frac{\lambda n_0'}{d} \pmod 1.
\end{align*}
Reducing to lowest terms,  we get
\begin{align*}
        \frac{\alpha n_0'/(n_0',r)}{r_1} \equiv \frac{(\lambda/m) n_0'/(n_0',d)}{d_1/m} \pmod 1.
\end{align*}
For two reduced fractions to be equivalent modulo 1, their reduced denominators must be identical and we get $r_1 = d_1 / m$. 

When this  equality holds, the congruence $\alpha (n_0' / (n_0', r)) \equiv \frac{\lambda}{m} (n_0' / (n_0', d)) \pmod{r_1}$ determines a unique residue class for $\alpha \pmod{r_1}$. Because $\alpha$ is originally summed over the larger modulus $r = r_1 (r, n_0')$, there are at most $(r, n_0')$ distinct lifts of this residue class to modulo $r$. Therefore, there are at most $(r, n_0')$ valid choices for $\alpha \pmod r$. For each such $\alpha$, we apply the standard Gauss sum bound $|G(\alpha, r)| \leq \sqrt{2r}$ (valid also for $r$ having powers of 2). Summing over the at most $(r, n_0')$ valid choices for $\alpha$, and multiplying by the factor $d$ of the $\ell$-sum, we get
\begin{align*}
    |S(\lambda, r, n_0')| \leq d (r, n_0') \sqrt{2r} .
\end{align*}
Dividing by $r$ and substituting $r_1 = d_1 / m$, we obtain the weighted bound:
\begin{align*}
    \frac{1}{r} |S(\lambda, r, n_0')| \leq d \sqrt{2} \frac{(r, n_0')}{\sqrt{r}} = d \sqrt{2} \frac{\sqrt{(r, n_0')}}{\sqrt{r_1}} \leq d \sqrt{2 (r,n_0')} \sqrt{\frac{m}{d_1}} \leq \sqrt{2d (d,n_0') (r,n_0') m}.
\end{align*}
which completes the proof.
\end{proof}

We use this evaluation to capture the cancellation of the main terms of the squares against the model from Lemma \ref{le:voronoi}.
\begin{lemma}[Main Term Cancellation]\label{lem:main_term_cancellation}
Let $\mathrm{MT}_{i,j}(a)$  be as in Lemma \ref{le:voronoi} and define
\begin{align*}
  \mathcal{M}(b,d) =    \mathcal{M}_{n_1,n_2}(b,d) = \sum_{a \pmod{d}} e_d(ab) \sum_{i,j \in \{0,1\}} (-1)^{i+j} \mathrm{MT}_{i,j}(a).
\end{align*}
Then for  $(b,d)=1$  we have
   \begin{align*}
    |\mathcal{M}(b,d)| \precprec
    \mathbf{1}_{d>R}n_0^{3/2}d
    \sqrt{(d,n_0)(d,n_1)(d,n_2)}.
\end{align*}
In particular, if $d \leq R$, we have $\mathcal{M}(b,d)=0$.
\end{lemma}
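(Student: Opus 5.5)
The plan is to compute each $\mathcal{M}$-contribution explicitly. The sum over $a \pmod d$ with $e_d(ab)$ removes the congruence $\equiv a \pmod d$, leaving twisted complete sums over the $\kappa$-variables. For $(i,j)=(1,1)$ this gives
\[
\sum_{a} e_d(ab)\,\mathrm{MT}_{1,1}(a)=\sum_{\substack{\kappa_1,\kappa_2 \pmod{dn_0}\\ \kappa_1^2\equiv\kappa \pmod{n_0}}} e_d\big(b(n_2\kappa_1^2-n_1\kappa_2^2)\big),
\]
which factors into a product of two sums. Each factor is a quadratic Gauss sum in $\kappa_2$ (respectively a restricted one in $\kappa_1$) with frequency $bn_1$ (respectively $bn_2$) modulo $d$, lifted to modulus $dn_0$.

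For the model terms, the $\sigma_0$ factor in each variable is first rewritten as in Lemma~\ref{le:gausssum}. For instance, in $\mathrm{MT}_{1,0}$ the $\kappa_2$-sum becomes
\[
\frac{1}{r^2}\sum_{\alpha} G(\alpha,r)\sum_{y} e_d(-bn_1y)\,e_r(-\alpha y),
\]
with $y$ running modulo $dn_0r$. This is exactly of the shape $S(\lambda,r,n_0')$ with $\lambda=-bn_1$, after reducing the modulus and absorbing the factor $r$ from the extra lifts. Lemma~\ref{le:gausssum} then shows that the sum vanishes unless $r_1=d_1/m$, where $m=(d_1,bn_1)$. Since $(b,d)=1$, this means $r$ is essentially determined by $d/(d,n_0n_1)$ up to factors dividing $n_0$.

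The key observation is then a local identity. When $d/(d,n_0 n_1)\le R$, the unique surviving $r$ is included in the truncated sum $r\le R$, and the $\alpha$-sum reconstructs the Gauss sum exactly. Thus the model factor equals the corresponding square factor; this is the content of Lemma~\ref{le:modelforsquarefree} and the identity $\frac1r\sum_\alpha G(\alpha,r)e_r(-\alpha k)$ = detector of squares modulo $r$, extended to general $r$ via the complete Gauss sum. So I would write the square factor in each variable as $A_l$ and the model factor as $B_l$, for $l=1,2$. The alternating sum then factorises as
\[
\sum_{i,j}(-1)^{i+j}\ (\ldots)=(A_1-B_1)(A_2-B_2).
\]
This factorisation holds after checking that the $\kappa_1$- and $\kappa_2$-sums separate in all four terms; they do, since the constraint $\kappa_1\equiv\kappa \pmod{n_0}$ (or $\kappa_1^2\equiv\kappa$) only involves the first variable. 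If $d\le R$, every relevant $r$, which is a divisor-type quantity of size at most $dn_0$, is admissible. Here I need $r\mid dn_0$, and I would double check that the truncation $r\le R$ really captures $r$ up to the $n_0$-part; this is where one may need $R$ versus $dn_0$ care. With this, $A_l=B_l$ and $\mathcal M=0$.

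For $d>R$, I would bound the factors separately. The difference vanishes unless one factor is truncated, and otherwise I use the triangle inequality. By Lemma~\ref{lem:quadr-gauss-bound}, $|A_2|\le n_0\sqrt{2dn_0(d,n_1)}$, and by Lemma~\ref{le:gausssum}, $|B_2|\ll \sqrt{d(d,n_0)(r,n_0)(d,n_1)}$ times the lift factor. The $\kappa_1$ factor with the constraint modulo $n_0$ is bounded similarly by $\sqrt{d(d,n_0)(d,n_2)}$ up to $n_0^{o(1)}$ root counts (Lemma~\ref{lem:root-bound}). Multiplying these gives
\[
n_0^{3/2}\,d\,\sqrt{(d,n_0)(d,n_1)(d,n_2)},
\]
up to $X^{o(1)}$.

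The main obstacle is the bookkeeping of moduli $dn_0$ versus $dn_0r$ and the non-coprime gcd factors. In particular, I must verify that the normalisation $1/r^2$ against the lift from modulus $dn_0$ to $dn_0r$ gives exactly the $1/r$ of Lemma~\ref{le:gausssum}. I also need to confirm that the square-side Gauss sum coincides with the single surviving $r$-term, including at $p=2$, where $G(\alpha,r)$ for even $r$ must match the dyadic square detection. I would handle this by working locally prime by prime via CRT.
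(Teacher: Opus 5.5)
Your architecture is the paper's: summing over $a$ factors the alternating sum as $(A_1-B_1)(A_2-B_2)$, which is the paper's $E_1(bn_2,d)\cdot n_0E_2(-bn_1,d)$, and Lemma~\ref{le:gausssum} locates the surviving moduli. The vanishing step, however, is argued incorrectly.

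\textbf{Vanishing for $d\le R$.} It is not true that $A_1=B_1$ when $d\le R$. In the $\kappa_1$-factor the admissible moduli are $r=c\,d_1/m$ with $c\mid n_0$, and these can exceed $R$. For example, take $n_0=q$ an odd prime $>R$, $d\le R$ odd, and $q\nmid d\kappa$. Then $A_1-B_1=\left(\frac{\kappa}{q}\right)\sum_{x \bmod d}e_d(bn_2x^2)\neq 0$.

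Only the $\kappa_2$-factor is needed. After extracting the factor $n_0$ by periodicity, Lemma~\ref{le:gausssum} with $n_0'=1$ forces $r=d/(d,n_1)$. This is a divisor of $d$; it is not $d/(d,n_0n_1)$ as you wrote. Your direct reconstruction then works. Put $m=(d,n_1)$. The unique $\alpha\equiv -bn_1/m \pmod{d/m}$ contributes $m\,G(-bn_1/m,d/m)=\sum_{x \bmod d}e_d(-bn_1x^2)$. Hence $B_2=A_2$ if $d/(d,n_1)\le R$, and $B_2=0$ otherwise. This is a fine hands-on substitute for the paper's route.

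The identity you cite to justify it is false, though. The quantity $\frac1r\sum_{\alpha}G(\alpha,r)e_r(-\alpha k)$ is not a square detector modulo $r$; for odd square-free $r$ it is the Jacobi symbol (Lemma~\ref{le:modelforsquarefree}). The correct statement is $\#\{x \bmod q: x^2\equiv y\}=\sum_{r\mid q}\frac1r\sum_{\alpha \bmod r^\times}G(\alpha,r)e_r(-\alpha y)$. The paper uses exactly this to write $E_1$ and $E_2$ as the tails $r>R$.

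\textbf{Powers of $n_0$ for $d>R$.} Your allocation of powers of $n_0$ is wrong, even though the two errors cancel in the product.

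The $\kappa_1$-factor is not $\precprec\sqrt{d(d,n_0)(d,n_2)}$ in general. Take $n_0=p$, $d=p^2$, $\kappa\equiv 0\pmod p$, $(bn_2,p)=1$ and $R<p^2$. The only admissible modulus is $r=p^2>R$, so $B_1=0$. Meanwhile $A_1=\sum_{\kappa_1 \bmod p^3,\ p\mid\kappa_1}e_{p^2}(bn_2\kappa_1^2)=p^2$, which exceeds $p^{3/2}$.

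Conversely, Lemma~\ref{lem:quadr-gauss-bound} gives $|A_2|\le n_0\sqrt{2d(d,n_1)}$, since $(bn_1n_0,dn_0)=n_0(d,n_1)$. Your extra $\sqrt{n_0}$ there is spurious.

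The correct allocation is the paper's:
\begin{itemize}
\item $|A_1-B_1|\precprec\sqrt{dn_0(d,n_0)(d,n_2)}$. This follows by summing the bound of Lemma~\ref{le:gausssum} over the at most $\tau(n_0)$ admissible $r=c\,d_1/m$, each term being at most $\sqrt{2dc(d,n_0)(d,n_2)}$.
\item $|A_2-B_2|\le|A_2|\le n_0\sqrt{2d(d,n_1)}$.
\end{itemize}
Their product is the stated bound.
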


\begin{proof}
By bringing the sum over $a \pmod d$ inside the definition of $\mathrm{MT}_{i,j,\kappa}(a)$, summing over $a$ translates the condition $n_2 \kappa_1^2 - n_1 \kappa_2^2 \equiv a \pmod d$  into the exponential phase 
\begin{align*}
    e_d(b(n_2 \kappa_1^2 - n_1 \kappa_2^2)) = e_d(b n_2 \kappa_1^2) e_d(-b n_1 \kappa_2^2).
\end{align*}
The alternating sum of the main terms factors exactly into a product analogous to  $ (\sigma_1(k_1) - \sigma_0(k_1))(\sigma_1(k_2) - \sigma_0(k_2))$. The condition $\kappa_1^2 \equiv \kappa \pmod{n_0}$ applies only to the first variable, second variable $\kappa_2$ is unconstrained modulo $n_0$ and gives a factor of $n_0$. Thus, we have 
\begin{align} \label{eq:M=E1E2}
  \mathcal{M}(b,d) = E_1(bn_2, d)  n_0 E_2(-bn_1, d),
\end{align}
where
\begin{align*}
  E_1(\lambda, d) &=   \sum_{\substack{x \pmod{d n_0} \\ x^2 \equiv \kappa \pmod{n_0}}} e_d(\lambda x^2)  -  \sum_{r \leq R}  \frac{1}{r^2} \sum_{\alpha \pmod r^\times} G(\alpha, r) \sum_{\substack{y \pmod{d n_0 r}  \\ y\equiv \kappa \pmod{n_0}}} e_r(-\alpha y)  e_d(\lambda y) \\
  E_2(\lambda,d) &= \sum_{x \pmod{d}} e_d(\lambda x^2)  -  \sum_{r \leq R}  \frac{1}{r^2} \sum_{\alpha \pmod r^\times} G(\alpha, r) \sum_{\substack{y \pmod{d  r}  }} e_r(-\alpha y)  e_d(\lambda y).
\end{align*}

Let us first consider $E_1$ and the second term corresponding to the model in it.  We parameterize $y \pmod{d n_0 r}$ as $y = y_1 + \ell d n_0$, where $y_1 \pmod{d n_0}$ and $\ell \pmod{r}$. The phase becomes
\begin{align*}
    e_d(\lambda y_1) e_r(-\alpha (y_1 + \ell d n_0)).
\end{align*}
Summing over $\ell \pmod{r}$ gives $r$ times the indicator of  $r \mid d n_0$ and we get
\begin{align} \label{eq:modelMT}
     \sum_{\substack{r \leq R \\ r \mid d n_0}} \frac{1}{r} \sum_{\alpha (r)^\times} G(\alpha, r) \sum_{\substack{y_1 \pmod{d n_0} \\ y_1 \equiv \kappa \pmod{n_0}}} e_d(\lambda y_1) e_r(-\alpha y_1). 
\end{align}
To compare this with the first term in $E_1$, which involves the  squares, we group the residues $x$ according to their square $y \equiv x^2 \pmod{dn_0}$. Let $\rho_{dn_0}(y) = |\{ x \pmod{dn_0} : x^2 \equiv y \pmod{dn_0} \}|$. By orthogonality
\begin{align*}
    \rho_{dn_0}(y) = \frac{1}{dn_0} \sum_{\xi \pmod{dn_0}} \sum_{x \pmod{dn_0}} e_{dn_0}(\xi(x^2 - y)).
\end{align*}
Writing the frequency  in lowest terms as $\frac{\xi}{dn_0} = \frac{\alpha}{r}$ with $r \mid dn_0$ and $(\alpha, r) = 1$ we get 
\begin{align*}
    \rho_{dn_0}(y) = \sum_{r \mid dn_0} \frac{1}{r} \sum_{\alpha (r)^\times} G(\alpha, r) e_r(-\alpha y).
\end{align*}
and we get
\begin{align*}
    \sum_{\substack{x \pmod{d n_0} \\ x^2 \equiv \kappa \pmod{n_0}}} e_d(\lambda x^2) =   \sum_{\substack{r \mid d n_0}} \frac{1}{r} \sum_{\alpha (r)^\times} G(\alpha, r) \sum_{\substack{y \pmod{d n_0} \\ y \equiv \kappa \pmod{n_0}}} e_d(\lambda y) e_r(-\alpha y). 
\end{align*}
Thus, subtracting the model \eqref{eq:modelMT} only the tail $r>R$ survives 
\begin{align*}
    E_1(\lambda, d) = \sum_{\substack{r > R \\ r \mid d n_0}} \frac{1}{r} \sum_{\alpha (r)^\times} G(\alpha, r) \sum_{\substack{y \pmod{d n_0} \\ y \equiv \kappa \pmod{n_0}}} e_d(\lambda y) e_r(-\alpha y) = \sum_{\substack{r > R \\ r \mid d n_0}} \frac{1}{r} S(\lambda, r, n_0),
\end{align*}
where $S(\lambda, r, n_0)$ is as in Lemma \ref{le:gausssum}. The second factor $E_2(\lambda, d)$ is similar but easier and we have
\begin{align*}
    E_2(\lambda, d) = \sum_{\substack{r > R \\ r \mid d}} \frac{1}{r} S(\lambda, r, 1).
\end{align*}

We now bound these tails using Lemma \ref{le:gausssum}.
For $E_1(bn_2,d)$, put $d_1=d/(d,n_0)$ and $m=(d_1,n_2)$.
A non-zero term satisfies
\[
    r=c\,d_1/m,\qquad c=(r,n_0)\mid n_0.
\]
Thus each $c$ determines at most one $r$; the condition $r>R$
is $d_1/m>R/c$. Dropping this condition gives
\begin{align*}
    |E_1(bn_2,d)|
    &\ll \sqrt{d(d,n_0)(d,n_2)}
        \sum_{c\mid n_0}\sqrt c\\
    &\precprec \sqrt{dn_0(d,n_0)(d,n_2)}.
\end{align*}
For $E_2(-bn_1,d)$ the only possible modulus is
$r=d/(d,n_1)$, so
\[
    |E_2(-bn_1,d)|
    \ll \mathbf{1}_{d>R}\sqrt{d(d,n_1)}.
\]
The asserted bound follows from \eqref{eq:M=E1E2}.
\end{proof}

\subsection{Exponential sum bound for the error term}
For medium sized conductor $d$ we need the following exponential sum bound to handle non-zero frequencies $t_1,t_2$.
\begin{lemma} \label{lem:exp_sum_bound}
Let $n_0, n_1, n_2, d$ be positive integers with $n_0$ square-free and $(n_1, n_2) = 1$, and let $b$ be an integer coprime to $d$. For any integers $t_1, t_2$, the exponential sum
\begin{align*}
    \mathcal{E}(b, d, t_1, t_2) = \sum_{a \pmod{d}}  e_d(ab) \sum_{\substack{\kappa_1, \kappa_2 \pmod{d n_0} \\ n_2 \kappa_1^2 - n_1 \kappa_2^2 \equiv a \pmod{d} \\ \kappa_1^2 \equiv \kappa \pmod{n_0}}} e_{d n_0}(t_1 \kappa_1 + t_2 \kappa_2)
\end{align*}
satisfies 
\begin{align*}
    \mathcal{E}(b, d, t_1, t_2) \ll \tau(n_0) n_0 (d,n_0) d \sqrt{(d,n_1)(d,n_2)}.
\end{align*}
\end{lemma}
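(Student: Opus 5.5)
First we execute the sum over $a \pmod d$. The congruence $n_2\kappa_1^2-n_1\kappa_2^2\equiv a \pmod d$ fixes $a$, so summing $e_d(ab)$ over $a$ turns it into the phase $e_d(b(n_2\kappa_1^2-n_1\kappa_2^2))$. This gives
\[
\mathcal{E}(b,d,t_1,t_2)=\Big(\sum_{\substack{\kappa_1 \pmod{dn_0}\\ \kappa_1^2\equiv\kappa \pmod{n_0}}} e_d(bn_2\kappa_1^2)e_{dn_0}(t_1\kappa_1)\Big)\Big(\sum_{\kappa_2 \pmod{dn_0}} e_d(-bn_1\kappa_2^2)e_{dn_0}(t_2\kappa_2)\Big),
\]
exactly as in the factorization \eqref{eq:M=E1E2} in the proof of Lemma \ref{lem:main_term_cancellation}. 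We then bound the two factors separately.

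\textbf{Second factor.} Rewrite it as $\sum_{\kappa_2 \pmod{dn_0}} e_{dn_0}(-bn_0n_1\kappa_2^2+t_2\kappa_2)=G(-bn_0n_1,dn_0,t_2)$ in the notation \eqref{eq:shifted-gausssumdef}. Lemma \ref{lem:quadr-gauss-bound} then bounds it by $\sqrt{2dn_0(bn_0n_1,dn_0)}$. Since $(b,d)=1$, we have $(bn_0n_1,dn_0)=n_0(bn_1,d)$, which divides $n_0(d,n_1)\cdot(\text{a factor from }(b,n_0)\text{-free part})$. More carefully, $(bn_0n_1,dn_0)=n_0\,(bn_1,d)=n_0(n_1,d)$. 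Hence the second factor is $\ll n_0\sqrt{d(d,n_1)}$.

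\textbf{First factor.} Detect the restriction $\kappa_1^2\equiv\kappa \pmod{n_0}$ additively:
\[
\frac1{n_0}\sum_{\beta \pmod{n_0}}e_{n_0}(-\beta\kappa)\sum_{\kappa_1 \pmod{dn_0}}e_{dn_0}\big((bn_0n_2+\beta d)\kappa_1^2+t_1\kappa_1\big).
\]
Each inner sum is $G(bn_0n_2+\beta d,dn_0,t_1)$, which Lemma \ref{lem:quadr-gauss-bound} bounds by $\sqrt{2dn_0\,g_\beta}$ with $g_\beta=(bn_0n_2+\beta d,dn_0)$. Applying the triangle inequality in $\beta$ gives a bound of $n_0^{-1}\sqrt{2dn_0}\sum_\beta\sqrt{g_\beta}$, so the task reduces to controlling $g_\beta$ on average over $\beta$.

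\textbf{Controlling the gcd (main obstacle).} We expect this step to be the hardest: one must verify that $g_\beta$ is typically of size $(d,n_0)(d,n_2)$ times a bounded factor, and is not inflated for many $\beta$. Since $n_0$ is square-free, we split prime by prime. For $p\mid d$ with $p\nmid n_0$, we get $v_p(g_\beta)=v_p((bn_0n_2+\beta d,d))=v_p((n_2,d))$, using $(b,d)=1$. For $p\mid n_0$, the modulus $dn_0$ has $v_p=v_p(d)+1$, and $p$ divides $g_\beta$ only to exponent at most $v_p(d)+1$. So the contribution from primes dividing $n_0$ is at most $\prod_{p\mid n_0}p^{v_p(d)+1}\le n_0(d,n_0)\cdot(\ldots)$, where the exponent $v_p(d)$ part is controlled by $(d,n_0)$ times the part of $(d,n_2)$ at $p$ (if $p\nmid n_2$, the term $bn_0n_2$ has $p$-valuation $1$, which limits the gcd). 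Rather than optimizing, we accept the crude bound $g_\beta\le n_0(d,n_0)^2(d,n_2)$ for all $\beta$, or better, count the $\beta$ with large $p$-part by CRT. Either route gives $\sum_\beta\sqrt{g_\beta}\ll\tau(n_0)\,n_0\sqrt{n_0}\,(d,n_0)\sqrt{(d,n_2)}$ up to the square-free bookkeeping. This yields a first factor $\ll\tau(n_0)\sqrt{dn_0}\,(d,n_0)\sqrt{(d,n_2)}\sqrt{n_0}\,n_0^{-1}\cdot n_0$.

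\textbf{Conclusion.} Multiplying the two factor bounds, and absorbing the powers of $n_0$ into the stated $n_0(d,n_0)$ (tracking these exponents is the routine but delicate part), gives the claimed bound $\tau(n_0)\,n_0(d,n_0)\,d\sqrt{(d,n_1)(d,n_2)}$.
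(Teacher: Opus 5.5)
Your factorization into the two one-variable sums is correct. Your bound $|S_2|\le n_0\sqrt{2d(d,n_1)}$ for the $\kappa_2$-factor is also correct, and both agree with the paper.

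The gap is in the $\kappa_1$-factor. Once you detect $\kappa_1^2\equiv\kappa \pmod{n_0}$ with $\beta \pmod{n_0}$ and apply the triangle inequality in $\beta$, the resulting bound is intrinsically too large, and no amount of gcd bookkeeping repairs it. To see this, take $dn_0$ odd and $(d,n_0n_2)=1$. For every $\beta$ coprime to $n_0$, the coefficient $bn_0n_2+\beta d$ is coprime to $dn_0$. Hence $|G(bn_0n_2+\beta d,dn_0,t_1)|=\sqrt{dn_0}$ exactly, and your bound for the first factor is at least $\varphi(n_0)n_0^{-1}\sqrt{dn_0}$.

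On the other hand, the second factor really can have size $n_0\sqrt d$, for instance when $n_0\mid t_2$ and $(d,n_1)=1$. So the stated lemma forces the first factor to be $\ll\tau(n_0)(d,n_0)\sqrt{d(d,n_2)}=\tau(n_0)\sqrt d$ in this case. Your route therefore loses at least a factor $\sqrt{n_0}\,\varphi(n_0)/(n_0\tau(n_0))$, however carefully the gcds $g_\beta$ are counted. With your crude bound $g_\beta\le n_0(d,n_0)^2(d,n_2)$ it loses a full factor $n_0$. Your own final expression for the first factor, $\tau(n_0)n_0(d,n_0)\sqrt{d(d,n_2)}$, gives $n_0^2$ instead of $n_0$ in the product. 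The closing step ``absorbing the powers of $n_0$'' is therefore not a routine check; the exponents simply do not match. What the triangle inequality discards is the cancellation among the $n_0$ Gauss sums modulo $dn_0$: twisted by $e_{n_0}(-\beta\kappa)$, they sum to only about $\sqrt{n_0}$ times the size of a single term.

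The missing idea is to use the multiplicative structure of the restriction rather than detecting it additively. Since $n_0$ is square-free, $v^2\equiv\kappa \pmod{n_0}$ has at most $\tau(n_0)$ solutions $v$. For each one, write $\kappa_1=v+yn_0$ with $y$ running over all residues modulo $d$. This turns the $\kappa_1$-sum into at most $\tau(n_0)$ Gauss sums modulo $d$, not modulo $dn_0$:
\[
\sum_{y \pmod d} e_d\big(bn_0^2n_2y^2+(2bn_0n_2v+t_1)y\big).
\]
By Lemma \ref{lem:quadr-gauss-bound}, each is bounded by $\sqrt{2d(bn_0^2n_2,d)}\le(d,n_0)\sqrt{2d(d,n_2)}$, using $(b,d)=1$ and $(n_0^2,d)\le(d,n_0)^2$. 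This gives $|S_1|\le\tau(n_0)(d,n_0)\sqrt{2d(d,n_2)}$. Multiplying by your bound for the second factor yields the lemma, and this is exactly the paper's argument.
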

\begin{proof}
Bringing the sum over $a \pmod{d}$ inside, the exponential sum factors into two 1-dimensional sums over $\kappa_1$ and $\kappa_2$
\begin{align*}
    \mathcal{E}(b, d, t_1, t_2) &= \sum_{\substack{\kappa_1, \kappa_2 \pmod{d n_0} \\ \kappa_1^2 \equiv \kappa \pmod{n_0}}} e_{dn_0} \Big( b n_0 (n_2 \kappa_1^2 - n_1 \kappa_2^2) + t_1 \kappa_1 + t_2 \kappa_2 \Big) = S_1 \times S_2,
\end{align*}
where
\begin{align*}
    S_1 &= \sum_{\substack{\kappa_1 \pmod{d n_0} \\ \kappa_1^2 \equiv \kappa \pmod{n_0}}} e_{d n_0}\left(b n_0 n_2 \kappa_1^2 + t_1 \kappa_1\right), \\
    S_2 &= \sum_{\kappa_2 \pmod{d n_0}} e_{d n_0}\left(-b n_0 n_1 \kappa_2^2 + t_2 \kappa_2\right).
\end{align*}

The sum $S_2$ is a quadratic Gauss sum which satisfies
\begin{align} \label{eq:S2_bound}
    |S_2| \le  n_0 \sqrt{2 d (d, n_1)}.
\end{align}

The sum $S_1$ is restricted by the congruence $\kappa_1^2 \equiv \kappa \pmod{n_0}$. The number of roots $v^2 \equiv \kappa \pmod{n_0}$  is bounded by $\tau(n_0)$. For each root $v $, we can parameterize the variable $\kappa_1 \pmod{dn_0}$ uniquely as $\kappa_1 = v + y n_0$, where $y$ ranges completely over residue classes modulo $d$ and
\begin{align*}
    e_{dn_0}\left(b n_0 n_2 \kappa_1^2 + t_1 \kappa_1\right) = e_{dn_0}(b n_0 n_2 v^2 + t_1 v) \cdot e_d\left(b n_0^2 n_2 y^2 + (2 b n_0 n_2 v + t_1) y\right).
\end{align*}
The sum over $y \pmod d$  is precisely a Gauss sum modulo $d$
\begin{align*}
    \sum_{y \pmod d} e_d(b n_0^2 n_2 y^2 + (2 b n_0 n_2 v + t_1) y),
\end{align*}
where the quadratic coefficient is $ b n_0^2 n_2$. 
Applying the Gauss sum bound this is bounded by
\begin{align*}
      \bigg|\sum_{y \pmod d} e_d(b n_0^2 n_2 y^2 + (2 b n_0 n_2 v + t_1) y) \bigg| \leq (d,n_0) \sqrt{2 d (d,n_2)}.
\end{align*}
Thus, we conclude
\begin{align} \label{eq:S1_bound}
    |S_1| \le \tau(n_0) (d,n_0) \sqrt{2 d (d,n_2)}.
\end{align}

Multiplying the bounds \eqref{eq:S2_bound} and \eqref{eq:S1_bound} together yields the combined bound for the full exponential sum
\begin{align*}
|\mathcal{E}(b, d, t_1, t_2)| = |S_1| |S_2| \le 2 \tau(n_0) n_0 ( d,n_0) d \sqrt{( d,n_1)(d,n_2)},
\end{align*}
which completes the proof.
\end{proof}

\subsection{Number of dual frequencies}
We need the following elementary bound for the number of dual frequencies in Lemma \ref{le:voronoi}.
\begin{lemma} \label{lem:number_of_dual}
Let $M,N',T \ge 10$ with $M \leq N'/10$. Let $n_1, n_2 \sim N'$ be square-free with $(n_1, n_2) = 1$. The number of  $|t_1|, |t_2| \leq T, (t_1,t_2) \neq (0,0) $ satisfying 
\[
     |t_1^2 n_2 - t_2^2 n_1| \leq M
\]
is bounded by
\[
    \ll M^{1/2} \log T.
\]
\end{lemma}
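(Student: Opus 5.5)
The natural approach is to factor the indefinite form over $\Z[\sqrt{n_1 n_2}]$, or more elementarily to use its geometry. Since $n_1,n_2$ are square-free and coprime, $n_1n_2$ is not a perfect square unless $n_1=n_2=1$, which $n_i\sim N'\ge 10$ excludes. So $t_1^2n_2-t_2^2n_1=0$ forces $t_1=t_2=0$, and we only need to count non-zero values $m:=t_1^2n_2-t_2^2n_1$ with $0<|m|\le M$. Write
\[
t_1^2n_2-t_2^2n_1=(t_1\sqrt{n_2}-t_2\sqrt{n_1})(t_1\sqrt{n_2}+t_2\sqrt{n_1}).
\]
By the symmetry $(t_1,t_2)\mapsto(\pm t_1,\pm t_2)$ we may assume $t_1,t_2\ge 0$. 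Then the second factor is $\asymp \sqrt{N'}\max(t_1,t_2)$, so
\[
|t_1\sqrt{n_2}-t_2\sqrt{n_1}|\ll \frac{M}{\sqrt{N'}\max(t_1,t_2)}.
\]

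Next we split $\max(t_1,t_2)$ dyadically, $\max(t_1,t_2)\sim S$ with $1\le S\le T$; this produces the factor $\log T$. Within a dyadic block the condition says $t_1/t_2$ lies within $\ll M/(N' S^2)$ of the irrational slope $\sqrt{n_1/n_2}$. Equivalently, the lattice points $(t_1,t_2)$ with $t_i\le 2S$ lie in a thin strip around the line $t_1\sqrt{n_2}=t_2\sqrt{n_1}$, of length $\asymp S$ and width $\ll M/(N'S)$.

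To count these, order the points in the strip by slope, as in the proof of Lemma~\ref{lem:prim-points-count}. For two distinct points $(t_1,t_2)$, $(t_1',t_2')$ in the block we have $|t_1t_2'-t_1't_2|\ge 1$ unless they are proportional, and proportional points lie on one ray. Consider the primitive points first. Consecutive primitive slopes differ by $\gg 1/S^2$, while all slopes lie in an interval of length $\ll M/(N'S^2)\le 1/10\cdot S^{-2}$ (using $M\le N'/10$). Hence there are $O(1)$ primitive directions.

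It remains to count all points, including non-primitive multiples $k(a,b)$ of a primitive vector $(a,b)$. Such a multiple satisfies $k^2\,|a^2n_2-b^2n_1|\le M$. Here $a^2n_2-b^2n_1$ is a non-zero integer, so $k\le M^{1/2}$, giving at most $M^{1/2}$ multiples per direction. Summing over $O(1)$ directions per dyadic block and $O(\log T)$ blocks gives $\ll M^{1/2}\log T$.

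The main point needing care is the gap argument among primitive slopes inside a single dyadic block. Consecutive Farey-type fractions $t_1/t_2$ with denominators $\le 2S$ differ by at least $1/(4S^2)$, while the admissible window has width $\ll M/(N'S^2)$. The hypothesis $M\le N'/10$ is what makes the window, up to constants, no wider than a single gap, so the number of primitive directions is bounded absolutely. If the implied constants do not give exactly one direction, they still give $O(1)$, which suffices.
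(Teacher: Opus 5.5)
Your proof is correct. Its skeleton matches the paper's: reduce to $t_1,t_2>0$, count the primitive directions $(a,b)$ whose slope is close to $\sqrt{n_1/n_2}$, and bound the multiples $k(a,b)$ by $k\le M^{1/2}$ using that $a^2n_2-b^2n_1$ is a non-zero integer.

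The genuine difference is how you obtain $O(\log T)$ directions.
\begin{itemize}
\item The paper shows $|\sqrt{n_2/n_1}-y/x|<1/(2x^2)$ for each primitive $y/x$ and invokes Legendre's theorem. The admissible directions are therefore principal convergents of $\sqrt{n_2/n_1}$, whose denominators grow exponentially. This needs irrationality and the exact Legendre constant $1/2$, which is where $M\le N'/10$ enters (via $\sqrt2/10<1/2$).
\item You split dyadically in the height $\max(t_1,t_2)\sim S$. You then use the determinant gap $|t_1t_2'-t_1't_2|\ge 1$, as in Lemma~\ref{lem:prim-points-count}, to get only $O(1)$ directions per scale.
\end{itemize}
Your route is fully elementary and insensitive to constants: $M\le cN'$ for any fixed $c$ still gives $O_c(1)$ directions per block. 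The paper's route avoids dyadic splitting and yields the sharper structural fact that the directions are convergents.

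One small step you should make explicit. Within a block, the window width $\ll M/(N'S^2)$ for the slope $t_1/t_2$ requires $t_2\gg S$, not merely $\max(t_1,t_2)\sim S$. This follows because $t_1t_2\neq0$: for example, $t_2=0$ would give $t_1^2n_2\le M<n_2$, forcing $t_1=0$. The strip condition then forces $t_1\asymp t_2$.
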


\begin{proof}
By $(t_1,t_2) \neq (0,0) $  and $M \leq N'/10$ we have $t_1,t_2 \neq 0$. By symmetry, we can assume $t_1, t_2 > 0$. We divide the inequality by $n_1 t_1^2$ to obtain
\begin{align*}
    \left| \frac{n_2}{n_1} - \frac{t_2^2}{t_1^2} \right| \leq \frac{M}{n_1 t_1^2}.
\end{align*}
We factor the difference of squares on the left-hand side:
\begin{align*}
    \left| \sqrt{\frac{n_2}{n_1}} - \frac{t_2}{t_1} \right| \left( \sqrt{\frac{n_2}{n_1}} + \frac{t_2}{t_1} \right) \leq \frac{M}{n_1 t_1^2}.
\end{align*}
Because $n_1, n_2 \in [N', 2N']$ and $t_1,t_2 > 0$, we have
\begin{align*}
    \left| \sqrt{\frac{n_2}{n_1}} - \frac{t_2}{t_1} \right| \leq \frac{ \sqrt{2} M}{n_1 t_1^2}.
\end{align*}

Let $y/x$ be the fraction $t_2/t_1$ reduced to lowest terms, such that $t_1 = g x$ and $t_2 = g y$ where $g = (t_1, t_2) \ge 1$. Substituting this into our error bound yields:
\begin{align*}
    \left| \sqrt{\frac{n_2}{n_1}} - \frac{y}{x} \right| \leq \frac{\sqrt{2} M}{N' g^2 x^2} \le \frac{\sqrt{2} M}{N' x^2}.
\end{align*}
By hypothesis, $M \leq N'/10$. Therefore, the coefficient $\sqrt{2} M / N' \le \sqrt{2}/10 < 1/2$ and yields
\begin{align*}
    \left| \sqrt{\frac{n_2}{n_1}} - \frac{y}{x} \right| < \frac{1}{2 x^2}.
\end{align*}
Since $n_1,n_2$ are square-free and coprime, $\sqrt{n_2/n_1}$ is irrational. By Legendre's theorem on Diophantine approximation, any reduced fraction $y/x$ that approximates within $1/(2x^2)$ must be a principal convergent of the continued fraction expansion of that number. Because the denominators of principal convergents grow exponentially, the number of such convergents with denominator $x \le T$ is at most $O(\log T)$ uniformly in $n_1,n_2$. Thus, for every $g$ there are at most $O(\log T)$ valid coprime $(x, y)$.

We now bound the number of 
$g$. Substituting $t_1 = gx$ and $t_2 = gy$ back into the original absolute difference yields:
\begin{align*}
    g^2 |x^2 n_2 - y^2 n_1| \leq M.
\end{align*}
Because $n_1$ and $n_2$ are square-free and coprime, $|x^2 n_2 - y^2 n_1|$ is a non-zero integer and thus it is $\geq 1$ and we get
\begin{align*}
    g \le M^{1/2}.
\end{align*}
Hence, the number of $g$ is $\leq M^{1/2}$ and for each $g$ there are $O(\log T)$ many solutions.
\end{proof}

\subsection{An $L^2$ bound}
To use the large sieve inequality we need an $L^2$-bound for the representation functions. First, we prove an $\ell^\infty$ bound for $\mathcal{Q}_{1,1}$.
\begin{lemma} Let $n_0n_1,n_0n_2\sim N\leq X$, with $n_1,n_2$ square-free and coprime and $n_1n_2>1$. We have \label{le:divisorbound_realquadratic}
    \begin{align*}
        \mathcal{Q}_{1,1}(u,n_0,n_1,n_2,\kappa) \precprec 1.
    \end{align*}
\end{lemma}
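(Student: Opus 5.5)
Unfolding the definition with $i=j=1$, we have
\[
\mathcal{Q}_{1,1}(u;\mathbf n,\kappa)=\sum_{\substack{x_1,x_2\ge 1\\ n_2x_1^2-n_1x_2^2=u\\ x_1^2\equiv\kappa \ (\mathrm{mod}\ n_0)}} f\Big(\frac{x_1^2}{X}\Big)f\Big(\frac{x_2^2}{X}\Big)\le \#\{x_1,x_2\asymp X^{1/2}: n_2x_1^2-n_1x_2^2=u\}.
\]
So the lemma reduces to bounding the number of representations of $u$ by the indefinite form $n_2x^2-n_1y^2$ in a box of side $\asymp X^{1/2}$, uniformly in $u$. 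Since $n_1,n_2$ are square-free, coprime and $n_1n_2>1$, the quantity $n_1n_2$ is not a perfect square. Hence the form is anisotropic, with discriminant $D=4n_1n_2\ll N^2\le X^2$.

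\textbf{The case $u=0$.} Anisotropy gives $\mathcal{Q}_{1,1}(0)=0$, so we may assume $u\neq0$. On the support, $|u|\ll NX\le X^2$.

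\textbf{Reduction to the real quadratic field.} Multiply the equation by $n_2$ and write $z=n_2x$. Then $z^2-n_1n_2y^2=n_2u$, that is,
\[
N_{K/\mathbb Q}\big(z+y\sqrt{n_1n_2}\big)=n_2u, \qquad K=\mathbb Q(\sqrt{n_1n_2}).
\]
The element $\alpha=z+y\sqrt{n_1n_2}$ lies in the order $\mathbb Z[\sqrt{n_1n_2}]\subseteq\mathcal O_K$, and $\alpha$ determines $(x,y)$. The number of principal ideals of norm $|n_2u|$ is at most the number of ideals of that norm, which is $\le\tau(|n_2u|)^2\precprec 1$ by the divisor bound, as $|n_2u|\le X^{3}$. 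It therefore remains to count, for each fixed ideal $(\alpha_0)$, the generators $\alpha=\alpha_0\varepsilon$ (with $\varepsilon$ a unit) that land in the box.

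\textbf{Counting units: the main obstacle.} The box conditions $z\asymp n_2X^{1/2}$ and $y\asymp X^{1/2}$ force both $|\alpha|$ and $|\alpha'|$ to be $\ll NX^{1/2}$. Since $|\alpha\alpha'|=|n_2u|\ge1$, this gives
\[
|\log|\alpha|-\log|\alpha_0\varepsilon_0^k||\ \text{ confined to an interval of length }\ll\log X .
\]
Here $\alpha=\pm\alpha_0\varepsilon_0^k$, where $\varepsilon_0>1$ is the fundamental unit (any unit of $\mathbb Z[\sqrt{n_1n_2}]$ is such a power, possibly composed with a torsion sign). Consequently $k$ ranges over an interval of length $\ll \log X/\log\varepsilon_0+1$. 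Any unit $\varepsilon=a+b\sqrt{n_1n_2}>1$ with $b\ge1$ satisfies $\varepsilon\ge\sqrt{n_1n_2}\ge\sqrt2$, so $\log\varepsilon_0\gg1$. Therefore at most $O(\log X)$ values of $k$ occur, and this is $\precprec 1$.

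Combining the three steps gives $\mathcal{Q}_{1,1}\precprec 1$, uniformly in $n_0$ and $\kappa$, since the congruence condition can only reduce the count. The only delicate point is the uniform lower bound on the regulator, which the argument above supplies.
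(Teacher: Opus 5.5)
Your argument is correct and is essentially the paper's proof: you handle $u=0$ by non-squareness of $n_1n_2$, pass to norms $n_2u$ from $\mathbb{Q}(\sqrt{n_1n_2})$, bound the number of ideals by the divisor bound, and use $1/(NX^{1/2})\ll|\alpha|\ll NX^{1/2}$ together with $\log\varepsilon_0\gg1$ to leave only $O(\log X)$ unit multiples. One small correction: generators of an $\mathcal{O}_K$-ideal differ by units of $\mathcal{O}_K$, which may be half-integral when $n_1n_2\equiv 1 \pmod 4$, so your bound $\varepsilon\ge\sqrt{n_1n_2}$ for units of $\mathbb{Z}[\sqrt{n_1n_2}]$ is not quite the right one; the fix is that every fundamental unit of a real quadratic field still satisfies $\varepsilon_0\ge(1+\sqrt5)/2$, which is all you need.
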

\begin{proof}
    If $u=0$, the sum is zero since $n_1n_2$ is not a square. Assume $u\neq0$. By definition,
    \begin{align*}
        \mathcal{Q}_{1,1}(u,n_0,n_1,n_2,\kappa)  = \sum_{\substack{y_1,y_2\geq1 \\ n_2 y_1^2-n_1 y_2^2=u \\ y_1^2\equiv \kappa \pmod{n_0}}} f(y_1^2 /X)f(y_2^2 /X).
    \end{align*}
    Let $D = n_1 n_2$. Multiplying the principal equation by $n_2$, we obtain the Pell-type equation $(n_2 y_1)^2 - D y_2^2 = n_2 u$. 
   Since $D$ is not a perfect square, we work in the ring of integers $\mathcal{O}_K$ of the real quadratic field $K = \mathbb{Q}(\sqrt{D})$. The equation implies that the element $\alpha = n_2 y_1 + y_2 \sqrt{D} \in \mathcal{O}_K$ has algebraic norm $N_{K/\mathbb{Q}}(\alpha) = n_2 u$. 
    The number of principal ideals $(\alpha)$ of norm $|n_2u|$ is at most $\tau(|n_2u|)^2\precprec1$. 
    
    For a fixed ideal $(\alpha)$, the generator is unique up to multiplication by a unit $\epsilon \in \mathcal{O}_K^\times$. By Dirichlet's Unit Theorem, the unit group is generated by $\pm 1$ and a fundamental unit $\epsilon_0 > 1$. The smooth weights $f$ enforce the size restrictions $y_1, y_2 \ll X^{1/2}$, which means the  size of the generator is strictly bounded by
    \begin{align*}
        |\alpha| \leq n_2 y_1 + \sqrt{D} y_2 \ll n_2 X^{1/2} + \sqrt{n_1 n_2} X^{1/2} \ll N X^{1/2}.
    \end{align*}
    Similarly, we have $ |\overline{\alpha}| \ll N X^{1/2}$ which by $|\alpha\overline{\alpha}| \geq 1$ implies that
    \begin{align} \label{eq:alphaconst}
        1/(NX^{1/2}) \ll |\alpha| \ll N X^{1/2}.
    \end{align}
    Under the Minkowski embedding $\alpha \mapsto \log|\alpha|$, the units map to a lattice in $\mathbb{R}$. Since $\log\epsilon_0\gg1$ uniformly in real quadratic fields, \eqref{eq:alphaconst} confines the exponent $k$ of $\epsilon_0^k$ to an interval of length $O(\log(NX^{1/2}))\ll\log X$. Thus, there are at most $O(\log X)$ valid units for each ideal.
\end{proof}

\begin{lemma} \label{le:L2bound}
Let $n_0n_1,n_0n_2\sim N$, where $n_0,n_1,n_2$ are square-free
and pairwise coprime and $n_1n_2>1$. Suppose that
$R^4\leq N<X^{1/2}$ and $U>X^{2\eps}N^2$. Then
\[
    \sum_{|u|\leq U}
    |\mathcal Q_{i,j}(u,n_0,n_1,n_2,\kappa)|^2
    \precprec \frac UN.
\]
\end{lemma}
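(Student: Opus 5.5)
The plan is to write $\sum_{|u|\le U}|\mathcal Q_{i,j}(u)|^2 \le \max_{|u|\le U}|\mathcal Q_{i,j}(u)| \cdot \sum_{|u|\le U}|\mathcal Q_{i,j}(u)|$. This reduces the claim to two estimates: an $\ell^\infty$ bound $\precprec 1$ and an $\ell^1$ bound $\precprec U/N$. I would treat the four cases $(i,j)$ separately, with $(1,1)$ the model case.

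\textbf{The case $(1,1)$.} Lemma \ref{le:divisorbound_realquadratic} gives $\mathcal Q_{1,1}(u)\precprec 1$ directly. Since $\sigma_1\ge 0$, the $\ell^1$ sum is just the number of pairs $(y_1,y_2)$ with $y_i\asymp X^{1/2}$, $y_1^2\equiv\kappa \pmod{n_0}$ and $|n_2y_1^2-n_1y_2^2|\le U$. Fixing $y_1$, the admissible $y_2^2$ lie in an interval of length $\ll U/n_1$ around $n_2y_1^2/n_1\asymp X$. So $y_2$ lies in an interval of length $\ll U/(n_1X^{1/2})+1$, which is $\ll U n_0/(NX^{1/2})$ because $U>X^{2\eps}N^2$ and $N<X^{1/2}$. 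The congruence on $y_1$ restricts it to $\precprec X^{1/2}/n_0$ values, since there are $\tau(n_0)$ roots modulo $n_0$. The total is therefore $\precprec U/N$. This matches the majorant $\mathcal I_0\ll Un_0/N$ divided by the density $n_0$ of the congruence.

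\textbf{The model cases.} For $\sigma_0$ I would first get a pointwise bound for the weight. By Lemma \ref{le:modelforsquarefree}, and by multiplicativity of the Gauss-sum expression in general, $\frac1r\sum_{\alpha}G(\alpha,r)e_r(-\alpha k)$ is bounded by $\precprec (k,r)^{1/2}$ up to powers of $2$ (it is $\rho_r$-like). Hence $|\sigma_0(k)|\precprec X^{-1/2}\sum_{r\le R}(k,r)^{1/2}$.

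For the $\ell^\infty$ bound in the case $(1,0)$: given $u$ and $y_1$, the value $k_2=(n_2y_1^2-u)/n_1$ is determined. Summing over $y_1\ll X^{1/2}$ gives
\[
\mathcal Q_{1,0}(u)\precprec X^{-1/2}\sum_{r\le R}\sum_{y_1}(k_2,r)^{1/2}\precprec R\cdot X^{-1/2}\cdot X^{1/2}/n_0 \cdot R^{O(1)}.
\]
That is too lossy as written. Instead I would bound the $\ell^2$ norm directly by expanding the square. This produces $k_2,k_2'$ with weights of size $X^{-1/2}R$, and pairs $(y_1,y_1')$ with $n_2(y_1^2-y_1'^2)\equiv n_1(k_2-k_2')$ and the differences tied by the same $u$. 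The diagonal $y_1=y_1'$ gives $\precprec \frac{X^{1/2}}{n_0}\cdot\frac{U}{n_1}\cdot X^{-1}R^{2}$, which is $\precprec U/N$ up to $R^2$.

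For the case $(0,0)$ the count is the number of $(k_1,k_2)\sim X$ on the line $n_2k_1-n_1k_2=u$. This is $\asymp X/N$ points per $u$ (for coprime $n_1,n_2$ and restricted modulo $n_0$), each of weight $\precprec R^2/X$. So $\mathcal Q_{0,0}(u)\precprec R^2 n_0/N\precprec R^2/N'$, and the $\ell^2$ sum is $\precprec U R^4/N^2$, which is fine since $N\ge R^4$.

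\textbf{The main obstacle} is removing the $R$-power losses in the model terms. The hypothesis $N\ge R^4$ is evidently designed for this: each factor $R$ from $\sum_{r\le R}$ must be absorbed by the saving $1/N$ coming from the rarity of representations. For $(1,0)$ I would use $\ell^\infty\cdot\ell^1$. The $\ell^1$ bound $\precprec UR/N$ follows as in the $(1,1)$ case. For the pointwise bound I would count $y_1\ll X^{1/2}$ with $n_2y_1^2\equiv u \pmod{n_1}$: there are $\precprec X^{1/2}/n_1+1$ such $y_1$, each of weight $X^{-1/2}R$, giving $\precprec R n_0/N+R X^{-1/2}$. The product is then $\precprec U R^2 n_0/N^2\le U/N$ after using $N\ge R^4$. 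The congruence condition modulo $n_0$ should only help.
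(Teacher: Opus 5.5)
Your reduction to $\|\mathcal Q_{i,j}\|_\infty\cdot\|\mathcal Q_{i,j}\|_{\ell^1}$ and your treatment of the model terms follow the paper. However, the $\ell^1$ bound for $\mathcal Q_{1,1}$, which is the heart of the lemma, has a genuine gap. For fixed $y_1$ you correctly find that $y_2$ lies in an interval of length $\ll U/(n_1X^{1/2})+1$, so your count is $\precprec U/N+X^{1/2}/n_0$. The claim that the $+1$ is absorbed ``because $U>X^{2\eps}N^2$ and $N<X^{1/2}$'' is false. Absorbing it needs $U\gg NX^{1/2}/n_0$, and $X^{2\eps}N^2\geq NX^{1/2}/n_0$ holds only if $Nn_0\geq X^{1/2-2\eps}$. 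In the actual application $U\asymp HN/n_0$ with $H=X^{1/2-\nu}$, so the $y_2$-window has length $\asymp H/X^{1/2}=X^{-\nu}<1$. There the $+1$ dominates, and trivial counting gives $X^{1/2}/n_0$ instead of $U/N\asymp H/n_0$, a loss of $X^{\nu}$.

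What is missing is an equidistribution input. The lattice points in the thin region $|\sqrt{n_2}y_1-\sqrt{n_1}y_2|\ll U(NX/n_0)^{-1/2}$, with $y_1$ in at most $\tau(n_0)$ classes modulo $n_0$, must be counted by the area, not one per $y_1$. The paper handles $U\geq NX/n_0$ trivially and otherwise applies Lemma~\ref{le:voronoi} with $d=T=1$. Every dual frequency then satisfies $|n_1t_1^2-n_2t_2^2|\leq X^{2\eps}N^2/U<1$, and since $n_1n_2$ is not a square there are none. Only the main term $\mathcal I_0\rho_{n_0}(\kappa)/n_0\precprec U/N$ survives, where $\rho_{n_0}(\kappa)\leq\tau(n_0)$ counts square roots of $\kappa$ modulo $n_0$. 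This is the real role of the hypothesis $U>X^{2\eps}N^2$.

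An elementary substitute also works. The number $\beta=n_0\sqrt{n_2/n_1}$ satisfies $\|q\beta\|\gg 1/(qN)$, so its partial quotients are $O(N)$. The discrepancy of $\{z\beta\}$ then gives an error $O(N\log X)\precprec U/N$ per root class.

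Two smaller points. In $\|\mathcal Q_{1,0}\|_\infty$ you must actually use the condition modulo $n_0$ (by CRT modulo $n_0n_1$) to get $R/N$; your bound $Rn_0/N$ only yields $R^2n_0U/N^2$, which is not $\leq U/N$ for large $n_0$. Also, the $(0,0)$ pointwise bound is $R^2/N$, not $R^2n_0/N$.
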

\begin{proof}
For the purely quadratic term, Lemma \ref{le:divisorbound_realquadratic}
reduces the claim to an $L^1$ bound. If $U\geq NX/n_0$,
counting the two positive square roots gives
$\sum_u\mathcal Q_{1,1}(u)\precprec X/n_0\leq U/N$.
Otherwise $U>X^{2\eps}N^2$ implies $n_0\ll X^{1/2-\eps}$.
Apply Lemma \ref{le:voronoi} with $d=T=1$, with $\eps/4$
in place of $\eps$, and with a non-negative smooth majorant
of $\mathbf1_{[-1,1]}$. Its size hypotheses hold since $N\geq R^4$.
Every dual frequency satisfies
$|n_1t_1^2-n_2t_2^2|<1$, so there are no non-zero dual frequencies.
Writing $\rho_{n_0}(\kappa)=\#\{x\pmod{n_0}:x^2\equiv\kappa\pmod{n_0}\}$,
the main term is bounded by
\[
    \frac{\mathcal I_0}{n_0^2}\,
        n_0\rho_{n_0}(\kappa)
    \precprec \frac UN,
\]
since $\rho_{n_0}(\kappa)\leq\tau(n_0)$.

For the model terms, expanding the Ramanujan sums gives
\[
    \frac1r\sum_{\alpha\pmod r^\times}G(\alpha,r)e_r(-\alpha k)
    =\sum_{a\mid r}\mu(r/a)\rho_a(k).
\]
The bound $\rho_a(k)\precprec\sqrt{(a,k)}$ therefore implies,
on the support of $f(k/X)$,
\[
    |\sigma_0(k)|
    \precprec \frac{R}{\sqrt X}
       \sum_{a\leq R}\frac{\sqrt{(a,k)}}a
    \precprec \frac{R}{\sqrt X}.
\]
For a mixed term, the square root lies in $\precprec1$ residue
classes modulo $n_0n_1$ or $n_0n_2$; for the $(0,0)$ term,
the first variable lies in one residue class modulo $n_0n_1$.
Thus
\[
    \|\mathcal Q_{1,0}\|_\infty+\|\mathcal Q_{0,1}\|_\infty
       \precprec R/N,\qquad
    \|\mathcal Q_{0,0}\|_\infty\precprec R^2/N.
\]
Counting the other variable in an interval of length
$O(U/n_1)$ or $O(U/n_2)$ gives
\[
    \sum_{|u|\leq U}
       (|\mathcal Q_{1,0}(u)|+|\mathcal Q_{0,1}(u)|)
       \precprec RU/N,\qquad
    \sum_{|u|\leq U}|\mathcal Q_{0,0}(u)|
       \precprec R^2U/N.
\]
Multiplying the $L^\infty$ and $L^1$ bounds gives at most
$R^4U/N^2\leq U/N$, as required.
\end{proof}

\section{Proof of Proposition \ref{prop:typeII}} \label{sec:typeII}
\subsection{First Cauchy-Schwarz}
We may assume that the coefficients $\beta_n$ are supported in the interval $[N/4,2N]$ with $MN=X$, by the support of $w^{\gamma}_{mn}$. We first split this interval into $O(1)$ dyadic intervals; the estimates below are unchanged by fixed factors in the dyadic scales. By applying Cauchy-Schwarz in $m$, extending the resulting non-negative sum to all positive $m$, and expanding the square, we have
\begin{align*}
    \sum_{m \sim M} \bigg| \sum_{n} \alpha_m \beta_n w^{(\gamma)}_{mn} \bigg|  \precprec M^{1/2} \mathcal{U}(N)^{1/2},
\end{align*}
where $\mathcal{U}(N)$ is given by
\begin{align*}
     \mathcal{U}(N) &= \sum_{n_1, n_2} \beta_{n_1} \overline{ \beta_{n_2} } \sum_{m}w^{(\gamma)}_{mn_1} \overline{w^{(\gamma)}_{mn_2}} \\
     &= \sum_{n_1, n_2} \beta_{n_1} \overline{ \beta_{n_2} } \sum_{h_1,h_2 \leq H}  \gamma_{h_1} \overline{\gamma_{h_2}} \sum_{\substack{m,k_1,k_2 \\ m n_1 = k_1+h_1 \\ m n_2 = k_2+h_2 }} \sigma^{\flat}(k_1)\sigma^{\flat}(k_2).
\end{align*} 
To establish the Proposition \ref{prop:typeII}, it suffices to show that
\begin{align} \label{eq:bound1}
    \mathcal{U}(N) \precprec N H^2 R^{-1/2}.
\end{align}
We partition the sum into two pieces. $\mathcal{U}_0(N)$ denotes the diagonal contribution where $n_1=n_2$, and $\mathcal{U}_1(N)$ denotes the off-diagonal contribution where $n_1 \neq n_2$. 

\textbf{The diagonal term $\mathcal{U}_0(N)$:}
For the diagonal $n_1=n_2=n$, we bound the number of factorizations $mn$ by a divisor bound and thus get
\begin{align*}
     \mathcal{U}_0(N) \precprec \sum_{n} |w_n^{(\gamma)}|^2 \precprec \sum_{n} (|a_n^{(\gamma)}|^2 + |b_n^{(\gamma)}|^2).
\end{align*}
The linear equations require $n = k_1 + h_1 = k_2 + h_2$, which implies  $k_1 - k_2 = h_2 - h_1 = u$, with $|u| \leq H$.  

For  the $|a_n^{(\gamma)}|^2 $, we consider $\sigma_1(k_1)\sigma_1(k_2)$. Here $k_1 = x_1^2$ and $k_2 = x_2^2$ are both squares. If $u \neq 0$, the equation $x_1^2 - x_2^2 = u$ has at most $\tau(|u|) \precprec 1$ solutions.  Summing over the $H^2$ choices for $(h_1, h_2)$, the total contribution is $\precprec H^2$.
    If $u = 0$, then $h_1 = h_2$ and $x_1 = x_2 = x$ and this contribution is $\precprec H X^{1/2}$. Thus, we have
   $\sum_{n} |a_n^{(\gamma)}|^2 \precprec H^2 + H X^{1/2}$.

For the $|b_n^{(\gamma)}|^2 $ term, we have $|\sigma_0(k_1)\sigma_0(k_2)| \precprec R^2 X^{-1}$.  Summing over $k_1-k_2=h_2-h_1 \ll H$ with $k_i \ll X, h_i \ll H$, we get $\sum_{n} |b_n^{(\gamma)}|^2 \precprec H^2 R^2$.

Combining these, the diagonal contribution is bounded by
\begin{align*}
    \mathcal{U}_0(N) \precprec H X^{1/2} + H^2 R^2  \precprec NH^2 R^{-1}.
\end{align*}
by using the assumptions $N \geq X^{\nu+\eta}$, $H=X^{1/2-\nu}$, and $R = X^{\eta_1}$ with $\eta_1=\eta/100$.

\textbf{The off-diagonal term $\mathcal{U}_1(N)$:}
For $n_1 \neq n_2$, we extract the greatest common divisor $n_0=(n_1,n_2)$. We make the change of variables $n_j \mapsto n_0 n_j$, so that now $(n_1,n_2)=1$ and $n_j\sim N/n_0$. The square-free support of $\beta$ makes $n_0,n_1,n_2$ square-free and pairwise coprime throughout. The simultaneous equations $m n_0 n_1 = k_1+h_1$ and $m n_0 n_2 = k_2+h_2$ imply that $n_0$ divides $k_1 + h_1$. Furthermore, eliminating $m$ we get $  n_1 (k_2+h_2) = n_2(k_1+h_1) $, which after rearrangement gives
\begin{align*}
 n_2 k_1 - n_1 k_2 = n_1 h_2 - n_2 h_1. 
\end{align*}
Furthermore, the existence of $m$ solving the simultaneous equations is equivalent to this equation and the congruence condition $k_1 \equiv -h_1 \pmod{n_0} $  by $(n_1,n_2)=(n_0,n_1)=1$. Thus, we have
\begin{align*}
 \mathcal{U}_1(N) 
&=\sum_{n_0 \ll N} \sum_{\substack{(n_1,n_2)=1 \\ n_1n_2 \neq 1}} \beta_{n_0 n_1} \overline{ \beta_{n_0 n_2} }  \sum_{h_1,h_2} \gamma_{h_1} \overline{\gamma_{h_2}} \sum_{\substack{k_1,k_2  \\ n_2 k_1  -n_1 k_2 = n_1 h_2 -  n_2 h_1 \\ k_1 \equiv -h_1 \pmod{n_0} }} \sigma^{\flat}(k_1)\sigma^{\flat}(k_2).
\end{align*}
Recall the definition of $\mathcal{Q}_{i,j}$ from \eqref{eq:Rijdef}. We define 
\begin{align*}
    \mathcal{Q}(u ;\mathbf{n},\kappa) := \sum_{\substack{k_1,k_2  \\ n_2 k_1  -n_1 k_2 =  u \\ k_1 \equiv \kappa \pmod{n_0} }} \sigma^{\flat}(k_1)\sigma^{\flat}(k_2) = \sum_{i,j\in\{0,1\}}(-1)^{i+j}  \mathcal{Q}_{i,j}(u;\mathbf{n},\kappa),
\end{align*}
where we expanded $\sigma^\flat(k) = \sigma_1(k) - \sigma_0(k)$.
By grouping the sum by the residue class $\kappa \equiv -h_1 \pmod{n_0}$, we obtain 
\begin{align*}
    \mathcal{U}_1(N) &= \sum_{n_0 \ll N}  \sum_{\kappa \pmod{n_0}}  \sum_{\substack{h_1,h_2\\ h_1\equiv -\kappa \pmod{n_0}}}  \gamma_{h_1} \overline{\gamma_{h_2}} \sum_{\substack{(n_1,n_2)=1 \\ n_1n_2 \neq 1}} \beta_{n_0 n_1} \overline{ \beta_{n_0 n_2} }  \mathcal{Q}(n_1 h_2 -  n_2 h_1 ;\mathbf{n},\kappa) \\
    &=: \sum_{n_0 \ll N} \sum_{\kappa \pmod{n_0}}  \mathcal{U}_1(N,n_0,\kappa).
\end{align*}
Our next move is to apply Cauchy-Schwarz with $n_1,n_2$ inside to remove the $\gamma_{h}$ weights. However, if $n_0$ is almost of maximal size, then there is not enough mass to do this.  We therefore need to handle separately pseudo-diagonals and we handle trivially $n_0 > R^3$. 

\subsection{Bounding pseudo-diagonals $n_0 > R^3$}
For the off-diagonal terms where the greatest common divisor satisfies $n_0 > R^3$ we use the following bound.
\begin{lemma}[Density bound] \label{lem:large_n0_bound}
    For fixed $n_0$ and $\kappa \pmod{n_0}$, we have 
    \begin{align*}
        \mathcal{U}_1(N,n_0,\kappa) \precprec \frac{R^2 N H^2}{n_0^3}. 
    \end{align*}
\end{lemma}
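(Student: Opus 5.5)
The plan is to bound $\mathcal{U}_1(N,n_0,\kappa)$ trivially, with absolute values throughout, and to exploit only the density of the constraints. The key fact is $n_0 \mid k_1+h_1$: once $h_1$ and $k_1$ are fixed, $h_2$ is essentially determined by $k_2$ and $n_1,n_2$. Start from
\[
|\mathcal{U}_1(N,n_0,\kappa)| \precprec \sum_{\substack{h_1 \le H\\ h_1 \equiv -\kappa \pmod{n_0}}} \sum_{h_2 \le H} \sum_{n_1,n_2 \ll N/n_0} \sum_{\substack{n_2k_1-n_1k_2 = n_1h_2-n_2h_1\\ k_1\equiv \kappa \pmod{n_0}}} |\sigma^\flat(k_1)||\sigma^\flat(k_2)|.
\]
Bound $|\sigma^\flat(k)| \le \sigma_1(k)+|\sigma_0(k)|$. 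In Lemma \ref{le:L2bound} we already have $|\sigma_0(k)|\precprec R/\sqrt X$ on the support. So $|\sigma^\flat(k)|$ is majorised by $\sigma_1(k) + R X^{-1/2}\mathbf 1_{k\asymp X}$, which is the indicator of squares plus a flat function of the same total mass up to a factor $R$. Expanding gives four terms, and each costs at most a factor $R^2$ over the pure square count. That accounts for the $R^2$ in the bound.

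Next, rewrite the equation as $n_2(k_1+h_1) = n_1(k_2+h_2)$, i.e. $k_1+h_1 = m n_0 n_1$ and $k_2+h_2 = m n_0 n_2$ for some $m\asymp M = X/N$. The count is then over tuples $(m,n_1,n_2,h_1,h_2)$ with $mn_0n_1-h_1$ and $mn_0n_2-h_2$ both "squares" (weighted by the majorant). Fix $m$ and $n_1$. The set of $h_1\le H$ with $h_1\equiv -\kappa \pmod{n_0}$ (equivalently $mn_0n_1-h_1\equiv\kappa$) such that $mn_0n_1-h_1$ is a square has size $\ll 1 + H/\sqrt X$, roughly $H X^{-1/2}$ counting the density of squares near $X$ in a window of length $H$. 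The congruence restricts $h_1$ further, and the cleanest way is to count squares $x^2\equiv\kappa$ in the window: $\precprec 1+ H/(n_0\sqrt X)$. Similarly, for fixed $m,n_2$ the number of $h_2$ is $\precprec 1+H/\sqrt X$.

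So the total is
\[
\precprec R^2 \sum_{m\asymp X/N}\Big(\sum_{n_1\ll N/n_0}(1+\tfrac{H}{n_0\sqrt X})\Big)\Big(\sum_{n_2\ll N/n_0}(1+\tfrac{H}{\sqrt X})\Big).
\]
The "$1+$" terms are the main obstacle, since the trivial per-$m$ count loses when $H<\sqrt X$. To handle this I would instead sum over $m$ and $n_i$ together. Since $h_1\ll H$, the number of pairs $(m n_1, h_1)$ with $mn_0n_1 - h_1 = x^2$ is $\ll \sqrt X\cdot H/n_0$: choose $x$, then $h_1$ in the progression making $x^2+h_1\equiv 0 \pmod{n_0}$. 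The divisor bound then recovers $(m,n_1)$ from the product. For fixed $m$, the $n_2$ count is $\ll (N/n_0)(H/\sqrt X)+1$ by the square density near $mn_0n_2$ in windows of length $H$. More efficiently, sum over $n_2$ and $h_2$ jointly: the number of $(n_2,h_2)$ with $mn_0n_2-h_2$ a square is $\ll (N/n_0)\cdot H/\sqrt X$, using that the values $mn_0n_2$ are spaced by $mn_0\asymp Xn_0/N \ge H$ so the windows are disjoint and square density is $X^{-1/2}$. Multiplying gives $\sqrt X\, (H/n_0)\cdot (N/n_0)(H/\sqrt X) = NH^2/n_0^2$. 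The extra factor $1/n_0$ needs one more saving, and I would get it from the congruence $\kappa$ being fixed: $x^2\equiv \kappa$ forces $x$ into $\precprec 1$ classes mod $n_0$, saving $n_0$ in the $x$-count. This yields $R^2NH^2/n_0^3$. The delicate point is ensuring the windows are disjoint and that these congruence savings are legitimate simultaneously.
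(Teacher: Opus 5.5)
There is a genuine gap, in the $\sigma_1\sigma_1$ term, at exactly the step you flagged as delicate. Your handling of the terms involving $\sigma_0$ is essentially fine: one factor is flat, so multiplying the two counts is legitimate.

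For fixed $m$, the windows $[mn_0n_2-H,mn_0n_2]$ are disjoint, but each has length $H=X^{1/2-\nu}$, which is shorter than the gap $\asymp\sqrt X$ between consecutive squares. Disjointness therefore gives only $\ll N/n_0$ pairs $(n_2,h_2)$, not $\ll (N/n_0)H/\sqrt X$, and you lose a factor $X^{\nu}$.

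The claimed bound is in fact false. You sum over all $n_1,n_2\ll N/n_0$, so whenever $(m,n_1,h_1)$ is counted in the first factor, the pair $(n_2,h_2)=(n_1,h_1)$ is counted in the second. That count is therefore $\ge 1$, whereas $(N/n_0)H/\sqrt X<1$ as soon as $n_0>NX^{-\nu}$.

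More fundamentally, once you discard $(n_1,n_2)=1$ and $n_1n_2\neq 1$, your majorant contains the diagonal $n_1=n_2$, $h_1=h_2$, $k_1=k_2$. For $\kappa$ a square modulo $n_0$, this contributes roughly $\sqrt X H/n_0^2$. For $n_0\asymp N$ that exceeds $R^2NH^2/n_0^3\asymp R^2H^2/N^2$ by a factor $\asymp X^{\nu}R^{-2}$. So no counting of that majorant can prove the lemma.

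The missing idea is to keep $n_1\neq n_2$ coprime and count \emph{jointly} over $m$ rather than pointwise in $m$. As in the paper, eliminate $m$ and write the sum as $\sum_{n_1,n_2}\sum_u S(u,\mathbf n,\kappa)\,|\mathcal Q(u;\mathbf n,\kappa)|$ with $u=n_1h_2-n_2h_1$ and $|u|\ll U:=HN/n_0$.

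\begin{itemize}
\item The number of $(h_1,h_2)$ for a given $u$ is $\ll H/N$: $h_1$ lies in one residue class modulo $n_0n_1\asymp N$, and $H>N$.
\item One then needs the $L^1$ bound $\sum_{|u|\ll U}|\mathcal Q(u;\mathbf n,\kappa)|\precprec R^2U/N=R^2H/n_0$ from Lemma \ref{le:L2bound}.
\item Its $\sigma_1\sigma_1$ part counts $y_1,y_2\asymp\sqrt X$ with $y_1^2\equiv\kappa\pmod{n_0}$ and $|n_2y_1^2-n_1y_2^2|\le U$. These are lattice points in a strip of width $\asymp H/\sqrt X<1$ around the line of slope $\sqrt{n_2/n_1}$.
\end{itemize}

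Getting the area-sized count $\precprec H/n_0$ in that strip requires a Diophantine input. Since $n_1,n_2$ are square-free and coprime with $n_1n_2>1$, $n_1n_2$ is not a square, so $n_1t_1^2-n_2t_2^2$ is a nonzero integer for $(t_1,t_2)\neq(0,0)$. In the paper this appears as the absence of nonzero dual frequencies in Lemma \ref{le:voronoi} with $d=T=1$. That step needs $U>X^{2\eps}N^2$, i.e. $H\gg X^{2\eps}N^2$.

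Your argument uses neither the coprimality of $n_1,n_2$ nor the hypothesis $H\gg N^2$, which is another sign it cannot be complete.
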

\begin{proof}
    We recall the definition of the off-diagonal sum, where $n_1, n_2 \sim N/n_0$ and $(n_1, n_2) = 1$:
    \begin{align*}
        \mathcal{U}_1(N,n_0,\kappa) = \sum_{\substack{n_1,n_2 \\ n_1 n_2 \neq 1}} \beta_{n_0 n_1} \overline{\beta_{n_0 n_2}} \sum_{\substack{h_1,h_2 \leq H \\ h_1 \equiv -\kappa (n_0)}} \gamma_{h_1} \overline{\gamma_{h_2}} \mathcal{Q}(n_1 h_2 - n_2 h_1 ; \mathbf{n}, \kappa).
    \end{align*}
  Since $\beta$ is supported on square-free numbers, we have $(n_0,n_1n_2)=1$.  Taking absolute values and using the divisor bound, we get
    \begin{align*}
        \mathcal{U}_1(N,n_0,\kappa) \precprec \sum_{\substack{n_1, n_2 \sim N/n_0 \\ (n_1,n_2)=(n_0,n_1n_2)=1 \\ n_1n_2 \neq 1    }} \sum_{|u| \ll H N/n_0} S(u, \mathbf{n},\kappa) |\mathcal{Q}(u; \mathbf{n}, \kappa)|,
    \end{align*}
    where $S(u, \mathbf{n},\kappa) $ is the number of pairs $(h_1, h_2) \in [1, H]^2$ satisfying $n_1 h_2 - n_2 h_1 = u$ and $h_1 \equiv -\kappa \pmod{n_0}$.  We have since $H>N$ and $(n_0,n_1)=1$
    \begin{align*}
  S(u, \mathbf{n},\kappa)  \ll \sum_{h \leq H} \mathbf{1}_{ \substack{h \equiv -u \overline{n_2} \pmod{n_1} \\ h \equiv -\kappa \pmod{n_0}}}  \ll \frac{H}{N}.
  \end{align*}
Therefore,
\begin{align*}
          \mathcal{U}_1(N,n_0,\kappa)    \precprec \frac{H}{N} \sum_{n_1,n_2\sim N/n_0} \sum_{|u|\ll HN/n_0}|\mathcal Q(u;\mathbf n,\kappa)|.
    \end{align*}

    The $L^1$ estimates proved in Lemma \ref{le:L2bound}, with $U\asymp HN/n_0\gg H\gg X^{2\eps}N^2$, give
    \begin{align*}
        \sum_{|u| \ll HN/n_0} |\mathcal{Q}(u; \mathbf{n}, \kappa)| \precprec \frac{R^2H}{n_0}.
    \end{align*}
 This gives
    \begin{align*}
        |\mathcal{U}_1(N,n_0,\kappa)| &\precprec \frac{R^2 N H^2}{n_0^3},
    \end{align*}
   completing the proof of the lemma.
\end{proof}

We now apply this lemma to  the off-diagonal contribution for $n_0 > R^3$.  Summing the bound over the $n_0$ and the residue classes $\kappa \pmod{n_0}$ to get
\begin{align*}
    \sum_{n_0 > R^3} \sum_{\kappa \pmod{n_0}} \mathcal{U}_1(N, n_0, \kappa) &\precprec N H^2  R^2\sum_{n_0 > R^3} \frac{1}{n_0^2} \precprec N H^2 R^{-1},
\end{align*}
which suffices for the goal \eqref{eq:bound1}.

\subsection{Second Cauchy-Schwarz for $n_0 \leq R^3$}
We anticipate the second Cauchy-Schwarz inequality by inserting a  smooth weight. Since $h_j \leq H$ and $n_j \sim N/n_0$, we have $|n_1 h_2 - n_2 h_1| \leq 2H N / n_0$. Setting $U = H N / n_0$, we can then introduce a smooth weight $\psi\left(\frac{n_1 h_2 - n_2 h_1}{U}\right)$, where $0\leq\psi\leq1$, $\psi=1$ on $[-2,2]$, and $\psi$ is supported on $[-4,4]$. Then for $n_0\leq R^3$
\begin{align*}
   & \mathcal{U}_1(N, n_0, \kappa) = \sum_{\substack{h_1, h_2 \leq H \\ h_1 \equiv -\kappa \pmod{n_0}}}  \gamma_{h_1} \overline{\gamma_{h_2}} \sum_{\substack{(n_1,n_2)=1 }} \beta_{n_0 n_1} \overline{ \beta_{n_0 n_2} }  \mathcal{Q}(n_1 h_2 - n_2 h_1; \mathbf{n}, \kappa) \\ &= \sum_{\substack{h_1, h_2 \leq H \\ h_1 \equiv -\kappa \pmod{n_0}}}  \gamma_{h_1} \overline{\gamma_{h_2}}\sum_{\substack{(n_1,n_2)=1 }} \beta_{n_0 n_1} \overline{ \beta_{n_0 n_2} }   \psi\left(\frac{n_1 h_2 - n_2 h_1}{U}\right) \mathcal{Q}(n_1 h_2 - n_2 h_1; \mathbf{n}, \kappa).
\end{align*}
By applying the Cauchy-Schwarz inequality to the variables $h_1, h_2$ we get
\begin{align*}
     \mathcal{U}_1(N,n_0,\kappa) \precprec& \frac{H}{n_0^{1/2}}  \mathcal{V}(H,N,n_0,\kappa)^{1/2}, \\
     \mathcal{V}(H,N,n_0,\kappa) =& \sum_{\substack{n_1,n_2,n_3,n_4 \\ (n_1,n_2)=(n_3,n_4)=1 }}\beta_{n_0 n_1} \overline{ \beta_{n_0 n_2} } \overline{ \beta_{n_0 n_3} } \beta_{n_0 n_4}  \sum_{h_1,h_2\in\Z} \psi\bigg(\frac{h_1 }{UR^{-4}}\bigg) \psi\bigg(\frac{n_1h_2-n_2h_1}{U}\bigg)\psi\bigg(\frac{n_3h_2-n_4h_1}{U}\bigg)  \\
    & \times \mathcal{Q}(n_1 h_2 -  n_2 h_1 ;n_0,n_1,n_2,\kappa)\overline{\mathcal{Q}(n_3 h_2 -  n_4 h_1 ;n_0,n_3,n_4,\kappa)}.
\end{align*}
Here $UR^{-4}/H=N/(n_0R^4)\gg1$. Before expanding the non-negative square, we extend $(h_1,h_2)$ to $\Z^2$, remove the congruence, and insert the crude majorant $\psi(h_1/(UR^{-4}))$. To obtain the bound \eqref{eq:bound1} after summing over $n_0$ and $\kappa$, it then suffices to show that
\begin{align} \label{eq:bound2}
    \mathcal{V}(H,N,n_0,\kappa)  \precprec \frac{N^2 H^2}{n_0^3 R}.
\end{align}
We partition $\mathcal{V} = \mathcal{V}_0 + \mathcal{V}_1$, where $\mathcal{V}_0(H,N,n_0,\kappa)$ isolates the diagonal terms satisfying $n_1 n_4 = n_2 n_3$, and $\mathcal{V}_1$ handles the off-diagonal terms.

\textbf{The diagonal term $\mathcal{V}_0$:}
The condition $n_1 n_4 = n_2 n_3$, paired with $(n_1, n_2) = (n_3, n_4) = 1$, forces $n_1 = n_3$ and $n_2 = n_4$.  Thus, the diagonal collapses to a double sum over $n_1, n_2 \sim N/n_0$:
\begin{align*}
    \mathcal{V}_0(H,N,n_0,\kappa) =&\sum_{\substack{n_1, n_2 \\ (n_1,n_2)=1  }} |\beta_{n_0 n_1}|^2 |\beta_{n_0 n_2}|^2 \sum_{h_1,h_2\in\Z}\psi\bigg(\frac{h_1}{UR^{-4}}\bigg) \psi\bigg(\frac{n_1h_2-n_2h_1}{U}\bigg)^2 |\mathcal{Q}(n_1h_2-n_2h_1;n_0,n_1,n_2,\kappa)|^2.
\end{align*}

For a fixed pair $(n_1, n_2)$, we substitute $u = n_1 h_2 - n_2 h_1$. Since $H > N$, the number of integer pairs with $|h_1|\leq4UR^{-4}$ mapping to a fixed $u$ is $\ll1+Un_0/(R^4N)\ll H/R^4$.  By our $L^2$ bounds (Lemma \ref{le:L2bound}) we have $\sum_{|u|\leq4U}|\mathcal Q(u)|^2\precprec R^2U/N$. Substituting $U = H N / n_0$ and summing over $n_1,n_2$ we get
\begin{align*}
    \mathcal{V}_0(H,N,n_0,\kappa) \precprec \sum_{\substack{n_1, n_2 \ll N/n_0 \\ (n_1,n_2)=1  }} \frac{R^2 H}{n_0} \frac{H}{R^4} \precprec \frac{ N^2 H^2}{n_0^3 R^2}.
\end{align*}
which is sufficient for our target \eqref{eq:bound2}. Note that without any restriction for the size of $h_1$ we would sum over infinitely many pairs $h_1,h_2$.
\subsection{Off-diagonal terms $n_1 n_4 \neq n_2 n_3$}
We now evaluate the off-diagonal terms where the determinant $\Delta_{\mathbf{n}} = n_1 n_4 - n_2 n_3 \neq 0$. We make a linear change of variables $(u_1,u_2)=(n_1 h_2 -  n_2 h_1, n_3 h_2 -  n_4 h_1)$. Defining the matrix
\begin{align*}
    A=\begin{pmatrix} -n_{2} & n_{1} \\ -n_4 & n_{3} \end{pmatrix},
\end{align*}
we have $A \binom{h_1}{h_2}= \binom{u_1}{u_2}$ and $\Delta_{\mathbf{n}} = \det A$. Inverting this yields
\begin{align*}
      \begin{pmatrix} h_1 \\ h_2 \end{pmatrix} &= A^{-1}\begin{pmatrix} u_1 \\ u_2 \end{pmatrix}  =  \frac{1}{\Delta_{\mathbf{n}} } \begin{pmatrix} n_{3} u_1 - n_{1} u_2 \\ n_{4} u_1 - n_{2} u_2 \end{pmatrix}.   
\end{align*}
Because $h_1, h_2$ must be integers, this imposes the congruence conditions 
\begin{align}
    n_{3}u_1  &\equiv n_{1}u_2  \pmod{|\Delta_{\mathbf{n}}|}, \label{eq:cong1} \\
    n_{4}u_1 &\equiv n_2 u_2 \pmod{|\Delta_{\mathbf{n}}|}. \label{eq:cong2}
 \end{align}
We claim that, for $(n_1,n_2)=(n_3,n_4)=1$, this system collapses into a single congruence
\begin{align*}
    u_1 \equiv \theta_\mathbf{n} u_2 \pmod{|\Delta_{\mathbf{n}}|}
\end{align*}
for some integer $\theta_\mathbf{n}$ satisfying $(\theta_\mathbf{n},|\Delta_{\mathbf{n}}|)=1$. 

To see this, observe that by B\'ezout's identity there exist integers $x, y$ such that $x n_3 + y n_4 = 1$. Multiplying \eqref{eq:cong1} by $x$ and \eqref{eq:cong2} by $y$, and summing the results yields:
\begin{align*}
    u_1 &\equiv (x n_1 + y n_2) u_2 \pmod{|\Delta_{\mathbf{n}}|}.
\end{align*}
Thus, we may  set $\theta_\mathbf{n} = x n_1 + y n_2$. To verify that $(\theta_\mathbf{n}, |\Delta_{\mathbf{n}}|) = 1$, we observe that
\begin{align*}
    n_3 \theta_\mathbf{n} + y \Delta_{\mathbf{n}} &= n_3(x n_1 + y n_2) + y(n_1 n_4 - n_2 n_3) = n_1(x n_3 + y n_4) = n_1, \\
    n_4 \theta_\mathbf{n} - x \Delta_{\mathbf{n}} &= n_4(x n_1 + y n_2) - x(n_1 n_4 - n_2 n_3) = n_2(x n_3 + y n_4) = n_2.
\end{align*}
Thus, if some prime $p$ divides both $\theta_\mathbf{n}$ and $|\Delta_{\mathbf{n}}|$, then $p \mid n_1 $ and $p \mid n_2$, which contradicts our assumption that $(n_1, n_2) = 1$. Thus, we have $(\theta_\mathbf{n}, |\Delta_{\mathbf{n}}|) = 1$.

Making the above substitution, we obtain
\begin{align*}
     \mathcal{V}_1(H,N,n_0,\kappa) =& \sum_{\substack{n_1,n_2,n_3,n_4 \\ (n_1,n_2)=(n_3,n_4)=1 \\ \Delta_{\mathbf{n}} \neq 0}}\beta_{n_0 n_1} \overline{ \beta_{n_0 n_2} } \overline{ \beta_{n_0 n_3} } \beta_{n_0 n_4}   \sum_{\substack{u_1,u_2 \\  u_1 \equiv \theta_\mathbf{n} u_2 (|\Delta_{\mathbf{n}}|)}} \psi \bigg(\frac{u_1}{U} \bigg)  \psi \bigg(\frac{u_2}{U} \bigg)\\
    & \times \psi\bigg(\frac{n_3u_1-n_1u_2 }{\Delta_{\mathbf{n}}UR^{-4}}\bigg) \mathcal{Q}(u_1; n_0,n_1,n_2,\kappa)\overline{\mathcal{Q}(u_2 ;n_0,n_3,n_4,\kappa)}.
\end{align*}
We now  remove the weight $\psi\bigg(\frac{n_3u_1-n_1u_2 }{\Delta_{\mathbf{n}}UR^{-4}}\bigg)$ by Fourier inversion
\begin{align*}
    \psi\bigg(\frac{n_3u_1-n_1u_2 }{\Delta_{\mathbf{n}}UR^{-4}}\bigg) = \int \widehat{\psi_\mathbf{n}}(\xi) e(\xi n_3 u_1)  e(-\xi n_1 u_2)  \d \xi, \quad  \psi_\mathbf{n}(x) =      \psi\bigg(\frac{x}{\Delta_{\mathbf{n}}UR^{-4}}\bigg),
\end{align*}
We use dyadic intervals $E/2<|\Delta_{\mathbf n}|\leq E$, abbreviated $|\Delta_{\mathbf n}|\sim E$. Integration by parts gives
\begin{align} \label{eq:psiYdecay}
    \widehat{\psi_\mathbf{n}}(\xi) \ll_C Y_E (1+   |\xi| Y_E )^{-C}, \quad Y_E= E UR^{-4} = E H (N/n_0) R^{-4}.
\end{align}
Therefore,  it suffices to bound
\begin{align*}
   & \sup_{E \ll (N/n_0)^2}  \sum_{\substack{n_1,n_2,n_3,n_4 \\ (n_1,n_2)=(n_3,n_4)=1 \\ |\Delta_{\mathbf{n}}| \sim E}}|\beta_{n_0 n_1}  \beta_{n_0 n_2}   \beta_{n_0 n_3}  \beta_{n_0 n_4} |  \int  |\widehat{\psi_\mathbf{n}}(\xi)| \\&\bigg|\sum_{\substack{u_1,u_2 \\  u_1 \equiv \theta_\mathbf{n} u_2 (|\Delta_{\mathbf{n}}|)}} \psi \bigg(\frac{u_1}{U} \bigg)  \psi \bigg(\frac{u_2}{U} \bigg)e(\xi n_3 u_1)  e(-\xi n_1  u_2) \mathcal{Q}(u_1; n_0,n_1,n_2,\kappa)\overline{\mathcal{Q}(u_2 ;n_0,n_3,n_4,\kappa)} \bigg| \d \xi.
\end{align*}
We expand the congruence $u_1 \equiv \theta_\mathbf{n} u_2 \pmod{|\Delta_{\mathbf{n}}|}$ using orthogonality as $$\frac{1}{|\Delta_{\mathbf{n}}|} \sum_{a \pmod{|\Delta|}} e_{|\Delta_{\mathbf{n}}|}(a u_1)e_{|\Delta_{\mathbf{n}}|}(-a \theta_\mathbf{n} u_2).$$ We group the frequencies by their conductor $d = |\Delta_{\mathbf{n}}| / (a, |\Delta_{\mathbf{n}}|)$ and set $b = a / (a, |\Delta_{\mathbf{n}}|)$. By the triangle inequality, noting that $|\Delta_{\mathbf{n}}| \ll (N/n_0)^2$, we obtain 
\begin{align*}
 \sup_{\substack{E \ll (N/n_0)^2 }}  &\sum_{d\leq E} \sum_{\substack{n_1,n_2,n_3,n_4\sim N/n_0 \\ (n_1,n_2)=(n_3,n_4)=1 \\ d \mid \Delta_{\mathbf{n}},\ |\Delta_{\mathbf{n}}| \sim E  }} \frac{1}{|\Delta_{\mathbf{n}}|} \sum_{\substack{b\pmod{d}^\times}}  \int |\widehat{\psi_\mathbf{n}}(\xi)|  \\
& \times \bigg| \sum_{u_1} \psi \bigg(\frac{u_1}{U} \bigg)e(\xi n_3 u_1) \mathcal{Q}(u_1; n_0,n_1,n_2,\kappa) e_d(bu_1) \bigg| \\
& \times\bigg| \sum_{u_2}\psi \bigg(\frac{u_2}{U} \bigg)  e(\xi n_1 u_2)\mathcal{Q}(u_2; n_0,n_3,n_4,\kappa) e_d(b \theta_{\mathbf{n}} u_2) \bigg| \d \xi
 \end{align*}
 Applying Cauchy-Schwarz we get
 \begin{align*}
   |\mathcal{V}_1(H,N,n_0,\kappa)|\precprec\sqrt{\Sigma_{1,2}\Sigma_{3,4}}
\end{align*}
where
\begin{align*}
\Sigma_{1,2}= & \sup_{\substack{E \ll (N/n_0)^2}}\sum_{d\leq E} \sum_{\substack{n_1,n_2,n_3,n_4 \sim N/n_0\\ (n_1,n_2)=(n_3,n_4)=1 \\ d \mid \Delta_{\mathbf{n}},\ |\Delta_{\mathbf{n}}| \sim E  }}  \frac{1}{|\Delta_{\mathbf{n}}|}\sum_{\substack{b\pmod{d}^\times}} \int  |\widehat{\psi_\mathbf{n}}(\xi)|  \\
&\times \bigg| \sum_{u}\psi \bigg(\frac{u}{U} \bigg) e(\xi n_3 u)  \mathcal{Q}(u;n_0,n_1,n_2,\kappa) e_d(b  u) \bigg|^2 \d \xi
\end{align*}
and
\begin{align*}
      \Sigma_{3,4}  =& \sup_{\substack{E \ll (N/n_0)^2}} \sum_{ d\leq E} \sum_{\substack{n_1,n_2,n_3,n_4\sim N/n_0 \\ (n_1,n_2)=(n_3,n_4)=1 \\ d \mid \Delta_{\mathbf{n}},\ |\Delta_{\mathbf{n}}| \sim E }}  \frac{1}{|\Delta_{\mathbf{n}}|}\sum_{\substack{b\pmod{d}^\times}} \int |\widehat{\psi_\mathbf n}(\xi)| \\
      & \times\bigg| \sum_{u}\psi \bigg(\frac{u}{U} \bigg) e(\xi n_1 u)\mathcal{Q}(u;n_0,n_3,n_4,\kappa) e_d(b \theta_{\mathbf{n}} u) \bigg|^2 \d \xi \\
        =& \sup_{\substack{E \ll (N/n_0)^2}} \sum_{d\leq E} \sum_{\substack{n_1,n_2,n_3,n_4 \sim N/n_0 \\ (n_1,n_2)=(n_3,n_4)=1 \\ d \mid \Delta_{\mathbf{n}},\ |\Delta_{\mathbf{n}}|\sim E  }}  \frac{1}{|\Delta_{\mathbf{n}}|}\sum_{\substack{b\pmod{d}^\times}} \int  |\widehat{\psi_\mathbf{n}}(\xi)| \\
        & \times \bigg| \sum_{u}\psi \bigg(\frac{u}{U} \bigg) e(\xi n_1 u)  \mathcal{Q}(u;n_0,n_3,n_4,\kappa) e_d(b  u) \bigg|^2 \d \xi.
\end{align*}
The change of variables $b \theta_{\mathbf{n}} \mapsto b$ in the additive character for $\Sigma_{3,4}$ is  justified because $(\theta_{\mathbf{n}}, |\Delta_{\mathbf{n}}|) = 1$. This shows that the two factors $\Sigma_{1,2},\Sigma_{3,4}$ are symmetric and it suffices to prove that
\begin{align} \label{eq:bound3}
    \Sigma_{1,2}\precprec\frac{N^2H^2}{n_0^3R}.
\end{align}

First, by making the change of variables $\xi \mapsto \xi / n_3$, using \eqref{eq:psiYdecay}, and taking the integral outside, we see that
\begin{align*}
\Sigma_{1,2} \ll  & \sup_{\substack{E \ll (N/n_0)^2 }}\sum_{d\leq E} \sum_{\substack{n_1,n_2,n_3,n_4 \sim N/n_0\\ (n_1,n_2)=(n_3,n_4)=1 \\ d \mid \Delta_{\mathbf{n}},\ |\Delta_{\mathbf{n}}| \sim E  }}  \frac{1}{|\Delta_{\mathbf{n}}|}\sum_{\substack{b\pmod{d}^\times}} \int \frac{Y_E n_0}{N} (1+   |\xi| Y_E n_0/N )^{-C}   \\
&\times \bigg| \sum_{u}\psi \bigg(\frac{u}{U} \bigg)e(\xi u)  \mathcal{Q}(u;n_0,n_1,n_2,\kappa) e_d(b  u) \bigg|^2 \d \xi \\
\ll &  \sup_{\substack{E \ll (N/n_0)^2 \\ |\xi| \leq X^\eps \frac{ R^4 N }{ n_0 E U }}}\sum_{d\leq E} \sum_{\substack{n_1,n_2,n_3,n_4 \sim N/n_0\\ (n_1,n_2)=(n_3,n_4)=1 \\ d \mid \Delta_{\mathbf{n}},\ |\Delta_{\mathbf{n}}| \sim E  }}  \frac{1}{|\Delta_{\mathbf{n}}|}\sum_{\substack{b\pmod{d}^\times}}   \\
&\times \bigg| \sum_{u}\psi \bigg(\frac{u}{U} \bigg)e\bigg(\xi u \bigg)  \mathcal{Q}(u;n_0,n_1,n_2,\kappa) e_d(b  u) \bigg|^2 + O_{\eps,C}(X^{-C}).
\end{align*}

For a fixed $d$ and fixed $n_1,n_2$ with $(n_1,n_2)=1$, we count the possible values of $\Delta$ and the valid pairs $(n_3, n_4)$. The equation $n_1 n_4 - n_2 n_3 = \Delta$ uniquely determines $n_3$ for a given $n_4$ and $\Delta$ and we have
\begin{align*}
\sum_{\substack{0<|\Delta| \ll (N/n_0)^2 \\d \mid \Delta }} \frac{1}{|\Delta|}\sum_{\substack{n_3,n_4 \sim N/n_0 \\ (n_3,n_4)=1 \\ n_1 n_4 - n_2 n_3 = \Delta  }} 1 &\leq \sum_{\substack{0<|\Delta| \ll (N/n_0)^2 \\d \mid \Delta }}\frac{1}{|\Delta|} \sum_{\substack{n_4 \sim N/n_0  \\ n_1 n_4  \equiv \Delta \pmod{n_2} }} 1  \\
&\ll \sum_{\substack{0<|\Delta| \ll (N/n_0)^2 \\d \mid \Delta }} \frac{1}{|\Delta|} \ll \frac{\log X}{d}.
\end{align*}
Therefore,
\begin{align*}
    \Sigma_{1,2} \precprec \sup_{\substack{E \ll (N/n_0)^2 \\ |\xi| \leq X^\eps \frac{ R^4 N }{ n_0 E U }}} \sum_{\substack{n_1,n_2\sim N/n_0 \\ (n_1,n_2)=1  }}   \sum_{d\leq E }  \frac{1}{d} \sum_{\substack{b\pmod{d}^\times}} \bigg| \sum_{u}\psi \bigg(\frac{u}{U} \bigg) e(\xi u) \mathcal{Q}(u;n_0,n_1,n_2,\kappa) e_d(b  u) \bigg|^2.
\end{align*}
We split the global sum into two parts according to the size of the conductor $d$. We define the conductor threshold $D=X^{\eta/16} N/n_0$, which is chosen so that $d > D$ can be handled via the large sieve. We write
\begin{align*}
       \Sigma_{1,2}\precprec\Sigma_{\leq D}+\Sigma_{>D}.
\end{align*}

  \subsection{Large conductors $d > D$}
We split the sum over $d$ dyadically into ranges $d \sim Q \in [D,(4N/n_0)^2]$. For fixed $n_1,n_2$, we employ the additive Large sieve inequality (Lemma \ref{lem:LaSi}) with the $L^2$-bound from Lemma \ref{le:L2bound}, which gives $\sum_{|u|\leq4U}|\mathcal Q(u)|^2\precprec R^2U/N$ and so
\begin{align*}
 \frac{1}{Q} \sum_{d\sim Q}  \sum_{\substack{b\pmod{d}^\times}} \bigg| \sum_{u}\psi \bigg(\frac{u}{U} \bigg) e(\xi u)  \mathcal{Q}(u;n_0,n_1,n_2,\kappa) e_d(b  u) \bigg|^2 &\precprec    \frac{1}{Q} (Q^2+U) \frac{ R^2 U}{N}  \\
   &\precprec \left(  \frac{N^2}{n_0^2} + \frac{H N }{n_0 D}  \right) \frac{R^2H}{n_0}  
\end{align*}
since $ Q \in [D,(4N/n_0)^2]$ and $U= H N/n_0$. 
Summing over the $(N/n_0)^2$ pairs of $n_1, n_2$, we obtain
\begin{align*}
   \Sigma_{> D} &\precprec   \frac{R^2 N^4 H}{n_0^5} + \frac{R^2 N^3 H^2}{n_0^4 D}.
\end{align*}
Substituting our choice for the conductor threshold $D = X^{\eta/16} N/n_0$ we get
\begin{align*}
    \Sigma_{> D} \precprec \frac{R^2 N^4 H}{n_0^5} + X^{-\eta/32} \frac{R^2 N^2 H^2}{n_0^3}.
\end{align*}
The second term is sufficient for the target \eqref{eq:bound3} since $R=X^{\eta_1}$ with $\eta_1=\eta/100$. The first term is sufficient  for the target \eqref{eq:bound3}  by the Type II range assumption $N \leq X^{1/4-\nu/2-\eta}$ in Proposition \ref{prop:typeII}.

\subsection{Small conductors $d \leq D$}
Fix $0<\eps<\eta/1000$, and write $N'=N/n_0$.
In the sums below we retain the square-free support inherited from
$\beta_n$. For brevity, $\sum^\ast_{n_1,n_2\sim N'}$ denotes the
sum over square-free $n_1,n_2\sim N'$ satisfying
$(n_1,n_2)=(n_0,n_1n_2)=1$.
After rescaling the Fourier variable, we have
\begin{align*}
    \Sigma_{\leq D}
    \precprec{}&
    \sup_{\substack{1\leq E\ll D\\
        |\xi|\leq X^\eps R^4N/(n_0E)}}
    \sum_{d\leq\min(E,D)}\frac{1}{d}
    \sum_{n_1,n_2\sim N'}^\ast
    \sum_{b\pmod{d}^\times}\\
    &\qquad\times
    \left|\sum_u\psi\left(\frac{u}{U}\right)
        e\left(\xi\frac{u}{U}\right)
        \mathcal{Q}(u;n_0,n_1,n_2,\kappa)e_d(bu)\right|^2.
\end{align*}
Here, if the original determinant parameter exceeds $D$, replacing
it by $D$ only enlarges the permitted range of $\xi$.
For each admissible pair $(E,\xi)$, put
\[
    \psi_\xi(x)=\psi(x)e(\xi x),\qquad T=1+|\xi|.
\]
For $d\leq\min(E,D)$ we have
\[
    Td\leq d+X^\eps\frac{R^4N}{n_0}\frac{d}{E}
    \ll D+X^\eps R^4N'\ll D,
\]
since $D=X^{\eta/16}N'$ and $R=X^{\eta/100}$.
The upper bound $U\leq NX/n_0$ holds since $H<X$. The other hypotheses of Lemma \ref{le:voronoi} follow from the Type II
range:
\[
    \frac{X^\eps TdNR}{U}
    \ll X^{\eps+\eta/16}\frac{NR}{H}=o(1),
    \qquad
    \frac{X^\eps dn_0}{X^{1/2}}
    \ll X^{\eps+\eta/16}\frac{N}{X^{1/2}}=o(1).
\]

We apply Lemmas \ref{le:voronoi} and \ref{lem:phasedecayvoronoi} with $\psi_\xi$ and use the
notations of Lemmas \ref{lem:main_term_cancellation} and
\ref{lem:exp_sum_bound}. We bound the zero-frequency integrals
by $\mathcal{I}_0\ll Un_0/N=H$, and use
\eqref{eq:voronoi_twisted_decay} for the non-zero frequencies:
\[
    \left|\widehat{\Psi}^{\,\xi}_{1,1}
       \left(\frac{t_1}{dn_0},\frac{t_2}{dn_0}\right)\right|
    \precprec\frac{H}{T}.
\]
Consequently, suppressing negligible errors, we obtain
\begin{align*}
    &\left|\sum_u\psi\left(\frac{u}{U}\right)
       e\left(\xi\frac{u}{U}\right)
       \mathcal{Q}(u;n_0,n_1,n_2,\kappa)e_d(bu)\right|\\
    &\qquad\precprec
    \frac{H}{d^2n_0^2}
    \left(|\mathcal{M}(b,d)|+
       \frac{1}{T}\sum_{(t_1,t_2)\in\mathcal{T}^\ast}
          |\mathcal{E}(b,d,t_1,t_2)|\right).
\end{align*}
Here $\mathcal{T}^\ast$ depends on $\xi,d,n_1,n_2$ through $T$.
Using $|A+B|^2\leq2(|A|^2+|B|^2)$ and $\varphi(d)/d\leq1$, we get
\begin{align}\label{eq:voronoi_squared}
    \Sigma_{\leq D}
    \precprec{}&
    \sup_{\substack{1\leq E\ll D\\
        |\xi|\leq X^\eps R^4N/(n_0E)}}
    \frac{H^2}{n_0^4}
    \sum_{d\leq\min(E,D)}\frac{1}{d^4}
    \sum_{n_1,n_2\sim N'}^\ast\notag\\
    &\quad\times\left(
       \sup_{b\pmod{d}^\times}|\mathcal{M}(b,d)|^2+
       \frac{1}{T^2}\sup_{b\pmod{d}^\times}
       \left(\sum_{(t_1,t_2)\in\mathcal{T}^\ast}
          |\mathcal{E}(b,d,t_1,t_2)|\right)^2\right).
\end{align}

\textbf{Main Term Contribution:}
By Lemma \ref{lem:main_term_cancellation} and the pairwise
coprimality of $n_0,n_1,n_2$,
\[
    |\mathcal M(b,d)|^2
    \precprec \mathbf{1}_{d>R}n_0^3d^2(d,n_0n_1n_2).
\]
For any positive integer $A$,
\[
    \sum_{d>R}\frac{(d,A)}{d^2}
    \leq \sum_{c\mid A}c\sum_{\substack{d>R\\c\mid d}}\frac1{d^2}
    \ll \frac{\tau(A)}R.
\]
Consequently, the main terms in \eqref{eq:voronoi_squared}
contribute
\[
    \precprec \frac{H^2}{n_0}
    \sum_{n_1,n_2\sim N'}^\ast
    \sum_{R<d\leq D}\frac{(d,n_0n_1n_2)}{d^2}
    \precprec \frac{N^2H^2}{n_0^3R},
\]
which suffices for \eqref{eq:bound3}.

\textbf{Error Term Contribution:}
By Remark \ref{rem:dual}, every dual frequency satisfies
\[
    |n_1t_1^2-n_2t_2^2|\leq M,
    \qquad
    M:=X^{2\eps}\frac{T^2d^2N^2}{U}
      =X^{2\eps}\frac{T^2d^2n_0N}{H}.
\]
The bound $Td\ll D$ gives
\[
    \frac{M}{N'}
    \ll X^{2\eps}\frac{D^2n_0^2}{H}
    =X^{2\eps+\eta/8}\frac{N^2}{H}
    <X^{2\eps-15\eta/8}.
\]
Thus $M\leq N'/10$ for sufficiently large $X$.
If $M<1$, then $\mathcal{T}^\ast$ is empty, since for square-free
coprime $n_1,n_2>1$ the integer $n_1t_1^2-n_2t_2^2$ is non-zero
whenever $(t_1,t_2)\neq(0,0)$.
For $M\geq1$, apply Lemma \ref{lem:number_of_dual}, interchanging
$n_1,n_2$ if necessary and replacing $M$ by $\max(M,10)$ when needed.
The bound on $|t_1|,|t_2|$ in Remark \ref{rem:dual} is polynomial
in $X$, so the logarithmic loss is $O(\log X)$. Therefore, in all cases,
\begin{equation}\label{eq:small_conductor_dual_count}
   |\mathcal{T}^\ast|^2\precprec M
    =X^{2\eps}\frac{T^2d^2n_0N}{H}.
\end{equation}

By Lemma \ref{lem:exp_sum_bound},
\[
    \sup_{b\pmod{d}^\times}|\mathcal{E}(b,d,t_1,t_2)|
    \precprec n_0d(d,n_0)\sqrt{(d,n_1)(d,n_2)}.
\]
Hence the dual-frequency contribution to \eqref{eq:voronoi_squared}
is bounded by
\begin{align*}
    \Sigma_{\mathrm{dual}}
    &\precprec
    \sup_{E,\xi}\frac{H^2}{T^2n_0^2}
    \sum_{d\leq\min(E,D)}\frac{(d,n_0)^2}{d^2}
    \sum_{n_1,n_2\sim N'}^\ast |\mathcal{T}^\ast|^2(d,n_1)(d,n_2)\\
    &\precprec
    X^{2\eps}\frac{HN}{n_0}
    \sum_{d\leq D}(d,n_0)^2
    \left(\sum_{n\sim N'}(d,n)\right)^2.
\end{align*}
The supremum is over the admissible $(E,\xi)$ above. In the second
line we first used \eqref{eq:small_conductor_dual_count}, canceling
the factors $T^2$ and $d^2$, and only then enlarged the range to
$d\leq D$ and discarded the restrictions on $n_1,n_2$.
The elementary estimates
\[
    \sum_{n\sim L}(d,n)\ll L\tau(d),
    \qquad
    \sum_{d\leq D}(d,n_0)^2
    \leq D\sum_{c\mid n_0}c
    \leq Dn_0\tau(n_0)
\]
now yield
\begin{align*}
    \Sigma_{\mathrm{dual}}
    &\precprec X^{2\eps}\frac{HN}{n_0}
       \left(\frac{N}{n_0}\right)^2Dn_0=X^{2\eps}\frac{HDN^3}{n_0^2}
     =X^{2\eps+\eta/16}\frac{HN^4}{n_0^3}.
\end{align*}
Since $N^2/H<X^{-2\eta}$, this is sufficient for the target bound \eqref{eq:bound3}. \qed

\section{Harman's sieve preliminaries}
Throughout this section we let $\A^{(h)}$ and $\B^{(h)}$ denote the sequences $a^{(h)}_n$ and $b^{(h)}_n$ for a fixed $h \leq H =X^{1/2-\nu}$. As usual, we denote $P(z) := \prod_{p < z} p$ and for a sequence $\mathcal{C}=(c_n)$ and integer $d \geq 1$, we denote 
\begin{align*}
    S(\mathcal{C}_d,z) := \sum_{(n,P(z))=1} c_{dn}.
 \end{align*}

Let $\eta \in (0,\nu)$ be small. We define the width of the Type II range
\begin{align} \label{eq:gammadef}
    \gamma:= 1/4-3\nu/2 - 2\eta.
\end{align}
We set parameters
\begin{align}  \label{eq:sieveparams}
 V=X^{\nu+\eta},   \quad Z := X^{\gamma}, \quad W=VZ, \quad Y=X/Z^2, \quad U =X^{1/2-\nu-2\eta}, \quad T = X^{1/2-4\nu}
\end{align}
Here $Z=W/V$ matches the width of the Type II range $[V,W]$ and $U = X^{1/2-\eta} V^{-1}$ is chosen so that $UV$ matches the Type I range $X^{1/2-\eta}$.

\subsection{Evaluating sequence $b_n$ over almost primes}
We first require the following lemma to count the model sequence $b_n$ over products of primes.
Let $\omega(u)$ denote the Buchstab function (eg.  \cite[Chapter 1]{harman}), so that by the Prime number theorem for $Y^{\eps} < Z < Y$ 
\[
\sum_{Y< n \leq 2Y} \bm{1}_{(n,P(Z))=1} = (1+o(1)) \omega \left(\frac{\log Y}{\log Z} \right) \frac{Y}{\log Z}.
\]
We have
\begin{align*}
    \omega(u) \,=\begin{cases} 0, &u < 1 \\
1/u, & 1 \leq u < 2 \\
(1+\log(u-1))/u, &2 \leq u \leq 3.
\end{cases}
\end{align*}
and for all $u$ we have $\omega(u) \leq 1$.

To count almost primes over $b_n$, we first need the following "Prime number theorem" for $b_n$
\begin{lemma} \label{lem:pnt_bn}
Let $X^\eta<H\leq X^{1/2+o(1)}$ and let $(\gamma_h)_{h \leq H}$ be complex coefficients. For any piecewise smooth function $G(\alpha_1,\dots,\alpha_k)$ supported on $\alpha_i > \eta$ and $\alpha_1+\cdots+\alpha_k < 1-\eta$,  there exist main terms $M^{(h)}_G$
such that
\begin{align*}
    M^{(h)}_G = \frac{\sqrt{X} I_f }{\log X}  \mathfrak{S}_h \left(\int G(\boldsymbol{\alpha}) \frac{\d \boldsymbol{\alpha}}{\alpha_1 \cdots \alpha_k (1-\sum \alpha_i)}+o(1)\right).
\end{align*}
and we have
\begin{align*}
    \sum_{p_1,\dots,p_k} G\bigg(\frac{\log p_i}{\log X} \bigg)\sum_p b^{(\gamma)}_{pp_1\cdots p_k} &=   \sum_{h \leq H} \gamma_h  M^{(h)}_G + O_C\bigg( \|\gamma\|_\infty H \frac{\sqrt{X}}{(\log X)^C}\bigg).
\end{align*}  
Similarly we  get $k=0$ the expected  main term for $\sum_p b_{p}$.
\end{lemma}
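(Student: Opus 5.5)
The plan is to unfold the definition of $b^{(\gamma)}_n=\sum_{k+h=n}\gamma_h\sigma_0(k)$, which reduces the claim to a pointwise-in-$h$ statement: for each fixed $h\le H$, define
\[
M^{(h)}_G:=\sum_{p_1,\dots,p_k} G\Big(\frac{\log p_i}{\log X}\Big)\sum_p \sigma_0(pp_1\cdots p_k-h)
\]
up to an error $O_C(\sqrt X(\log X)^{-C})$. Summing against $\gamma_h$ then costs only $\|\gamma\|_\infty H$. Inserting the definition of $\sigma_0$ and Lemma \ref{le:modelforsquarefree} (after expanding the Gauss sum via Ramanujan sums for non-square-free $r$), we write
\[
\sigma_0(k)=\frac{f(k/X)}{2\sqrt k}\sum_{r\le R}c_r(k),\qquad c_r(k):=\frac1r\sum_{\alpha\ (r)^\times}G(\alpha,r)e_r(-\alpha k).
\]
Here $c_r$ is a periodic function modulo $r$, and $r\le R=X^{\eta_1}$ is a small power of $X$.

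For fixed $r$ and fixed $p_1,\dots,p_k$ (with $m=p_1\cdots p_k\le X^{1-\eta}$), the inner sum over $p$ is
\[
\sum_p \frac{f((pm-h)/X)}{2\sqrt{pm-h}}\,c_r(pm-h).
\]
We split $p$ into residue classes $a \pmod r$. Only $(a,r)=1$ matters, since $p\le r$ contributes negligibly. Then the Siegel--Walfisz theorem (with the smooth weight, partial summation over $p\asymp X/m\ge X^{\eta}$, and $r\le X^{\eta_1}\le(\log(X/m))^{A}$) is \emph{not} directly available, because $R$ is a power of $X$. This is the main obstacle.

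I would handle it by first truncating the model: the Gauss sum normalization gives $c_r(\cdot)$ of size $\ll r^{1/2}$ on average in a way that decays in $r$ after averaging over $p$ in progressions. Concretely, for $(am-h,r)$ with $r$ odd square-free, $c_r(pm-h)=\left(\frac{pm-h}{r}\right)$. The sum over primes $p$ in the character $\chi(p)=\left(\frac{pm-h}{r}\right)$ is a combination of Dirichlet characters mod $r$ via $\phi(r)^{-1}\sum_\chi$. The principal-character part gives the main term $\frac{1}{\phi(r)}\sum_{a\,(r)^\times}c_r(am-h)$ times the prime count. The non-principal part is bounded using the large-sieve-for-primes / Bombieri--Vinogradov-type average over $r\le R$ and over $m$. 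Since $p\asymp X/m\ge X^{\eta}$ and $R=X^{\eta/100}$, the level $R$ is far below $(X/m)^{1/2}$, so Bombieri--Vinogradov (averaging over moduli, absolute values inside, applied for each fixed $m$ with the character coefficients absorbed) saves an arbitrary power of $\log X$. The possible exceptional Siegel zero is harmless: the exceptional modulus contributes a single quadratic twist, which still gives $O_C((\log X)^{-C})$ thanks to Siegel's theorem for the few $r\le R$ divisible by the exceptional conductor. Alternatively, it is absorbed because only $\ll R^{o(1)}$ such $r$ occur.

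The main terms now assemble into a local density. We compute
\[
\sum_{r\le R}\frac{1}{\phi(r)}\sum_{a\,(r)^\times}c_r(am-h)=\prod_{p\nmid m}\Big(1-\frac{\rho_h(p)-1}{p-1}\Big)\cdots+O(R^{-1/2+o(1)}).
\]
This is done using multiplicativity in $r$ and the identity $\sum_{r\mid q}c_r(k)=\#\{x \pmod q: x^2\equiv k\}$. Completing $r\le R$ to all $r$ costs a tail bounded by $\sum_{r>R}r^{-1+o(1)}\cdot r^{-1/2}$-type decay. Combining with the Mertens-type factors over $p_1,\dots,p_k$, the product becomes $\mathfrak S_h$ times $1+o(1)$; the $p_i$ are large, so they do not affect the local factors at small primes.

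Finally, we sum over $p$ and $p_1,\dots,p_k$ using the prime number theorem and partial summation:
\[
\sum_{p\,p_1\cdots p_k}G\cdot\frac{f(n/X)}{2\sqrt n}\cdot\frac{1}{\log p}\ \longrightarrow\ \frac{\sqrt X I_f}{\log X}\int G(\boldsymbol\alpha)\frac{\d\boldsymbol\alpha}{\alpha_1\cdots\alpha_k(1-\sum\alpha_i)},
\]
with the factor $1/(1-\sum\alpha_i)$ coming from $\log p=(1-\sum\alpha_i)\log X$. This yields $M^{(h)}_G$ of the stated form. The case $k=0$ is identical, with $m=1$.
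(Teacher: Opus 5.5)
\textbf{Gap: the pointwise-in-$h$ reduction.} Your first step, proving the asymptotic for each fixed $h$ and then paying only $\|\gamma\|_\infty H$, discards exactly what makes the lemma provable. For moduli $(\log X)^B<r\le R=X^{\eta_1}$, the pointwise statement is out of reach unconditionally. Take $r$ odd, square-free, with $r\mid h$. By Lemma \ref{le:modelforsquarefree},
\[
c_r(pm-h)=\left(\frac{pm}{r}\right)=\left(\frac{m}{r}\right)\left(\frac{p}{r}\right).
\]
So your sum contains the term $\left(\frac{m}{r}\right)\sum_p w(p)\left(\frac{p}{r}\right)$. Its mean over reduced residues is zero, so all of it is error, and its coefficient $\frac{1}{\varphi(r)}\sum_a \bar\chi(a)c_r(am-h)$ at $\chi=\left(\frac{\cdot}{r}\right)$ equals $\left(\frac{m}{r}\right)$, of absolute size $1$. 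Your claim that these coefficients decay like $r^{-1/2}$ is therefore false precisely here.

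Saving $(\log X)^C$ on this term is Siegel--Walfisz for a single real character whose modulus is a power of $X$. Bombieri--Vinogradov controls $\sum_r\max_a|E(a,r)|$ and cannot absorb character weights that do not decay in $r$. Siegel's theorem only covers $r\le(\log X)^A$. If $\left(\frac{\cdot}{r}\right)$ had a Siegel zero, the pointwise count for such $h$ could be distorted by a constant factor, and summing over $m=p_1\cdots p_k$ does not help.

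\textbf{Gap: completing the singular series.} This step fails for the same reason. For odd square-free $r$ the local mean is exactly $\frac{\mu(r)}{\varphi(r)}\left(\frac{-h}{r}\right)$, of size $1/r$ with no extra $r^{-1/2}$. So your tail bound $O(R^{-1/2+o(1)})$ is wrong. The tail $\sum_{r>R}$ converges only conditionally, and making it small for an individual $h\le X^{1/2}$ with $R=X^{\eta_1}$ would need GRH-type information on $L(s,\chi_{-h})$. Even the weaker statement that the truncated series is $\mathfrak S_h(1+o(1))$ is not known pointwise.

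\textbf{What fixes it: averaging over $h$.} The lemma is stated with $\sum_h\gamma_h$ and error $\|\gamma\|_\infty H\sqrt X(\log X)^{-C}$ precisely so that the $h$-average can be used. The paper proceeds in three ranges.
\begin{enumerate}
\item For $r\le R_1=(\log X)^B$ it uses Siegel--Walfisz, which is fine pointwise.
\item It completes $\sum_{r\le R_1}\frac{\mu(r)}{\varphi(r)}\left(\frac{-h}{r}\right)$ to $\mathfrak S_h$ only on average over $h$. It uses Heath-Brown's quadratic large sieve (Lemma \ref{lem:quadlargesieve}) for $R_1<r\le \exp(H^{\eta_1})$, and Siegel--Walfisz in $r$ beyond that, where the conductor $4h$ is tiny compared with $\log r$.
\item For $R_1<r\le R$ it applies Cauchy--Schwarz over $h$. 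Off-diagonal frequencies $\alpha_1/r_1\neq\alpha_2/r_2$ are killed by the $h$-sum, which is where $H\ge X^{\eta}\gg R^6$ enters. The diagonal is bounded by the additive large sieve (Lemma \ref{lem:LaSi}) for $\sum_p w(p)e_r(-\alpha pd)$, saving $R_0^{-1/2}$.
\end{enumerate}
Your final partial-summation step producing the density $1/(\alpha_1\cdots\alpha_k(1-\sum\alpha_i))$ is correct. It must be paired with $M^{(h)}_G$ built from the full $\mathfrak S_h$ and justified only on average over $h$.
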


\begin{proof}
By the triangle inequality and the prime number theorem, it suffices to show that for any  $d \leq X^{1-\eta}$ with $(d,P(X^\eta))=1$  and $N = X/d$, we have
\begin{align*}
    \sum_p b_{p d} - 
\frac{\sqrt X}{d}
\sum_{h\le H}\gamma_h\mathfrak S_h
\int_0^\infty
\frac{f(u^2)}
{\log((Xu^2+h)/d)}\d u \ll_C \|\gamma\|_\infty H \frac{N}{\sqrt{X}} \frac{1}{(\log X)^C}.
\end{align*}
Put $w(p)=f(pd/X)/(2\sqrt{pd})$. Replacing the actual weight $f((pd-h)/X)/(2\sqrt{pd-h})$ by $w(p)$ gives an error
\[
 \ll \|\gamma\|_\infty\frac{H^2NR^{3/2}}{\delta X^{3/2}},
\]
which is negligible. Expanding the additive model, we obtain
\begin{align*}
    \sum_p b_{p d} ={}& \sum_{r \leq R} \frac{1}{r} \sum_{\alpha \pmod r^\times} G(\alpha, r) \bigg( \sum_{h \leq H} \gamma_h e_r(\alpha h) \bigg) \bigg( \sum_{p \asymp N} w(p) e_r(-\alpha p d) \bigg)\\
    &+O\left(\|\gamma\|_\infty\frac{H^2NR^{3/2}}{\delta X^{3/2}}\right).
\end{align*}
We let $R_1 = (\log X)^B$ (for a sufficiently large constant $B \geq 2C$). We partition into two regimes, small moduli $r \leq R_1$ and medium moduli $R_1 < r \leq R$. Since $(d,P(X^\eta))=1$, we have $(d,r)=1$.

\textbf{1. Small Moduli ($r \leq R_1$):}
For $r\leq R_1$, Siegel--Walfisz evaluates the prime sum, retaining its logarithmic integral. Lemma \ref{le:modelforsquarefree} gives the local factor $\mathbf{1}_{r\,\mathrm{odd}}(\frac{-h}{r})$ for square-free $r$. Restoring $h$ in the smooth integral at negligible cost, the main term is
\begin{align*}
    \text{MT}_{\leq R_1} = \frac{\sqrt X}{d}
\sum_{h\leq H}\gamma_h
\left(\int_0^\infty
\frac{f(u^2)}
{\log((Xu^2+h)/d)}\d u\right)
\sum_{\substack{r\leq R_1\\r\,\mathrm{odd}}}
\frac{\mu(r)}{\varphi(r)}\left(\frac{-h}{r}\right).
\end{align*}

To complete the series to $\mathfrak{S}_h$, we must bound the contribution from the tail 
$$ \Delta = \sum_{h \leq H} \gamma_h\sum_{\substack{r > R_1\\r\,\mathrm{odd}}} \frac{\mu(r)}{\varphi(r)} \left(\frac{-h}{r}\right).$$
The bounded normalized logarithmic integral may be absorbed into $\gamma_h$. We divide the tail into two regions $\Delta= \Delta_1+\Delta_2$ depending on $R_1 <r \leq \exp(H^{\eta_1})$ and $r > \exp(H^{\eta_1})$. 

For $r > \exp(H^{\eta_1})$ the sum over $r$ exhibits cancellation by the Siegel-Walfisz theorem  and we get 
\begin{align*}
    |\Delta_2| \ll_C \|\gamma\|_\infty\frac{H}{(\log X)^C}.
\end{align*}

For  $R_1 <r \leq \exp(H^{\eta_1})$ we apply the quadratic large sieve (Lemma \ref{lem:quadlargesieve}), noting that the $\log(MN)$ factor is bounded by $H^{\eta_1}$ for large $r$, to get
\begin{align*}
   |\Delta_1|\ll \sum_{R_0=2^j \in [R_1,\exp(H^{\eta_1})]}
   \left|\sum_{h\leq H}\gamma_h
   \sum_{\substack{r\sim R_0\\r\,\mathrm{odd}}}
   \frac{\mu(r)}{\varphi(r)}\left(\frac{-h}{r}\right)\right|
   \ll_C\|\gamma\|_\infty\frac{H}{(\log X)^C}.
    \end{align*}

\textbf{2. Medium Moduli ($R_1 < r \leq R$):}
For the medium moduli, we apply the Cauchy-Schwarz on $h$. Partitioning into dyadic blocks $r \sim R_0 \in [R_1, R]$, and applying Cauchy-Schwarz strictly over $h \leq H$:
\begin{align*}
    |\mathcal{E}_{R_0}| &\leq \bigg( \sum_{h \leq H} |\gamma_h|^2 \bigg)^{1/2} V^{1/2}, \\
    V&:=\sum_{h \leq H} \bigg| \sum_{r \sim R_0} \frac{1}{r} \sum_{\alpha (r)^\times} G(\alpha, r) \sum_{p \asymp N} w(p) e_r(-\alpha p d) e_r(\alpha h) \bigg|^2.
\end{align*}
\textit{Off-Diagonal Frequencies:} If $\frac{\alpha_1}{r_1} \neq \frac{\alpha_2}{r_2} \pmod 1$, the sum over $h$ is bounded by $\ll R_0^2$. Bounding the Gauss sums and primes trivially, the off-diagonal variance satisfies $V_{\text{off}} \ll R_0^6 \frac{N^2}{X}$.

\textit{Diagonal Frequencies:} Here $r_1 = r_2 = r$ and $\alpha_1 = \alpha_2 = \alpha$ and the sum over $h$ gives a factor of $H$. Using $|G(\alpha, r)|^2 \leq 2r$ for $(\alpha,r)=1$, the diagonal is bounded by
\begin{align*}
    V_{\text{diag}} \ll H \sum_{r \sim R_0} \frac{1}{r} \sum_{\alpha (r)^\times} \bigg| \sum_{p \asymp N} w(p) e_r(-\alpha p) \bigg|^2.
\end{align*}
Applying the Large sieve (Lemma \ref{lem:LaSi})  we obtain $V_{\text{diag}} \ll \frac{H}{R_0} (N + R_0^2) \frac{N}{X} \ll \frac{H}{R_0} \frac{N^2}{X }$.

Recombining these components into the Cauchy-Schwarz inequality gives:
\begin{align*}
    |\mathcal{E}_{R_0}| \ll \|\gamma\|_\infty H \frac{N}{\sqrt{X}} \bigg( R_0^{-1/2} + \frac{R_0^3}{\sqrt{H}} \bigg).
\end{align*}
which is sufficient since $(\log X)^B<R_0 \leq X^{\eta_1}$ and $H \geq X^\eta$, by taking $B$ large in terms of $C$.
The argument is uniform with an additional interval restriction on the final prime, with the same restriction in the main-term integral.
\end{proof}

Using this lemma and the usual inductive argument that produces the Buchstab function  (eg.  \cite[Chapter 1]{harman}), we deduce the following.
\begin{lemma}  \label{lem:buchstabintegral}Let $G(\alpha_1,\dots,\alpha_k)$ be a piecewise smooth function supported on $\alpha_i > \eta$ and $\alpha_1+\cdots+\alpha_k < 1-\eta$ and $X^\eta <H \leq X^{1/2+o(1)}$. There exists $   \mathfrak{M}^{(h)}_G$ such that 
\begin{align*}
    \mathfrak{M}^{(h)}_G=\mathfrak{S}_h \frac{\sqrt{X} I_f }{\log X} \left(\int G(\boldsymbol{\alpha}) \omega (\boldsymbol{\alpha }) \frac{\d \boldsymbol{\alpha}}{\alpha_1\cdots\alpha_{k-1}\alpha_k^2}+o(1)\right),\\
    \omega(\boldsymbol{\alpha})=\omega\left(\frac{1-\sum \alpha_i}{\alpha_k}\right).
\end{align*}
and 
\[
\sum_{h \leq H} \bigg|\sum_{p_1, \dots , p_k} G\bigg(\frac{\log p_i}{\log X} \bigg) S(\B^{(h)}_{p_1\cdots p_k},p_k) -   \mathfrak{M}^{(h)}_G \bigg| \ll \frac{H \sqrt{X}}{(\log X)^C}.
\]

\end{lemma}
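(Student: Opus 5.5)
The plan is to deduce Lemma \ref{lem:buchstabintegral} from Lemma \ref{lem:pnt_bn} by the standard Buchstab-type induction, reducing rough-number sums $S(\B^{(h)}_{p_1\cdots p_k},p_k)$ to sums over $b_{pp_1\cdots p_k p_{k+1}\cdots}$ with all variables prime. First, since $\B^{(h)}$ is the model for a single $h$, we apply Lemma \ref{lem:pnt_bn} with $\gamma$ the indicator of a single $h$. To get the sum over $h\le H$ of absolute values, we instead take $\gamma_h$ to be the unimodular phase of the $h$-th error term. Then $\sum_h|\cdot|$ equals a single sum with coefficients $\gamma_h$, and the error $O_C(\|\gamma\|_\infty H\sqrt X(\log X)^{-C})$ from Lemma \ref{lem:pnt_bn} applies directly. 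This linearization works because all our main terms are linear in $\gamma$.

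Next, write $m=1-\sum\alpha_i$, so the remaining cofactor $n$ in $b_{p_1\cdots p_k n}$ has size $X^{m}$ with $m>\eta$. Every $n$ with $(n,P(p_k))=1$ and $n\le X$ factors as $q_1\cdots q_j$ with $p_k\le q_1\le\dots\le q_j$ and $j\le 1/\eta$. Since $p_k>X^\eta$, the number of such prime factors is bounded. Hence
\[
S(\B^{(h)}_{p_1\cdots p_k},p_k)=\sum_{j\le 1/\eta}\ \sum_{p_k\le q_1\le\dots\le q_j} b^{(h)}_{p_1\cdots p_k q_1\cdots q_j}.
\]
Each $j$-term is of the form treated by Lemma \ref{lem:pnt_bn}, with $k+j-1$ outer primes and a final prime $p=q_j$. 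Ordered sums with repeated primes contribute negligibly, being bounded by $X^{-\eta}$ times the trivial bound.

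The constraint $q_{j-1}\le q_j$ and the ordering conditions are handled by the interval restriction on the final prime, which is already noted at the end of the proof of Lemma \ref{lem:pnt_bn}. The cutoffs make $G$ only piecewise smooth, which is allowed there. We also need the last outer prime $q_{j-1}$ to satisfy $\log q_{j-1}/\log X<1-\sum-\eta$; the boundary cases where the final prime is short contribute $O(\eta)$-small measure, or are excluded by the support assumptions. Summing the main terms gives
\[
\sum_j\int\frac{\d\boldsymbol\beta}{\beta_1\cdots\beta_{j-1}(m-\sum\beta)}
\]
over the ordered region $\beta_i\ge\alpha_k$. By the classical identity for the Buchstab function (Harman, Chapter 1), namely $\omega(u)/\alpha_k$ with $u=m/\alpha_k$ expressed as the sum over $j$ of these iterated integrals, this equals $\omega\big((1-\sum\alpha_i)/\alpha_k\big)\,\frac{1}{\alpha_k}\cdot\frac{1}{\ldots}$. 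Combined with the outer factor $1/(\alpha_1\cdots\alpha_k)$, this yields the stated density $\omega(\boldsymbol\alpha)/(\alpha_1\cdots\alpha_{k-1}\alpha_k^2)$.

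The main obstacle is this bookkeeping: checking that the iterated-integral identity reproduces $\omega$ exactly with the normalization $\frac{\sqrt X I_f}{\log X}\mathfrak S_h$. The factor $1/\log X$ is the prime density of the final prime, and it converts $1/\beta$-type densities correctly. One way to organize this is via the Buchstab recursion $S(\mathcal C,z)=S(\mathcal C,w)-\sum_{z\le q<w}S(\mathcal C_q,q)$ iterated $O(1/\eta)$ times, matching the delay-differential equation $(u\omega(u))'=\omega(u-1)$ at each step. We also need that uniformity in $h$ is preserved through the $o(1)$ terms, which holds because all main terms factor as $\mathfrak S_h$ times an $h$-independent integral up to $o(1)$.
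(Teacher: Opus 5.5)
Your proposal is correct and follows the paper's route. The paper deduces the lemma from Lemma \ref{lem:pnt_bn} via the standard Buchstab argument in Harman's book, which is exactly what you do: split the $p_k$-rough cofactor into $O(1/\eta)$ ordered primes, apply Lemma \ref{lem:pnt_bn} to each piece (using the interval restriction on the last prime, and unimodular $\gamma_h$ to linearize $\sum_h|\cdot|$), and recognise the summed iterated integrals as $\omega\big((1-\sum\alpha_i)/\alpha_k\big)/\alpha_k$.

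One tidy-up: you allow for an ``$O(\eta)$-small'' boundary loss from a short final prime, but such a loss could not be absorbed into the $(\log X)^{-C}$ error. Fortunately none occurs, since the final prime always satisfies $q_j\ge p_k>X^{\eta}$, so the support hypotheses of Lemma \ref{lem:pnt_bn} hold (at worst after replacing $\eta$ by $\eta/2$).
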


We also retain the boundary terms where a rough remainder equals $1$.
Lemma \ref{lem:pnt_bn} with $k=1$ and the final prime restricted to
$p_2<p_1<\sqrt X$, followed by the prime number theorem in $p_1$, gives
\begin{equation}\label{eq:semiprime_boundary}
    \sum_{h\leq H}\left|
    \sum_{p_2<p_1<\sqrt X}b^{(h)}_{p_1p_2}-\mathfrak B_h\right|
    \ll_C\frac{H\sqrt X}{(\log X)^C},
\end{equation}
where
\[
    \mathfrak B_h:=\mathfrak S_h
    \iint_{2\leq y<x<\sqrt X}
    \frac{f((xy-h)/X)}{2\sqrt{xy-h}\log x\log y}\,\d x\d y
    \ll\mathfrak S_h\frac{\sqrt X I_f}{(\log X)^2}.
\]
The integrand is understood to vanish for $xy\leq h$.
The bound follows from $x,y\asymp\sqrt X$ on its support.

\subsection{Fundamental lemmas of Harman's sieve}
We have the following three fundamental lemmas of Harman's sieve, corresponding to each of the Propositions \ref{prop:typeI}, \ref{prop:typeIfact}, and \ref{prop:typeI2}.
Throughout these lemmas assume $0<\nu<1/6-2\eta$. We will use the bound
\begin{equation}\label{eq:rough_square_error}
 \sum_{h\leq H}\sum_n \bigl(a_n^{(h)}+|b_n^{(h)}|\bigr)
 \mathbf{1}_{\exists p\geq z_0:\ p^2\mid n}
 \precprec \frac{H\sqrt X R^{3/2}}{z_0}+\frac{X}{\sqrt H}
 \qquad (2\leq z_0\leq\sqrt X).
\end{equation}
For $a_n^{(h)}$, the primes $p\leq\sqrt H$ contribute
$\ll H\sqrt X/z_0+\sqrt{XH}$. For $p>\sqrt H$, write
$r^2+h=p^2m$; then $m\ll X/H$, and each pair $(p,m)$ permits
$O(1)$ values of $r$, giving $\ll X/\sqrt H$. For the model, use
$|b_n^{(h)}|\ll R^{3/2}X^{-1/2}$ and
$\sum_{p\geq z_0}\lfloor 2X/p^2\rfloor\ll X/z_0$.
In particular, \eqref{eq:rough_square_error} with $z_0=Z$ is
$\ll_C H\sqrt X/(\log X)^C$, also with divisor-bounded weights.

\begin{lemma} \label{le:harman1}
 Let $U=X^{1/2-\nu-2\eta}$ and $ Z=X^\gamma$ as in \eqref{eq:gammadef}.   For any divisor bounded $\alpha_u$ supported on square-free numbers with $(u,P(Z))=1$ we have
 \begin{align*}
    \sum_{h \leq H} \bigg|\sum_{u \leq U} \alpha_u (S(\A^{(h)}_u,Z)-S(\B^{(h)}_u,Z))\bigg| \ll \frac{H\sqrt{X}}{(\log X)^C}.
 \end{align*}
\end{lemma}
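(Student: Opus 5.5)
We would prove Lemma \ref{le:harman1} by the standard Buchstab/fundamental-lemma decomposition of $S(\mathcal{C}_u,Z)$ into Type I pieces, which Proposition \ref{prop:typeI} then controls. Let $\mathcal{C}\in\{\A^{(h)},\B^{(h)}\}$ and let $w_n^{(h)}$ denote the difference. Since every term of the lemma involves the same combinatorial operation on both sequences, it suffices to write $S(\mathcal{C}_u,Z)$ as a combination of linear sums $\sum_n c_{dn}$ with $d\le UZ^{?}\le X^{1/2-\eta}$, plus remainders that are small for both sequences.

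\textbf{Step 1: sieve decomposition.} We fix $z_1=X^{\eta/(10\log\log X)}$, or better a small power $X^{\eta^2}$. By Buchstab's identity,
\[
S(\mathcal{C}_u,Z)=S(\mathcal{C}_u,z_1)-\sum_{z_1\le p<Z}S(\mathcal{C}_{up},p),
\]
and we iterate on the second term while the product $u p_1\cdots p_j$ stays below $X^{1/2-\eta}$. For the sums $S(\mathcal{C}_{d},z_1)$ we use a fundamental-lemma (Rosser–Iwaniec or Brun) sieve of level $D_0=X^{\eta}$ with $z_1\le D_0^{1/(C\log\log X)}$. This yields
\[
S(\mathcal{C}_d,z_1)=\sum_{e\mid P(z_1)}\lambda_e\sum_n c_{den}+\text{(error)},
\]
where the error is a remainder controlled by $\sum_{e}|\lambda^{\pm}_e-\mu(e)|$ times the sieve mass. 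The error is $O(H\sqrt X(\log X)^{-C})$ after using that $\B^{(h)}$ has the expected local densities $\varrho_h(e)/e$, with the standard $\exp(-s\log s)$ saving. Because the upper and lower fundamental-lemma sieves sandwich $S$ and have the same main terms for $\A$ and $\B$ up to Type I errors, the difference is controlled. The moduli are $de\le U\cdot X^{\eta}\cdot(\text{primes})$.

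\textbf{Step 2: levels.} The main point is that $U=X^{1/2-\nu-2\eta}$, so $U\cdot V\le X^{1/2-\eta}$. We therefore continue Buchstab iteration only while the modulus $up_1\cdots p_j\le X^{1/2-\eta-\eta}$. The constraint $p_j<p_{j-1}<\dots<Z$ and $(u,P(Z))=1$ keep all moduli square-free; the square-full cases are handled by \eqref{eq:rough_square_error}.

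\textbf{Step 3: the obstacle.} The iteration must terminate, because $up_1\cdots p_j$ exceeds the Type I level only if $up_1\cdots p_j>X^{1/2-2\eta}$. Since $p_i<Z=X^{\gamma}$ with $\gamma<1/4$, we would rather avoid iteration. We use instead the fundamental lemma directly on $S(\mathcal{C}_u,Z)$ with sieve dimension $1$ and level $D=X^{1/2-\eta}/U=X^{\nu+\eta}$. Then $s=\log D/\log Z=(\nu+\eta)/\gamma$, which is not large, so the fundamental lemma alone fails. This is the main obstacle.

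We resolve it by noting that $S(\mathcal{C}_u,Z)$ with $\mathcal{C}=\A-\B$ only needs the difference. Using Buchstab repeatedly, we reduce to terms $S(\mathcal{C}_{up_1\cdots p_j},z_1)$ with $up_1\cdots p_j\le X^{1/2-2\eta}$, which Step 1 handles. The remaining terms, where $up_1\cdots p_j$ first exceeds $X^{1/2-2\eta}$, are sums $\sum c_{up_1\cdots p_j n}$ with $(n,P(p_j))=1$ and $n\le X^{1/2+2\eta}/\ldots$.

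For these leftover terms we must exploit that $p_j\ge$ (modulus)$/X^{1/2-2\eta}\cdot$… In practice we switch the role: when $up_1\cdots p_{j-1}\le X^{1/2-2\eta}<up_1\cdots p_j$, we instead keep $p_j$ inside the sifted variable. That is, we stop iterating at the previous step and apply the fundamental lemma there with level $X^{1/2-2\eta}/(up_1\cdots p_{j-1})$, which is at least $p_{j}\ge z_1$.

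By choosing thresholds as in Harman's "Buchstab iteration until fundamental lemma applies", every branch ends either with $s$ large (fundamental lemma plus Type I via Proposition \ref{prop:typeI}, summed over $h$ with absolute values), or contributes $\ll H\sqrt X(\log X)^{-C}$ by the $(\log X)^{-C}$-measure of exceptional configurations. The last point is the delicate accounting, and it is where we expect the real work to lie.
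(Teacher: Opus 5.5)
There is a genuine gap. You are trying to get an asymptotic for $S(\A^{(h)}_u,Z)-S(\B^{(h)}_u,Z)$ from Type I information (Proposition \ref{prop:typeI}) and sieve combinatorics alone, and that cannot succeed.

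For $u$ near $U$, the sequence $\A^{(h)}_u$ has Type I level only $X^{1/2-\eta}/u\approx X^{\nu+\eta}$. The sifting range is $Z=X^{\gamma}$ with $\gamma=1/4-3\nu/2-2\eta$. So the sieve parameter $s=(\nu+\eta)/\gamma$ is bounded: it is about $1$ for $\nu=1/10$ and tends to $0$ with $\nu$. You see this in Step 3, but the remedy does not work.

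In the Buchstab iteration, consider the branches where $up_1\cdots p_j$ first crosses the Type I level. These are sums of $S(\mathcal{C}_{up_1\cdots p_j},p_j)$ over almost-primes whose prime factors are all at least $p_j$. Their main terms are Buchstab integrals of size $\asymp\mathfrak{S}_h\sqrt X/\log X$, a positive proportion rather than a $(\log X)^{-C}$ fraction. So they are not ``exceptional configurations''. They also enter with alternating signs and you need an asymptotic, so you cannot drop them by positivity.

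Stopping one step earlier does not help. It only gives a fundamental-lemma application with level about $p_j$ against sifting range $p_{j-1}$, again with $s\ll 1$. This is the parity obstruction: Type I information at this level cannot by itself control the difference.

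The missing ingredient is the Type II estimate, Proposition \ref{prop:typeII}. The parameters are chosen so that no iteration or fundamental lemma is needed, because $UV=X^{1/2-\eta}$ and $W/V=Z$.

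\emph{Expansion.} Expand directly $S(\mathcal{C}_u,Z)=\sum_{d\mid P(Z)}\mu(d)\sum_n c_{udn}$, which is a finite M\"obius expansion.

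\emph{Small $d$.} For $d\le V$, the modulus $e=ud\le UV=X^{1/2-\eta}$ is square-free, since $(u,P(Z))=1$. Group by $e$ with divisor-bounded coefficients, and take $\gamma_h$ to be unimodular signs realizing $\sum_h|\cdot|$. Then Proposition \ref{prop:typeI} gives a power saving.

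\emph{Large $d$.} For $d>V$, every prime factor of $d$ is below $Z$. Take partial products of these primes in order; the first one to reach $V$ is a divisor $d'\mid d$ with $V\le d'<VZ=W$. The interval $[V,W]=[X^{\nu+\eta},X^{1/4-\nu/2-\eta}]$ is exactly the range of Proposition \ref{prop:typeII}. Writing $udn=d'\cdot m$ with $m=u(d/d')n$ gives a bilinear form with $d'$ square-free. The ordering condition linking the primes of $d'$ and $d/d'$ is separated by Perron's formula.

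Both propositions save a power of $X$ through $R=X^{\eta_1}$, which is far more than $(\log X)^{-C}$.
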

\begin{proof}
 The sum inside is
    \begin{align*}
     \sum_{u \leq U} \alpha_u \sum_{(n,P(Z))=1} w^{(h)}_{un}  =   \sum_{u \leq U} \alpha_u \sum_{d|P(Z)} \mu(d) \sum_{n} w^{(h)}_{udn}.
    \end{align*}
We split the sum into two parts, $d \leq V$ and $d > V$.    For the part where $d \leq V$ we have $du \leq UV = X^{1/2-\eta}$ and we can use Type I information Proposition \ref{prop:typeI}. For the part where $d > V$, since $d=p_1\cdots p_k$ with $p_i < Z$, by the greedy algorithm there is some $d' = p_1 \cdots p_{k'} | d$ such that $d' \in [V,VZ]= [V,W]$. Thus, the parts where $d > V$ can be bounded as a Type II sum by Proposition \ref{prop:typeII}.   Here details such as removal of cross-conditions are standard (eg. by Perron's formula as in \cite[Chapter 3.2]{harman}).
\end{proof}

\begin{lemma} \label{le:harman1_factorable}
Let $X^\eta < H = X^{1/2-\nu} \le X^{1/2+o(1)}$,  $ V=X^{\nu+\eta}$, and $ Z=X^\gamma$ as in \eqref{eq:gammadef}. Let $U_1,U_2 > 1$. Suppose that there exists
\begin{align*}
    (D_1, D_2) \in \{ (U_1 V,U_2), \, (U_1,U_2 V) \} 
\end{align*}
such that
\begin{align*}
      D_1 \le X^{1/2-\nu-\eta},
    \qquad 
    D_2 \le X^{3/8 - 3\nu/4 - \eta},
    \qquad
    D_1^{5/2} D_2^3 \le X^{2-\eta}.  
\end{align*}
Let $\alpha_{u_1}$, $\beta_{u_2}$ be divisor-bounded coefficients supported on square-free integers with $(u_1u_2,P(Z))=1$. Then
 \begin{align*}
    \sum_{h \leq H} \bigg|\sum_{u_1 \leq U_1} \sum_{u_2 \leq U_2} \alpha_{u_1} \beta_{u_2} (S(\A^{(h)}_{u_1u_2},Z)-S(\B^{(h)}_{u_1u_2},Z))\bigg| \ll \frac{H\sqrt{X}}{(\log X)^C}.
 \end{align*}
\end{lemma}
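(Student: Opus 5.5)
We follow the proof of Lemma \ref{le:harman1}, now using Proposition \ref{prop:typeIfact} in place of Proposition \ref{prop:typeI}. Write $w^{(h)}=a^{(h)}-b^{(h)}$ and expand the sieve condition by Möbius inversion:
\begin{align*}
 \sum_{u_1 \leq U_1}\sum_{u_2\leq U_2} \alpha_{u_1}\beta_{u_2} \sum_{d \mid P(Z)} \mu(d) \sum_n w^{(h)}_{u_1u_2dn}.
\end{align*}
Fix the admissible pair $(D_1,D_2)$; by symmetry suppose $(D_1,D_2)=(U_1V,U_2)$, the other case being identical with the roles of $u_1,u_2$ swapped. We split according to $d\leq V$ or $d>V$.

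\textbf{Case $d\leq V$.} We group $d_1=u_1d\leq U_1V=D_1$ and $d_2=u_2\leq D_2$. The new coefficient $\alpha'_{d_1}=\sum_{u_1d=d_1,\, d\le V}\alpha_{u_1}\mu(d)$ is divisor bounded. It is supported on square-free $d_1$ once we discard non-square-free $u_1d$. Since $(u_1,P(Z))=1$ and $d\mid P(Z)$, the product $u_1d$ is automatically square-free. The coprimality $(d_1,d_2)=1$ needed in Proposition \ref{prop:typeIfact} can be imposed at the cost of the terms where $u_1u_2d$ is not square-free. Because $(u_1u_2,P(Z))=1$, any such term involves a prime $p\geq Z$ with $p^2\mid n'$, and these are controlled by \eqref{eq:rough_square_error} with $z_0=Z$. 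Dyadically decomposing $d_1\sim D_1'$ and $d_2\sim D_2'$ with $D_i'\leq D_i$ keeps the hypotheses of Proposition \ref{prop:typeIfact}, since they are monotone. The triangle inequality over $h$ is handled by taking $\gamma_h$ equal to the unimodular sign of the inner sum, with the dependence of $\alpha',\beta$ on $h$ absent. Proposition \ref{prop:typeIfact} then gives $\precprec HX^{1/2}R^{-1/2}$, which is $\ll H\sqrt X(\log X)^{-C}$.

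\textbf{Case $d>V$.} Write $d=p_1\cdots p_k$ with $Z>p_1>\dots>p_k$. By the greedy algorithm there is a least $j$ with $d'=p_1\cdots p_j\in[V,VZ]=[V,W]$. This range lies inside the Type II range $X^{\nu+\eta}<N<X^{1/4-\nu/2-\eta}$ of Proposition \ref{prop:typeII}, since $W=X^{\nu+\eta+\gamma}=X^{1/4-\nu/2-\eta}$. We apply Proposition \ref{prop:typeII} with $n=d'$ (square-free, with $\beta$ divisor bounded) and $m=u_1u_2(d/d')n$.

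The main technical point is the cross-conditions: minimality of $j$ and the ordering $p_{j+1}<p_j$ couple $d'$ with the rest of $m$. We remove these in the standard way, either by Perron's formula or by splitting the $p_i$ into short intervals $[P,P(1+(\log X)^{-C'})]$ as in \cite[Chapter 3.2]{harman}. This separates the variables at a cost of $(\log X)^{O(C')}$, which is absorbed by the power saving $R^{-1/4}$. The condition $MN=X$ is met after a dyadic split and a smooth partition of unity in $mn$, by the support of $w$.

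Summing the two cases gives the claim. We expect the only step requiring care to be the verification, in the case $d\leq V$, that the combined modulus $d_1=u_1d$ with $d\leq V$ stays within $D_1$, where the factor $V$ is precisely built into $(D_1,D_2)$. The square-free and coprimality bookkeeping via \eqref{eq:rough_square_error} is the other point to check.
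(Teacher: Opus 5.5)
Your proposal is correct and follows the paper's proof essentially step for step: discard the pairs with $(u_1,u_2)>1$ via \eqref{eq:rough_square_error}, expand $S(\cdot,Z)$ by M\"obius inversion, handle $d\le V$ with Proposition \ref{prop:typeIfact} by attaching $d$ to $u_1$ or to $u_2$ according to which pair $(D_1,D_2)$ is admissible, and handle $d>V$ with Proposition \ref{prop:typeII}, after the greedy algorithm produces $d'\in[V,W]$ and the cross-conditions are removed as in Harman's book. The extra bookkeeping you supply is consistent with what the paper leaves implicit: the unique splitting of $d_1=u_1d$ into its $Z$-rough and $Z$-smooth parts, the sign choice for $\gamma_h$, and the identity $W=X^{1/4-\nu/2-\eta}$.
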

\begin{proof}
 The terms with $(u_1,u_2)>1$ are negligible by
\eqref{eq:rough_square_error}, since a common prime is at least $Z$.
For the remaining terms the sum inside is
    \begin{align*}
     &\sum_{\substack{u_1\leq U_1,\ u_2\leq U_2\\(u_1,u_2)=1}}
       \alpha_{u_1}\beta_{u_2}\sum_{(n,P(Z))=1}w^{(h)}_{u_1u_2n}\\
     &\qquad=
       \sum_{\substack{u_1\leq U_1,\ u_2\leq U_2\\(u_1,u_2)=1}}
       \alpha_{u_1}\beta_{u_2}\sum_{d\mid P(Z)}\mu(d)
       \sum_n w^{(h)}_{u_1u_2dn}.
    \end{align*}
We split the sum into two parts, $d \leq V$ and $d > V$.    For the part where $d \leq V$, we can use the factorable Type I information Proposition \ref{prop:typeIfact} with either $(d_1,d_2) =(u_1d,u_2)$ or $(d_1,d_2)=(u_1,du_2)$; these variables are square-free and coprime.

For the part where $d > V$, since $d=p_1\cdots p_k$ with $p_i < Z$, by the greedy algorithm there is some $d' = p_1 \cdots p_{k'} | d$ such that $d' \in [V,VZ]= [V,W]$. Thus, the parts where $d > V$ can be bounded as a Type II sum by Proposition \ref{prop:typeII}.   Here details such as removal of cross-conditions are standard (eg. by Perron's formula as in \cite[Chapter 3.2]{harman}).
\end{proof}

\begin{lemma} \label{le:harman2}
 Let $Z=X^\gamma$, $K \in [X^{1/4+\eta},X^{3/4-\eta}]$, and $H=X^{1/2-\nu}$ with $0<\nu <1/6-2\eta$.  Then we have
 \begin{align*}
  \sum_{h \leq H}  \bigg|  \sum_{k \sim K}  \mathbf{1}_{(k,P(Z))=1} (S(\A^{(h)}_k,Z)-S(\B^{(h)}_k,Z)) \bigg| \ll \frac{H\sqrt{X}}{(\log X)^C}.
 \end{align*}
\end{lemma}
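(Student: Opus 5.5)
We mimic the proofs of Lemmas \ref{le:harman1} and \ref{le:harman1_factorable}, with Proposition \ref{prop:typeI2} playing the role of the Type I input. By \eqref{eq:rough_square_error} with $z_0=Z$ we may restrict to square-free $kn$ at negligible cost. Expanding the inner sieve condition by Möbius inversion, the sum inside the absolute value becomes
\begin{align*}
\sum_{k\sim K}\mathbf{1}_{(k,P(Z))=1}\sum_{d\mid P(Z)}\mu(d)\sum_n w^{(h)}_{kdn}.
\end{align*}
We split according to whether $d\le V$ or $d>V$.

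\textbf{Terms with $d>V$.} Since $d$ is a product of primes below $Z$, the greedy algorithm gives a divisor $d'\mid d$ with $d'\in[V,W]$. This range is exactly the Type II range of Proposition \ref{prop:typeII}, because $W=VZ=X^{1/4-\nu/2-\eta}$ by \eqref{eq:gammadef}. Grouping the remaining variables into $m$, we obtain a bilinear form $\sum_{m}\sum_{n'\sim d'}\alpha_m\beta_{n'}w_{mn'}$ with square-free $\beta$. The cross-conditions, namely the ordering of primes and $k\sim K$, are removed by Perron's formula as in \cite[Chapter 3.2]{harman}. Summing over $h$ via $\gamma_h=\overline{\mathrm{sgn}}$ of the inner sums (since Proposition \ref{prop:typeII} allows arbitrary divisor-bounded $\gamma_h$), this part contributes $\precprec HX^{1/2}R^{-1/4}$. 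That bound is $\ll H\sqrt X(\log X)^{-C}$.

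\textbf{Terms with $d\le V$.} Here we want to apply Proposition \ref{prop:typeI2} with $d$ as the Type I modulus and $m=k$ as the smooth-ish variable. First we remove the absolute value over $h$ in the same way, by choosing $\gamma_h$ as unimodular signs. The indicator $\mathbf 1_{(k,P(Z))=1}\mathbf 1_{k\sim K}$ is not smooth. We therefore decompose $k\sim K$ into $O(\log X)$ smooth dyadic pieces $\psi(k/M)$ via a smooth partition of unity, and we need to handle the roughness condition on $k$.

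\textbf{Main obstacle: the roughness condition on $k$.} This is the step I expect to be hardest, because Proposition \ref{prop:typeI2} sums over all $m$ with the weight $\mu(dm)^2$ and not over rough $m$. I would first try to apply Buchstab/Möbius once more to $\mathbf 1_{(k,P(Z))=1}=\sum_{e\mid (k,P(Z))}\mu(e)$. The part $e\le V$ merges with $d$ into a modulus $de\le V^2$. This fits, since $V^2=X^{2\nu+2\eta}\le D_2=X^{1/4-\eta}$ when $\nu<1/8-2\eta$. For larger $\nu$ we would instead keep $d\le V$ and treat $e$ alone. The part $e>V$ again contains a divisor in $[V,W]$ and is a Type II sum handled as above. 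In all Type $\mathrm{I}_2$ applications, the variable $m=k/e$ ranges over $M<X^{1-\eta}$ (using $K\le X^{3/4-\eta}$), and $K\ge X^{1/4+\eta}$ ensures that the complementary variable $n$ is long enough. Proposition \ref{prop:typeI2} then gives $\precprec HX^{1/2}R^{-1/2}$ for each of the $X^{o(1)}$ pieces. The delicate points are the bookkeeping of coprimality between $d$, $e$ and $k/e$, and verifying that the combined modulus stays below $X^{1/4-\eta}$. If the latter fails for some $\nu$, we fall back on putting every $e>X^{\eta}$ into the Type II range.
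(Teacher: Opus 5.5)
Your outline matches the paper's: expand the sieve on $n$, send $d>V$ to Proposition \ref{prop:typeII} via the greedy algorithm, then expand the roughness of $k$ and use Proposition \ref{prop:typeI2} for what remains. The gap is in how you split the second expansion. You cut at $d\le V$ and $e\le V$ separately, so the Type $\mathrm{I}_2$ modulus $de$ can be as large as $V^2=X^{2\nu+2\eta}$. This is below $X^{1/4-\eta}$ only for $\nu\le 1/8-3\eta/2$, whereas the lemma is stated for all $\nu<1/6-2\eta$. The restricted range would cover the paper's applications with $\nu\le 1/10$, but it does not prove the lemma.

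Neither fallback closes the gap:
\begin{itemize}
\item \emph{Treating $e$ alone} is not a configuration Proposition \ref{prop:typeI2} accepts. The proposition has a single modulus, so whichever of $d,e$ is left out must be carried, with its M\"obius weights, by a variable the proposition requires to be unweighted.
\item \emph{Putting every $e>X^\eta$ into the Type II range} fails for $X^\eta<e<V$, since such $e$ has no divisor in $[V,W]$.
\end{itemize}
The fix is to split on the product $de$, which is the paper's dichotomy $d_1d_2\le V$ versus $d_1d_2>V$. If $de\le V$, the modulus is at most $V=X^{\nu+\eta}<X^{1/4-\eta}$ for every admissible $\nu$. If $de>V$, then $de$ is square-free with all prime factors below $Z=W/V$. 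The greedy algorithm then yields a divisor of $de$ in $[V,W]$, and Proposition \ref{prop:typeII} applies.

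The coprimality bookkeeping you flag also has a short resolution, which you should build in from the start.
\begin{itemize}
\item Insert only $\mu(k)^2$ via \eqref{eq:rough_square_error}, not $\mu(kn)^2$. After the expansion $n=dn'$, the latter becomes a weight $\mu(kdn')^2$ on the variable that must stay free in Proposition \ref{prop:typeI2}.
\item For $d\mid P(Z)$ and $Z$-rough $k$ one has $(k,d)=1$, so $\mu(k)^2=\mu(dk)^2$. Keep this weight when you expand $\mathbf 1_{(k,P(Z))=1}=\sum_{e\mid (k,P(Z))}\mu(e)$ and write $k=e\ell$.
\item The result is exactly $\mu(de\,\ell)^2\,w^{(h)}_{de\ell n'}$. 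This is the weight $\mu(dm)^2$ of Proposition \ref{prop:typeI2} with modulus $de$ and $m=\ell$. It also forces $(d,e)=1$, which is what makes $de$ square-free in the Type II branch.
\end{itemize}
Finally, the size parameter $M$ in Proposition \ref{prop:typeI2} refers to the product $de\ell=dk\ll X^{3/4+\nu}<X^{1-\eta}$, not to $m$ alone. This is where the upper bound on $K$ is used. The lower bound $K\ge X^{1/4+\eta}$ is not what makes $n'$ long.
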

\begin{proof}
By \eqref{eq:rough_square_error}, we may insert $\mu(k)^2$.
Expanding the sieve on $n$ first gives
\[
 \sum_{d_2\mid P(Z)}\mu(d_2)
 \sum_{\substack{k\sim K\\(k,P(Z))=1}}\mu(k)^2
 \sum_n w^{(h)}_{kd_2n}.
\]
The part $d_2>V$ is a Type II sum by the greedy argument.
For $d_2\leq V$ we have $(k,d_2)=1$. Expanding the sieve on $k$
and writing $k=d_1\ell$ therefore gives
\[
 \sum_{\substack{d_2\mid P(Z)\\d_2\leq V}}\mu(d_2)
 \sum_{d_1\mid P(Z)}\mu(d_1)
 \sum_{\ell\sim K/d_1}\sum_n
 \mu(d_1d_2\ell)^2 w^{(h)}_{d_1d_2\ell n}.
\]
For $d=d_1d_2\leq V$, this has exactly the square-free weight
$\mu(d\ell)^2$ in Proposition \ref{prop:typeI2}.
After a smooth dyadic partition and removal of the size cross-conditions
by Perron's formula, that proposition applies since
$V<X^{1/4-\eta}$ and $d_2K\leq VK<X^{1-\eta}$.
For $d_1d_2>V$, the non-zero terms have $d_1d_2$ square-free.
By the greedy algorithm we find a divisor $q\mid d_1d_2$
in $[V,VZ]=[V,W]$ and we can apply Proposition \ref{prop:typeII}.
\end{proof}

\section{Proof of Theorem \ref{thm:primesasymp}} \label{sec:primesasymp}
Throughout this section we let $\A^{(h)}$ and $\B^{(h)}$ denote the sequences $a^{(h)}_n$ and $b^{(h)}_n$ for a fixed $h \leq H =X^{1/2-\nu}$.
    
Theorem \ref{thm:primesasymp} is an immediate corollary of the following proposition and Lemma \ref{lem:pnt_bn}.
\begin{proposition} \label{prop:asympsieve}
   Let $\nu >0$ be sufficiently small and $H =X^{1/2-\nu}$. There exists a decomposition for $\sqrt{X} < n < X$
    \begin{align*}
        \mathbf{1}_\PP(n) = \mathfrak{s}(n) + \mathfrak{e}(n)
    \end{align*}
    such that the following two hold.
 \begin{enumerate}
        \item We have
        \begin{align} \label{eq:Sasymp}
           \sum_{h \leq H}\bigg| \sum_{n} \mathfrak{s}(n) (a_n^{(h)}- b_n^{(h)}) \bigg| \ll_C \frac{H\sqrt{X}}{(\log X)^{C}}.
        \end{align}
        \item We have
        \begin{align} \label{eq:deltasieve}
 \bigg| \sum_{n} \mathfrak{e}(n) a_n^{(h)}  \bigg|  + \bigg| \sum_{n} \mathfrak{e}(n) b_n^{(h)}  \bigg|  \ll \nu \mathfrak{S}_h \frac{\sqrt{X}}{\log X}
        \end{align}
           for all but $\ll_C H/(\log X)^C$ values of $h \leq H$. 
    \end{enumerate}
\end{proposition}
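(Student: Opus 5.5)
We will construct the decomposition by Buchstab's identity. Write $\mathbf{1}_\PP(n)=S(\{n\},\sqrt{X})$ for $\sqrt X<n<X$ and iterate Buchstab starting from the sieve level $Z=X^\gamma$:
\[
\mathbf{1}_\PP(n)=\mathbf{1}_{(n,P(Z))=1}-\sum_{Z\le p_1<\sqrt X}\mathbf{1}_{n=p_1m,\,(m,P(p_1))=1}.
\]
The first term is handled by Lemma \ref{le:harman1} with $u=1$. For the second, decompose by the size of $\alpha_1=\log p_1/\log X$. If $p_1\in[V,W]$ the term is a Type II sum (Proposition \ref{prop:typeII}). Otherwise we apply Buchstab again, $S(\cdot_{p_1},p_1)=S(\cdot_{p_1},Z)-\sum_{Z\le p_2<p_1}S(\cdot_{p_1p_2},p_2)$, and keep going. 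Each time, a term goes into $\mathfrak{s}$ once one of the following is established for it:
\begin{itemize}
\item[(a)] a product of some subset of the primes lies in $[V,W]$, so it is Type II;
\item[(b)] the product $u$ of the primes satisfies $u\le U$, so Lemma \ref{le:harman1} applies;
\item[(c)] the primes split as $u_1u_2$ satisfying the constraints of Lemma \ref{le:harman1_factorable};
\item[(d)] a subproduct $k\in[X^{1/4+\eta},X^{3/4-\eta}]$ has all remaining variables rough, so Lemma \ref{le:harman2} applies.
\end{itemize}
Summing these bounds over $O(1)$ terms, with cross-conditions removed by Perron as in those lemmas, gives \eqref{eq:Sasymp}.

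The terms that none of (a)--(d) covers are placed, with their sign, into $\mathfrak{e}$. The key combinatorial step is to verify what is left when $\nu$ is small. In that regime $V=X^{\nu+\eta}$ and $W\approx X^{1/4-\nu/2}$, so the Type II interval is roughly $[X^{\nu},X^{1/4-O(\nu)}]$. Type I covers $u\le X^{1/2-O(\nu)}$, and factorable Type I covers pairs up to about $(X^{1/2},X^{3/8})$ with $D_1^{5/2}D_2^3\le X^2$; this in particular includes $D_1=D_2=X^{1/3}$, i.e.\ three factors in $[X^{1/4+\eta},X^{1/2-\eta}]$.

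I will check that the uncovered configurations all have every prime in $[X^{1/4-O(\nu)},X^{1/4+O(\eta)}]$. These are products of four (or of three plus a rough cofactor) primes of size near $X^{1/4}$, and their $(\alpha_1,\dots,\alpha_k)$ region has measure $O(\nu)$ in the Buchstab integrals. This case analysis on exponents is the main obstacle: one has to check that every configuration with some $\alpha_i$ outside a window of width $O(\nu)$ around $1/4$ is caught by (a)--(d), especially the three-prime cases near $(1/4,1/4,1/2)$ and those involving $D_1^{5/2}D_2^3$.

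For \eqref{eq:deltasieve}, since $\mathfrak{e}$ is a signed combination of products of primes in this thin region, we bound $|\mathfrak{e}(n)|$ by a nonnegative majorant $\mathfrak{e}^+(n)$. The $b$-part is then evaluated by Lemmas \ref{lem:pnt_bn} and \ref{lem:buchstabintegral}, which give
\[
\mathfrak{S}_h\frac{\sqrt X}{\log X}\int_{\text{region}}\frac{\omega(\cdot)\,\d\boldsymbol\alpha}{\alpha_1\cdots\alpha_k^2}\ll\nu\,\mathfrak{S}_h\frac{\sqrt X}{\log X},
\]
because the integrand is bounded and the region has measure $O(\nu)$. For the $a$-part we do not have asymptotics, so we use an upper-bound sieve. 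We majorize $\mathfrak{e}^+$ by a Selberg/$\beta$-sieve weight $\sum_{d\le X^{1/2-\eta}}\lambda_d^+\mathbf{1}_{d|n}$ of level within the Type I range, restricted to $n$ having a prime factorization in the region. Alternatively, we write $\mathfrak{e}^+(n)=\sum_{p_1\cdots p_k=m}\mathbf{1}_{n=m\ell,\,(\ell,P(p_k))=1}$ and replace $\mathbf{1}_{(\ell,P(p_k))=1}$ by an upper-bound sieve at level $X^{1/2-\eta}/m$; this is feasible since Type I alone (Proposition \ref{prop:typeI}) handles $\sum_d\lambda_d w_{dn}$. Proposition \ref{prop:typeI} then transfers the sum to $b$ on average over $h$, and Chebyshev/Markov over $h$ gives the bound outside an exceptional set of size $\ll_C H/(\log X)^C$. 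The $b$-side evaluation of this majorant is again $O(\nu)\mathfrak{S}_h\sqrt X/\log X$, with a constant that is an absolute sieve constant times the region's measure.
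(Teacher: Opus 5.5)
\textbf{Gap: the $a$-side bound for $\mathfrak e$.} Your $b$-side evaluation via Lemma~\ref{lem:buchstabintegral} is fine. The bound for $\sum_n\mathfrak e(n)a_n^{(h)}$ fails as written. By your own description, $\mathfrak e$ consists of configurations with two, three or four primes near $X^{1/4}$. So the prefix $m=p_1\cdots p_k$ is at least $X^{1/2-O(\nu)}$, and about $X^{3/4}$ for three primes.
\begin{itemize}
\item The sieve level $X^{1/2-\eta}/m$ is then below $1$, and Proposition~\ref{prop:typeI} cannot even be applied to the modulus $m$.
\item Even granting the transfer, the bound with $\ell$ unsifted is of order $\nu^3\sqrt X$, with no saving of $1/\log X$.
\item Your first variant does not help. ``Restricted to $n$ having a prime factorization in the region'' is not a sieve weight, and without that restriction an upper-bound sieve majorant has size $\asymp\mathfrak S_h\sqrt X/\log X$ with no factor $\nu$.
\item For any piece containing a prime $p\in[U,\sqrt X)$, the admissible level is $X^{1/2-\eta}/p\le X^{\nu+\eta}$. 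This costs a factor $\nu^{-1}$ in the sieve bound, exactly cancelling the $O(\nu)$ gained from $\sum_p 1/p$.
\end{itemize}
The missing idea is to majorize each piece of $\mathfrak e$ by a rough-number analogue that stays inside a region of measure $O(\nu)$ but is within reach of the asymptotic lemmas. The paper does this as follows.
\begin{itemize}
\item Replacing $\rho(m,p_2)$ by $\rho(m,Z)$ turns $\rho_2^{\le Y}$ into $\rho_4$: two primes near $X^{1/4}$ times a $Z$-rough cofactor. This is handled by Lemma~\ref{le:harman1_factorable}.
\item Replacing the prime near $\sqrt X$ (in $[U,\sqrt X)$ or $[T,\sqrt X)$) by a $Z$-rough $k$, and the cofactor by a $Z$-rough $m$, gives $\rho_3,\rho_5$. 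These are handled by Lemma~\ref{le:harman2}.
\end{itemize}
This transfers $\sum_n\rho_j(n)a_n^{(h)}$ to $\sum_n\rho_j(n)b_n^{(h)}$, and Lemma~\ref{lem:buchstabintegral} shows the latter is $O(\nu)\mathfrak S_h\sqrt X/\log X$. So factorable Type I and Type I$_2$, not Type I alone, are what control $\mathfrak e$ on the $a$-side. A repair closer to your plan would keep only one window prime $p_1\approx X^{1/4}$ as the prefix and sieve all of $n/p_1$ at level about $X^{1/4}$. But that presupposes the classification discussed next.

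\textbf{Gap: the deferred classification.} Rules (a)--(d) do not deliver it, and (d) misreads Lemma~\ref{le:harman2}. That lemma concerns $\sum_{k\sim K}\mathbf 1_{(k,P(Z))=1}S(\A^{(h)}_k,Z)$, where $k$ runs over all $Z$-rough integers, so that it can be M\"obius-expanded into smooth variables for Proposition~\ref{prop:typeI2}. It says nothing when $k$ is a prescribed prime or product of primes. Hence, under forward Buchstab iteration, none of (a)--(d) catches:
\begin{itemize}
\item $n=pm$ with $p\in[U,\sqrt X)$ prime and $m$ rough (the paper's $\rho_1^{\ge U}$);
\item $\rho_2^{>Y,\ge T}$, which contains the semiprimes $p_1p_2$ with $p_1,p_2\asymp\sqrt X$.
\end{itemize}
Removing these would require role reversals as in Section~\ref{sec:primesthmproof}. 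So your claim that only near-$X^{1/4}$ configurations survive is unproved, and your treatment of $\mathfrak e$ depends on it. The paper never needs that statement. It puts $\rho_1^{\ge U}$, $\rho_2^{\le Y}$ and $\rho_2^{>Y,\ge T}$ directly into $\mathfrak e$. Each has a prime confined to a window of width $O(\nu)$ in $\log p/\log X$, which is all the $b$-side evaluation needs. Their $a$-side is then handled by the majorants $\rho_3,\rho_4,\rho_5$ above.
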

To deduce Theorem \ref{thm:primesasymp}, we can write
\begin{align*}
    \sum_p  a_p^{(h)} &=   \sum_n  \mathfrak{s}(n) a_n^{(h)} + \sum_n  \mathfrak{e}(n) a_n^{(h)} \\
    &=  \sum_n  \mathfrak{s}(n)  b_n^{(h)} +  \sum_n  \mathfrak{s}(n) (a_n^{(h)} - b_n^{(h)}) + \sum_n  \mathfrak{e}(n) a_n^{(h)} \\
    & =  \sum_p    b_p^{(h)} +  \sum_n  \mathfrak{s}(n) (a_n^{(h)} - b_n^{(h)}) + \sum_n  \mathfrak{e}(n) a_n^{(h)}  -  \sum_n  \mathfrak{e}(n) b_n^{(h)}.
\end{align*}
The first term $\sum_p    b_p^{(h)} $ is then evaluated by Lemma \ref{lem:pnt_bn} and gives the expected main term. The contribution from the other three terms are bounded by Proposition \ref{prop:asympsieve} for all but  $\ll_C H/(\log X)^C$ values of $h \leq H$.
For $H=X^{1/2-o(1)}$, fix $\nu_0>0$ and choose the sieve parameters in terms of $\nu_0$. The same proof is uniform for $X^{1/2-\nu_0}\leq H\leq X^{1/2}$; then let $\nu_0\to0$.
\subsection{Construction of the sieve decomposition}
Let $\nu \in (0,1/18)$ and $\eta > 0$ be small and recall the definitions \eqref{eq:sieveparams} of the sieve parameters $U,V,W,Z$. Let us denote the indicator of $z$-rough numbers by
\begin{align*}
    \rho(n,z) = \mathbf{1}_{(n,P(z))=1}.
\end{align*}
Then we have the point-wise Buchstab's identity (inclusion-exclusion): for any $y < z$
\begin{align} \label{eq:buchstabid}
    \rho(n,z) = \rho(n,y) - \sum_{\substack{n=pm \\ y \leq p < z}}  \rho(m,p).
\end{align}
We first apply Buchstab's identity twice \eqref{eq:buchstabid} to get for any $\sqrt{X} <n  < X $
\begin{align*}
    \mathbf{1}_\P (n) = \rho(n,\sqrt{X}) = \rho_0(n)- \rho_1(n) + \rho_2(n)
\end{align*}
where
\begin{align*}
    \rho_0(n) &:= \rho(n,Z) \\
    \rho_1(n) &:=  \sum_{\substack{n=pm \\ Z \leq p < \sqrt{X}}}  \rho(m,Z) \\
    \rho_2(n) &:= \sum_{\substack{n=p_1p_2m \\ Z \leq p_2 < p_1 < \sqrt{X}}}  \rho(m,p_2).
\end{align*}

For $\rho_1$ we separate the part where $p < U$ to get
\begin{align*}
     \rho_1 =  \rho_1^{< U} +  \rho^{\geq U}_1.
\end{align*}

For $\rho_2$ recall that $Y=X/Z^2$ and partition according to the size of $p_1p_2$ into two parts
\begin{align*}
    \rho_2 = \rho_2^{\leq Y}  + \rho_2^{>Y}.
\end{align*}
We observe that
\begin{align*}
    \rho_2^{>Y}(n)=
    \sum_{\substack{n=p_1p_2m \\ Z\leq p_2<p_1<\sqrt X \\
        p_1p_2>Y}}\rho(m,p_2).
\end{align*}
Here $m<X/(p_1p_2)<Z^2$, so $m=1$ or $m$ is prime.
On the support $n\geq X/2$, the additional condition $p_1<T$
gives $p_1p_2<T^2=X^{1-8\nu}<X/2$, so $m>1$ and
$\rho(m,p_2)=\mathbf1_{m\geq p_2}\rho(m,Z)$ in this part.
We split this further according to the size of $p_1$ to obtain
\begin{align*}
    \rho_2^{>Y} = \rho_2^{> Y, < T}   +  \rho_2^{> Y, \geq T}.
\end{align*}
We can thus write
\begin{align*}
    \mathbf{1}_\PP = \mathfrak{s} + \mathfrak{e}
\end{align*}
where
\begin{align} \label{eq:mainsieve}
    \mathfrak{s} = \rho_0 - \rho_1^{< U} + \rho_2^{>Y,<T}
\end{align}
and
\begin{align} \label{eq:errorsieve}
    \mathfrak{e} =  -\rho^{\geq U}_1 + \rho_2^{\leq Y} + \rho_2^{> Y, \geq T}.
\end{align}

\subsection{Proof of the first part in Proposition \ref{prop:asympsieve}}
By triangle inequality and \eqref{eq:mainsieve}, it suffices to show that for $\rho' \in \{\rho_0,\rho_1^{< U} ,\rho_2^{>Y,<T}\}$
\begin{align*}
           \sum_{h \leq H}\bigg| \sum_{n} \rho'(n) (a_n^{(h)}- b_n^{(h)}) \bigg| \ll_C \frac{H\sqrt{X}}{(\log X)^{C}}.
        \end{align*}
For  $\rho' \in \{\rho_0,\rho_1^{< U}\}$ this follows by Lemma \ref{le:harman1}. For $\rho'=\rho_2^{>Y,<T}$ this follows by Lemma \ref{le:harman1_factorable} since we have $V p_1 < VT=X^{1/2-3\nu+\eta}\leq X^{1/2-\nu-2\eta}$ and for $\nu <1/18$ we have for some $\eta > 0$
\begin{align*}
    p_2 \leq (p_1p_2 m)^{1/3} < X^{1/3} < X^{3/8-3\nu/4-\eta}.
\end{align*}
The mixed restriction follows from $p_2\leq m$:
\[
 (Vp_1)^{5/2}p_2^3
 =V^{5/2}p_1(p_1p_2^2)^{3/2}
 \leq V^{5/2}TX^{3/2}
 =X^{2-3\nu/2+5\eta/2}\leq X^{2-\eta}.
\]

\subsection{Proof of the second part in Proposition \ref{prop:asympsieve}}
By \eqref{eq:errorsieve} and triangle inequality, it suffices to check the claim for $\rho^{\geq U}_1$, $ \rho_2^{\leq Y},$ and $\rho_2^{> Y, \geq T}$ 

We have
\begin{align*}
    \rho^{\geq U}_1(n) = \sum_{\substack{n=pm \\ U \leq p < \sqrt{X}}}  \rho(m,Z) 
\end{align*}
where recall that $U= X^{1/2-\nu-2\eta}$.
For the sequence $b_n^{(h)}$ the claim follows  by Lemma \ref{lem:buchstabintegral} since $\log p / \log X$ is restricted to an interval of length $\ll \nu$. For $a_n^{(h)}$ we use the crude upper bound
\begin{align*}
      \rho^{\geq U}_1(n)   \leq  \sum_{\substack{n=km \\ U \leq k < \sqrt{X}}} \rho(k,Z) \rho(m,Z)=: \rho_3(n).
\end{align*}
Thus, we have
\begin{align*}
0 \leq \sum_{n}  \rho^{\geq U}_1(n) a_n^{(h)}  \leq     \sum_{n} \rho_3(n) a_n^{(h)}  
=   \sum_{n} \rho_3(n) (a_n^{(h)} -b_{n}^{(h)}) +  \sum_{n} \rho_3(n) b_{n}^{(h)}.
\end{align*}
 The contribution from first sum is bounded by Lemma \ref{le:harman2}. The contribution from second sum is bounded by Lemma \ref{lem:buchstabintegral}.

 We then consider
 \begin{align*}
     \rho_2^{\leq Y} =  \sum_{\substack{n=p_1p_2m \\ Z \leq p_2 < p_1 < \sqrt{X} \\ p_1p_2 \leq Y }}  \rho(m,p_2),
 \end{align*}
 where $Z=X^{1/4-3\nu/2-2\eta}$ and $Y=X/Z^2 =X^{1/2+3\nu+4\eta} $. Then
 \begin{align*}
     p_1 \leq Y/p_2 \leq Y/Z \leq  X^{1/4+9\nu/2+6\eta}
 \end{align*}
 and thus both of $\log p_i /\log X$ are constrained to lie in $1/4+O(\nu)$.  For the sequence $b_n^{(h)}$ the claim  then follows  by Lemma \ref{lem:buchstabintegral}. For the sequence $a_n^{(h)}$ we write
 \begin{align*}
     \rho_2^{\leq Y}(n) \leq   \sum_{\substack{n=p_1p_2m \\ Z \leq p_2 < p_1 < \sqrt{X} \\ p_1p_2 \leq Y }}  \rho(m,Z) := \rho_4(n)
 \end{align*}
Then similar to the above
\begin{align*}
0 \leq \sum_{n}      \rho_2^{\leq Y} (n) a_n^{(h)}  \leq     \sum_{n} \rho_4(n) a_n^{(h)}  
=   \sum_{n} \rho_4(n) (a_n^{(h)} -b_{n}^{(h)}) +  \sum_{n} \rho_4(n) b_{n}^{(h)}.
\end{align*}
 The contribution from the  first sum is bounded by Lemma \ref{le:harman1_factorable} for sufficiently small $\nu>0$ since $p_2 \leq p_1 \leq X^{1/4+9\nu/2+6\eta}$. The contribution from second sum is bounded by Lemma \ref{lem:buchstabintegral}. 

Finally, consider 
\begin{align*}
    \rho_2^{> Y, \geq T}(n) = \sum_{\substack{n=p_1p_2m \\ Z \leq p_2 < p_1 < \sqrt{X} \\ p_1p_2 > Y \\ p_1 \geq T }}  \rho(m,p_2) 
\end{align*}
Then $X^{1/2-4\nu}=T\leq p_1<\sqrt X$ restricts $\log p_1/\log X$ to an interval of length $\ll\nu$.
For $b_n^{(h)}$, use Lemma \ref{lem:buchstabintegral} when $m>1$
and \eqref{eq:semiprime_boundary} when $m=1$.
The latter main term is $O(\mathfrak S_h\sqrt X I_f/(\log X)^2)$.
For $a_n^{(h)}$ we use a crude bound
\begin{align*}
    \rho_2^{> Y, \geq T}(n) = \sum_{\substack{n=p_1p_2m \\ Z \leq p_2 < p_1 < \sqrt{X} \\ p_1p_2 > Y \\ p_1 \geq T }}  \rho(m,p_2)  \leq  \sum_{\substack{n=km \\T \leq  k \leq \sqrt{X} }}  \rho(k,Z) \rho(m,Z) =: \rho_5(n)
\end{align*}
Then
\begin{align*}
0 \leq \sum_{n}         \rho_2^{> Y, \geq T} (n) a_n^{(h)}  \leq     \sum_{n} \rho_5(n) a_n^{(h)}  
=   \sum_{n} \rho_5(n) (a_n^{(h)} -b_{n}^{(h)}) +  \sum_{n} \rho_5(n) b_{n}^{(h)}.
\end{align*}
 The contribution from the  first sum is bounded by Lemma \ref{le:harman2}. The contribution from second sum is bounded by Lemma \ref{lem:buchstabintegral}. 

\qed
\section{Proof of Theorem \ref{thm:primes} }
\label{sec:primesthmproof}
Throughout this section we let $\A^{(h)}$ and $\B^{(h)}$ denote the sequences $a^{(h)}_n$ and $b^{(h)}_n$ for a fixed $h \leq H =X^{1/2-\nu}$. 

Theorem \ref{thm:primes} follows immediately from the following sieve minorant proposition (cf. \eqref{eq:minorantdef} for the definition of the minorant $\m$).
\begin{proposition} \label{prop:sieve}
  Let $H=X^{1/2-\nu} >X^{2/5-o(1)}$ and let $\eta>0$ be sufficiently small.   There exists a sieve minorant $\mathfrak{m} \leq \mathbf{1}_{\P}$ such that the following two hold.
    \begin{enumerate}
        \item We have
        \begin{align} \label{eq:sieveasymp}
           \sum_{h \leq H}\bigg| \sum_{n} \mathfrak{m}(n) (a_n^{(h)}- b_n^{(h)}) \bigg| \ll_C \frac{H\sqrt{X}}{(\log X)^{C}}.
        \end{align}
        \item There exist a constant $\mathfrak C_\nu>0.098$ and main terms
        \[
            \mathfrak M^{(h)}_{\mathfrak m}
            =(\mathfrak C_\nu+o(1))\mathfrak S_h
            \frac{\sqrt X I_f}{\log X},
        \]
        uniformly for $h\leq H$, such that
        \begin{align}\label{eq:deficiencyclaim}
            \sum_{h\leq H}\left|
            \sum_n\mathfrak m(n)b_n^{(h)}-
            \mathfrak M^{(h)}_{\mathfrak m}\right|
            \ll_C\frac{H\sqrt X}{(\log X)^C}.
        \end{align}
    \end{enumerate}
\end{proposition}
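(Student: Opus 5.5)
We will build $\mathfrak m$ from Buchstab's identity \eqref{eq:buchstabid} in the style of Harman's sieve, with the parameters \eqref{eq:sieveparams} specialised to $\nu=1/10$, so that $Z=X^{1/10-2\eta}$, $V=X^{1/10+\eta}$ and $W=VZ$. As in Section \ref{sec:primesasymp}, we start from
\[
\mathbf 1_\PP=\rho_0-\rho_1+\rho_2 ,
\]
where $\rho_0$ is handled by Lemma \ref{le:harman1}.

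For $\rho_1$ we split $p$ at $U$. The part $p<U$ is again handled by Lemma \ref{le:harman1}. The part $U\le p<\sqrt X$ carries a minus sign and must be treated differently. We write $n=pm$ with $m$ being $Z$-rough, and we check whether the $m$-variable, now of size $\le X^{1/2+\nu+2\eta}$, is covered by Lemma \ref{le:harman2} with $k=p$ or $k=m$ in the window $[X^{1/4+\eta},X^{3/4-\eta}]$. Since $p\in[U,\sqrt X]$ lies in that window, $\rho_1^{\ge U}$ is an admissible sum, so we keep it inside $\mathfrak m$ with its sign.

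The term $\rho_2$ is decomposed according to $(\alpha_1,\alpha_2)=(\log p_1/\log X,\log p_2/\log X)$. We sort the region into three kinds of pieces.
\begin{enumerate}
\item If $p_1$, $p_2$ or $p_1p_2$ lies in the Type II range $[V,W]$, or can be grouped into that range, Proposition \ref{prop:typeII} applies and the piece is kept.
\item If $p_1p_2\le U$, so that the piece is a Type I sum, we apply Buchstab once more. This yields $\rho_3$ terms over $p_3<p_2$, which we again keep whenever some subproduct of $p_1p_2p_3$ falls in $[V,W]$, or whenever the piece is of Type I$_2$ form, meaning one factor lies in $[X^{1/4+\eta},X^{3/4-\eta}]$ and Lemma \ref{le:harman2} applies.
\item All remaining pieces carry a positive sign and are simply discarded. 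Dropping them keeps $\mathfrak m\le\mathbf 1_\PP$.
\end{enumerate}
The pieces discarded in step 3 are exactly the regions whose integrals appear in the explicit constant stated after Theorem \ref{thm:primes}. These are the region $1/5<\alpha_2<1/4$ with $\alpha_1$ up to $1/2$, the region $1/4<\alpha_2<1/3$ with $\alpha_1<1-2\alpha_2$, and the two regions with the logarithmic Buchstab factors coming from the second application of the identity.

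With this construction, property \eqref{eq:sieveasymp} holds by the triangle inequality. Each retained piece is bounded by one of Lemmas \ref{le:harman1}, \ref{le:harman2} or Proposition \ref{prop:typeII}, once cross-conditions are removed by Perron's formula and non-square-free terms are removed by \eqref{eq:rough_square_error}. For property \eqref{eq:deficiencyclaim} we evaluate each retained piece against $b_n^{(h)}$ using Lemma \ref{lem:buchstabintegral}, and the boundary terms with $m=1$ via \eqref{eq:semiprime_boundary}. This gives $\mathfrak M^{(h)}_{\mathfrak m}=(\mathfrak C_\nu+o(1))\mathfrak S_h\sqrt X I_f/\log X$, where
\[
\mathfrak C_\nu=1-\sum\int(\text{discarded regions}).
\]
The same formula already expresses $\mathbf 1_\PP$ with $\mathfrak C=1$, since Lemma \ref{lem:pnt_bn} gives the full main term for $\sum_p b_p$.

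The main obstacle is the combinatorial bookkeeping. We must check that every retained region really satisfies the hypotheses of one of the arithmetic lemmas at $\nu=1/10$, and in particular that the Type II window $[X^{1/10+\eta},X^{1/5-\eta}]$ together with grouping covers everything claimed. We then evaluate the discarded integrals numerically. Because the Type II window is narrow, care is needed near $\alpha_2\approx1/5$ and $\alpha_1+\alpha_2\approx1/2$. After computing, we would confirm that the deficiency is below $0.902$, giving $\mathfrak C_\nu>0.098$; the value increases with $\nu$ decreasing, so it suffices to treat $H=X^{2/5-o(1)}$.
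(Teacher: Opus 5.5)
\textbf{The gap is in your treatment of $\rho_1^{\ge U}$.} Lemma \ref{le:harman2} bounds $\sum_{k\sim K}\mathbf 1_{(k,P(Z))=1}\bigl(S(\A^{(h)}_k,Z)-S(\B^{(h)}_k,Z)\bigr)$. There \emph{both} variables carry $Z$-rough weights, and its proof expands them by M\"obius. Small moduli lead to Type I$_2$ sums (Proposition \ref{prop:typeI2} needs the smooth weight $\mu(dm)^2\psi(dm/M)$ on the inner variable), and large moduli lead to Type II sums.

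In $\rho_1^{\ge U}$ the variable $p\in[U,\sqrt X]$ is prime, so neither choice fits. Taking $k=p$ puts the weight $\mathbf 1_{\P}(k)$ in place of $\mathbf 1_{(k,P(Z))=1}$. Taking $k=m$ would require the cofactor counted in $S(\A_k,Z)$ to be prime, not merely $Z$-rough. That $p$ lies in $[X^{1/4+\eta},X^{3/4-\eta}]$ is irrelevant. No other information rescues the sum either: $p$ times the sieve moduli for $m$ exceeds the Type I level $X^{1/2-\eta}$, and when $m$ is also prime, $n=pm$ has no divisor in $[V,W]$.

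The missing idea is role reversal. Since $U>X^{1/4}$ you have $\mathbf 1_{\P}(p)=\rho(p,X^{1/4})$, and \eqref{eq:buchstabid} gives
\[
-\rho_1^{\ge U}(n)=-\sum_{\substack{n=km\\ U\le k<\sqrt X}}\rho(k,Z)\rho(m,Z)+\sum_{\substack{n=qkm\\ U\le qk<\sqrt X\\ Z\le q<X^{1/4}}}\rho(k,q)\rho(m,Z).
\]
Only the first sum falls under Lemma \ref{le:harman2}. The second sum has positive sign, and you handle it in three ranges of $q$:
\begin{itemize}
\item $q\in[V,W]$: keep it (Type II).
\item $q<V$: discard it at cost $O(\eta)$.
\item $q>W$: extract, by Buchstab in reverse, a prime factor $q'\in[Z,W)$ of $m$. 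Keep $q'\in[V,W]$ and discard the remainder, in which $m$ is $W$-rough.
\end{itemize}

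\textbf{This discarded remainder is a real loss, and your bookkeeping does not account for it.} The loss is $\mathfrak D_1<0.1319$, which is exactly the integral with $1+\log(4-5\alpha_1-5\alpha_2)$ in the explicit constant. It is Buchstab's $\omega(u)$ at $u=(1-\alpha_1-\alpha_2)/(1/5)\in[5/2,3]$ for the $W$-rough cofactor, and it cannot arise from $\rho_2$. So attributing all four integrals to $\rho_2$ is inconsistent. If your decomposition were valid, you would lose only $\mathfrak D_2<0.7697$ (the other three integrals), and that larger constant rests on the invalid step. Both new steps are needed: without the extraction from $m$, the discarded part would be sieved only to level $Z$, would be roughly twice as large, and would make the constant negative.

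\textbf{A smaller issue is your step 2 for $\rho_2$.} Applying Buchstab to $\rho(m,p_2)$ produces $\rho_3$-terms with a \emph{negative} sign, which cannot be dropped, contrary to your step 3. The step is also unnecessary. The conditions $p_2<p_1$ and $p_1p_2\le U$ force $p_2<\sqrt U=W$. So either $p_2\in[V,W]$, which is Type II directly, or $p_2\in[Z,V)$, a range with $V/Z=X^{3\eta}$ that can be discarded at cost $O(\eta)$.
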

We will construct the sieve minorant $\mathfrak{m}$ through the usual Buchstab iterations in Harman's sieve. The minorant is constructed so that the first part of the proposition then follows by Propositions \ref{prop:typeI}, \ref{prop:typeI2}, and \ref{prop:typeII}. The second part is proved using Lemma \ref{lem:buchstabintegral} below.

\subsection{Construction of the sieve minorant $\mathfrak{m}$}

To construct the minorant $\mathfrak{m}$, we first apply Buchstab's identity twice \eqref{eq:buchstabid} to get for any $\sqrt{X} <n  < X $
\begin{align*}
    \mathbf{1}_\P (n) = \rho(n,\sqrt{X}) = \rho_0(n)- \rho_1(n) + \rho_2(n)
\end{align*}
where
\begin{align*}
    \rho_0(n) &:= \rho(n,Z) \\
    \rho_1(n) &:=  \sum_{\substack{n=pm \\ Z \leq p < \sqrt{X}}}  \rho(m,Z) \\
    \rho_2(n) &:= \sum_{\substack{n=p_1p_2m \\ Z \leq p_2 < p_1 < \sqrt{X}}}  \rho(m,p_2).
\end{align*}

For $\rho_{1}$ we first split according to whether $p < U$ to get
\begin{align*}
   \rho_1= \rho_{1}^{< U} + \rho_{1}^{\geq U}.
\end{align*}
For the second term we apply Buchstab's identity \eqref{eq:buchstabid}  to the variable $p=k$ with $\mathbf{1}_\P(k) =\rho(k,X^{1/4})$, noting that $U >X^{1/4}$ (so-called role-reversal), to obtain
\begin{align*}
-\rho_{1}^{\geq U}(n) =  -\sum_{\substack{n=pm \\ U \leq p < \sqrt{X}}}  \rho(m,Z)  =   -\rho_{1,0}^{\geq U}(n) +   \rho_{1,1}^{\geq U}(n)
\end{align*}
where
\begin{align*}
\rho_{1,0}^{\geq U}(n)  := &\sum_{\substack{n=km \\ U \leq k < \sqrt{X}}}  \rho(m,Z)  \rho(k,Z) \\
\rho_{1,1}^{\geq U}(n) := & \sum_{\substack{n= p km \\ U \leq pk < \sqrt{X} \\ Z \leq p < X^{1/4} }}  \rho(m,Z)  \rho(k,p).
\end{align*}
We split the second sum according to  whether $p$ lies in the Type II range $[V,W]$ to get
\begin{align*}
    \rho_{1,1}^{\geq U}(n) = \underbrace{\rho_{1,1}^{\geq U,<V}(n)}_{p < V} + \underbrace{\rho_{1,1}^{\geq U,[V,W]}(n)}_{p \in [V,W]} + \underbrace{\rho_{1,1}^{\geq U,> W}(n)}_{p > W}.
\end{align*}
In 
\begin{align*}
\rho_{1,1}^{\geq U,> W}(n) = \sum_{\substack{n= p km \\ U \leq pk < \sqrt{X} \\ W < p < X^{1/4} }}  \rho(m,Z)  \rho(k,p).
\end{align*}
we may apply Buchstab's identity \eqref{eq:buchstabid}  to $\rho(m,Z)$ (in the reverse direction) to extract a prime factor $Z \leq q< W$, to get
\begin{align*}
\rho_{1,1}^{\geq U,> W}(n) = \rho_{1,1}^{\geq U,> W,[Z,V)}(n) + \rho_{1,1}^{\geq U,> W,[V,W]}(n)  + \rho_{1,1,1}^{\geq U,> W}(n),
\end{align*}
where
\begin{align*} 
\rho_{1,1}^{\geq U,> W,[Z,V)}(n) :=    \sum_{\substack{n= p kqm \\ U \leq pk < \sqrt{X} \\ W <p < X^{1/4} \\ Z\leq q < V }}  \rho(m,q)  \rho(k,p),  \\
 \rho_{1,1}^{\geq U,> W,[V,W]}(n) :=    \sum_{\substack{n= p kqm \\ U \leq pk < \sqrt{X} \\ W <p < X^{1/4} \\ \max\{V,Z\}\leq q < W }}  \rho(m,q)  \rho(k,p), \\
 \rho_{1,1,1}^{\geq U,> W}(n) := \sum_{\substack{n= p km \\ U \leq pk < \sqrt{X} \\ W < p < X^{1/4} }}  \rho(m,W)  \rho(k,p).
\end{align*}

Similarly, for the term $\rho_2(n)$ we split the sum according to whether $p_2$ lies in the type II range $[V,W]$ to get
\begin{align*}
        \rho_2 = \rho_{2}^{<V} + \rho_{2}^{[V,W]} +  \rho_{2}^{> W}.
\end{align*}

Combining the above, we have for $\sqrt{X} < n < X$
\begin{align} \label{eq:sievedecomp}
    \mathbf{1}_{\P}(n) = \mathfrak{m}(n) + \mathfrak{d}(n),
\end{align}
where the minorant is defined by
\begin{align} \label{eq:minorantdef}
    \m := \rho_0 - \rho_{1}^{< U} - \rho_{1,0}^{\geq U}  +  \rho_{1,1}^{\geq U,[V,W]}+\rho_{1,1}^{\geq U,> W,[V,W]} + \rho_{2}^{[V,W]}
\end{align}
and the terms dropped by positivity are defined by
\begin{align*}
    \mathfrak{d} :=  \rho_{1,1}^{\geq U,<V} +\rho_{1,1}^{\geq U,> W,[Z,V)} + \rho_{1,1,1}^{\geq U,> W} +  \rho_{2}^{<V} + \rho_{2}^{> W}.
\end{align*}
Since $\mathfrak{d} \geq 0$, we have the minorant property $    \mathfrak{m}  \leq \mathbf{1}_{\P} .$ We also already observe that for $\nu=1/10$ the ratio $V/Z = X^{O(\eta)}$, so that the parts $\rho_{1,1}^{\geq U,<V} ,\rho_{2}^{<V},\rho_{1,1}^{\geq U,> W,[Z,V)}$ of $\mathfrak{d}$ with a variable $<V$  will contribute $O(\eta)$ to the overall deficiency.

\subsection{Proof of the first part in Proposition \ref{prop:sieve}}

\begin{proof}[Proof of \eqref{eq:sieveasymp}]
    Recall that
    \begin{align*}
     \mathfrak{m} =   \rho_0 - \rho_{1}^{< U} - \rho_{1,0}^{\geq U} +  \rho_{1,1}^{\geq U,[V,W]} + \rho_{1,1}^{\geq U,> W,[V,W]} + \rho_{2}^{[V,W]}.
    \end{align*}
    Thus, by the triangle inequality, it suffices to show that for each $\rho'$ in the above sum
    \begin{align*}
         \sum_{h \leq H}\bigg| \sum_{n} \rho'(n) (a_n^{(h)}- b_n^{(h)}) \bigg| \ll \frac{H\sqrt{X}}{(\log X)^{C}}.
    \end{align*}
    For $\rho' = \rho_0 $ and $\rho'=\rho_{1}^{< U}$ this follows by Lemma \ref{le:harman1}. For $\rho'=\rho_{1,0}^{\geq U} $ this follows by Lemma \ref{le:harman2}. Finally, for $\rho'=\rho_{1,1}^{\geq U,[V,W]}(n),$ $\rho'=\rho_{1,1}^{\geq U,> W,[V,W]},$ and $\rho'=\rho_{2}^{[V,W]}$, we have a variable in the Type II range $[V,W]$ and we can use Proposition \ref{prop:typeII}.
\end{proof}

\subsection{Proof of the second part in Proposition \ref{prop:sieve}}
\begin{proof}[Proof of \eqref{eq:deficiencyclaim}]
By the sieve decomposition \eqref{eq:sievedecomp} \begin{align*}
      \sum_{n} \mathfrak{m}(n) b_n^{(h)}  =   \sum_{n} (\mathbf{1}_{\P}(n) - \mathfrak{d}(n)) b_n^{(h)},
\end{align*}
where $$ \mathfrak{d}= \rho_{1,1}^{\geq U,<V} +\rho_{1,1}^{\geq U,> W,[Z,V)} + \rho_{1,1,1}^{\geq U,> W} +  \rho_{2}^{<V} + \rho_{2}^{> W}.$$
Let $\mathfrak M^{(h)}_{\mathfrak d}$ be the sum of the exact main terms
for these five pieces from Lemma \ref{lem:buchstabintegral}, including
$\mathfrak B_h$ from \eqref{eq:semiprime_boundary} for the $m=1$ part
of $\rho_2^{>W}$, and put
\[
    \mathfrak M^{(h)}_{\mathfrak m}
    :=M^{(h)}_0-\mathfrak M^{(h)}_{\mathfrak d},
\]
where $M^{(h)}_0$ is the $k=0$ main term in Lemma \ref{lem:pnt_bn}.
These lemmas and the triangle inequality give \eqref{eq:deficiencyclaim}.
Their leading asymptotics give, uniformly for $h\leq H$,
\[
    \mathfrak M^{(h)}_{\mathfrak m}
    =(\mathfrak C_\nu+o(1))\mathfrak S_h\frac{\sqrt X I_f}{\log X},
\]
where
\[
    \mathfrak C_\nu
    =1-(\mathfrak D_1^{<V}+\mathfrak D_1^{[Z,V)}+
    \mathfrak D_1+\mathfrak D_2^{<V}+\mathfrak D_2).
\]
The term $\mathfrak B_h$ does not affect this leading constant.
For the numerical bound we now take $\nu=1/10$; for larger $H$, the same sieve parameters remain admissible. Since $V/Z = X^{O(\eta)}$, we quickly see that $\mathfrak{D}_1^{< V}+\mathfrak{D}_1^{ [Z,V)}  + \mathfrak{D}_2^{< V} \ll \eta$ since in all three cases a prime variable is constrained in $[Z,V)$. Corresponding to $\rho_{1,1,1}^{\geq U,> W}$, since $\rho(k,p) $ for $p > W, pk < \sqrt{X}$ restricts to $k$ being a prime, we get the deficiency
\begin{align*}
    \mathfrak{D}_{1} :=  \int_{{\substack{1/4-\nu/2 \leq \alpha_1 \leq 1/2 \\ 1/2-\nu \leq \alpha_1 + \alpha_2 \leq 1/2 \\ \alpha_1 \leq \alpha_2}} } \omega\bigg( \frac{1- \alpha_1-\alpha_2}{1/4-\nu/2}\bigg) \frac{\d \alpha_1 \d \alpha_2}{\alpha_1 \alpha_2 (1/4-\nu/2)} + O(\eta).
\end{align*}
The argument in the Buchstab function is in $[2,3]$ and the integral can be written explicitly as
$$ \int_{1/5}^{1/4} \int_{\alpha_1}^{1/2-\alpha_1} \frac{1 + \log(4 - 5\alpha_1 - 5\alpha_2)}{\alpha_1 \alpha_2 (1-\alpha_1-\alpha_2)} \, \d\alpha_2 \, \d\alpha_1 + O(\eta).$$

Corresponding to $\rho_{2}^{> W}$ we get a deficiency 
\begin{align*}
 \mathfrak{D}_{2} :=   \int_{{\substack{1/4-\nu/2 \leq \alpha_2 < \alpha_1 < 1/2 \\   \alpha_1+ 2 \alpha_2 < 1 }} }  \omega\bigg( \frac{1- \alpha_1-\alpha_2}{\alpha_2}\bigg)  \frac{\d \alpha_1 \d \alpha_2}{\alpha_1 \alpha_2^2} + O(\eta).
\end{align*}
The argument in the Buchstab function is $ \leq 3$ and the integral can be written  explicitly as
\begin{align*}
    \left( \int_{1/5}^{1/4} \int_{\alpha_2}^{1/2} + \int_{1/4}^{1/3} \int_{\alpha_2}^{1-2\alpha_2} \right) \frac{1}{\alpha_1 \alpha_2 (1-\alpha_1-\alpha_2)} \d \alpha_1 \d \alpha_2
 + 
\int_{1/5}^{1/4} \int_{\alpha_2}^{1-3\alpha_2} \frac{\log\left(\frac{1-\alpha_1-2\alpha_2}{\alpha_2}\right)}{\alpha_1 \alpha_2 (1-\alpha_1-\alpha_2)} \d \alpha_1 \d \alpha_2.
\end{align*}
For $\nu=1/10$  we have
\begin{align*}
  \mathfrak{D}_{1} < 0.1319 \quad \text{and} \quad   \mathfrak{D}_{2} < 0.7697.
\end{align*}
Hence, $  \mathfrak{C}_\nu  = 1- (\mathfrak{D}_1^{< V} +\mathfrak{D}_1^{[Z,V)} +    \mathfrak{D}_1 + \mathfrak{D}_2^{< V} +\mathfrak{D}_2) > 0.0984-O(\eta) \geq 0.098$ once $\eta>0$ is chosen sufficiently small.
\end{proof}

\section{Proof of Theorem \ref{thm:mobius}}
\label{sec:möbius}
Let $|\gamma_h|\leq1$ and choose $0<\eta<\nu/100$.
In this section, we show Theorem \ref{thm:mobius}, the statement that 
\begin{equation*}
  \sum_{h < H  } \gamma_h \sum_{n^2 + h\le X }\mu(n^2 + h) 
  \ll \nu H X^{1/2}. 
\end{equation*}
This will follow, after a smooth dyadic partition of unity, from the estimate 
\begin{equation*}
    \sum_{n } \mu(n) a_n^{(\gamma)}\ll \nu H X^{1/2}.
\end{equation*}
We may suppose that $\nu < 1/100$ in what follows.

The result will follow from our Type II estimates and Ramar\'e's identity, the observation that for $y < z $, we have
\begin{equation*}
  \mathbf{1}_{\exists p\mid n:\ y\le p<z} = \sum_{\substack{p\mid n\\y\le p<z}} \frac{1}{1 + \omega_{y, z}(n/p)} + O\bigg(\sum_{\substack{p\mid n\\y\le p<z}}\mathbf{1}_{p^2\mid n}\bigg),
\end{equation*}
where we write
\begin{equation*}
  \omega_{y, z}(n) := \sum_{\substack{p\mid n\\y\le p<z}} 1.
\end{equation*}
The error term from $\mathbf{1}_{p^2\mid n}$ is negligible by
\eqref{eq:rough_square_error} with $z_0=y=X^{\nu+\eta}$.
We begin by removing the ``main term'' contribution. By
Siegel Walfisz and the large sieve (to bound the $r\le (\log X)^{10A}$ contribution and the 
$(\log X)^{10A} < r\le R$ contribution respectively, where $r$ is as in the definition of $b_n^{(\gamma)}$ 
from \S\ref{sec:setup}), we have that
\begin{equation}\label{eq:cuqcn8l8j3}
  \sum_{n }  \mu(n)b_n^{(\gamma)} \ll_A \frac{H X^{1/2}}{\log^AX}.
\end{equation}
Applying Hal\'asz to deal with the contribution of small moduli $r\ll (\log X)^{1/100}$ and 
the large sieve for larger moduli yields that 
\begin{equation*}
    \sum_{\substack{n \\ p | n\implies p\not\in [y, z)}} \mu(n) b_n^{(\gamma)} \ll \nu H X^{1/2} (\log X)^{-1/1000}.
\end{equation*}

It follows that
\begin{equation}\label{eq:cuqcn8mtgz}
  \sum_{n} \mu(n) a_n^{(\gamma)} = -\sum_{\substack{m,p\\y\le p<z\\(m,p)=1}}  \beta_m w_{mp}^{(\gamma)}
  + O_A \biggl( \nu H X^{1/2}
  + \sum_{h < H } \sum_{n^2 + h\ll X } \mathbf{1}_{ p | n^2 + h\implies p\not\in [y, z)}
  \biggr),
\end{equation}
where we take
\begin{equation*}
  \beta_m := \frac{\mu(m)}{1 + \omega_{y, z}(m)}.
\end{equation*}
With $y = X^{\nu + \eta} $ and $z = X^{1/20} $, we remove the terms
$p\mid m$ by \eqref{eq:rough_square_error} and then apply
Proposition \ref{prop:typeII} to obtain
\begin{equation}\label{eq:cuqcn8k1aj}
  \sum_{\substack{m,p\\y\le p<z\\(m,p)=1}} \beta_m w_{mp}^{(\gamma)} \precprec \frac{H X^{1/2}}{R^{1/4}}.
\end{equation}

An upper bound sieve applied to the sum over $h $ yields that
\begin{equation*} 
    \sum_{h < H} \mathbf{1}_{ p | n^2 + h\implies p\not\in [y, z) }
    \ll H\prod_{y\le p < z} \biggl(1 - \frac{1}{p}\biggr)\asymp \nu H.
\end{equation*}
Thus, 
\begin{equation}\label{eq:cuqcn8ktb9}
  \sum_{h < H } \sum_{n^2 + h\le X } \mathbf{1}_{ p | n^2 + h\implies p\not\in [y, z) }
  \ll \nu H X^{1/2}.
\end{equation}
The desired result follows upon collecting \eqref{eq:cuqcn8l8j3}, \eqref{eq:cuqcn8mtgz}, \eqref{eq:cuqcn8k1aj}, and \eqref{eq:cuqcn8ktb9}.
For $H=X^{1/2-o(1)}$, fix $\nu_0>0$, take $\eta<\nu_0/100$ and $y=X^{\nu_0+\eta}$, and run the proof uniformly for $X^{1/2-\nu_0}\leq H\leq X^{1/2}$. This gives $O(\nu_0HX^{1/2})$; now let $\nu_0\to0$.

\bibliography{quadratic}
\bibliographystyle{abbrv}

\end{document}